 \def\NN{{\mathbb N}}  
 \def\RR{{\mathbb R}} \def\SS{{\mathbb S}} 
 \def\ZZ{{\mathbb Z}}
\def\Wi{\widetilde}
  \def\cH{{\cal H}}  
   \def\cO{{\cal O}}
\def\cF{{\cal F}}  \def\cL{{\cal L}}  \def\cX{{\cal X}}
\newtheorem{theorem}{{Theorem}}[section]
\newtheorem{proposition}[theorem]{{Proposition}}
\newtheorem{lemma}[theorem]{{Lemma}}
\newtheorem{corollary}[theorem]{{Corollary}}
\newtheorem{claim}[theorem]{{Claim}}
\newtheorem{question}[theorem]{{Question}}
\theoremstyle{definition}
\newtheorem{definition}[theorem]{{Definition}}
\theoremstyle{remark}
\newtheorem{remark}[theorem]{{Remark}}
\title{Classifying expanding attractors on figure eight knot complement space and non-transitive Anosov flows on Franks-Williams manifold}
\author{Jiagang Yang and Bin Yu}
\date{\today}
\begin{document}

\maketitle

\begin{abstract}
The path closure of  figure eight knot complement space, $N_0$, supports a natural DA (derived from Anosov) expanding attractor.
Using this attractor, Franks-Williams constructed the first example of non-transitive Anosov flow on the manifold $M_0$ obtained by gluing two copies of  $N_0$ through identity map along their boundaries, named by Franks-Williams manifold. In this paper, our main goal is  to classify  expanding attractors on $N_0$ and non-transitive Anosov flows on $M_0$. We prove that, up to orbit-equivalence,  the DA expanding attractor is the unique expanding attractor supported by $N_0$, and  the non-transitive
Anosov flow constructed by Franks and Williams is the unique non-transitive Anosov flow  admitted by $M_0$.
Moreover,  more general cases are also discussed. In particular, we  completely classify non-transitive  Anosov flows on a family of infinitely many toroidal $3$-manifolds with two hyperbolic pieces, obtained by gluing two copies of $N_0$ through any gluing homeomorphism.
\end{abstract}

\section{Introduction}\label{s.int}
Anosov systems (flow and diffeomorphism) generalize the classical example of geodesic flows on closed Riemannian manifolds with negative curvature.
They were originally called U-system by D.V.Anosov in his celebrated paper \cite{An}, where he  proved that every Anosov system is both structure stable and ergodic. Such systems play a fundamental role in Smale's picture about structure stable system \cite{Sm}.

Since then, Anosov system has become an important mathematical object studied by many people from several different viewpoints.
In particular, many conceptions developed from Anosov system take  bridges between dynamical system, geometry and topology.
One  natural direction is to qualitatively understand Anosov system. Nevertheless, many works have been done in this direction. For instance, for $3$-dimensional diffeomorphisms, a complete topological classification is obtained by Franks \cite{Fra}. His theorem implies that every $3$-dimensional Anosov diffeomporphism is topologically conjugate to some linear Anosov automorphism on $T^3$, up to finite cover. In particular, every $3$-dimensional Anosov diffeomporphism is
transitive. It is still a long-standard open question by Smale that whether   every  Anosov diffeomporphism is transitive.

But for $3$-dimensional Anosov flows, a complete classification seems to be far from reached at present.
There are many non-algebraic Anosov flows constructed, see for instance, \cite{HT}, \cite{Go}, \cite{BL} and \cite{BBY}.
In particular, Franks and Williams \cite{FW} built the first non-transitive Anosov flow on a closed $3$-manifold.
This manifold is the toroidal $3$-manifold by gluing two copies of figure eight knot complement space along their boundaries by identity map. We refer to it as \emph{Franks-Williams manifold} in this paper.
This example is well-known because it gives a negative  answer for
 Smale's orginal question  in the case of Anosov flows: whether every  Anosov system is transitive.

The classification of $3$-dimensional Anosov flows has many deep relations with the topolgy of the background manifolds.
For certain classes of $3$-manifolds, there are already complete classifications:
\begin{itemize}
  \item Plante \cite{Pl} and Ghys \cite{Gh1}  classified Anosov flows on torus bundle over circle and circle bundle over surfaces respectively.
  \item Ghys \cite{Gh2} classified Anosov flows on $3$-manifolds with smooth stable/unstable bundles.
  \item Barbot \cite{Bar} classified Anosov flows on a class of graph manifolds, which he called generalized Bonatti-Langevin manifolds.
\end{itemize}
By lifting Anosov flow to the universal cover of the background manifold,  Barbot and Fenley developed
some powerful tools to understand Anosov flows on $3$-manifolds and used them to reveal many new and deep dynamical and topological behaviors about Anosov flows, see for instance, \cite{Fen1}, \cite{Bar} and \cite{BF1}.  These works have immensely improved our understanding about Anosov flows on $3$-manifolds.
For a more comprehensive discussion on this topic, we suggest Barthelme's nice survey \cite{Bart}.

\subsection{Main results}\label{ss.main}
All of the classification works above focused on transitive Anosov flows.
In this paper, as one of the main results of this paper, we provide a complete classification of non-transitive flows on certain $3$-manifolds,
including Franks-Williams manifold. To our knowledge, this is the first time such classification result
is obtained for non-transitive Anosov flows.

To build a non-transitive Anosov flow on a $3$-manifold, expanding attractor with a standard neighborhood
is necessary. Expanding attractors are  uniformly hyperbolic attractors of flows whose topological dimension is equal to
the dimension of unstable manifold.
We  say that a compact $3$-manifold $N$ supports an expanding attractor $\Lambda$ if the flow is transverse to $\partial N$ with maximal invariant set coincide with $\Lambda$.
More definitions and properties about expanding attractors can be found in Section \ref{ss.epa}.

First we introduce a class of expanding attractors, one of which was used to build the first non-transitive
Anosov flow  by Franks and Williams \cite{FW}.
Starting with a suspension Anosov flow on the sol-manifold
$W_A=T^2 \times [0,1]/ (x,1)\sim (A(x),0)$ induced by  the vector field $(0, \frac{\partial}{\partial t})$,
 we perform a DA-surgery  on a small tubular neighborhood of the periodic orbit associated to the origin in $T^2$. One obtains a new Axiom A flow $Y_t^A$  on $W_A$  with one expanding attractor $\Lambda_A$ and one isolated repeller $\gamma_A$.
  By cutting a suitable small tubular neighborhood of
$\gamma_A$,  a filtrating neighborhood $N_A$ of $\Lambda_A$ is obtained, which is a compact
$3$-manifold with a once-punctured torus fibration structure.
When $A=\left(
                                                                                   \begin{array}{cc}
                                                                                     2 & 1 \\
                                                                                     1 & 1 \\
                                                                                   \end{array}
                                                                                 \right)$,
we use $W_0$, $Y_t^0$, $\Lambda_0$ and $N_0$ instead of $W_A$, $Y_t^A$, $\Lambda_A$ and $N_A$
respectively. Note that $N_0$ is homeomorphic to the figure eight knot complement space.

Now we briefly  introduce Franks-Williams' non-transitive Anosov flow. For more details, see \cite{FW}.
Choose two copies of $(N_0, Y_t^0, \Lambda_0)$, say $(N_1, Y_t^1, \Lambda_1)$ and $(N_2, Y_t^2, \Lambda_2)$, and then choose  a gluing homeomorphism $\Psi_0: \partial N_2 \to \partial N_1$
which is isotopic to identity and satisfies some foliations transversality property.
 The details about this property can be found in Section \ref{s.nonAno}.
 Franks and Williams proved that the glued flow $Z_t^0$ is a non-transitive Anosov flow on the glued manifold $M_0$, which  is called \emph{Franks-Williams manifold}.

Our first main result provides a complete classification of non-transitive Anosov flows on Franks-Williams manifold $M_0$.

\begin{theorem}\label{t.claAno}
Every non-transitive Anosov flow on Franks-Williams manifold $M_0$ is topologically equivalent to $Z_t^0$,
which is the standard model built by Franks-Williams.
\end{theorem}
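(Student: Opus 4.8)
The plan is to prove Theorem \ref{t.claAno} by showing that any non-transitive Anosov flow $\phi_t$ on $M_0$ must, up to orbit equivalence, decompose along the incompressible torus $T$ of $M_0$ (the JSJ torus separating the two hyperbolic pieces, each homeomorphic to $N_0$) into two pieces each carrying an expanding attractor on a copy of $N_0$, and then to invoke the (presumably earlier, or companion) classification of expanding attractors on $N_0$. Concretely, I would first use the nonwandering set decomposition of a non-transitive Anosov flow: by Conley theory / the spectral decomposition, the chain recurrent set of $\phi_t$ is a disjoint union of basic sets, and since the flow is Anosov and non-transitive there must be at least one attractor and one repeller among them, with a filtration in between. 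The key structural input is that $M_0$ is a toroidal manifold whose only essential surfaces are (isotopic to) the JSJ torus, so any filtrating region must be bounded by tori isotopic to $T$; this forces the attractor to live in (a manifold homeomorphic to) $N_0$ with flow transverse to the boundary torus, i.e. $N_0$ \emph{supports} the attractor in the sense defined above.

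Next I would argue that the attractor so obtained is an \emph{expanding} attractor. Here I would use that $N_0 \simeq$ figure-eight knot complement is hyperbolic and atoroidal, so the attractor cannot be a suspension (which would need a torus) nor contained in a solid-torus-like neighborhood, and a case analysis on its topological dimension — combined with the structure of codimension-one hyperbolic attractors of flows on $3$-manifolds (Williams' theory, and Franks' and Bonatti–Langevin-type results on attractors with transverse boundary) — rules out everything except a $1$-dimensional expanding attractor. Symmetrically, the repeller of $\phi_t$ sits in the complementary copy of $N_0$ as an expanding \emph{repeller} (an expanding attractor for $-\phi_t$). At this point the classification of expanding attractors on $N_0$ (the uniqueness statement asserted in the abstract, which I am entitled to assume as an earlier result of the paper) tells me that each piece is orbit-equivalent to $(N_0, Y^0_t, \Lambda_0)$ and its time-reversal.

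The final step is the gluing/uniqueness argument: having identified $M_0 = N_1 \cup_\varphi N_2$ with $\phi_t$ restricting on $N_i$ to the standard DA model, I must show that the gluing homeomorphism $\varphi$ of the boundary torus is forced, up to the ambiguity that does not change the orbit-equivalence class of the resulting flow, to be Franks–Williams' $\Psi_0$. This is where the \emph{foliation transversality} condition enters: the stable and unstable foliations of $\Lambda_i$ induce two measured foliations (or their slopes) on $\partial N_i$, and for the glued flow to be Anosov the two pairs of foliations must be transverse across $T$; combined with the constraint that $\varphi$ is determined up to isotopy by the mapping class group of the torus acting compatibly with the structure, this pins down $\varphi$ in the relevant class, and the resulting flow is $Z^0_t$ by construction. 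I expect the \textbf{main obstacle} to be the middle step — proving that the attractor forced to live in $N_0$ is necessarily the \emph{expanding} one and is orbit-equivalent to the DA model, i.e. the genuine classification of hyperbolic attractors with transverse boundary on the figure-eight knot complement; controlling the possible one-dimensional templates and branched-surface presentations, and showing none other than the DA template is realizable on $N_0$, is the technical heart of the matter. A secondary subtlety is verifying that the filtration separating attractor from repeller can be normalized so that its dividing surface is genuinely a single torus isotopic to $T$ (ruling out more complicated filtrating regions), which requires the incompressibility and the atoroidality of the pieces.
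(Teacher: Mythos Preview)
Your decomposition step is essentially correct and matches the paper's Lemma~\ref{l.decomp}: Lyapunov function, Brunella--Fenley to get a transverse torus, hyperbolicity of the pieces to force it to be the JSJ torus, then invoke Theorem~\ref{t.claexp}. One small correction: showing the attractor is \emph{expanding} is not the obstacle you make it out to be. Any proper attractor of a $3$-dimensional Anosov flow contains the full $2$-dimensional unstable manifold of each of its points, so has topological dimension at least $2$; it cannot be $3$-dimensional (else the flow is transitive), so it is automatically a $2$-dimensional expanding attractor. No template or branched-surface analysis is needed here, and the classification on $N_0$ is exactly Theorem~\ref{t.claexp}, which you are right to cite as input.

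The genuine gap is your final paragraph. You write that once both pieces are identified with the DA model, the gluing map $\varphi$ ``is determined up to isotopy'' and ``this pins down $\varphi$ in the relevant class, and the resulting flow is $Z^0_t$ by construction.'' This is precisely what does \emph{not} follow by construction. Knowing that $\varphi$ is isotopic to the identity and that $\varphi(F^u)\pitchfork F^s$ does not tell you that two different such $\varphi$'s yield orbit-equivalent flows: the isotopy does not a priori respect the flow, and the orbit-equivalence class of the glued flow could in principle depend on the actual transverse position of $\varphi(F^u)$ relative to $F^s$, not just the isotopy class of $\varphi$. The entire content of the paper's Section~\ref{s.nonAno} (Claim~\ref{cl.final}) is to prove that it does not: one lifts to the infinite cyclic cover of $N_i$, builds a leaf-preserving homeomorphism $\widetilde h_1$ on the boundary that intertwines the two transverse pairs (Lemma~\ref{l.ch1}), and then extends it to an orbit-preserving self-map of $\widetilde N_1$ by flowing with a carefully chosen time-reparametrization governed by a bounded ``height function'' $\widetilde g_1$ (Proposition~\ref{p.cg1}, Lemma~\ref{l.key}). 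Continuity on the attractor requires the s-arch argument of Lemma~\ref{l.samesach} and Proposition~\ref{p.cH1}. None of this is automatic, and your proposal contains no mechanism for producing the conjugacy. You have located the difficulty in the wrong place: the hard part is not recognizing the pieces, it is matching the gluings.
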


To prove Theorem \ref{t.claAno}, it is crucial
to classify expanding attractors on $N_0$.

There are several reasons to highlight the study of expanding attractors.
First, every non-transitive Anosov flow admits at least one expanding attractor as its basic set, by Smale's spectral decomposition. The local dynamical property of expanding attractor reflects some global information of non-transitive
Anosov flow.  Moreover, similar to the case of Anosov flow, expanding attractor is interesting in itself due to   its  geometric and topological
properties among its filtrating neighborhood. For instance:
 \begin{enumerate}
   \item the stable foliations of the attractors  is a special class of taut foliations and each attractor itself is an essential lamination in the sense of Gabai-Ortel \cite{GO}, and moreover,
the stable foliation is transverse to the unstable lamination, which provides rich
information about the topology of the background manifold;
   \item Ghrist  showed in his remarkable paper \cite{Ghr} that  the periodic orbits in $(N_0, Y_0^t, \Lambda_0)$
   contain any knot or link type. \footnote{Birman and Williams \cite{BW} firstly studied
   the knot and link types in the union of the periodic orbits carried by the attractor $\Lambda_0$. Their breakthrough  is to construct a  branched surface with semi-flow, called by template nowadays,  so that the union of the periodic orbits carried by the template is in one-to-one correspondence to the union of the periodic orbits carried by $\Lambda_0$. Moreover, they also showed that the correspondence preserves the corresponding knot or link types. Ghrist \cite{Ghr} shows that  every knot or link type is realized in the union of the periodic orbits carried by the template.}
 \end{enumerate}

There are a few works devoted to  expanding attractors of flows on $3$-manifolds.
For instance, Christy \cite{Ch1}, \cite{Ch2} and Section $9$ of \cite{BBY}:
\begin{enumerate}
  \item  \cite{Ch1}  focuses on the existence of Birkhoff section for coherent expanding attractor;\footnote{One can find the definition of coherent expanding attractor in Section \ref{sss.epaflow}.}
  \item  \cite{Ch2} is devoted
to build a type of branched surfaces with semi-flows that carry expanding attractors;
  \item in Section $9$ of \cite{BBY}, the authors constructed many new examples, include many incoherent expanding attractors.
\end{enumerate}

In this paper, we will show that the figure eight knot complement space $N_0$ only carries
a unique expanding attractor, which is the DA attractor $\Lambda_0$.
This result will not only be a progress
in the study of expanding attractors, but also is the first step towards the proof of Theorem
\ref{t.claAno}. Let us represent it more precise here.

\begin{theorem}\label{t.claexp}
Up to topological equivalence, there is a unique
flow  with expanding attractor supported by
the figure eight knot space $N_0$, which
is the DA flow $Y_t^0$ restricted on $N_0$.
\end{theorem}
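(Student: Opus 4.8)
### Proof proposal for Theorem \ref{t.claexp}

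The plan is to start from an arbitrary flow $X_t$ on $N_0$ with an expanding attractor $\Lambda$ whose filtrating neighborhood is $N_0$, and to reconstruct the global picture well enough to recognize it as $Y_t^0$. First I would extract the soft topological data: since $N_0$ is the figure eight knot complement, it is hyperbolic and atoroidal, its boundary is a single torus, and the flow $X_t$ is transverse to $\partial N_0$ entering (the set $\partial N_0$ is the repelling boundary of the filtration). The stable foliation $W^s(\Lambda)$ extends to a genuine taut foliation $\cF^s$ of $N_0$ transverse to $\partial N_0$, and the unstable lamination $W^u(\Lambda)$ extends to an essential lamination carried by a branched surface, transverse to $\cF^s$. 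The first real step is to pin down the isotopy class of the trace of $\cF^s$ on $\partial N_0$: on a hyperbolic manifold with torus boundary a taut foliation transverse to the boundary induces a linear foliation of slope independent of isotopy, and I expect the normal-surface/incompressibility analysis of $N_0$ (only one Seifert slope, the fiber slope, is realized by an essential once-punctured-torus fibration) to force this slope to be the fibered slope. This identifies the return map structure: $N_0$ fibers over $S^1$ with once-punctured-torus fiber $S$, and $\Lambda$ is carried by this fibration.

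The second step is to analyze the first-return dynamics on a fiber. Cutting along the fiber surface $S$ (a once-punctured torus, i.e. a disk with two bands), the attractor $\Lambda \cap S$ is a one-dimensional expanding attractor of the return map $f\colon S \to S$ — a hyperbolic one-dimensional attractor of a surface homeomorphism. Here I would invoke the classification theory of one-dimensional attractors à la Williams and the Béguin–Bonatti–Yu / Christy branched-surface machinery: such an attractor is determined, up to topological equivalence of the ambient flow, by its (branched-surface) template together with the way the complementary region is a neighborhood of the repelling orbit $\gamma$. On a once-punctured torus the only way to have an expanding attractor whose complement in $N_0$ is a single once-punctured-torus times $S^1$ collar of a periodic orbit is the DA template on $S$: $f$ is (isotopic to, and then genuinely, after the attractor-classification normal form) the DA modification of the Anosov map $A=\left(\begin{smallmatrix}2&1\\1&1\end{smallmatrix}\right)$ near the origin. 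The remaining freedom is the conjugacy class of the monodromy in the mapping class group of $S$ fixing the puncture; the Alexander-polynomial / fiberedness constraints of the figure eight knot (its monodromy is, up to conjugacy and the hyperelliptic involution, the specified Anosov matrix) cut this down to $A$ and its inverse, which give topologically equivalent flows by reversing time or by the orientation-reversing symmetry of $N_0$.

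The third step is to upgrade "same combinatorial template and same complementary region" to "topologically equivalent flow". Here the tool is the standard fact (Williams for the attractor itself, plus a collaring/return-flow argument for the filtrating neighborhood) that a one-dimensional expanding attractor is topologically conjugate to the inverse limit of its defining branched surface's expanding map, and that the ambient flow on a filtrating neighborhood is determined by this conjugacy together with the isotopy class of the repelling orbit and the transverse gluing of the two boundary collars — all of which we have matched with the model $(N_0, Y_t^0, \Lambda_0)$. Concretely I would build an orbit-equivalence on the mapping tori by first conjugating the return maps $f \simeq f_0$ on $S$ rel the attractor and the puncture (using that DA maps with the same underlying Anosov data are conjugate rel the modified orbit), then suspending, then adjusting near $\partial N_0$ where both flows are just "product flow exiting the boundary", and finally checking the two orbit-equivalences glue.

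The main obstacle I expect is the second step, and within it the verification that the return map $f$ must be (conjugate to) a DA map of the specific matrix $A$, rather than merely \emph{some} pseudo-Anosov-derived surface map with a one-dimensional attractor. Two things have to be controlled simultaneously: (i) the combinatorics of the attractor's branched surface on the once-punctured torus — one must rule out more complicated branched surfaces carrying expanding attractors whose complement is still a single collared periodic orbit, which is where Christy's / Béguin–Bonatti–Yu's incoherent examples show the danger is real and the low genus and the "one repeller, filtrating neighborhood $= N_0$" hypotheses must be used decisively; and (ii) the identification of the monodromy via the figure eight knot's unique fibered structure, i.e. translating the dynamical normal form back into the mapping class group and matching it with $A$ up to the symmetries of $N_0$. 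I would expect to spend most of the technical work establishing (i), with (ii) following from a now-classical computation together with the uniqueness of the figure eight knot fibration.
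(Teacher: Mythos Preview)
The proposal has a substantial gap at the transition from your first step to your second. You write that once the boundary slope is identified as the fibered slope, ``this identifies the return map structure,'' and you then cut along a fiber $S$ and analyze the first-return map $f\colon S\to S$. But knowing the isotopy class of the trace of $\cF^s$ on $\partial N_0$ does \emph{not} produce a surface transverse to the flow $X_t$, let alone a global section. A priori the flow could wind in a complicated way relative to the fibration, with periodic orbits that are null-homologous in the fiber direction, and then there is no first-return map to speak of. Establishing that a fiber surface can be made transverse to $X_t$ --- equivalently, that every orbit eventually crosses each torus fiber with definite sign --- is in fact the crux of the argument and occupies most of Sections~\ref{s.filfol} and~\ref{s.claexp}. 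Your vague appeal to ``normal-surface/incompressibility analysis'' does not supply this; indeed the paper's own slope analysis (Propositions~\ref{p.noodd} and~\ref{p.no0}) already requires nontrivial facts specific to the figure eight knot (Poincar\'e--Hopf on a singular foliation, and that the double branched cover is $L(5,2)$), and even then it only feeds into the section argument rather than replacing it.

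The paper's route is: extend $X_t$ to an Axiom A flow on the sol-manifold $W_0$ by capping off with a repelling periodic orbit; show (Theorem~\ref{t.folext}) that the attractor lamination extends to a taut foliation on some finite cyclic cover $W_q$; use that taut foliation and a Brittenham--Roberts transversality result, together with Novikov's theorem applied on a doubled $T^3$, to prove that every periodic orbit of $\Lambda$ has nonzero algebraic intersection with a torus fiber (Proposition~\ref{l.openper}); upgrade this to all orbits via the shadowing lemma (Proposition~\ref{p.liminf}); and only then invoke Fuller's theorem (Theorem~\ref{t.Fuller}) to produce a genuine global section (Proposition~\ref{p.sect}). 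At that point the problem reduces to classifying Axiom A diffeomorphisms of $T^2$ with one expanding attractor and one repelling fixed point in the isotopy class of $A$ (Proposition~\ref{p.DAdiff}), which is roughly your step two; your steps two and three are then in the right spirit. But the existence of the section is the theorem's real content and cannot be read off from boundary data.
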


Now we sketch the proofs of Theorem  \ref{t.claAno} and Theorem \ref{t.claexp},
starting with the proof of the latter.
Every expanding attractor supported by $N_0$ is a $2$-dimensional lamination.
By filling a solid torus to $N_0$ in a standard way, one obtains the sol-manifold $W_0$.
In the mean time, the expanding attractor is a $2$-dimensional lamination on $W_0$.
The first step to prove Theorem \ref{t.claexp} is to show that
there exists a finite cover of $W_0$ so that the lift of the attractor can be extended
to a $C^0$ taut foliation on the covering manifold (Theorem \ref{t.folext}).
We will  prove Theorem \ref{t.folext} in two sides by discussing a type of boundary slopes implied by the attractor:
 \begin{enumerate}
   \item in some cases we will extend the attractor laminations to foliations up to finite cover by using a standard construction known to topologists as the
filling monkey saddle foliation;
   \item in the other cases we will find some obstructions by using Poincare-Hopf Theorem or some special
   topological information about $N_0$.
 \end{enumerate}
 All of these can be found in Section \ref{s.filfol}.

 The  flow in question on $N_0$ can be naturally extended to an Axiom A flow on $W_0$.
 The second step to prove Theorem \ref{t.claexp} is to show that
 there exists a global section for this Axiom A flow (Proposition \ref{p.sect}).
 The proof of Proposition \ref{p.sect} is carried out in the following three steps.
 \begin{enumerate}
   \item By using Theorem \ref{t.folext} and some topological information about the  attractors,
   we  show that every periodic orbit in the attractor essentially intersects with every torus fiber
   of the unique torus fibration structure on $W_0$ (Proposition \ref{l.openper}).
   \item By using the shadowing lemma and Proposition \ref{l.openper}, we can extend the intersection property:  every flowline of the Axiom flow essentially intersects with every torus fiber in $W_0$
       (Proposition \ref{p.liminf}).
   \item By a beautiful theorem of Fuller \cite{Fu} (Theorem \ref{t.Fuller}), we find the desired global section (Proposition \ref{p.sect}).
 \end{enumerate}

 Depending on Proposition \ref{p.sect}, the problem  is transformed
 to the classification of a special class of Axiom A diffeomorphisms on a torus.
 We will use a standard DA diffeomorphism as a model,
 and then show that each of the  diffeomorphism in question is topologically conjugate
 to this model by building conjugacy map (Proposition \ref{p.DAdiff}).

Now we outline the proof of Theorem \ref{t.claAno}, which occupies Section \ref{s.nonAno}.
By some standard techniques (more details can be found in the proof of Lemma \ref{l.decomp}), we get that the non-wandering set of this Anosov flow is the union of an expanding attractor and an expanding repeller so that each of them can be supported by $N_0$. Therefore,
 the proof of Theorem \ref{t.claAno}  can be naturally divided into two steps: local classification of expanding attractor
and global classification of non-transitive Anosov flow.
Since the first step is already done in Theorem \ref{t.claexp}, we are left to consider the
following question regarding the global topology of the underlying manifold:
how gluing maps between boundaries of two filtrating neighborhoods affect topological equivalent classes of the flow.
For this purpose, by using a non-transitive Anosov flow constructed by Franks-Williams  as a model,
we show that every non-transitive Anosov flow is topologically equivalent to this model
by building an orbit-preserving map. To build such a map, there are two points:
\begin{enumerate}
  \item lifting everything in question  to an infinite cyclic cover of $M_0$;
  \item constructing a conjugacy map by carefully perturbing along flowlines.
\end{enumerate}
The lifting technique provide us a convenient property about the uniqueness of  intersectional points between two leaves in the two induced $1$-foliations on the glued torus.
Note that some similar techniques to the first point  were already used by Barbot \cite{Bar} and Beguin and the second author of this paper \cite{BY1} in other cases. We think that some further developments of these ideas will be important for classifying Anosov flows on toroidal $3$-manifolds.

\subsection{Generalizations}\label{ss.Ge}
Note that  the stable foliation of the attractor $\Lambda_A$  induces a $1$-foliation which is the union of two Reeb annuli on $\partial N_A$. One can choose a compact leaf $c_v$ in the induced $1$-foliation and a circle $c_h$ which bounds a
once-punctured torus in $N_A$ so that
$c_v$ and $c_h$ intersect once. We can fix an orientation on $c_v$ and an orientation on $c_h$
so that they can be used to
coordinate both  $\partial N_1$ and $\partial N_2$. Then an orientation preserving homeomorphism
$\Psi: \partial N_2 \to \partial N_1$ is always isotopic to an automorphism $B\in SL(2, \ZZ)$
so that $(B\circ c_h, B\circ c_v)=(c_h,c_v)B$.
We denote by $M_B^A$ the $3$-manifold obtained by gluing two copies of $N_A$, say $N_1^A$ and $N_2^A$,
  through a gluing automorphism $B$. In particular, when $A=\left(
                                    \begin{array}{cc}
                                      2 & 1 \\
                                      1 & 1 \\
                                    \end{array}
                                  \right)$, we replace $M_B^A$ by $M_B$.
                                  Obviously, if $B=\left(
                                    \begin{array}{cc}
                                      1 & 0 \\
                                      0 & 1 \\
                                    \end{array}
                                  \right)$, $M_B = M_0$.
When
$B=\left(
                                    \begin{array}{cc}
                                      1 & 0 \\
                                      k & 1 \\
                                    \end{array}
                                  \right)
  $ or
  $B=\left(
                                    \begin{array}{cc}
                                      -1 & 0 \\
                                      k & -1 \\
                                    \end{array}
                                  \right)
  $ for some $k\in \ZZ$,   similar to  the construction of $Z_t^0$ on $M_0$, one can construct a non-transitive Anosov flow $Z_t^{B}$ on $M_B^A$. More details can be found in Section \ref{s.final}.

It is natural to ask for the following   generalizations of Theorem \ref{t.claexp} and  Theorem  \ref{t.claAno}:
\begin{question}\label{q.gen}
\begin{enumerate}
  \item How to classify non-transitive Anosov flows on $M_B$?
  \item How to classify expanding attractors on $N_A$ and non-transitive Anosov flows on $M_B^A$?
\end{enumerate}
\end{question}

   We have the following theorem which answers the first question.

\begin{theorem}\label{t.gFW1}
\begin{enumerate}
  \item If $B= \left(
                                    \begin{array}{cc}
                                      1 & 0 \\
                                      k & 1 \\
                                    \end{array}
                                  \right)
  $  or
  $B=\left(
                                    \begin{array}{cc}
                                      -1 & 0 \\
                                      k & -1 \\
                                    \end{array}
                                  \right)
  $ for some $k\in \ZZ$, then every non-transitive Anosov flow on  $M_B$  is topologically equivalent to  $Z_t^B$;
  \item otherwise, $M_B$  does not carry any non-transitive Anosov flow.
\end{enumerate}
\end{theorem}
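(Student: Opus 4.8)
The plan is to reduce both parts of the statement to the local classification already established, and then to read off the constraint on $B$ from the gluing torus. So let $\phi_t$ be a non-transitive Anosov flow on $M_B$. Exactly as in the proof of Theorem~\ref{t.claAno} (compare the discussion around Lemma~\ref{l.decomp}), Smale's spectral decomposition, together with the fact that $M_B$ is toroidal with JSJ decomposition consisting of exactly two copies of the hyperbolic manifold $N_0$, forces the non-wandering set of $\phi_t$ to be the union of one expanding attractor $\Lambda$ and one expanding repeller $\cR$. Their filtrating neighbourhoods $N_1\supset\Lambda$ and $N_2\supset\cR$ are the two JSJ pieces of $M_B$ — here one uses that the stable foliation of an expanding attractor is taut, so that $\partial N_1$, and likewise $\partial N_2$, is incompressible — and $M_B=N_1\cup_T N_2$ along an incompressible torus $T$. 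Applying Theorem~\ref{t.claexp} to $(N_1,\phi_t|_{N_1})$ and, after reversing time, to $(N_2,\phi_{-t}|_{N_2})$, each piece with its flow is topologically equivalent to the DA model $Y_t^0$ on $N_0$. Hence $\phi_t$ is topologically equivalent to a flow obtained by gluing two copies of $Y_t^0$ along their boundary tori by an orientation preserving homeomorphism $\Psi$, whose isotopy class in the $(c_h,c_v)$-coordinates agrees with $B$ up to the action of the finite group of flow-preserving self-homeomorphisms of $N_0$ on $H_1(\partial N_0;\ZZ)$; since that action fixes the slope of $c_v$, this ambiguity is harmless for what follows.

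Now for the constraint on $B$. On $T$ the attractor side $N_1$ carries the trace $\cF_1$ of the stable foliation of $\Lambda$, which is a union of two Reeb annuli whose closed leaves have slope $c_v$; the repeller side $N_2$ carries the trace $\cF_2$ of the unstable foliation of $\cR$, again a union of two Reeb annuli, whose closed leaves, transported to $T$, have slope $B(c_v)$. The glued object can be an Anosov flow only if its weak stable and weak unstable foliations extend across $T$, which forces $\cF_1$ and $\cF_2$ to be $C^0$-transverse on $T$; this is precisely the foliation transversality condition of Franks and Williams recalled in Section~\ref{s.nonAno}. Examining the tangent line fields of two such unions of Reeb annuli — their behaviour along and near the closed leaves, together with the spiralling directions prescribed by the DA attractor and repeller — one checks that a transverse pair can exist only when the two families of closed leaves are parallel, i.e. $B(c_v)=\pm c_v$, which happens exactly when $B$ is $\begin{pmatrix}1&0\\k&1\end{pmatrix}$ or $\begin{pmatrix}-1&0\\k&-1\end{pmatrix}$ for some $k\in\ZZ$. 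Combined with the previous paragraph, this proves part (2): if $B$ is not of this form, $M_B$ supports no non-transitive Anosov flow.

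For $B$ in one of the two families the flow $Z_t^B$ already exists by the construction of Section~\ref{s.final}, so part (1) reduces to the uniqueness assertion: any two flows obtained by gluing two copies of $Y_t^0$ along $T$ through homeomorphisms inducing the same $B$ and satisfying the transversality condition are topologically equivalent. This is carried out by adapting the argument of Section~\ref{s.nonAno}. One lifts both flows to the infinite cyclic cover of $M_B$ dual to a once-punctured-torus fibre; in this cover the flows ``unwrap'', so that any leaf of $\widetilde{\cF}_1$ and any leaf of $\widetilde{\cF}_2$ lying over $T$ meet in at most one point. Using this uniqueness of intersection points, one first matches the two induced $1$-foliations on $T$ by an orbit equivalence, then extends it through $N_1$ and $N_2$ by perturbing along flowlines, and finally descends it to $M_B$; since both glued flows are modelled on $Y_t^0$ on each piece, the output is an orbit equivalence with $Z_t^B$.

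The step I expect to be the main obstacle is this last uniqueness argument for arbitrary $k$: the extra Dehn twist changes the number and the cyclic arrangement of the intersection points of $\cF_1$ and $\cF_2$ on $T$, so one must verify that the interpolation along flowlines still closes up. The point is that the twist lifts to a translation in the cyclic cover, where the relevant combinatorics no longer sees $k$, but turning this into a genuine orbit equivalence — in particular controlling the perturbation uniformly near $T$ — is where the work lies. A secondary delicate point is the necessity direction of the second paragraph: one must rule out every $C^0$-transverse realisation of two unions of Reeb annuli with non-parallel closed leaves, not just the obvious ones, and confirm that non-transversality genuinely obstructs the Anosov property rather than merely one particular gluing recipe.
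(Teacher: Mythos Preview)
Your proposal is correct and matches the paper's route almost step for step: decompose as in Lemma~\ref{l.decomp}, apply Theorem~\ref{t.claexp} to each piece, obstruct the bad $B$'s by noting that a compact leaf of $\Psi(F^u)$ would have to cross a Reeb annulus of $F^s$ transversely (which is impossible), and for the admissible $B$'s rerun the lifting-and-interpolation argument of Section~\ref{s.nonAno}. Your caution about the Dehn twist is in fact the one place where the paper's ``every step still works'' is a bit glib --- for $k\neq 0$ the individual lifts $\Wi\Psi_i$ of Lemma~\ref{l.comm}(5) do not exist (since $B(c_h)=c_h+kc_v\notin\langle c_h\rangle$), but because $\Psi_2\circ\Psi_1^{-1}$ is isotopic to the identity one can rephrase the construction of $\Wi h_1$ entirely in terms of the lifted foliations $\Wi{\Psi_i(F^u)}$ on $\Wi T_1$ and the argument then goes through unchanged.
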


We remark that the proof of item $1$ of the theorem essentially is the same as the proof of Theorem \ref{t.claAno},
and  the proof of item $2$ basically depends on a transversality obstruction
for two special $1$-foliations on a torus.

Our discussion about the second  question  in Question \ref{q.gen} is more complicated. Nevertheless, except for
some very special cases, we also can answer the question similar to
Theorem \ref{t.claexp} and Theorem \ref{t.claAno}.  But the statements and comments have to be very subtle, so we
collect them in Section \ref{s.final}, in particular, Theorem \ref{t.claexpA}.

\subsection{Further questions}

The first natural question is:

\begin{question}\label{q.transitive}
Does there exist any transitive Anosov flow on $M_0$, $M_B$ or $M_B^A$?
If so, how to classify such flows?
\end{question}
 We remark that the question above is a special case of the following,
 more general question:
  classify transitive Anosov flows on toroidal $3$-manifolds
with two hyperbolic JSJ pieces. Some recent progresses for discussing similar questions on  graph manifolds can be found in \cite{BF1} and \cite{BF2}.

Unlike the phenomena implied in Theorem \ref{t.claexp},
for every $n\in \NN$,
there exists a hyperbolic $3$-manifold
$N_n$ which supports at least $n$ pairwise non-topologically equivalent expanding attractors, see \cite{BM} \footnote{In \cite{BM}, the authors show that for every $n\in \NN$, there exists a hyperbolic $3$-manifold
$M_n$ which carries at least $n$ pairwise non-topologically equivalent Anosov flows.
We remark that , by doing some DA surgeries in their examples, one can immediately get the similar result for expanding attractors}.  Beguin and the second author  of the paper  prove this result by a different method in a preparing work  \cite{BY2}.
 So, it is natural to ask:

\begin{question}\label{q.fiExp}
Does every compact orientable hyperbolic $3$-manifold with finitely many tori boundary only admit finitely many expanding attractors up to topological equivalence?
\end{question}

Note that the similar question for Anosov flows on $3$-manifolds is an  open question.

\section*{Acknowledgments}
This work started from a summer school at Peking University in 2018, organized by Shaobo Gan and Yi Shi.
We thank them for providing us the chance  to meet together.
This work was partially carried during some stay of Jiagang Yang in Tongji University and Bin Yu in Southern University of Science and Technology (SUSTech). We
thank these universities for the financial support for these visits.
In particular, Jiagang Yang was visiting SUSTech's Mathematics Department for a long-term academic leave when this paper was preparing. He is very grateful for the good working environment during his visit and for the support received from
the Department colleagues and authorities, in particular from Jana Rodriguez Hertz and Raul Ures.
We also would like to thank Sebastien Alvarez, Francois Beguin, Christian Bonatti, Youlin Li and Raul Ures   for their mathematical comments and suggestions. In particular, we thank Fan Yang for his numerous language corrections.
Jiagang Yang is supported by CNPq, FAPERJ, and PRONEX.
Bin Yu is supported by the National Natural Science Foundation of China (NSFC 11871374).

\section{Preliminaries}
\subsection{Expanding attractors}\label{ss.epa}
In this subsection, we will briefly introduce expanding attractors of diffeomorphisms on surfaces and
expanding attractors of flows on $3$-manifolds. The story about the study on these two topics are quite different:
there are  many  works from several different perspectives on the first topic, but there are  only a few tentative approaches
on the second.

\subsubsection{Expanding attractors of diffeomorphisms on surfaces}\label{sss.epadif}
The topological classification of expanding attractors on surfaces are well studied by several authors from different viewpoints
(see for instance, \cite{Ply},  \cite{Rua} and \cite{BLJ}).
In this section we adopt the viewpoint which exploits the relation
between expanding attractors and Pseudo-Anosov homeomorphisms.

Let $\Sigma_0$ be an orientable compact surface and $\Phi_0:\Sigma_0 \to \Sigma_0$ be  an embedding map so that:
\begin{enumerate}
  \item $\Phi_0 (\Sigma_0)$ is in the interior of $\Sigma_0$;
  \item the maximal invariant set of $\Phi_0$ on $\Sigma_0$ is an expanding attractor $\Lambda_a$, i.e. a transitive uniformally
  hyperbolic attractor so that the topological dimension of $\Lambda_a$ is $1$;
  \item every connected component of a stable manifold minus $\Lambda_a$ contains one end in $\Lambda_a$.
\end{enumerate}
We say that such a $\Sigma_0$ is a \emph{filtrating neighborhood} of $\Lambda_a$.
The union of the stable manifolds of $\Lambda_a$ induces a foliation $f^s$ on $\Sigma_0$.
A periodic orbit $\Gamma =\{P_1,\dots, P_k\}$ of $\Lambda_a$ is called a \emph{boundary periodic orbit}
if there exists a free separatrix of $W^s (P_i)$ ($i=1,\dots,k$). Each $P_i$ is called a \emph{boundary periodic point}.
A separatrix $W_+^s(P_i)$ of $W^s(P_i)$ is \emph{free} if there exists an open sub-arc of $W_+^s (P_i)$ with one end $P_i$ which
is disjoint to $\Lambda_a$.
We say that two boundary periodic points $P_1$ and $P_2$ are \emph{adjacent} if there are two unstable separatrices $W_+^u (P_1)$ and $W_-^u (P_2)$ of $P_1$ and $P_2$ so that they can be connected by a  stable segment whose interior is disjoint with $\Lambda_a$.
We say two boundary periodic orbits $P_1$ and $P_2$ are \emph{chain-adjacent} in the sense
that there exists some boundary periodic points $P^i$ ($i=1,\dots,n$) so that $P^1= P_1, P^n =P_2$ and $P^i$ and $P^{i+1}$ ($i=1,\dots,n-1$) are adjacent.

The following theorems are instrumental to understand expanding attractors on surfaces. They can be found in Ruas' thesis \cite{Rua} or Chapter $8$
of Bonatti-Langevin-Jeandenans \cite{BLJ} \footnote{Indeed, in Chapter $8$ of \cite{BLJ}, Bonatti and Jeandenans dealt with a different and more complicated class of hyperbolic basic sets, i.e,
saddle basic sets. But their idea also works in the case of expanding attractors.}

\begin{theorem} \label{t.boundsuf}
\begin{enumerate}
  \item There are finitely many boundary periodic orbits in $\Lambda_a$;
  \item the chain-adjacent relation is a equivalent relation among boundary periodic orbits;
  \item every chain-adjacent class $\{P^1, \dots, P^n \}$ is associated to a boundary connected component so that $P^i$ and $P^{i+1}$ ($i=1,\dots,n$) are adjacent where we assume that
      $P^{n+1}=P^1$
      (See Figure \ref{f.Cadjclass}).
\end{enumerate}
\end{theorem}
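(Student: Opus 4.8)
\smallskip
\noindent\emph{Sketch of the intended argument.} All three assertions concern the stable foliation $f^s$ of $\Lambda_a$ in a collar of $\partial\Sigma_0$, and the plan is to adapt to expanding attractors the analysis carried out for general saddle basic sets in Chapter~$8$ of \cite{BLJ} (see also \cite{Rua}); the expanding case is lighter because every unstable manifold of $\Phi_0$ is contained in $\Lambda_a$, so locally $\Lambda_a$ is a Cantor family of unstable arcs transverse to $f^s$. Three ingredients organize everything: a Markov partition $\mathcal R=\{R_1,\dots,R_N\}$ of $\Lambda_a$; the local product structure, which identifies $\Lambda_a$ near each of its points with the product of a Cantor set along $f^s$ and an interval in the unstable direction; and the connected components $U_1,\dots,U_r$ of $\Sigma_0\setminus\Lambda_a$. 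Hypothesis~$(3)$ on the filtrating neighborhood forces the free stable separatrices to escape to $\partial\Sigma_0$, so that each complementary region carrying one lies in a collar of a single component of $\partial\Sigma_0$, and its frontier in $\Lambda_a$ is built out of stable half-leaves of $f^s$ and unstable arcs. Since a boundary periodic point is exactly a periodic point one of whose stable separatrices lies on such a frontier, the three items become statements about the regions $U_j$ and their frontiers.

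For item~$(1)$ I would first prove $r<\infty$, most transparently through the classical description of the one-dimensional expanding attractor $\Lambda_a$ as the set of leaves carried by a branched $1$-manifold (train track) $\tau\subset\Sigma_0$: after collapsing the carrying neighborhood onto $\tau$, the components of $\Sigma_0\setminus\Lambda_a$ match the components of $\Sigma_0\setminus\tau$, and a finite graph embedded in a compact surface has only finitely many complementary regions, each bounded by finitely many edges and cusps. Transported back, this shows that each $\mathrm{Fr}(U_j)$ is a finite cyclic concatenation of stable half-leaves of $f^s$ and unstable arcs; in particular $\Sigma_0$ carries finitely many free stable separatrices, hence finitely many boundary periodic points, hence finitely many boundary periodic orbits. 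Item~$(2)$ is then purely formal: chain-adjacency is obtained from the adjacency relation by adjoining reflexivity (the chains of length one) and passing to the transitive closure (concatenation of chains), so it suffices to observe that adjacency is symmetric --- which is immediate from its definition, since interchanging $P_1$ and $P_2$, with the corresponding relabelling of their unstable separatrices, leaves the connecting stable segment and the condition on its interior unchanged (or, if one prefers, symmetry follows from the cyclic structure established in item~$(3)$). Hence chain-adjacency is an equivalence relation on the finite set of boundary periodic orbits.

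For item~$(3)$, fix a component $c$ of $\partial\Sigma_0$ and a complementary region $U=U_j$ in its collar. By item~$(1)$ the frontier $\mathrm{Fr}(U)$, with the cyclic order inherited from $c$, is a finite cyclic word alternating between a stable half-leaf of $f^s$ and an unstable arc. At each corner where a stable half-leaf meets an unstable arc there is a point of $\Lambda_a$ with a free stable separatrix, and examining the dynamics of $\Phi_0$ along $\mathrm{Fr}(U)$ via the Markov structure --- exactly as for saddle basic sets in \cite{BLJ} --- one checks that the unstable arc on that side of $\mathrm{Fr}(U)$ lies on the unstable separatrix of a unique boundary periodic point $P^i$. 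Reading once around $\mathrm{Fr}(U)$ then lists these points cyclically as $P^1,\dots,P^n$; by construction the unstable separatrices of $P^i$ and $P^{i+1}$ appearing consecutively on $\mathrm{Fr}(U)$ are joined by the interposed stable segment, whose interior lies in $U$ and hence misses $\Lambda_a$, so $P^i$ and $P^{i+1}$ are adjacent for all $i$ with $P^{n+1}=P^1$. Conversely, the free stable separatrix of any boundary periodic point escapes into exactly one $U_j$, hence into the collar of a single component of $\partial\Sigma_0$, so every chain-adjacency class is associated to a boundary component and each boundary periodic orbit is accounted for (see Figure~\ref{f.Cadjclass}). The two genuinely substantial steps, where the local product structure has to be married to the Markov partition (equivalently, to the train-track carrying), are the finiteness and finite-sidedness of the $U_j$ needed in item~$(1)$ and the verification in item~$(3)$ that one turn around a complementary region yields exactly one adjacency cycle; the remainder is bookkeeping.
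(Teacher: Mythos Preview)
The paper does not actually prove this theorem: it is stated as background and attributed to Ruas' thesis \cite{Rua} and Chapter~8 of Bonatti--Langevin--Jeandenans \cite{BLJ}, with the remark that the saddle-set arguments there adapt to the expanding-attractor case. Your sketch is precisely an outline of that adaptation --- using the local product structure, a Markov partition (equivalently a train-track carrying), and the analysis of complementary regions $U_j$ --- so it is entirely in line with what the paper defers to, and in fact supplies more detail than the paper itself.
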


 \begin{figure}[htp]
\begin{center}
  \includegraphics[totalheight=5cm]{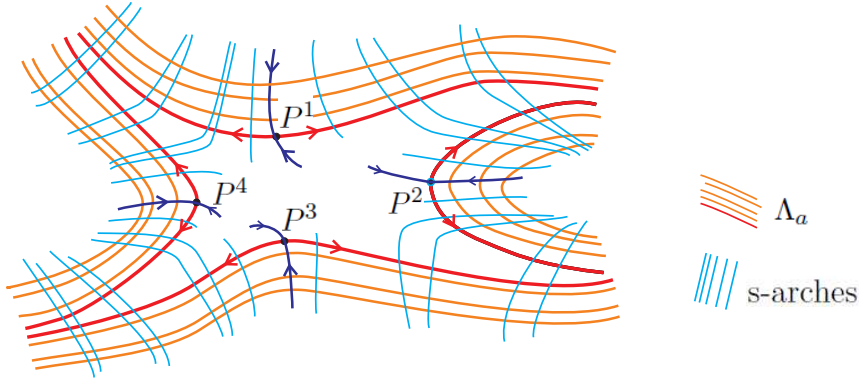}\\

  \caption{a chain-adjacent class with $4$ boundary periodic orbits}\label{f.Cadjclass}
\end{center}
\end{figure}

\begin{definition}\label{d.filA}
There exists a standard way to extend $\Phi_0$ on $\Sigma_0$ to an Axiom A diffeomorphism
$\overline{\Phi}_0$ on a closed surface $\Sigma$ so that:
\begin{enumerate}
  \item $\Sigma$ can be obtained  by filling disks to all of the boundary components of $\Sigma_0$;
  \item $\overline{\Phi}_0$ is an Axiom A diffeomorphism which is the extension of $\Phi_0$ so that each filling disk contains a unique repelling periodic point.
\end{enumerate}
It is easy to show that up to topological conjugacy, $\overline{\Phi}_0$ is unique. We call $ \overline{\Phi}_0$ the \emph{filling Axiom A diffeomorphism} of $\Phi_0$.
\end{definition}

We call an \emph{$s$-arch} for $\Phi_0$ every arc of stable manifold with both ends  in the attractor $\Lambda_a$, but whose interior is disjoint from $\Lambda_a$.

\begin{theorem}\label{t.sufconj}
There exists  a pseudo-Anosov homeomorphism $\Phi$ on $\Sigma$
and a continuous surjective map $\pi: \Sigma_0 \to \Sigma$  so that
$$\pi \circ \Phi_0= \Phi \circ \pi$$
and every preimage $\pi^{-1} (y)$ of $\pi$ at a point $y\in \Sigma$
is either an $s$-arch for $\Phi_0$
or the free stable separatrices of a chain-adjacent class of $\Phi_0$.
Moreover,  $\Phi$ and $\overline{\Phi}_0$ are isotopic.
\end{theorem}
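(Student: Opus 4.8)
To prove this, the plan is to build $(\Sigma,\Phi,\pi)$ by \emph{collapsing the stable foliation} of the expanding attractor. First I would introduce a decomposition $\cD$ of $\Sigma_0$ whose non-singleton elements are (i) every $s$-arch of $\Phi_0$, and (ii) for each chain-adjacent class of boundary periodic orbits, the union (and closure in $\Sigma_0$) of the free stable separatrices of that class; every other point of $\Sigma_0$ is a singleton of $\cD$. By Theorem~\ref{t.boundsuf} there are only finitely many elements of type (ii), and they are in bijection with a subset of the boundary components of $\Sigma_0$. I would then set $\Sigma:=\Sigma_0/\cD$ with quotient map $\pi$; by construction the preimage $\pi^{-1}(y)$ is a point, an $s$-arch, or the free stable separatrices of a chain-adjacent class, exactly as the statement requires.

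The first thing to prove is that $\Sigma$ is a closed surface. Here I would use the local product structure of $\Lambda_a$: around a point of $\Lambda_a$ that is not a boundary periodic point, a chart identifies $\Sigma_0$ with $[-1,1]^2$ so that the stable leaves are vertical and $\Lambda_a=[-1,1]\times C$ for a Cantor set $C$; collapsing the $s$-arches precisely fills in the gaps of $C$, and the local quotient is again a disk. Around an element of type (ii), the cyclic combinatorics of the chain-adjacent class provided by Theorem~\ref{t.boundsuf}(3) shows that the element is a cellular set whose collapse yields a $k$-pronged (monkey-saddle) cone point. Finally, compactness of $\Sigma_0$ together with the finiteness in Theorem~\ref{t.boundsuf}(1) is used to check that $\cD$ is upper semicontinuous, and Moore's decomposition theorem then gives that $\Sigma$ is a closed surface and that $\pi$ is a near-homeomorphism. \emph{Verifying upper semicontinuity of $\cD$ is the main obstacle}: one must rule out a sequence of $s$-arches accumulating onto a union of distinct stable leaves (which would make the quotient non-Hausdorff), and this is precisely where uniform hyperbolicity and the finiteness of boundary periodic orbits are indispensable.

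With $\Sigma$ in hand, the rest is more routine. Since $\Phi_0$ sends $s$-arches to $s$-arches and permutes chain-adjacent classes, it preserves $\cD$; because $\Phi_0$ is an embedding with $\Lambda_a=\bigcap_n\Phi_0^n(\Sigma_0)$ and $\pi(\Lambda_a)=\Sigma$, the induced map $\Phi:=\pi\circ\Phi_0\circ\pi^{-1}$ is a well-defined homeomorphism of $\Sigma$ satisfying $\pi\circ\Phi_0=\Phi\circ\pi$. The $\pi$-images of the unstable and stable laminations of $\Lambda_a$ complete to a transverse pair of singular foliations of $\Sigma$, singular exactly at the cone points coming from type (ii) elements, and $\Phi$ uniformly expands one and contracts the other with the hyperbolicity constants of $\Phi_0$; by Thurston's characterization this makes $\Phi$ pseudo-Anosov. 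Lastly, to identify $\Phi$ with the filling Axiom A diffeomorphism $\overline{\Phi}_0$ up to isotopy, I would extend the collapsing construction over the filling disks: this gives an identification of the closed surface carrying $\overline{\Phi}_0$ with $\Sigma$, and a near-homeomorphism semi-conjugating $\overline{\Phi}_0$ to $\Phi$ that is the identity outside arbitrarily small disks around the repelling periodic points. As it is the identity there and both maps permute the filling disks in the same way, an Alexander-trick argument on each disk produces an isotopy from $\overline{\Phi}_0$ to $\Phi$. (The triple $(\Sigma,\Phi,\pi)$ is then unique up to the obvious equivalence, since $\cD$ is intrinsically determined by the dynamics of $\Phi_0$.)
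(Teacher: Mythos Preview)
The paper does not prove this theorem; it quotes it as a preliminary fact from Ruas' thesis and from Chapter~8 of Bonatti--Langevin--Jeandenans. Your outline follows exactly the strategy of those references: collapse $s$-arches and free separatrices, identify the quotient as a closed surface, and read off the pseudo-Anosov structure from the hyperbolic splitting on $\Lambda_a$.

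There is, however, a genuine gap in your decomposition as written. A point $x\in\partial\Sigma_0$ not lying on a free separatrix sits on a stable arc with one end in $\Lambda_a$ and the other on $\partial\Sigma_0$; this arc is neither an $s$-arch (which needs both ends in $\Lambda_a$) nor part of your type-(ii) elements, so in your $\cD$ it is a union of singletons. A neighbourhood of $\partial\Sigma_0$ therefore survives the quotient unchanged, and $\Sigma_0/\cD$ is a surface \emph{with boundary}, not the closed $\Sigma$. Relatedly, your type-(ii) element for a chain-adjacent class is, as you describe it, a finite disjoint union of arcs --- not connected, hence not cellular --- so Moore's theorem does not apply to it directly.

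The clean fix, implicit in the references, is to carry out the collapse on the closed surface carrying $\overline{\Phi}_0$ rather than on $\Sigma_0$. There every complementary arc of $\Lambda_a$ in a stable leaf is a genuine $s$-arch, and the free separatrices of a chain-adjacent class all accumulate on the single repelling periodic point in the associated filling disk, so together with that point they form one connected cellular ``star''. The quotient is then the closed surface $\Sigma$, and restricting the quotient map to $\Sigma_0$ gives the $\pi$ of the statement (with the stated description of preimages read modulo this point). Your final paragraph already gestures at this; it should come first, not last.
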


\begin{remark}\label{r.sufconj}
\begin{enumerate}
  \item The semi-conjugate  map $\pi$  crashes every $s$-arch to a point, and
  also crashes the free separatrices according to the same chain-adjacent class of
  $\Phi_0$ to a point. We call this shrinking procedure an \emph{s-arches shrinking}.
  \item As a direct consequence of Theorem \ref{t.sufconj}, the preimage of a singular point of
  $\Phi$ is the union of the free stable separatrices of a chain-adjacent class of $\Phi_0$.
\end{enumerate}
\end{remark}

\subsubsection{Expanding attractors of flows on $3$-manifolds}\label{sss.epaflow}

Let $Y_t$ be a smooth flow on a compact orientable $3$-manifold $M$
and $N\subset M$ is a compact codimension-$0$ sub-manifold  with boundary so that
$Y_t$ is transverse inwards $N$ along $\partial N$ and
$\Lambda$ is the maximal invariant set
of $(N,Y_t \mid_N)$. If $\Lambda$ is a transitive uniformally hyperbolic attractor so that both  the topological dimension  and the unstable dimension
of $\Lambda$ are $2$, then we say that $\Lambda$ is an \emph{expanding attractor} and $N$ is a \emph{filtrating neighborhood} of $\Lambda$. Sometimes, we say that  $\Lambda$ is \emph{supported} by $N$.
Indeed, for $(M, Y_t, \Lambda)$, filtrating neighborhood of $\Lambda$ is unique up to topological equivalence  induced by the flow orbits.

The union of the stable manifolds of $\Lambda$ induces a foliation $\cF^s$ on $N$.
Since $Y_t$ is transverse inwards $N$ along $\partial N$, $\cF^s$ is transverse to $\partial N$ too.
Now we can define some similar notions  to the case of diffeomorphisms on surfaces.
A periodic $\gamma$ of $\Lambda$ is called a \emph{boundary periodic orbit} if
there is a free separatrix of $W^s (\gamma)$. We say a separatrix $W_+^s (\gamma)$ is \emph{free} if
there is an open neighborhood of $\gamma$ which is disjoint with $\Lambda$ except for $\gamma$.
We say that two boundary periodic orbits $\gamma_1$ and $\gamma_2$ are \emph{adjacent} if there are two unstable separatrices $W_+^u (\gamma_1)$ and $W_-^u (\gamma_2)$ of $\gamma_1$ and $\gamma_2$ so that they can be connected by a (strongly) stable segment whose interior is disjoint with $\Lambda$.
We say two boundary periodic orbits $\gamma_1$ and $\gamma_2$ are \emph{chain-adjacent} in the sense
that there exists some boundary periodic orbits $\gamma^i$ ($i=1,\dots,n$) so that $\gamma^1= \gamma_1, \gamma^n =\gamma_2$ and $\gamma^i$ and $\gamma^{i+1}$ ($i=1,\dots,n-1$) are adjacent.

The following theorem which is parallel to Theorem \ref{t.boundsuf} is the first important step to describe expanding attractors on $3$-manifolds,
which are essentially due to Christy \cite{Ch1}, Beguin-Bonatti \cite{BB}.\footnote{Indeed, \cite{BB}, Beguin and Bonatti studied a more complicated case: nontrivial $1$-dimensional saddle basic set. Nevertheless, their work also can imply Theorem \ref{t.boundpeorb}.}

\begin{theorem} \label{t.boundpeorb}
\begin{enumerate}
  \item There are finitely many boundary periodic orbits in $\Lambda$;
  \item the chain-adjacent relation is a equivalent relation among boundary periodic orbits;
  \item every  chain-adjacent class $\{\gamma^1, \dots, \gamma^n \}$ is associated to a boundary connected component so that $\gamma^i$ and $\gamma^{i+1}$ ($i=1,\dots,n$) are adjacent where we assume that
      $\gamma^{n+1}=\gamma^1$;
\end{enumerate}
\end{theorem}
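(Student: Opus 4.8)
The plan is to deduce all three statements from the two-dimensional picture established in Theorem \ref{t.boundsuf}, exploiting the product structure that the flow puts on a filtrating neighborhood $N$. The first step is to cut $N$ open along a well-chosen transverse surface. More precisely, one picks a global cross-section-type surface, or at least a family of disjoint embedded surfaces transverse to $Y_t$ that hit every orbit of $\Lambda$; intersecting $\Lambda$ with such a surface $\Sigma_0$ produces a compact surface together with a first-return map $\Phi_0$, whose maximal invariant set is a one-dimensional expanding attractor $\Lambda_a=\Lambda\cap\Sigma_0$. One must check that $(\Sigma_0,\Phi_0)$ satisfies the three hypotheses preceding Theorem \ref{t.boundsuf}: items (1) and (2) follow because $\Lambda$ is a transitive hyperbolic attractor of topological dimension $2$ with $2$-dimensional unstable manifolds, so its slice is a transitive hyperbolic attractor of topological dimension $1$ with $1$-dimensional unstable bundle; item (3), concerning ends of stable manifolds, is inherited from the corresponding property in the definition of filtrating neighborhood in \S\ref{sss.epaflow}. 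Here the mild subtlety is that in general an expanding attractor of a flow need not admit a genuine global section; the honest way around this is to work locally, i.e.\ to observe that every boundary periodic orbit of the flow meets $\Sigma_0$ in a periodic orbit of $\Phi_0$, and that being a boundary (resp.\ adjacent, resp.\ chain-adjacent) periodic orbit for $Y_t$ translates verbatim into being a boundary (resp.\ adjacent, resp.\ chain-adjacent) periodic point for $\Phi_0$, because a free separatrix of $W^s(\gamma)$, a strong-stable connecting segment, etc., all restrict to the corresponding objects on $\Sigma_0$ and conversely suspend back up.

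Granting this dictionary, items (1) and (2) are immediate: the periodic orbits of $Y_t$ in $\Lambda$ are in bijection with the $\Phi_0$-periodic points in $\Lambda_a$ compatibly with all the adjacency notions, so finiteness of boundary periodic orbits and the equivalence-relation property for chain-adjacency transport directly from Theorem \ref{t.boundsuf}(1)--(2). For item (3) I would again push the surface statement up: by Theorem \ref{t.boundsuf}(3) each chain-adjacent class of $\Phi_0$ is cyclically ordered $P^1,\dots,P^n$ with $P^i$ adjacent to $P^{i+1}$ and is associated with a boundary component $c$ of $\Sigma_0$. The $Y_t$-saturation of $c$ (equivalently, the suspension of $c$ under the first-return map, re-assembled inside $N$) is then a boundary component of $N$, and it is exactly the boundary component picked out by the corresponding chain-adjacent class $\{\gamma^1,\dots,\gamma^n\}$ of $Y_t$; the cyclic adjacency $\gamma^i\leftrightarrow\gamma^{i+1}$ with $\gamma^{n+1}=\gamma^1$ is the suspension of the cyclic adjacency on the $P^i$.

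The step I expect to be the main obstacle is making the reduction-to-a-surface rigorous without assuming a global section: one has to argue that the relevant \emph{local} data near the boundary periodic orbits — the free separatrices, the strong-stable connecting segments whose interiors avoid $\Lambda$, and the induced foliation $\cF^s$ near $\partial N$ — are faithfully captured by a transverse surface, and that no boundary component of $N$ is "missed." The cleanest route is probably to invoke the results of Christy \cite{Ch1} and Beguin--Bonatti \cite{BB} directly for the $3$-dimensional statement (as the footnote to the theorem indicates), using the surface picture above only as the organizing principle; concretely, one takes a small flow-box neighborhood of each boundary periodic orbit, identifies the free-separatrix/adjacency combinatorics there with the planar model of Figure \ref{f.Cadjclass}, and then glues these local models along $\cF^s$-leaves to recover the global chain-adjacent classes and their associated boundary components. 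Once the bijection between the flow data and the surface data is set up carefully, the three conclusions are formal consequences of Theorem \ref{t.boundsuf}.
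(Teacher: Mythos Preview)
The paper does not prove this theorem at all: it is stated as a known result and attributed to Christy \cite{Ch1} and B\'eguin--Bonatti \cite{BB}, with the footnote remarking that \cite{BB} treats the more general saddle case. So in the end your proposal and the paper agree, since your ``cleanest route'' is precisely to invoke those references directly.

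That said, your primary strategy --- reducing to Theorem \ref{t.boundsuf} via a transverse section --- has a genuine gap that you yourself flag but underestimate. Expanding attractors of $3$-dimensional flows need not admit a global cross-section, and this is not a mild subtlety: the entire Section \ref{s.claexp} of the paper is devoted to proving that, in the very special case of $N_0$, such a section exists (Proposition \ref{p.sect}), and the argument requires the foliation-extension machinery of Section \ref{s.filfol}, shadowing, and Fuller's theorem. So you cannot assume a global $\Sigma_0$ with first-return map $\Phi_0$ as input to Theorem \ref{t.boundsuf}; that would be circular in the context of this paper. Your fallback of working ``locally'' with flow-box neighborhoods of boundary periodic orbits is the right instinct and is essentially how Christy and B\'eguin--Bonatti proceed, but as written it is not a proof: you have not explained why there are only finitely many boundary periodic orbits to begin with (item (1) is where the real content lies, and it does not follow from local flow-box pictures alone), nor why the local adjacency models glue up to give exactly one chain-adjacent class per boundary torus. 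For the purposes of this paper the correct move is simply to cite the references, as the authors do.
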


Recall that $\cF^s$ is transverse to $\partial N$, then $F^s = \cF^s \cap \partial N$  is a $1$-foliation. By Poincare-Hopf theorem, every connected component of $\partial N$ is homeomorphic to either a torus or a Klein bottle.
Note that $M$ is orientable, then every connected component of $\partial N$ should be homeomorphic to a torus.
 Each compact leaf of $F^s$ is the intersection of the free separatrix of a boundary periodic orbit and $\partial N$. Moreover,  there always exist compact leaves in every connected component $T$ of $\partial N$ and
 the  number of the compact leaves of $F^s$ on $T$ is equivalent to the number of boundary periodic orbits which are associated to $T$. Then $F^s$ on $T$ is the union of finitely many Reeb
annuli and fundamental non-Reeb annuli. Here, a \emph{fundamental non-Reeb annulus} is an annulus endowed with a $1$-foliation so that,
  \begin{enumerate}
    \item the foliation  only contains two compact leaves whose union is the boundary of the annulus;
    \item  every non-compact leaf is asymptotic to the two compact leaves in different directions.
  \end{enumerate}
  Since $F^s = \cF^s \cap \partial N$, for every boundary compact leaf $c\in F^s$, the holonomy of $F^s$ along $c$ is contractable in the same directions. As a combinatorial consequence, the number of fundamental non-Reeb annuli of $f^s$ on $T$ is even.
We say the expanding attractor $\Lambda$ is \emph{coherent} if $F^s$ on every boundary torus is the union
of finitely many Reeb annuli. Equivalently and more intrinsically, every two adjacent boundary periodic orbits are co-oriented in the path closure of $N-\Lambda$.
Otherwise, $\Lambda$ is called an \emph{incoherent} attractor.



In the introduction, we have roughly introduced a class of expanding attractors which plays a fundamental role in this paper. Let us reconstruct them with more descriptions.
Starting with the suspension Anosov flow on the sol-manifold
$W_A=T^2 \times [0,1]/ (x,1)\sim (A(x),0)$ induced by  the vector field $(0, \frac{\partial}{\partial t})$, then we do a DA-surgery  on a small tubular neighborhood of the periodic orbit associated to the origin in $T^2$, one obtain a new Axiom A flow $Y_t^A$  on $W_A$  with one expanding attractor $\Lambda_A$ and one isolated repeller $\gamma_A$.
Indeed, we can cut a  small tubular neighborhood $U(\gamma_A)$ of
$\gamma_A$ so that the the path closure of $W_A -U(\gamma_A)$, named by $N_A$, satisfies the following conditions:
 \begin{enumerate}
   \item it is a filtrating neighborhood  of $\Lambda_A$;
   \item $\Sigma_t= N_0 \cap T^2 \times \{t\}$  is a filtrating neighborhood of the attractor $\Lambda \cap \Sigma_t$ for the first return map of $Y_t$ on $\Sigma_t$.
 \end{enumerate}
Note that $\Sigma_t =\Sigma_{t+1}$ for every $t\in \RR$.
We collect some further properties about $(N_A, Y_t^A)$ here without proofs, since they can be easily checked using the descriptions above.

\begin{proposition}\label{p.pYt}
\begin{enumerate}
  \item $\{\Sigma_t\}$ induces a once-punctured torus fibration structure on $N_A$.
  \item We can define a projection map $P_1: N_A \to \SS^1$  by $P_1 (x,t)=t$ so that $P_1^{-1}(t)=\Sigma_t$.
  \item The stable foliation of $\Lambda_A$, $\cF^s$ is transverse to $\partial N_A$. This property implies that $\cF^s \cap \partial N_A$ is a $1$-foliation, named by $F^s$.
  \item $F^s$ is    composed of two Reeb annuli so that the holonomy of each compact leaf is either attracting or repelling, and  holonomy repelling direction of a compact leaf is coherent to the orientation of a boundary periodic orbit. (Figure \ref{f.DAatt})
  \item The maximal invariant set $\Lambda_A$ of $(N_A,Y_t^A)$  is a coherent expanding attractor.
\end{enumerate}
\end{proposition}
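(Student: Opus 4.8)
The plan is to read all five items directly off the explicit DA construction of $(N_A,Y^A_t)$; none of them needs anything beyond Theorem~\ref{t.boundpeorb} and the soft facts recalled at the beginning of Section~\ref{sss.epaflow}. For (1)–(2), first I would note that $W_A=T^2\times[0,1]/(x,1)\sim(A(x),0)$ is a torus bundle over the circle with monodromy $A$ and fibers $T^2\times\{t\}$, and that $0\in T^2$ being fixed by $A$, the orbit $\gamma_A$ of the suspension flow through it is transverse to every fiber and meets each fiber exactly once. Hence one may take the tubular neighborhood $U(\gamma_A)$ fibered over the circle, of the form $\bigcup_t(D_t\times\{t\})$ with $D_t$ a small disk around $0$ in $T^2\times\{t\}$; then $\Sigma_t=N_A\cap(T^2\times\{t\})=(T^2\setminus D_t)\times\{t\}$ is a once-punctured torus, and the restriction of the bundle projection exhibits $N_A$ as a once-punctured torus bundle. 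The map $P_1(x,t)=t$ is precisely this projection: it is well defined because the gluing identifies $t=1$ with $t=0$, and $P_1^{-1}(t)=\Sigma_t$ and $\Sigma_t=\Sigma_{t+1}$ are immediate from the same picture.

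For (3), the leaves of $\cF^s$ are the weak stable manifolds $W^s(x)$, $x\in\Lambda_A$; each is a union of flow orbits, so the flow vector field is tangent to $\cF^s$ at every point. On the other hand $Y^A_t$ is transverse to $\partial N_A$, since along $\partial N_A=\partial U(\gamma_A)$ the flow exits the solid torus $U(\gamma_A)$ surrounding the repelling orbit $\gamma_A$. If $\cF^s$ were tangent to $\partial N_A$ at some point $p$, then the flow direction at $p$ would lie in $T_p\partial N_A$, a contradiction. Hence $\cF^s$ is transverse to $\partial N_A$ and $F^s:=\cF^s\cap\partial N_A$ is a $1$-dimensional foliation, which is just the instance for $N_A$ of the general discussion.

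For (4)–(5), transversally to $\gamma_A$ the first return map of $Y^A_t$ is a surface DA map with a source at the origin; along the stable eigenaxis of $A$ it has two hyperbolic fixed points $q_+,q_-$, one on each side of $0$ (two distinct points, since $A$ has positive eigenvalues), each a saddle whose stable separatrix toward $0$ is free. Their suspensions $\delta_+,\delta_-$ are the boundary periodic orbits of $\Lambda_A$; by Theorem~\ref{t.boundpeorb} they form one chain-adjacent class associated to the single boundary torus $\partial N_A$, so $\delta_+$ and $\delta_-$ are adjacent. The closures of the two free stable separatrices meet $\partial N_A$ in two disjoint essential circles, the compact leaves of $F^s$, cutting $\partial N_A$ into two annuli; because the transverse dynamics contracts uniformly toward $\Lambda_A$ while $\gamma_A$ pushes outward inside $U(\gamma_A)$, the weak stable manifolds of non-boundary points of $\Lambda_A$ spiral onto \emph{both} compact leaves inside each complementary annulus, which is exactly the defining property of a Reeb annulus. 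Reading the one-dimensional transverse contraction rate along each compact leaf off the DA normal form yields the attracting-or-repelling dichotomy for its holonomy and matches the repelling direction against the flow-orientation of the corresponding $\delta_\pm$, giving (4). Finally $\delta_+$ and $\delta_-$ are both oriented by the ambient $+t$-direction of the suspension, hence co-oriented in the path closure of $N_A\setminus\Lambda_A$; equivalently, no fundamental non-Reeb annulus occurs, so by the characterization of coherence in Section~\ref{sss.epaflow}, $\Lambda_A$ is a coherent expanding attractor, which is (5).

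The part hidden behind ``easily checked'' is item~(4): one must write the DA surgery near $\gamma_A$ in a sufficiently explicit normal form to be sure that each complementary annulus of $F^s$ is a genuine Reeb annulus rather than a fundamental non-Reeb one, and to compute the holonomy of the two compact leaves together with its sign relative to the orientations of $\delta_\pm$. Everything else is bookkeeping once the fibered tubular neighborhood is fixed, so this local computation is the step on which I would concentrate.
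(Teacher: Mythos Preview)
Your proposal is correct and is precisely the kind of verification the paper has in mind: the paper does not prove Proposition~\ref{p.pYt} at all, but introduces it with the sentence ``We collect some further properties about $(N_A, Y_t^A)$ here without proofs, since they can be easily checked using the descriptions above.'' Your write-up is exactly that check---reading each item off the explicit DA construction, the fibered tubular neighborhood of $\gamma_A$, and the general facts from Section~\ref{sss.epaflow} and Theorem~\ref{t.boundpeorb}---so there is nothing to compare against beyond noting that you have supplied the details the authors chose to omit.
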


\begin{figure}[htp]
\begin{center}
  \includegraphics[totalheight=7.5cm]{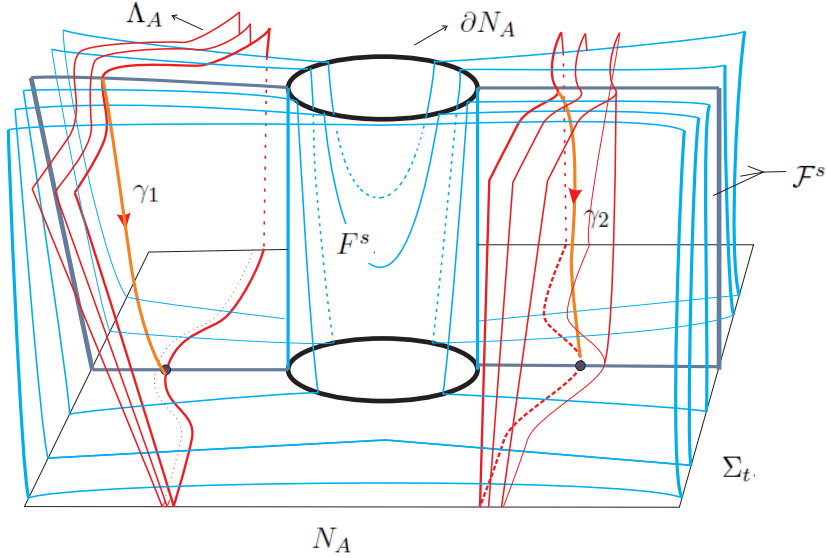}\\

  \caption{$(N_A, Y_t^A)$ with two boundary periodic orbits $\gamma_1$ and $\gamma_2$}\label{f.DAatt}
\end{center}
\end{figure}

\subsection{Transverse surface}
A transverse surfaces of
a $3$-dimensional Anosov flow share some very nice properties (see Proposition \ref{p.BrFe}). They were first proved by Brunella \cite{Br} and Fenley \cite{Fen2}
independently. Note that their strategies \footnote{for instance, the proof of Lemma 2 in \cite{Br}} can be routinely used to prove the same properties for expanding attractors of $3$-dimensional flows.
We collect them  as follows.

\begin{proposition}\label{p.BrFe}
Let $\Sigma$ be a transverse surface of an Anosov flow  on a closed orientable $3$-manifold  or
an expanding attractor  supported by a compact orientable $3$-manifold.
Then we have:
\begin{enumerate}
  \item $\Sigma$ is homeomorphic to a torus;
  \item if $T_1$ and $T_2$ are two transverse tori, then
they can be isotopic along the flowlines of the corresponding flow.
\end{enumerate}
\end{proposition}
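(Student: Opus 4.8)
The plan is to adapt Brunella's and Fenley's arguments verbatim, the key point being that the crucial input they use — the existence of transverse contracting/expanding one-dimensional foliations through which flowlines move monotonically — is available both for an Anosov flow and for an expanding attractor on its filtrating neighborhood. For part (1), suppose first that $\Sigma$ is a closed transverse surface inside a closed orientable $3$-manifold carrying an Anosov flow $Y_t$. The strong stable and strong unstable foliations $\cF^{ss}$, $\cF^{uu}$ intersect $\Sigma$ in two transverse one-dimensional singular-free foliations, which by Poincar\'e--Hopf forces $\chi(\Sigma)=0$, so $\Sigma$ is a torus or a Klein bottle; orientability of the ambient manifold together with co-orientability of the weak foliations rules out the Klein bottle. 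The only real work is to exclude the sphere a priori and to see that the two induced foliations are genuinely nonsingular: a tangency of $\Sigma$ with $\cF^{ss}$ would contradict transversality of $\Sigma$ to the flow at that point, since $Y_t$-flowboxes are products of flow direction with a weak-stable disk. For the expanding-attractor case, one replaces $\cF^{ss},\cF^{uu}$ by the two transverse one-dimensional foliations that $\cF^s$ and the unstable lamination cut out on $\Sigma$ (using that $\Sigma$ sits in the interior of a filtrating neighborhood $N$ where these laminations are honest foliations near $\Sigma$, as in Section \ref{ss.epa}); the same Poincar\'e--Hopf count applies.

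For part (2), let $T_1$ and $T_2$ be two transverse tori. The strategy is the standard "flow them into each other" argument. First I would isotope $T_2$, keeping it transverse to the flow, so that it meets $T_1$ transversally (in the surface sense) in a finite union of circles; since both are incompressible tori (each carries a nonsingular foliation, hence is $\pi_1$-injective by Novikov-type reasoning, or more directly because a compressing disk would produce a flow tangency) one may further remove, by innermost-disk surgeries compatible with transversality to the flow, all intersection circles that bound disks on one side, so that $T_1\cap T_2$ consists of essential circles — or is empty. If $T_1\cap T_2=\emptyset$, then $T_1$ and $T_2$ cobound a region $R$ with $\partial R = T_1\cup T_2$, and inside $R$ the flow has no closed orbit and no recurrence transverse to $T_1$ (a flowline entering through $T_1$ must leave through $T_2$ and vice versa, because on a transverse torus every orbit crosses in one direction); a compactness/flowbox argument then shows $R$ is a product $T^2\times[0,1]$ with the flow transverse to the $T^2\times\{s\}$, giving the desired isotopy along flowlines. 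If $T_1\cap T_2$ is a nonempty union of essential circles, one cuts along these circles into annular pieces and runs the same product argument on each bigon/annulus region, then reassembles.

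The main obstacle I expect is not the Euler-characteristic bookkeeping but the step that produces the product structure on the region between the two tori: one has to rule out that a flowline wanders forever inside $R$ without crossing $T_2$, i.e. that $\omega$-limit sets stay in the interior. For the Anosov case this is where one invokes that the chain-recurrent set of the flow restricted to a region bounded by transverse tori is empty (any nonwandering point would force, via hyperbolicity and shadowing, a closed orbit or a homoclinic orbit hitting the transverse boundary the wrong number of times). For the expanding-attractor case the analogous statement is even easier: $R$ is disjoint from $\Lambda$ because $\Lambda$ is the maximal invariant set and $R$ lies outside (or is swept off) any filtrating neighborhood, so the flow on $\overline R$ is gradient-like and the product structure follows from a standard Lyapunov-function argument. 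Once this "no trapped orbits'' fact is in hand, the rest is the routine flowbox compactness argument, and I would write it in the expanding-attractor language since the Anosov case then follows by applying it to the filtrating neighborhoods of the attractor and repeller in the spectral decomposition.
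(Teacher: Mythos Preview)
The paper does not give its own proof of this proposition; it cites Brunella \cite{Br} and Fenley \cite{Fen2} for the Anosov case and remarks (in the footnote) that their strategies---specifically the proof of Lemma~2 in \cite{Br}---carry over routinely to expanding attractors. So there is no detailed argument in the paper to compare against.

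Your treatment of part~(1) is correct and is the standard argument. One remark: the stable foliation $\cF^s$ alone (which in the attractor case is a genuine foliation on all of $N$) already yields a nonsingular $1$-foliation on $\Sigma$, giving $\chi(\Sigma)=0$; invoking the unstable lamination is unnecessary and would require extra care since it is defined only on $\Lambda$.

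For part~(2) your outline has the right shape, but two points need flagging. First, your justification of incompressibility does not work: carrying a nonsingular $1$-foliation does not make a torus $\pi_1$-injective (any torus in $S^3$ carries one), and a compressing disk has no reason to be transverse to the flow. The actual Brunella--Fenley argument uses that the weak stable and unstable foliations are taut (Reebless) and applies a Novikov-type argument to the \emph{ambient} $2$-dimensional foliation, not to the induced $1$-foliation on $\Sigma$; this is precisely the nontrivial content the paper is citing. Second, your claim in the expanding-attractor case that the region $R$ between $T_1$ and $T_2$ is disjoint from $\Lambda$ is false in general: if $T_1$ and $T_2$ are two fiber tori of a DA suspension (both global sections), the region between them meets $\Lambda$ in every slice. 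What you actually need is only that no orbit is \emph{trapped} in $R$, which is weaker; but establishing this in general brings you back to incompressibility and the structure of the ambient foliations, i.e.\ back to the first point.
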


\subsection{Several strategies to construct foliations}
In this subsection, we will introduce several strategies which will allow us to construct
new foliations from old one. 
Most parts in this subsection  can be found in  Chapter $4$ of Calegari's book \cite{Cal} or Gabai \cite{Ga}.

\subsubsection{Filling Reeb annulus}
Suppose $N$ is a  compact $3$-manifold with boundary and $\cF$
is a $2$-foliation of $N$ so that there is an annulus $A\subset \partial N$
satisfying that $\cF\cap A$ is a Reeb annulus. Then one can glue a solid torus $V$
with half-Reeb foliations (see Figure \ref{f.halfreeb} as an illustration) to $N$ along $A$, so that
the two foliations are pinched to a foliation or a branching foliation
 on the  glued manifold $N'\cong N$. Note that after this surgery, compare to the foliation on $N$,
 there is one more tangent annulus in the boundary of $N'$.

\begin{figure}[htp]
\begin{center}
  \includegraphics[totalheight=6.3cm]{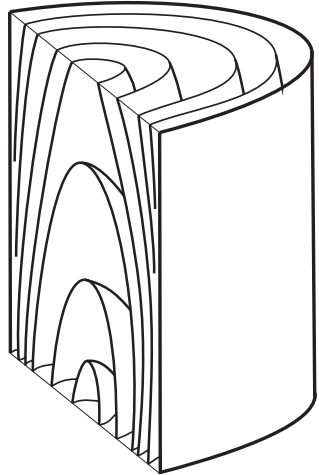}\\

  \caption{half Reeb foliations in a solid torus (the top and the bottom  are glued together)}\label{f.halfreeb}
\end{center}
\end{figure}

\subsubsection{Blowing up leaves and filling gaps}\label{sss.blup}
Suppose $N$ is a compact $3$-manifold with boundary and $\cF$
is a $2$-foliation on $N$. Assume that $l$ is a noncompact leaf of $\cF$ so that $l$ is transverse to $\partial N$ and there exists a circle $c \subset l \cap \partial N$. We open $l$ to $N(l)$ where $N(l)$ is an $I$-bundle over $l$ and $\partial N(l)=l_1 \cup A_0 \cup l_2$, where $A_0$ is an annulus with boundary the union of two circles $c_1$ and $c_2$.
 Then  $\cF$ is opened along $l$ to a lamination $\cL$ \footnote{A reader can read Example 4.14 of \cite{Cal} to find a serious explanation about why $\cL$ can be a lamination.} so that the path closure of $N-\cL$ is $N(l)$.

Let $F$ be a $1$- foliation on $A_0$ which is transverse to the $I$-bundle on $A_0$. For instance, one can choose $(A_0, F)$ to be a fundamental non-Reeb annulus.
We will explain that  $F$ always can be extended to a $2$-foliation on $N(l)$  transverse to the $I$-bundle.
Naturally,
the  extended $2$-foliation and the lamination $\Lambda$ are pinched to a $2$-foliation on $N$.

Since  $F$ is transverse to the $I$-bundle on $A_0$,  $F$ can be described by a holonomy homeomorphism $h$ on $[0,1]$ along $c$ so that $h(0)=0$ and $h(1)=1$.
Note that a $2$-foliation $\cF^2$ in $N(l)$ transverse to the I-bundle
can be described by a holonomy homomorphism:
$$\varphi: \pi_1(l)\to Homeo([0,1])$$
Therefore, to extend $F$  to a $2$-foliation on $N(l)$ transverse to the $I$-bundle
is equivalent to build a holonomy homomorphism $\varphi:\pi_1 (l) \to Homeo ([0,1])$ so that $\varphi([c])=h$.
But this always can be done since $l$ is non-compact, and  then $[c]$ is a generator of $\pi_1 (l)$, which is isomorphic to a free group. One also see a similar argument in Example 4.22 of \cite{Cal}.



\subsubsection{Surgeries with saddles}
We are going to introduce monkey saddle foliations on a solid torus, mainly from
Example 4.18, Example 4.19 and Example 4.22 of \cite{Cal}.

 We start at
$V=A_0\times [0,1]=S^1 \times [0,1] \times [0,1]$ with foliation $\cF_1= F_1 \times [0,1]$ where $F_1$ is a fundamental non-Reeb annulus on $A_0$. Recall that a fundamental non-Reeb annulus is an annulus endowed with
a $1$-foliation so that the foliation is non-Reeb, and there are exactly two compact leaves in the foliation
whose union is the boundary of the annulus.
Let $c_0=S^1 \times (0,0)$. Since $F_1$ is a fundamental non-Reeb annulus, we can assume that $c_0$ is transverse to $\cF_1$.
$\partial V$ is the union of four annuli $A_1, B_1, A_2, B_2$ where $A_1$ and $A_2$ are two  annuli
so that each of them is endowed with a fundamental non-Reeb annulus and $B_1$ and $B_2$ are two annuli leaves of $\cF_1$.
We call $\cF_1$ an \emph{index-$1$ monkey saddle foliation} on the solid torus $V$.

   For every even number $p>2$, by doing a $\frac{p}{2}$-branched cover on $V=A_0\times [0,1]$ along $C$ and lifting $\cF_1$ under this branched cover,
   we get a new foliation $\cF_{\frac{p}{2}}$ on a new solid torus
   which is called an \emph{index-$\frac{p}{2}$  monkey saddle foliation}.
    For simplicity, we still call by $V$ the new
   solid torus. Now $\partial V$ is the union of $2p$ annuli $A_1, B_1, A_2, B_2, \dots, A_p, B_p$ where each $A_i$ is endowed with a fundamental non-Reeb annulus  and each $B_j$  is an annulus leaf of $\cF_{\frac{p}{2}}$.
Indeed, $\cF_3$ is the canonical monkey saddle foliation. (Figure \ref{f.monkeysaddle})

 \begin{figure}[htp]
\begin{center}
  \includegraphics[totalheight=5.5cm]{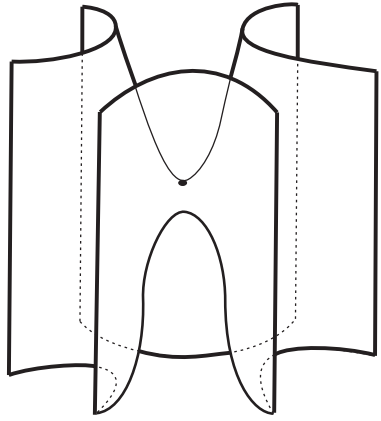}\\

  \caption{a leaf of monkey saddle foliation}\label{f.monkeysaddle}
\end{center}
\end{figure}



\subsection{Boundary branching foliation and filling monkey saddle foliations}
Now we define a type of pseudo-foliation on a compact orientable $3$-manifold $N$ with a torus boundary $T$.
We say that $\cF_b$ a \emph{boundary  branching foliation}  if $T$ is the union of $p$ annuli $B^1, \dots, B^p$
so that
\begin{enumerate}
  \item $\partial B^i = a_{i-1} \cup a_i$ ($i=1,\dots,p$ and $a_0= a_p$) and $B^i \cap B^{i+1}=a_i$ where $a_i$ is  a simple closed curve in $T$;
  \item $T$ (the union of all of $B^i$) is in the unique  branching leaf $l_b$ of $\cF_b$ and $a_i$ is  a branching cusp circle;
  \item  the path closure of
$l_b - T$ is the union of $p$  immerse surfaces $l_h^1, \dots, l_h^p$, which are called by \emph{half-leaves} of $\cF_b$.
\end{enumerate}
(see Figure \ref{f.branchfol})
We say a boundary  branching foliation $\cF_b$ is \emph{special} if both of every half-leaf of $\cF_b$ and every other leaf of $\cF_b$
 are non-compact.
Similar to open boundary leaves, we can open every half-leaf in $l_b - T$ to get a lamination $\cL_{\cF_b}$ in $N$.
 Certainly, $\cF_b$ and $\cL_{\cF_b}$ are closely related, for instance:
 \begin{enumerate}
   \item $l_b$ is split to $p$ leaves without branching;
   \item every other leaf of $\cF_b$ is still corresponding to a leaf in $\cL_{\cF_b}$.
 \end{enumerate}
 The half-leaf $l_h^i$ ($i\in \{1,\dots,p\}$) is opened to $N(l_h^i)$ where $N(l_h^i)$ is an $I$-bundle over $l_h^i$ and $\partial N(l_h^i)=l_h^{i,+} \cup A^i \cup l_h^{i,-}$ where $A^i$ is an annulus with boundary the union of two circles $a_i^+$ and $a_i^-$ which is the splitting of  $a_i$.
 Now $T$ becomes the union of $2p$ annuli $A^1, B^1, A^2, B^2, \dots, A^p, B^p$.

 \begin{figure}[htp]
\begin{center}
  \includegraphics[totalheight=6.5cm]{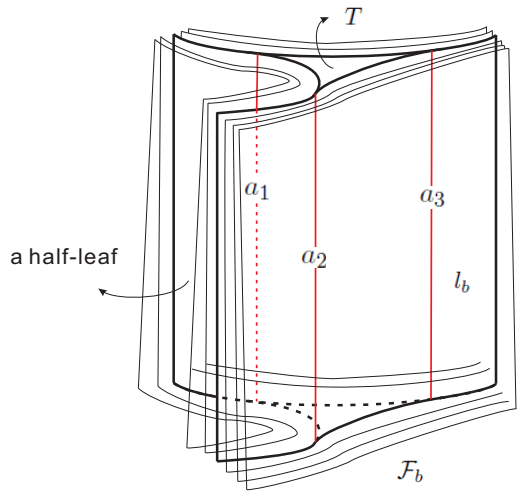}\\

  \caption{boundary  branching foliation $\cF_b$ close to $T$}\label{f.branchfol}
\end{center}
\end{figure}

Let $c$ be a circle in $T$ which intersects with $c_1$ once. Set $V$ is a solid torus with a meridian $c_m$ which bounds a disk in $V$.
We glue $N$ and $V$ together by a gluing map $\varphi: \partial V \to \partial N$ so that $c=c_m$ and we denote by $W$ the glued manifold $N\cup_T V$.
The following is  the main proposition of this subsection.

 \begin{proposition}\label{p.extfol}
 Let  $\cF_b$ be a special boundary branching foliation on $N$ with even number ($p$) of cusp circles  and $\cL_{\cF_b}$ be
 the lamination on $N$ by opening all of the half-leaves of $\cF_b$.
 Then there exists a taut foliation $\cF'$ on $W$ so that  $\cL_{\cF_b}$ is
 a sub-foliation of $\cF'$.
 \end{proposition}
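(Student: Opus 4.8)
The plan is to realize $\cF'$ by filling in, one block at a time, the pieces of $W=N\cup_T V$ complementary to the lamination $\cL_{\cF_b}$ with foliated models that agree along their shared boundary annuli, and then to check tautness. First I would describe the picture near $T$: after opening the $p$ half-leaves, the regions of $N$ complementary to $\cL_{\cF_b}$ that touch $T$ are exactly the $p$ pairwise disjoint $I$-bundles $N(l_h^1),\dots,N(l_h^p)$, with $N(l_h^i)$ meeting $\partial N$ along its lateral annulus $A^i$, and $T$ is cut into a cyclic union of $2p$ annuli $A^1,B^1,\dots,A^p,B^p$ in which each $B^i$ is a piece of a leaf of $\cL_{\cF_b}$. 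Since $\cF_b$ is special, every $l_h^i$ is noncompact, so $\pi_1(l_h^i)$ is free and the cusp-circle class is a generator; this is precisely the input required by the gap-filling below.

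Next I would fit the solid torus. As $p$ is even, the monkey saddle construction recalled in the previous subsection supplies the index-$\frac{p}{2}$ monkey saddle foliation $\cF_{\frac{p}{2}}$ on a solid torus $V_0$ whose boundary is a cyclic union of $2p$ annuli carrying alternately a fundamental non-Reeb $1$-foliation transverse to $\cF_{\frac{p}{2}}$ and a tangent annular leaf. I would fix once and for all a gluing homeomorphism $g\colon\partial V_0\to T$ matching this $2p$-annulus pattern with $A^1,B^1,\dots,A^p,B^p$; since the meridian of $V_0$ meets each boundary annulus once transversally to its core, and the curve $c$ does the same on $T$ (it meets one cusp circle, hence all of them, once), the map $g$ may further be chosen to send the meridian of $V_0$ to $c$. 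As a Dehn filling of $N$ depends up to homeomorphism only on the filling slope, gluing $V_0$ to $N$ along $g$ reproduces the given manifold $W$.

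With $g$ fixed, I would build the $N$-side foliation $\cF_N$ so that its trace on $T$ equals $g_*(\cF_{\frac{p}{2}}|_{\partial V_0})$: on each $A^i$ take the fundamental non-Reeb $1$-foliation $g_*(\cF_{\frac{p}{2}}|_{A_i})$ and, by the blowing-up/filling extension lemma of Section~\ref{sss.blup} (applicable because $l_h^i$ is noncompact), extend it across $N(l_h^i)$ to a $2$-foliation transverse to the $I$-bundle that agrees with $\cL_{\cF_b}$ along the two copies $l_h^{i,\pm}$; pinching these $p$ foliations with $\cL_{\cF_b}$ produces a foliation $\cF_N$ of $N$ containing $\cL_{\cF_b}$ as a sub-foliation and tangent along each $B^i$. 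Gluing $(N,\cF_N)$ to $(V_0,\cF_{\frac{p}{2}})$ via $g$, the two transverse foliations agree along the $A$-annuli and the tangent leaves agree along the $B$-annuli, so no branching is created and we obtain an honest topological foliation $\cF'$ on $W$ with $\cL_{\cF_b}$ as a sub-foliation.

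Finally I would check tautness. Because $\cF_b$ is special, all leaves of $\cL_{\cF_b}$ and all other leaves of $\cF_b$ are noncompact; the $2$-foliations added over the $I$-bundles are covers of the noncompact surfaces $l_h^i$, hence have noncompact leaves; the monkey saddle foliation has noncompact leaves; and the annuli $B^i$, compact leaves of $\cF_{\frac{p}{2}}$ inside $V_0$, are absorbed in $W$ into noncompact leaves. Hence $\cF'$ has no compact leaf, in particular no Reeb component, and tautness follows from the standard Reebless-implies-taut results (see \cite{Cal}); alternatively, a closed transversal through each leaf can be assembled from the $I$-fibres of the gaps and a meridian disc of $V$ transverse to $\cF'$. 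I expect the genuinely delicate step to be the fitting of the monkey saddle block: one must realize by a single map $g$ both the matching of boundary $1$-foliations (the alternating $2p$ annuli --- which is exactly why $p$ must be even and why the \emph{special} hypothesis is needed for the co-orientations to close up around $T$) and the correct meridian slope $c$, so that the ambient manifold is the prescribed $W$ rather than another Dehn filling of $N$; the co-orientation bookkeeping and the meridian computation in the branched-cover model of $V_0$ are the technical core.
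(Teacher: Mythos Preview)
Your proposal is correct and follows essentially the same route as the paper: glue in the index-$\tfrac{p}{2}$ monkey saddle foliated solid torus so that its tangent annuli match the $B^i$'s and its transverse (fundamental non-Reeb) annuli sit over the $A^i$'s, then use the gap-filling of Section~\ref{sss.blup} on each $N(l_h^i)$ (valid since $l_h^i$ is noncompact), and finally verify tautness by checking that every leaf of $\cF'$ is noncompact. Two small remarks: the tautness step in the paper is exactly ``no compact leaf'' rather than a Reebless-implies-taut statement (the latter is false in general), and the r\^ole of the \emph{special} hypothesis is precisely to guarantee noncompactness of the half-leaves (for the $I$-bundle extension) and of all leaves (for tautness), not a co-orientation closing condition.
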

 \begin{proof}
 Since $p$ is  even, then by the constructions of monkey saddle foliations introduced before, we can fill an index-$\frac{p}{2}$
 monkey saddle foliation $\cF_{\frac{p}{2}}$ in $V$ so that
  $A_1,  A_2, \dots, A_p$ are $p$ annuli
which are endowed with $p$ fundamental non-Reeb annuli and $B_1,  B_2, \dots, B_p$ are $p$ annuli components of $\cF_{\frac{p}{2}}$.
We can control the gluing map $\varphi$ between $N$ and $V$ carefully so that $\varphi(A_i)=A^i$ and $\varphi(B_i)=B^i$.

Since $B_i$ ($i=1,\dots,p$) is part of a leaf of   $\cL_{\cF_b}$
and $B^i$ also is a leaf of $\cF_{\frac{p}{2}}$,
then
$\cL_{\cF_b}$ and $\cF_{\frac{p}{2}}$ can be glued together.
To extend them to a foliation on $W$, we need to extend them to the interior of every $N(l_h^i)$.
Note that in $\partial N(l_h^i)$, each of $l_h^{i,+}$ and  $l_h^{i,-}$ is part of a leaf in $\cL_{\cF_b}$
and $F^i= A_i \cap \cF_{\frac{p}{2}}$ is a fundamental non-Reeb annulus which is a suspension foliation on
$A_i$. Also notice that $N(l_h^i)$ is an $I$-bundle over a non-compact surface $l_h^i$,
Then one can use the same technique in Section \ref{sss.blup} to extend the union of  $\cL_{\cF_b}$ and $\cF_{\frac{p}{2}}$
to a foliation $\cF'$ on $W$ so that $\cF'$ restricted to $N(l_h^i)$ is transverse to the $I$-bundle.

Obviously, $\cL_{\cF_b}$ is a sub-foliation of $\cF'$. We are left to check that $\cF'$ is a taut foliation. We only need to show that every leaf of $\cF'$ is non-compact.
First recall that since $\cF_b$ is special, then
$l_b - T$ is the union of $p$ half-leaves and both of every half-leaf of $\cF_b$ and every another leaf of $\cF_b$
 are non-compact. Therefore, every leaf of $\cL_{\cF_b}$ is non-compact.
 Let $l$ be a leaf in $\cF'$ which contains a monkey saddle leaf. One can observe that $l$ transversely intersects with $A_1$ so that $l\cap A_1$ is non-compact. Therefore, $l$ is non-compact.
 In summary, every leaf of $\cF'$ is non-compact.
 \end{proof}

\section{Filling foliations}\label{s.filfol}
Recall that $N_0$ is the compact $3$-manifold which is the path closure of the figure eight knot complement space,
$N_0$ also can be obtained by cutting an open solid torus from the sol-manifold $W_0$.
In this section we
assume that $Y_t$ is a smooth flow  on $N_0$ so that
the maximal invariant set of $(N_0,Y_t)$ is a hyperbolic expanding attractor $\Lambda$. Suppose that $\cF^s$ is the stable foliation and $F^s=\cF^s \cap T$  contains $p$ compact leaves $c_1, \dots, c_p$
            so that the holonomy of every $c_i$ (with a suitable orientation) is contracting.
           Here we denote by $T$ the torus $\partial N_0$.
\begin{lemma}\label{l.tranfol}
There exists a $1$-foliation $H^u$ on $T$  which is the union of $p$ Reeb annuli and is transverse to $F^s$ on $T$.
\end{lemma}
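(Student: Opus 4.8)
The plan is to construct $H^u$ so that it is the ``dual'' Reeb picture to $F^s$ on the torus $T$. Recall that, since $\Lambda$ is an expanding attractor and $\cF^s$ is its stable foliation transverse to $\partial N_0 = T$, the induced $1$-foliation $F^s$ on $T$ has exactly $p$ compact leaves $c_1,\dots,c_p$, and between consecutive compact leaves $F^s$ is either a Reeb annulus or a fundamental non-Reeb annulus (this is the structure theory recalled after Theorem~\ref{t.boundpeorb}). By the holonomy hypothesis — the holonomy of each $c_i$ is contracting from one side — and the combinatorial parity consequence stated there (the number of fundamental non-Reeb annuli is even, and in fact the co-orientations match up so that each $c_i$ is a ``source-free'' compact leaf whose holonomy repelling direction is coherent with a boundary periodic orbit), I want first to reduce to the case where $F^s$ is a union of $p$ Reeb annuli $R_1^s,\dots,R_p^s$, each bounded by a pair among the $c_i$; the general (incoherent) case will then follow by the same local model applied annulus-by-annulus, since what is produced on each annulus only depends on the local picture near its boundary compact leaves.

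The main step is purely two-dimensional: on the closed annulus $R_i^s = c_i \times [0,1]$ carrying a Reeb foliation (two boundary circles as leaves, all interior leaves spiralling from one boundary to the other), I will exhibit a transverse $1$-foliation. Parametrize $R_i^s$ by $(\theta, s) \in S^1 \times [0,1]$ with the Reeb leaves being the graphs $s \mapsto$ const together with curves asymptotic to both boundary circles; concretely the Reeb foliation can be taken to be integral curves of a closed $1$-form with a prescribed blowup, and I pick $H^u\big|_{R_i^s}$ to be the Reeb foliation ``rotated by $\pi/2$'': two boundary circles $c_{i-1}, c_i$ as leaves again, interior leaves spiralling, but chosen so that at every interior point the tangent line of $H^u$ is not equal to the tangent line of $F^s$. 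The two compact leaves are common to both foliations, so transversality there must be checked separately — and this is where I use that the two foliations must agree on which circle is a leaf but can be made to differ in the ``speed'' of the interior spiralling. A clean way to guarantee transversality everywhere, including on the compact leaves, is to work in a blown-up model: replace each $c_i$ by an annular collar $c_i \times [-\varepsilon,\varepsilon]$, so that $F^s$ and $H^u$ each become honest (nonsingular) $1$-foliations by lines of a foliated $S^1\times\RR$, and then transversality is the statement that two explicit vector fields are everywhere linearly independent, which one arranges by choosing their ``slope functions'' to be strictly ordered. Finally one collapses the collars back.

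The hard part — or rather the part that needs the most care — will be matching up the local pictures along the compact leaves $c_1,\dots,c_p$ so that the $H^u\big|_{R_i^s}$ glue to a globally consistent $1$-foliation on all of $T$ that is genuinely Reeb (i.e.\ $p$ Reeb annuli, not some mixture) and genuinely transverse to $F^s$ including along the $c_i$. The point to watch is the \emph{co-orientation}: along $c_i$, the foliation $F^s$ has contracting holonomy on one side and (by the parity/coherence discussion) a controlled behaviour on the other; I must choose the Reeb annulus $R_i^u$ of $H^u$ adjacent to $c_i$ so that its spiralling direction is compatible on both sides, which forces the combinatorial pattern of the $R_j^u$ around $T$ to be determined once one makes an initial choice, and one checks there is no obstruction to closing up around the circle direction because $p$ compact leaves produce $p$ complementary annuli and each gets one Reeb foliation. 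Once the gluing is arranged, transversality is local and already handled by the blown-up model of the previous paragraph, so the proof concludes. I would also remark that the resulting $H^u$ is, up to isotopy on $T$, exactly the trace on $T$ of the unstable lamination of $\Lambda$, which is why the notation $H^u$ is natural — but for the statement of the lemma only the constructed transverse Reeb $1$-foliation is needed.
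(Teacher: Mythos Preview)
Your construction has a genuine gap: you place the compact leaves of $H^u$ at the \emph{same} circles $c_1,\dots,c_p$ that are already the compact leaves of $F^s$. If $c_i$ is a leaf of both $F^s$ and $H^u$, then along $c_i$ the two foliations are tangent, not transverse, and no ``blown-up model followed by collapsing'' repairs this --- after the collapse you are back to a shared leaf. Your own sentence ``the two compact leaves are common to both foliations, so transversality there must be checked separately'' is the red flag: there is nothing to check, transversality simply fails.

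The paper's construction avoids exactly this trap by putting the compact leaves of $H^u$ elsewhere. In each complementary annulus $A_i$ of $F^s$ (whether Reeb or fundamental non-Reeb --- no reduction to the coherent case is needed), one first chooses a circle $a_i$ in the interior of $A_i$ that is transverse to $F^s$. This $a_i$ becomes a compact leaf of $H^u$. Cutting $A_i$ along $a_i$ yields two half-annuli, and on each half one writes down explicit vector fields on $[0,1]\times\RR/\ZZ$: $F^s$ is tangent to $\cos(\tfrac{\pi}{2}x)\partial_x+\sin(\tfrac{\pi}{2}x)\partial_y$ and $H^u$ to $\sin(\tfrac{\pi}{2}x)\partial_x-\cos(\tfrac{\pi}{2}x)\partial_y$, which are everywhere linearly independent. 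Near $c_i$ (where $x=0$ or $x=1$) the foliation $H^u$ is \emph{vertical}, hence transverse to the compact leaf $c_i$ of $F^s$; near $a_i$ it is the compact leaf of $H^u$ and transverse to the non-compact leaves of $F^s$. The pieces glue along each $c_i$ because both sides of $H^u$ are vertical there, and the contracting holonomy of $F^s$ along every $c_i$ forces the glued $H^u$ to be a union of $p$ Reeb annuli (bounded by the $a_i$'s, not the $c_i$'s). Your final remark that $H^u$ is the trace of the unstable lamination is also off: the unstable lamination of an expanding attractor does not extend over $N_0\setminus\Lambda$, so there is no such trace on $T$.
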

\begin{proof}
Firstly, we construct $H^u$.  Set    the path closure of $T-c_1 \cup \dots \cup c_p$ is the union of $p$ annuli $A_1, \dots, A_p$
so that $\partial A_i =c_i \cup c_{i+1}$ ($c_{p+1}=c_1$).
Then every $F^s \mid_{A_i}$ ($i=1, \dots, p$) either is a Reeb annulus or a fundamental non-Reeb annulus.
In any case, we can choose a circle $a_i$ in the interior of $A_i$ so that
$a_i$ is transverse to $F^s$.  Suppose $a_i$ cuts $A_i$ into $A_i^+$ and $A_i^-$.
For simplicity, we set $B=A_i^+$ or $B=A_i^-$. Then $F^s \mid_B$  can be   coordinated by $[0,1]\times \RR^1$ by standard $\ZZ$-action so that $F^s\mid_B$
 is induced by the orbits of the vector field $\cos (\frac{1}{2}\pi x) \frac{\partial}{\partial x}+\sin (\frac{1}{2}\pi x)\frac{\partial}{\partial y}$
  under the $\ZZ$-action. Now we can build $H^u$ on $B$ so that $H^u \mid_B$ is induced by the orbits of the vector field
  $\sin (\frac{1}{2}\pi x) \frac{\partial}{\partial x}-\cos (\frac{1}{2}\pi x) \frac{\partial}{\partial y}$ under the $\ZZ$-action.

  Note that $H^u \mid_B$ is the union of $a_i$ and infinitely many non-compact leaves so that each non-compact leaf is asymptotic to
  $a_i$ and vertical to either $c_i$ or $c_{i+1}$.   Therefore, they can be glued together to form a foliation $H^u$ on $T$ which is transverse to
  $F^s$. Moreover, notice that the holonomy of $F^s$ along every $c_i$ (with a suitable orientation) is contracting.
  One can automatically check that $H^u$ is the union of $p$ Reeb annuli (see a concrete example in Figure \ref{f.FuFs}).
\end{proof}

\begin{remark}\label{r.nattho}
We remind readers that as illustrated in Figure \ref{f.FuFs}, the holonomy of a compact leaf in $H$ is not necessary to be  attracting or repelling.
\end{remark}

\begin{figure}[htp]
\begin{center}
  \includegraphics[totalheight=6cm]{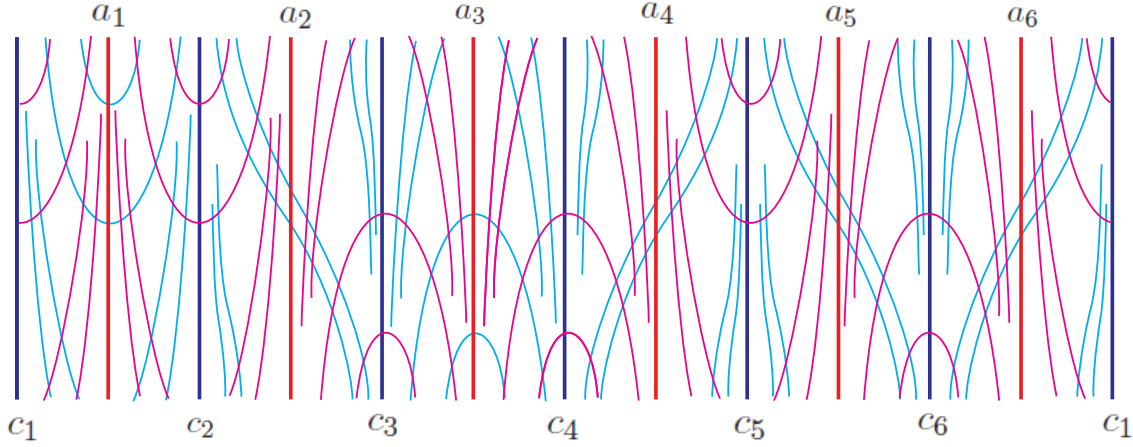}\\

  \caption{an example about $F^s$ (blue) and $H^u$ (red)}\label{f.FuFs}
\end{center}
\end{figure}

\begin{lemma}\label{l.uext}
Let $H^u$ be a $1$-foliation on $T$ which is transverse to $F^s$ and $\cL^u$ be the $2$-foliation
 induced by  the flowlines of the $1$-leafs of $H^u$  on $N_0-\Lambda$,
then
$\Lambda$ and $\cL^u$ form a $2$-foliation $\cF$ without any compact leaf on $N_0$.
\end{lemma}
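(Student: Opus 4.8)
The plan is to describe the complement $N_0\setminus\Lambda$ explicitly through the flow, recognise $\cL^u$ there as a product foliation, and then reattach $\Lambda$ using the transversality supplied by Lemma~\ref{l.tranfol}, following the gap‑filling mechanism of Section~\ref{sss.blup}. Accordingly I would first analyse $N_0\setminus\Lambda$. Since $Y_t$ is transverse inwards along $T=\partial N_0$, the manifold $N_0$ is positively invariant; and since $\Lambda$ is the maximal invariant set of $(N_0,Y_t)$, the backward orbit of any point of $N_0\setminus\Lambda$ must leave $N_0$ through $T$ in finite time, crossing $T$ exactly once. Hence $(x,t)\mapsto Y_t(x)$ is a homeomorphism from $T\times[0,\infty)$ onto $N_0\setminus\Lambda$ sending $T\times\{0\}$ to $\partial N_0$, and under this identification $\cL^u$ is the product foliation with leaves $\ell\times[0,\infty)$, $\ell$ a leaf of $H^u$, while $\cF^s$ restricted to $N_0\setminus\Lambda$ is the product foliation with leaves $L\times[0,\infty)$, $L$ a leaf of $F^s$. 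In particular $\cL^u$ is a genuine $C^0$ $2$‑foliation of the open manifold $N_0\setminus\Lambda$, transverse to $\partial N_0$ (where it restricts to $H^u$), and each of its leaves is non‑compact because $\ell\times[0,\infty)$ is not closed in $N_0$: its closure meets $\Lambda$. Moreover, since $H^u$ is transverse to $F^s$ on $T$ by Lemma~\ref{l.tranfol} and the flow carries the pair $(F^s,H^u)$ on $T$ to the pair $(\cF^s,\cL^u)$ inside the collar, the foliations $\cL^u$ and $\cF^s$ are transverse everywhere on $N_0\setminus\Lambda$.

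Next I would verify that $\cF=\Lambda\cup\cL^u$ is a $C^0$ foliation in a neighbourhood of $\Lambda$, i.e.\ that every point of $\Lambda$ admits a foliated chart for $\cF$. The transverse direction of $\cF^s$ is the strong stable direction, which is also the transverse direction of the lamination $\Lambda$, whose leaves are weak unstable manifolds. Since $\cL^u$ is transverse to $\cF^s$, its leaves are everywhere transverse to the strong stable direction, so as they are pushed forward by $Y_t$ — which contracts the strong stable direction and leaves the weak unstable direction fixed — they align, in the limit, with the weak unstable leaves, i.e.\ with the leaves of $\Lambda$; hence along a strong stable transversal the slices of $\cL^u$‑leaves are points filling up precisely the complementary intervals of the Cantor transversal of $\Lambda$. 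Away from the boundary periodic orbits this immediately produces product charts matching the lamination charts of $\Lambda$ with the product chart of $\cL^u$; near the finitely many boundary periodic orbits $\gamma_1,\dots,\gamma_p$ (Theorem~\ref{t.boundpeorb}), which carry the free separatrices of $\cF^s$ along which $N_0\setminus\Lambda$ accumulates onto leaves of $\Lambda$, one argues in flow‑box coordinates: the local model is an $I$‑bundle over a leaf whose side annulus carries the $1$‑foliation cut out by $\cL^u$, transverse to the $I$‑bundle (a fundamental non‑Reeb annulus on the free side, by the transversality arranged in Lemma~\ref{l.tranfol}), and, exactly as in Section~\ref{sss.blup} and Proposition~\ref{p.extfol}, this transverse $1$‑foliation fits with the local structure of $\Lambda$ to give the desired product chart. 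Assembling these with the product chart for $\cL^u$ on $N_0\setminus\Lambda$ yields a $C^0$ foliation $\cF$ of $N_0$ having $\Lambda$ as a sublamination.

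Finally, $\cF$ has no compact leaf: the leaves of $\cL^u$ are non‑compact by the first step, and every leaf of $\Lambda$ is a weak unstable manifold of a point of a transitive expanding attractor, hence non‑compact. The step I expect to be the main obstacle is the local picture near the free separatrices of $\gamma_1,\dots,\gamma_p$: one must check that the Reeb‑annulus foliation $H^u$, whose compact leaves need not have attracting or repelling holonomy (Remark~\ref{r.nattho}), meshes — after being flowed forward — with the contracting holonomy of $F^s$ along the $c_i$ into a single $C^0$ product chart, uniformly along the periodic orbit. This meshing is exactly the transversality of $H^u$ and $F^s$, and it is precisely what Lemma~\ref{l.tranfol} was arranged to provide.
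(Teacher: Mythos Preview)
Your argument is correct, and its core mechanism is the same as the paper's: the alignment step --- pushing the $\cL^u$-leaves forward by $Y_t$, using that the flow contracts the strong stable direction so that these surfaces $C^1$-converge to the weak unstable leaves of $\Lambda$ --- is precisely the standard $\lambda$-lemma (inclination lemma). The paper's proof consists of a single sentence invoking that lemma from \cite{PW} and nothing else. You have essentially rediscovered and unpacked the $\lambda$-lemma without naming it.

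The main difference is that you add a second layer of machinery near the boundary periodic orbits, appealing to the $I$-bundle/gap-filling technique of Section~\ref{sss.blup} and Proposition~\ref{p.extfol} to produce local product charts there. This is not needed: the $\lambda$-lemma already gives \emph{uniform} $C^1$-convergence of the flowed $\cL^u$-plaques to the unstable lamination, and this uniformity is exactly what furnishes foliation charts everywhere along $\Lambda$, including along the free separatrices of the $\gamma_i$. The holonomy subtlety you flag (Remark~\ref{r.nattho}) is likewise absorbed by the $\lambda$-lemma: all that matters is transversality of $H^u$ to $F^s$, not the holonomy type of the compact leaves of $H^u$. So your proof is sound but over-engineered; the paper's one-line citation covers the same ground.
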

\begin{proof}
This is a direct consequence of standard $\lambda$-lemma (\cite{PW}).
\end{proof}

 By Lemma \ref{l.tranfol} and Lemma \ref{l.uext}, we immediately have the following proposition.
\begin{proposition}\label{p.uext}
$\Lambda$ can  be extended to a foliation $\cH^u$  without any compact leaf on $N_0$ which is transverse to $T$ so that the induced $1$-foliation $\cH^u \cap T$ on $T$ is leaf-conjugate to $H^u$ in Lemma \ref{l.tranfol}.
\end{proposition}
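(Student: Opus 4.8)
The plan is to deduce Proposition \ref{p.uext} by simply chaining Lemma \ref{l.tranfol} and Lemma \ref{l.uext}, which between them already do all the work; what remains is to combine them and then verify the two extra assertions, namely transversality of the resulting foliation to $T$ and the identification of its trace on $T$. First I would invoke Lemma \ref{l.tranfol} to fix a $1$-foliation $H^u$ on $T=\partial N_0$ which is the union of $p$ Reeb annuli and is transverse to $F^s=\cF^s\cap T$; this $H^u$ will serve as the candidate boundary trace.

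Next I would feed this particular $H^u$ into Lemma \ref{l.uext}. Since $H^u$ is transverse to $F^s$, that lemma (via the $\lambda$-lemma of \cite{PW}) produces the $2$-foliation $\cL^u$ of $N_0\setminus\Lambda$ obtained by saturating the leaves of $H^u$ along the flowlines of $Y_t$, with the property that $\cL^u$ together with $\Lambda$ glue into a genuine $2$-foliation $\cF$ of $N_0$ having no compact leaf. I then set $\cH^u:=\cF$. By construction $\Lambda$ is a sublamination of $\cH^u$, so $\cH^u$ is an extension of $\Lambda$, and it has no compact leaf.

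Finally I would verify the two boundary statements. For transversality to $T$: in a collar of $T$ the attractor $\Lambda$ is disjoint from $T$ (it is the maximal invariant set lying in the interior of $N_0$, and forward flowlines issued from $T$ never return to $T$ since $N_0$ is filtrating), while $Y_t$ crosses $T$ transversally; hence near $T$ the foliation $\cH^u=\cL^u$ consists precisely of the flow-saturations of short arcs of leaves of $H^u$, each arc lying in $T$, so every leaf of $\cH^u$ meets $T$ transversally. For the trace: again by the very construction of $\cL^u$ one has $\cH^u\cap T=\cL^u\cap T=H^u$, which is leaf-conjugate (through the identity) to the $H^u$ furnished by Lemma \ref{l.tranfol}. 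I do not expect any genuine obstacle here: the substance is entirely carried by the two lemmas, and the only point needing a sentence of care is the observation that the $1$-foliation recovered on $T$ is exactly the union of $p$ Reeb annuli one started with, which is immediate because $\cL^u$ is built by saturating that very foliation.
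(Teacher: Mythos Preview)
Your proposal is correct and follows essentially the same approach as the paper, which simply states that the proposition follows immediately from Lemma~\ref{l.tranfol} and Lemma~\ref{l.uext}. You have merely made explicit the routine verifications of transversality to $T$ and identification of the boundary trace that the paper leaves implicit.
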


Since $H^u = \cH^u \cap T$ is the union of $p$ Reeb annuli,
we can fill the $p$ Reeb annuli to get  $\cH_b^u$ on a compact $3$-manifold $N'$ which is homeomorphic to
$N_0$. For simplicity, we still call $N'$ by $N_0$. $\cH_b^u$ has the following property.

\begin{proposition}\label{p.pbrfol}
  $\cH_b^u$ is a    special boundary branching foliation on $N_0$ with $p$ branching $a_1, \dots, a_p$.   Moreover, the attractor $\Lambda$ is a sub-foliation
  of $\cH_b^u$.
\end{proposition}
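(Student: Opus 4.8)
The plan is to unwind the definition of a special boundary branching foliation --- conditions (1)--(3) together with the noncompactness requirement --- check each of them for $\cH_b^u$, and then observe that $\Lambda$ is left untouched by the fillings. Recall from Proposition~\ref{p.uext} that $\cH^u$ is a foliation of $N_0$ with no compact leaf, transverse to $T$, extending $\Lambda$ (so $\Lambda$ is a union of leaves of $\cH^u$), and with $\cH^u\cap T$ leaf-conjugate to the $1$-foliation $H^u$ of Lemma~\ref{l.tranfol}; the latter is a union of $p$ Reeb annuli $R_1,\dots,R_p$ whose compact leaves are the $p$ circles $a_1,\dots,a_p$, indexed so that $\partial R_i=a_i\cup a_{i+1}$ (mod $p$). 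For each $i$ let $L_i$ be the leaf of $\cH^u$ meeting $T$ along $a_i$; as $\cH^u$ has no compact leaf, $L_i$ is noncompact. Recall also that $\cH_b^u$ is obtained by filling the Reeb annuli $R_1,\dots,R_p$ one at a time, each time gluing a half-Reeb solid torus to the current boundary along the relevant Reeb annulus.

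First, the boundary structure. Before any filling, $\cH^u$ is transverse to $T$, and filling one Reeb annulus replaces it by a tangent annulus while leaving the remainder of the boundary foliation unchanged; so after all $p$ fillings we obtain, on a manifold still homeomorphic to $N_0$, a branching foliation $\cH_b^u$ whose boundary torus $T$ is the union of $p$ tangent annuli $B^1,\dots,B^p$, where $B^i$ is the annulus created by filling $R_{i-1}$, so that $\partial B^i=a_{i-1}\cup a_i$ and $B^i\cap B^{i+1}=a_i$. This is condition (1). For condition (2), the key point is that all the $B^i$ lie in a single leaf. In the half-Reeb filling along $R_{i-1}$, the new tangent annulus $B^i$ is pinched along each of its two boundary circles to the ambient leaf through that circle --- along $a_{i-1}$ to $L_{i-1}$, along $a_i$ to $L_i$ --- the pinching creating a cusp there. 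Hence the leaf of $\cH_b^u$ carrying $B^i$ also contains $L_{i-1}$ and $L_i$; running this over all $i$ and using $B^i\cap B^{i+1}=a_i$, the tangent annuli and the leaves $L_i$ assemble into one connected leaf $l_b\supset T$ that branches exactly along $a_1,\dots,a_p$, the sheet peeling off at $a_i$ being $L_i$. Thus condition (2) holds with $p$ branching cusp circles, and condition (3) holds with half-leaves $l_h^i=L_i$.

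It remains to check that $\cH_b^u$ is special and that $\Lambda$ persists. Each half-leaf $l_h^i=L_i$ is a leaf of $\cH^u$, hence noncompact by Proposition~\ref{p.uext}; and every leaf of $\cH_b^u$ other than $l_b$ coincides, away from the filling solid tori, with a leaf of $\cH^u$, while the half-Reeb pieces contribute no compact leaf. So no half-leaf and no other leaf of $\cH_b^u$ is compact, i.e.\ $\cH_b^u$ is special. Finally, $\Lambda$ is a union of leaves of $\cH^u$ lying in the interior of $N_0$, whereas all the fillings take place inside a collar neighborhood of $T$ disjoint from $\Lambda$; hence $\Lambda$ is unaffected and remains a sub-foliation of $\cH_b^u$.

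The main obstacle is condition (2): one has to read off from the explicit local model of the half-Reeb component --- as used in the filling-Reeb-annulus construction --- that filling $R_{i-1}$ really does weld the new tangent annulus $B^i$ to the leaves $L_{i-1}$ and $L_i$ along honest cusp circles, so that the $p$ tangent annuli fuse into a single branching leaf rather than into several. Once that local picture is in hand, conditions (1) and (3), the specialness, and the invariance of $\Lambda$ are all routine bookkeeping.
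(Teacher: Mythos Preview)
Your proof is correct and follows essentially the same approach as the paper's. The paper's argument is much terser: it simply records that every leaf of $\cH^u$ is noncompact and that the leaf $l_i$ of $\cH^u$ through $a_i$ is homeomorphic to $S^1\times[0,+\infty)$ (hence meets $T$ only in $a_i$ and is noncompact), then declares the boundary-branching-foliation axioms ``automatic'' and notes $\Lambda$ is disjoint from $T$; you have unpacked that automatic step by checking conditions (1)--(3) and specialness explicitly, which is fine. The one datum the paper states and you leave implicit is $L_i\cap T=a_i$ (equivalently $L_i\cong S^1\times[0,+\infty)$), which is what guarantees the half-leaves are exactly the $L_i$; this follows immediately from the construction in Lemma~\ref{l.uext} since orbits from $T$ flow inward and never return.
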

\begin{proof}
By Lemma \ref{l.uext}   and Proposition \ref{p.uext},   every leaf of $\cH^u$ is non-compact and
 $a_i$ is contained in a non-compact leaf $l_i$ of $\cH^u$ which is homeomorphic to $S^1 \times [0,+\infty)$.
 Then one can automatically get that $\cH_b^u$ is a special    boundary branching foliation on $N_0$ with $p$ branching $a_1, \dots, a_p$.

 Moreover, notice that    $\Lambda$ is a sub-foliation    of $\cH^u$    which is disjoint with $T$. Therefore, $\Lambda$ is a sub-foliation  of $\cH_b^u$.
 \end{proof}

Recall that (see Section \ref{s.int}) by filling a solid torus $V$ to $N_0$ by gluing $T$ and $\partial V$  so that
the circle $c_h$ bounds a disk $D$
in $V$, we get the
Sol manifold $W_0$ which is homeomorphic to the mapping torus of $T^2$ under the
Thom-Anosov automorphism $A=\left(
                                    \begin{array}{cc}
                                      2 & 1 \\
                                      1 & 1 \\
                                    \end{array}
                                  \right)$. The purpose of this section is to show the following theorem.

\begin{theorem}\label{t.folext}
Let $\Lambda$ be an expanding attractor with $N_0$ as a filtrating neighborhood,
then there exists an integer $q$  so that $\Lambda_q$ always
can be extended to a taut foliation $\cF_q$ on $W_q$.
Here $(W_q, \Lambda_q)$ is the $q$-cyclic cover of
$(W_0,\Lambda)$ with respect to the torus fibration structure on $W_0$.
\end{theorem}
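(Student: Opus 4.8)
The plan is to combine Proposition~\ref{p.pbrfol} with Proposition~\ref{p.extfol}, passing first to a cyclic cover to repair a parity defect. By Proposition~\ref{p.pbrfol}, $\Lambda$ is a sub-foliation of a special boundary branching foliation $\cH_b^u$ on $N_0$ whose $p$ cusp circles $a_1,\dots,a_p$ are isotopic in $T=\partial N_0$ to the compact leaves $c_1,\dots,c_p$ of $F^s$; in particular they all carry a single primitive slope $s$. Since $W_0$ is recovered from $N_0$ by Dehn filling the solid torus $V$ along its meridian $c_h$ (the boundary of the once-punctured torus fibre $\Sigma$), the decisive numerical invariant of $\Lambda$ is the integer $m=|s\cdot c_h|\ge 0$. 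I would run the argument according to $m$ and the parity of $p$.

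\textbf{Good cases: $m\ge 1$, and $p$ or $m$ even.} Put $q=m$ and pass to the $q$-fold cyclic cover $W_q\to W_0$ associated to the torus fibration $P\colon W_0\to S^1$; restricted to $\pi_1(T)$ this cover is classified by $x\mapsto (x\cdot c_h)\bmod q$. The core $\gamma_0$ of $V$ has degree $1$ over the base, so $V$ lifts to a single solid torus $V_q$, $N_0$ to a connected $q$-fold cyclic cover $N_q$ with one torus boundary $T_q$, and $W_q=N_q\cup V_q$. The pulled-back foliation $\cH_{b,q}^u$ is again a special boundary branching foliation on $N_q$, and since each cusp circle has slope $s$ with $s\cdot c_h\equiv 0\pmod q$, it lifts to exactly $m$ parallel cusp circles; thus $\cH_{b,q}^u$ has $mp$ cusp circles, an even number under the hypothesis. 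A short homology computation in $T_q$ (whose $H_1$ is generated by a lift of $c_h$ and by the $q$-fold cover of $c_v$) shows that a meridian of $V_q$ meets each lifted cusp circle exactly once. Proposition~\ref{p.extfol} therefore applies on $W_q=N_q\cup V_q$ and produces a taut foliation $\cF_q$ on $W_q$ containing $\cL_{\cH_{b,q}^u}$ — hence containing the lift $\Lambda_q$ of $\Lambda$ — as a sub-foliation. When $m=1$, which includes the DA attractor $\Lambda_0$ ($p=2$), this is simply $q=1$ and a direct application of Proposition~\ref{p.extfol}.

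\textbf{Remaining cases are obstructed.} Two configurations escape the above: (i) $m=0$, i.e.\ the cusp circles are isotopic to $c_h=\partial\Sigma$; and (ii) $m$ and $p$ both odd, so that the cusp count $p\cdot\gcd(m,q)$ of $\cH_{b,q}^u$ stays odd in \emph{every} cyclic cover and Proposition~\ref{p.extfol} is never available. I would show that neither occurs for an expanding attractor on $N_0$. For (i): a compact leaf of $F^s$ isotopic to $\partial\Sigma$ forces the corresponding boundary periodic orbit to be freely homotopic in $N_0$ to $\partial\Sigma$, hence null-homotopic in $W_0$, contradicting the $\pi_1$-injectivity of the leaves of the expanding attractor — equivalently, contradicting Poincar\'e--Hopf applied to the first-return dynamics across $\Sigma$, whose Euler characteristic is $-1$. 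For (ii): I would combine the combinatorial constraints of Theorem~\ref{t.boundpeorb} and the Reeb/non-Reeb decomposition of $F^s$ on $T$ — which pins the parity of $p$ in terms of the (all contracting) holonomies of the $c_i$ — with the special topology of $N_0$ as the figure eight knot complement (its short list of boundary slopes carrying essential laminations) to exclude $m$ and $p$ from being odd simultaneously.

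The cyclic-cover bookkeeping in the good cases — counting lifts of cusp circles, and checking the meridian--cusp intersection number — is routine. The heart of the matter is the last paragraph: assembling a complete, mutually exclusive list of the possible boundary slopes $s$ of the compact leaves of $F^s$, and certifying slope by slope that the ``bad'' slope/parity combinations are killed by Poincar\'e--Hopf or by the topology of $N_0$. Once that list is in place, every surviving attractor falls into the good pattern, and Theorem~\ref{t.folext} follows.
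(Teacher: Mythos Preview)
Your handling of the good cases ($m\ge1$ with $pm$ even) matches the paper's exactly: pass to the $m$-fold cyclic cover so that the lifted cusp circles meet the lifted filling meridian once, then apply Proposition~\ref{p.extfol}. The gaps are in your two obstruction cases, where the arguments you sketch do not go through.

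For $m=0$: your appeal to ``$\pi_1$-injectivity of the leaves of the expanding attractor'' is circular. Essentiality of $\Lambda$ is only given in $N_0$; in $W_0$ it must be proved, and it \emph{fails} precisely in the case at hand --- if the boundary periodic orbit $\gamma$ is freely homotopic to the filling meridian $c_h$, then the core of the boundary leaf $W^u(\gamma)$ bounds a disk in the complementary open solid torus $W_0\setminus\Lambda$, so $\Lambda$ is compressible in $W_0$. Your alternative (``Poincar\'e--Hopf for the first-return dynamics across $\Sigma$'') presupposes a global section, which is exactly what Theorem~\ref{t.folext} is being used to produce later. The paper's argument is genuinely different and uses the specific topology of $N_0$: fill $N_0$ along the \emph{other} natural slope $c_v$ (the figure-eight knot meridian) to obtain $S^3$; since now $a_i\cdot c_v=1$, Proposition~\ref{p.extfol} would give a taut foliation on $S^3$ when $p$ is even, and on the branched double cover $L(5,2)$ when $p$ is odd --- both impossible.

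For $m$ and $p$ both odd: your sketch (``combine Theorem~\ref{t.boundpeorb} with the short list of boundary slopes of $N_0$'') never names an actual obstruction, and the Reeb/non-Reeb decomposition of $F^s$ constrains only the number of non-Reeb annuli, not the parity of $p$. The paper's argument is a direct Poincar\'e--Hopf computation, applied not to any dynamics but to the branching foliation itself: put a fibre torus $\Sigma\subset W_0$ in general position with respect to $\cH_b^u$; the induced singular $1$-foliation on $\Sigma$ has one $(pm)$-prong singularity coming from $V\cap\Sigma$ and otherwise only centres and saddles, so $0=\chi(\Sigma)=(1-\tfrac{pm}{2})+\sum_i(1-m_i)$ with each $m_i\in\{0,2\}$, which forces $pm$ to be even.
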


Assume that the algebraic intersection number between $a_1$ and $c_h$ is $q$. We will divide the proof into the following three cases:
$pq$ is even and nonzero, $pq$ is odd and $pq=0$ (in fact, in this case $q=0$).

\subsection{$pq\neq 0$ and $pq$ is even}
 We can open every half-leaf of $\cH_b^u$ in $l_b - T$ so that we get a lamination $\cL_{\cH_b^u}$ in $N_0$
 with $\Lambda$ as  a sub-lamination.
 Since $pq$ is even and nonzero, we can do 'q'-cyclic cover with respect to the torus fibration structure.
 Let $(W_q, \Lambda_q, \cL_{\cH_b^u}^q)$ be the lift of $(W_0,\Lambda,\cL_{\cH_b^u})$. Note that the union of  compact leaves of $H^u$
 is lifted to the union of  $pq$ compact leaves so that each of them intersects $c_h^q$ once. Here $c_h^q$ is a lifted connected component of $c_h$ in
 $\partial W_q$.
 Then by Proposition \ref{p.extfol}, there exists a taut foliation $\cF_q$ on $W_q$
 so that  $\cL_{\cH_b^u}^q$ is a sub-foliation of $\cF_q$.   Since $\Lambda_q$ is a sub-lamination of $\cL_{\cH_b^u}^q$, then  $\Lambda_q$ also is a sub-lamination of
 the taut foliation $\cF_q$ on $W_q$. The conclusion of Theorem   \ref{t.folext} is followed in this case.

\subsection{$pq$ is odd}
We will show that this case can not appear.

 \begin{figure}[htp]
\begin{center}
  \includegraphics[totalheight=7.5cm]{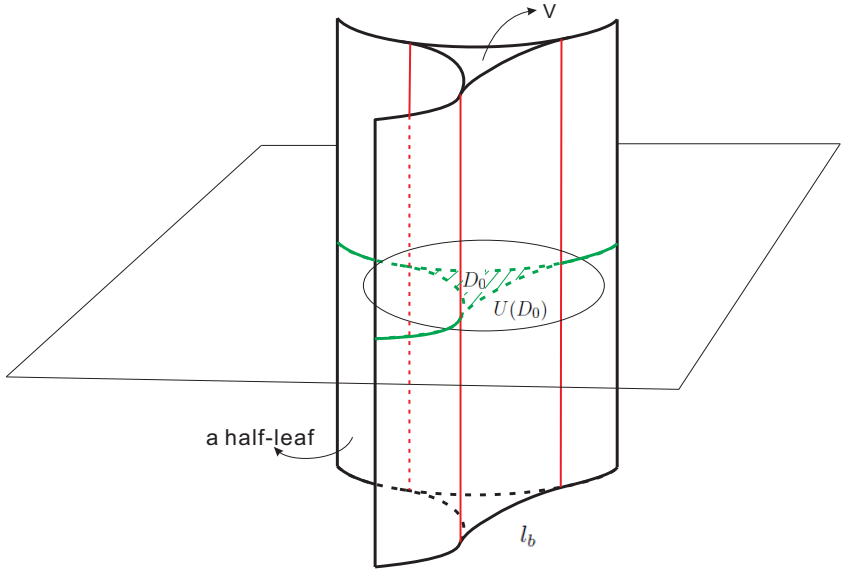}\\

  \caption{the local section $U(D_0)$}\label{f.localsec}
\end{center}
\end{figure}

\begin{proposition}\label{p.noodd}
For the parameters $p$ and $q$ of $\cH^u$, they can't satisfy that $pq$ is odd.
\end{proposition}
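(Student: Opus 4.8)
The plan is to rule out $pq$ odd by a Poincar\'e--Hopf computation on the fibre of the once-punctured torus fibration of $N_0$, exploiting that this fibre has odd Euler characteristic. Assume for contradiction that $pq$ is odd; then $p$ and $q$ are both odd, and in particular $q\neq 0$.

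I would work with the stable foliation $\cF^s$ of $\Lambda$ on $N_0$: it is transverse to $T=\partial N_0$, it is taut (as recalled in Section~\ref{s.int}), and $F^s=\cF^s\cap T$ consists of the $p$ compact leaves $c_1,\dots,c_p$, of a common slope $\sigma$ with $\sigma\cdot[c_h]=q$, together with the Reeb and the (even number of) fundamental non-Reeb annuli between them; thus the number of Reeb annuli of $F^s$ is $\equiv p\pmod 2$. First I would take the fibre $\Sigma$ (so $\chi(\Sigma)=-1$, $\Sigma$ is incompressible, and $\partial\Sigma$ has slope $[c_h]$) and, using the standard position arguments for taut foliations and incompressible surfaces (Chapter~4 of \cite{Cal} together with Novikov's theorem to remove centre tangencies, since a taut foliation has no Reeb component), isotope $\Sigma$ so that every interior tangency with $\cF^s$ is a saddle and $\partial\Sigma$ meets $F^s$ transversally away from finitely many quadratic tangencies.

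Next I would apply Poincar\'e--Hopf to the singular line field $\ell=T\Sigma\cap T\cF^s$ on $\Sigma$. In the form $\chi(\Sigma)=\sum_{\mathrm{interior}}\mathrm{ind}(\ell)+\tfrac12(n_+-n_-)$, where $n_+$ and $n_-$ count the boundary tangency points at which the local $\ell$-leaf lies, respectively, outside and inside $\Sigma$, all interior indices equal $-1$ (saddles), so $n_+-n_-$ is even; hence the total number of tangencies of $\partial\Sigma$ with $F^s$ is even. It remains to compute this number modulo $2$. A boundary tangency of $\ell$ is exactly a point where $\partial\Sigma$ is tangent to the $1$-foliation $F^s$ on $T$. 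Put $\partial\Sigma$ in minimal position with respect to the $p$ parallel compact leaves $c_1,\dots,c_p$: this produces $p|q|$ transverse intersection points (the geometric intersection number of a curve of slope $[c_h]$ with the family $c_1,\dots,c_p$), cutting $\partial\Sigma$ into $p|q|$ arcs, each a spanning arc of one of the $p$ annuli of $F^s$, with the cyclic sequence of these annuli traversed $|q|$ times, so each annulus is spanned $|q|$ times. A spanning arc of a Reeb annulus meets the Reeb foliation in an odd number of points (across a Reeb annulus the tangent line of the foliation turns once through the circle of directions, so it crosses the essentially constant direction of a straight spanning arc an odd number of times), whereas a spanning arc of a fundamental non-Reeb annulus meets it an even number of times (there the leaf directions stay in a half-open arc of the circle of directions and never reach the arc's direction). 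Hence the total number of tangencies is $\equiv |q|\cdot(\#\text{Reeb annuli})\equiv |q|\cdot p\equiv pq\pmod 2$; combined with the parity statement above this forces $pq$ even, a contradiction. (The two positions of $\partial\Sigma$ used here are compared legitimately because the mod-$2$ tangency count of a curve with the nonsingular foliation $F^s$ is an isotopy invariant.)

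I expect the main obstacle to be the local statement that a spanning arc of a Reeb annulus meets the Reeb foliation an odd number of times, and the accompanying care needed to realise the count $p|q|$ in a genuinely minimal position with straight spanning arcs; the position argument for $\cF^s$ with respect to $\Sigma$ is standard but should be stated with the usual precautions for taut foliations (and one may equally run the whole argument with the foliation $\cH^u$ of Proposition~\ref{p.uext} in place of $\cF^s$, since all $p$ of its boundary annuli are Reeb, at the cost of justifying tautness of $\cH^u$).
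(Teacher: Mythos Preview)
Your argument is correct and rests on the same Poincar\'e--Hopf obstruction as the paper, but the implementation is genuinely different. The paper works on the \emph{closed} torus fibre $\Sigma\subset W_0$ with the branching foliation $\cH_b^u$: after putting $\Sigma$ in general position (Solodov), the induced singular $1$-foliation on $\Sigma$ has a single $pq$-prong singularity (coming from the core of $V$) together with finitely many saddles and centres, and $\chi(\Sigma)=0=(1-\tfrac{pq}{2})+\sum_i(1-m_i)$ forces $pq$ even. You instead work on the \emph{once-punctured} torus fibre in $N_0$ with the stable foliation $\cF^s$, replacing the prong singularity by a boundary tangency count: Poincar\'e--Hopf with boundary gives $n_+-n_-$ even, hence $n_++n_-$ even, while your annulus-by-annulus computation gives $n_++n_-\equiv pq\pmod 2$.

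A few remarks. First, your appeal to tautness to kill centres is not needed for the parity conclusion: with generic (Morse) interior tangencies the indices are integers regardless, so $n_+-n_-$ is automatically even. This lets you bypass the Roussarie--Thurston step entirely and use only a generic-position argument, much as the paper uses Solodov's lemma allowing both saddles and centres. Second, your key local claim (odd tangencies across a Reeb annulus, even across a fundamental non-Reeb annulus) is correct and is exactly the statement that an orientation of the foliation on such an annulus induces opposite, respectively equal, orientations on the two boundary circles; this makes the winding of the line field along a spanning arc odd, respectively even, in $\pi_1(\RR P^1)$. Straightness of the spanning arcs is not essential, since this winding is independent of the choice of spanning arc. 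Third, your identification of $q$ with the intersection number of the compact leaves of $F^s$ with $c_h$ is legitimate, since the compact leaves $a_i$ of $H^u$ constructed in Lemma~\ref{l.tranfol} are parallel to the $c_i$.

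In sum, the paper's route compactifies the boundary data into a single prong singularity on a closed torus, while yours keeps the boundary and reads off the same half-integer obstruction from the tangency count; both are valid, and the paper's is somewhat shorter once $\cH_b^u$ is in hand, whereas yours avoids the branching-foliation machinery at the price of the annulus analysis.
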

\begin{proof}
 We assume that $p$ and $q$ of $\cH^u$ satisfy that  $pq$ is odd. Set $\Sigma =T^2 \times \{0\}$ is an embedded torus in $W_0$.                                                                                                         Note that up to isotopy,  the core of $V$ can intersect $\Sigma$ once. Then up to isotopy we can assume that
$V$ transversely intersects with $\Sigma$, so that $V\cap \Sigma$ is a $pq$-polygon $D_0$, whose $pq$ edges are in the boundary of the $pq$ filling half-Reebs.
There exists a small disk neighborhood $U(D_0)$ of $D_0$
 in $\Sigma$ such that $\cH_b^u$ is transverse to $D(P_0)$ and the intersectional
 $1$-foliation is a local $pq$-branching singular foliation. (see Figure \ref{f.localsec} as an illustration) The proof of Lemma  $5$ in section $2$ of  Solodov \cite{So}  implied a more general conclusion:   let $S$ be an embedded surface in a $3$-manifold with $2$-foliation $\cF$ so that $\cF$ is transverse to $\partial S$, then $S$ can be isotopic to $S'$ relative to $\partial S$ so that $f_{sing}=\cF'\cap S'$ is a singular $1$-foliation with finitely many singularities so that everyone is either saddle or center type. \footnote{Indeed, Solodov (Lemma $5$, section $2$, \cite{So}) proved the claim in the case that $S$ is a disk. But one can routinely check that the technique which the author used doesn't depend on the topology of $S'$.}
In our case, by the conclusion above, up to isotopical leaf-conjugacy, one can assume that $\Sigma\setminus U(D_0)$ and $\cH_b^u$
are in a topologically general position:
\begin{enumerate}
\item $\Sigma\setminus U(D_0)$ and $\cH_b^u$  are transverse everywhere except for finitely many singularities;
 \item every singularity is either a saddle point or a center point.
 \end{enumerate}
  By blowing down the $pq$-polygon $D_0$ to a point $P_0$, the intersection $F_b^u = \cH_b^u \cap \Sigma$ endows a singular foliation on the torus
  $\Sigma$ whose singularity set is the union of one $pq$-branching singularity $P_0$ and finitely many singularities $P_1,\dots,P_k$ which is either a saddle point or a
center point. We set $P_i$ is an $2m_i$-prong singularity, then $m_i=0$ or $m_i=2$.
By 'general Poincare-Hopf theorem' for singular 1-foliations (see for instance, Page 75, \cite{FLP}),
$$\cX(\Sigma)= (1-\frac{pq}{2}) + \sum_{i=1}^k (1-m_i)$$
Note that $\cX(\Sigma)=0$ and  $pq$ is an odd number, then the equality isn't possible.  Therefore, $pq$ can't be odd.
\end{proof}
\subsection{$pq=0$}
  We will show that this case also does not exist. We remind readers that the definition of $c_h$ and $c_v$ can be found in Section \ref{s.int}.
 \begin{proposition}\label{p.no0}
 $a_1$ of $\cH^u$ is not parallel to $c_h$ in $T$. Equivalently, $pq\neq 0$.
 \end{proposition}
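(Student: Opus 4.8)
The plan is to rule out the case $q = 0$ (equivalently $a_1$ parallel to $c_h$) by showing that it would force the curve $c_h$ to be transverse to the $1$-foliation $F^s = \cF^s \cap T$, and then deriving a contradiction from the global structure of the attractor $\Lambda$ inside $N_0$ together with the fact that $c_h$ bounds a once-punctured torus. First I would record what $q = 0$ means concretely: since the algebraic intersection number of $a_1$ with $c_h$ is $q = 0$ and both live on the torus $T$, the branching curve $a_1$ is isotopic on $T$ to $c_h$ (up to orientation), because on a torus a primitive class with zero intersection with $[c_h]$ is a multiple of $[c_h]$, and $a_1$, being a leaf of the Reeb component of $H^u$, is primitive. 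By construction $a_1$ is transverse to $F^s$ (it was chosen transverse to $F^s\mid_{A_i}$ in Lemma~\ref{l.tranfol}), so in this case $c_h$ itself is isotopic to a curve transverse to $F^s$, i.e.\ transverse to every leaf of $F^s$ through which it passes.

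Next I would exploit the fact that $c_h$ bounds an embedded once-punctured torus $\Sigma_0$ in $N_0$ (a fiber of the once-punctured torus fibration of $N_0$, as in Proposition~\ref{p.pYt}), with $\partial \Sigma_0 = c_h$. Using Solodov's general-position argument (as quoted in the proof of Proposition~\ref{p.noodd}), I would isotope $\Sigma_0$ rel $\partial\Sigma_0$ so that $\cF^s$ induces on $\Sigma_0$ a singular $1$-foliation $f_{\mathrm{sing}}$ with only saddle and center singularities, and with $c_h = \partial\Sigma_0$ transverse to $f_{\mathrm{sing}}$ along the whole boundary. The key point is that the restriction of $\cF^s$ to $\Sigma_0$ is \emph{not just any} singular foliation: its leaves come from intersecting stable manifolds of the hyperbolic attractor $\Lambda$ with the fiber $\Sigma_0$, so away from $\Lambda$ the induced foliation is nonsingular, and the recurrence/transitivity of $\Lambda$ forces the induced foliation on $\Sigma_0 \cap \Lambda$ to carry the structure of the filtrating neighborhood of the $1$-dimensional expanding attractor $\Lambda \cap \Sigma_0$ for the first-return map (cf.\ Section~\ref{sss.epadif} and Proposition~\ref{p.pYt}(2)). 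In particular the boundary $c_h$ meets free stable separatrices of the boundary periodic orbits of this surface attractor, and by Theorem~\ref{t.boundsuf} these organize into chain-adjacent classes along $\partial\Sigma_0$; this pins down the \emph{co-orientation} behaviour of $F^s$ along $c_h$.

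Then comes the Poincaré--Hopf bookkeeping. Capping $\Sigma_0$ by the disk $D \subset V$ across $c_h$, and blowing the singular pattern on $\Sigma_0$ together with the induced $1$-foliation on $D$ into a singular $1$-foliation on the torus $\Sigma = \Sigma_0 \cup_{c_h} D$, I would apply the generalized Poincaré--Hopf formula for singular $1$-foliations (Page~75 of \cite{FLP}) exactly as in Proposition~\ref{p.noodd}: $\cX(\Sigma) = 0$ equals $1$ minus half the prong number contributed by the cap singularity, plus the contributions $(1 - m_i)$ of the interior saddles and centers. The transversality of $c_h$ to $F^s$ (which is what $q=0$ buys us) controls how many prongs the cap contributes; combined with the constraint that $F^s$ on $T$ is a union of Reeb annuli and fundamental non-Reeb annuli — so that the number of compact leaves $p$ and the co-orientation data along $c_h$ are fixed — the Euler-characteristic identity becomes arithmetically impossible (it forces a half-integer or the wrong parity, just as in the odd case). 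I expect the main obstacle to be the middle step: making precise that the induced singular foliation on $\Sigma_0$ genuinely sees the boundary periodic orbits of the surface expanding attractor $\Lambda \cap \Sigma_0$ and hence that $c_h$ transverse to $F^s$ is incompatible with the free-separatrix structure guaranteed by Theorem~\ref{t.boundsuf}; once the local picture near $c_h$ is nailed down, the Poincaré--Hopf contradiction is routine. An alternative, possibly cleaner, route for this step is to argue directly on $W_0$: if $q = 0$ then $\cH^u$ would descend (with the Reeb annuli filled as in Proposition~\ref{p.pbrfol}) to a branching foliation on $W_0$ in which $c_h$ is a boundary-parallel transversal, contradicting that $c_h$ becomes a meridian of the filling solid torus $V$ — i.e.\ the filled object could not be a foliation of the Sol-manifold $W_0$ at all, since the meridian disk would be forced tangent along its boundary while the cusp circles $a_i = c_h$ are transverse, an inconsistency of the monkey-saddle filling. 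I would carry out whichever of these two makes the parity clash most transparent.
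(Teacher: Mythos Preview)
Your proposal has a genuine gap: the Poincar\'e--Hopf bookkeeping does \emph{not} produce an arithmetic contradiction in the case $q=0$. If $c_h$ is transverse to $F^s$ along $\partial\Sigma_0$, then capping $\Sigma_0$ by the meridian disk $D\subset V$ contributes an ordinary index-$+1$ singularity (a source or sink), not a fractional one. On the once-punctured torus $\Sigma_0$ the total singular index must be $\chi(\Sigma_0)=-1$, and on the capped torus $\Sigma$ it must be $0=1+(-1)$; a single saddle on $\Sigma_0$ already realizes this. There is no half-integer and no parity clash here --- the parity obstruction in Proposition~\ref{p.noodd} came specifically from a $pq$-prong singularity with $pq$ odd, which has no analogue when the boundary is genuinely transverse.

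Your attempt to manufacture extra constraints via the ``middle step'' is circular. You invoke the structure of $\Lambda\cap\Sigma_0$ as a surface expanding attractor for the first-return map, citing Proposition~\ref{p.pYt}(2); but that proposition describes the \emph{model} DA flow $Y_t^0$, not the arbitrary flow $Y_t$ whose attractor $\Lambda$ you are analysing. At this stage of the argument nothing guarantees that $\Sigma_0$ is a section for $Y_t$, or even that the flow is transverse to it --- establishing a global section is exactly the goal of Section~\ref{s.claexp}, and Proposition~\ref{p.no0} is a prerequisite for that. So Theorem~\ref{t.boundsuf} and the free-separatrix picture on $\Sigma_0$ are simply not available to you here. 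Your alternative route (``the filled object could not be a foliation of $W_0$'') only shows that the monkey-saddle filling along the meridian of $V$ is unavailable; it does not exclude $q=0$.

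The paper's proof takes a completely different line: it fills $N_0$ along $c_v$ (the knot meridian, \emph{not} $c_h$) to obtain $S^3$, or passes to the branched double cover $L(5,2)$ when $p$ is odd, and then uses Proposition~\ref{p.extfol} to extend the boundary branching foliation $\cH_b^u$ to a taut foliation on the filled manifold. Since neither $S^3$ nor $L(5,2)$ carries a taut foliation, this yields the contradiction. The obstruction is global $3$-manifold topology, not an index count on a surface.
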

 \begin{proof}
  By the definitions of the notations, one can easily check that the following three statements are pairwise equivalent:
 $pq=0$, $q=0$ and  $a_1$ is parallel to $c_h$ in $T$.
  Therefore, we only need to show that $a_1$ is not parallel to
 $c_h$ in $T$.    We assume that $a_1$ is  parallel to $c_h$ in $T$.

 $N_0$ is  homeomorphic to the path closure of figure eight knot complement space with $c_v$ as a meridian circle.
 Recall that $N_0$ carries the special boundary branching foliation $\cH_b^u$ in $N_0$ with $p$ branching $a_1, \dots, a_p$.
 We can open every half-leaf of $\cH_b^u$ in $l_b - T$ so that we get a lamination $\cL_{\cH_b^u}$ in $N_0$
 with $\Lambda$ as  a sub-lamination.
 By filling a solid torus $V$ to $N_0$ so that a meridian of $V$ is glued to $c_v$, we get a glued manifold $M$ homeomorphic to $S^3$.
 If $p$ is even,
  by Proposition \ref{p.extfol}, there exists a taut foliation $\cF'$ on $M$
 so that  $\cL_{\cH_b^u}$ is a sub-foliation of $\cF'$. But $M$ is  homeomorphic to $S^3$ which does not carry any taut foliation.
 Therefore,  $p$ can not be an even number in this case.
 If $p$ is odd, we can do a branched double cover on $M$ along a core circle $K$ of $V$ which is a figure eight knot in $M\cong S^3$.
 We get a new manifold $\overline{M}$ which is homeomorphic to
 a lens space $L(5,2)$ (see Section C of Chapter $10$ in \cite{Ro}). $N_0, \cH_b^u, H^u, V$ are lifted to $\overline{N}_0, \overline{\cH}_b^u, \overline{H}^u, \overline{V}$ respectively.
 Then $\overline{\cH}_b^u$ is a special boundary branching foliation with $2p$ branchings so that each of them bounds a disk in $\overline{V}$.
 Then similar to the case $p$ is even, one can build a taut foliation on $\overline{M}$ from the boundary branching foliation $\overline{\cH}_b^u$.
 But $\overline{M}$ is a lens space whose universal cover is $S^3$, then $\overline{M}$ also does not carry any taut foliation.  We get a contradiction.

 In summary, $a_1$ can not be parallel to $c_h$ in $T$.
\end{proof}

\section{expanding attractors on the figure eight knot complement space $N_0$}\label{s.claexp}
In this section, we will show that $N_0$ carries a unique expanding attractor, i.e. Theorem \ref{t.claexp}.
We sketch the proof as follows.
\begin{enumerate}
  \item The flow in question on $N_0$ can be naturally extended to an Axiom A flow on $W_0$.
   we will show that every flowline of the Axiom flow essentially intersects with every torus fiber in $W_0$
       (Proposition \ref{p.liminf}). See Section \ref{ss.struflow}.
  \item Using on a beautiful theorem of Fuller \cite{Fu} (Theorem \ref{t.Fuller}), we find a global section for the Axiom A flow (Proposition \ref{p.sect}). 
      The existence of a global section allows us to translate Theorem \ref{t.claexp} into 
      the classification of a special class of Axiom A diffeomorphisms on a torus. 
      See Section \ref{ss.glsec}.
  \item Finally we prove a classification result for this class of  Axiom A diffeomorphisms (Proposition \ref{p.DAdiff}). See Section \ref{ss.claAdiff}. Then we can quickly finish the proof of Theorem \ref{t.claexp}.  See Section \ref{ss.fclaexp}.
\end{enumerate}

Now we introduce some notations and facts which will be used in this section.
Recall that $N_0$ is a sub-manifold of the sol-manifold $W_0$.
Let $Y_t$ be a smooth flow on $N_0$ so that the maximal invariant set
is a transverse expanding attractor $\Lambda$ supported by $N_0$.
The closure of $W_0 - N_0$, denoted by $V_0$, is homeomorphic to a solid torus,
therefore, up to topological equivalence, $Y_t$ can be extended to an Axiom A flow
on $W_0$ whose non-wandering set is the union of
the expanding attractor $\Lambda$ and an isolated periodic orbit repeller
$\beta$ which satisfies the following conditions:
 \begin{enumerate}
   \item $\beta$ is isotopic to the core of $V$;
   \item $\beta$ is transverse to the torus fibration
structure induced by $\{T^2 \times \{t\} \}$.
 \end{enumerate}
  Here the corresponding coordinate for the fibration structure can be found in Section \ref{sss.epaflow}. For simplicity, we still denote by $Y_t$ the extended Axiom A flow.

Also recall that $W_q$ is the $q$-cyclic cover of $W_0$ with respect to the torus fibration on $W_0$.
We remark that  $W_1=W_0$. Let $\pi_q: W_q \to W_0$ be the corresponding covering map.
Let $N_q$, $\Lambda_q$, $\beta_q$ and $Y_t^q$ be the  lifts of $N_0$, $\Lambda$, $\beta$ and $Y_t$ in $W_q$ under $P_q$. By the construction of $\pi_q$, $\beta_q$  is an isolated periodic orbit repeller
of $Y_t^q$.
Let  $\Wi W_0$ be the infinite cyclic covering space of $W_0$ associated to the torus fibration structure.
 $\Wi W_0$ is homeomorphic to $\Sigma \times \RR = \Sigma \times (-\infty, +\infty)$ where $\Sigma$ is homeomorphic to a torus.
 Assume that $\pi: \Wi W_0 \to W_0$ is the natural covering map and $P: \Wi W_0 \to \RR$ is the natural projection map.
 Set $\Wi N_0$,  $\Wi{\Lambda}$, $\Wi \beta$ and $\Wi Y_t$ are the lifts of $N_0$,  $\Lambda$, $\beta$ and $Y_t$ in $\Wi W_0$ respectively under  $\pi$.
 By the construction of $\pi$, $\Wi \beta$  is homeomorphic to $\RR$ and transverse to the torus fibration structure induced by $\{\Sigma \times \{t\}\}$.
Naturally $\Wi W_0$ also is the infinite cyclic cover of $W_q$ by a covering map $\pi^q: \Wi W_0 \to W_q$ so that $\pi_q \circ \pi^q =\pi$.
By Theorem \ref{t.folext},
we can choose an integer $q$  so that $\Lambda_q$
can be extended to a taut foliation $\cF_q$ on $W_q$.
We fix this $q$ from now on and
 further assume that $\Wi{\cF}$ is the lifting foliation of $\cF_q$ on $\Wi W_0$ under $P$.
 Certainly $\Wi \Lambda$ is a sub-foliation of $\Wi F$.

\subsection{The structure of flowlines of the attractor in $\Wi W_0$}\label{ss.struflow}
\subsubsection{Intersectional property about periodic orbits}
In this subsection, we will show  that every periodic orbit in the attractor essentially intersects with every torus fiber of the unique torus fibration structure on $W_0$ (Proposition \ref{l.openper}). The proof heavily
depends on some topological information about the  attractors: Theorem \ref{t.folext} and Lemma \ref{l.Sgood}.

\begin{lemma}\label{l.ktransitive}
$\Lambda_q$ is a transitive expanding attractor of $Y_t^q$.
\end{lemma}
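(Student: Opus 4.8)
The plan is to recognise $\Lambda_q$ as a hyperbolic attractor of $Y_t^q$ of the same dimensions as $\Lambda$, to prove that $\Lambda_q$ is connected, and then to transfer transitivity from $\Lambda$ to $\Lambda_q$ by a shadowing argument. For the first point: since $\pi_q\colon W_q\to W_0$ is a finite covering conjugating $Y_t^q$ with $Y_t$, it is a local diffeomorphism carrying orbits to orbits, so $N_q=\pi_q^{-1}(N_0)$ is again a filtrating neighborhood ($Y_t^q$ is transverse inward along $\partial N_q$), $\Lambda_q=\pi_q^{-1}(\Lambda)$ is precisely the maximal invariant set of $(N_q,Y_t^q)$, and the hyperbolic splitting of $\Lambda$ pulls back to $\Lambda_q$. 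Hence $\Lambda_q$ is a uniformly hyperbolic attractor with $1$-dimensional strong stable and strong unstable bundles; in particular its unstable dimension is $2$, and, being locally homeomorphic to $\Lambda$ via $\pi_q$, its topological dimension is $2$ as well. It therefore only remains to show that $Y_t^q|_{\Lambda_q}$ is transitive.

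The key point is that $\Lambda_q$ is connected. The restriction $\pi_q\colon\Lambda_q\to\Lambda$ is a $q$-fold covering commuting with the flows, with cyclic deck group $G\cong\ZZ/q\ZZ$, so $\Lambda_q$ is connected if and only if the winding numbers, around the base circle of the fibration $W_0\to\SS^1$, of the loops contained in $\Lambda$ generate $\ZZ/q\ZZ$. I would obtain this from the stronger statement that the periodic orbits of $Y_t|_\Lambda$ generate $H_1(W_0;\ZZ)\cong\ZZ$: writing $\Lambda$ as the inverse limit of an expanding semiflow on a branched surface (template) $B\subset N_0$ in the sense of Birman--Williams, the manifold $N_0$ deformation retracts onto $B$, so the inclusion induces an isomorphism $H_1(B;\ZZ)\cong H_1(N_0;\ZZ)\cong\ZZ$; the homology classes of the periodic orbits of the semiflow on $B$ generate $H_1(B;\ZZ)$, and these correspond to periodic orbits of $Y_t|_\Lambda$ with the same classes. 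One must be careful here not to argue directly through $\pi_1(\Lambda)$: typically $\Lambda$ is not even path connected (its leaves are simply connected and it has a Cantor transverse structure), so the naive covering-space computation would give the wrong answer, and it is the template description that makes the homological statement available.

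Granting connectedness, transitivity follows by shadowing. Fix a dense orbit $\cO\subset\Lambda$ of $Y_t$, a lift $\widehat{\cO}\subset\Lambda_q$, and set $K=\overline{\widehat{\cO}}$; this is a closed $Y_t^q$-invariant set with $\pi_q(K)=\Lambda$ and $\bigcup_{g\in G}gK=\Lambda_q$. If $K\ne\Lambda_q$, then its distinct $G$-translates $K=K_1,\dots,K_d$ with $d\ge 2$ would cover $\Lambda_q$, be cyclically permuted by $G$, and --- since $\Lambda_q$ is connected --- two of them would intersect; to rule this out one shows more, namely that $\Lambda_q$ is transitive. This is where the shadowing lemma enters: transitivity of $Y_t|_\Lambda$ provides, for any two prescribed points of $\Lambda$, an orbit segment from a neighborhood of the first to a neighborhood of the second, and connectedness of $\Lambda_q$ means exactly that the $G$-monodromy of the combinatorial loops based at a fixed Markov rectangle of $\Lambda$ realizes every element of $G$; inserting such a correcting loop into the segment and shadowing, one obtains, for any $\hat u,\hat v\in\Lambda_q$, an orbit segment of $Y_t^q$ from a neighborhood of $\hat u$ to a neighborhood of $\hat v$. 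Hence $Y_t^q|_{\Lambda_q}$ is transitive (in particular $K=\Lambda_q$), which is the assertion. The main obstacle is the connectedness step; Step~1 is bookkeeping about covers and the last step is a routine application of specification/shadowing once connectedness is in hand.
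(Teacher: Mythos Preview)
Your argument is essentially correct but follows a completely different route from the paper. The paper argues by contradiction on the level of separating surfaces: if $\Lambda_q$ failed to be transitive, spectral decomposition and a Lyapunov function would produce an extra transverse surface inside $N_q$ separating two basic pieces; by Proposition~\ref{p.BrFe} such a surface is a torus, and since $N_q$ is a hyperbolic $3$-manifold this torus is isotopic to $\partial N_q$; applying Proposition~\ref{p.BrFe} again, the region between them would be a flow product $T^2\times I$ with empty maximal invariant set, contradicting the decomposition. This is a short topological argument that uses only tools already assembled in the paper and the hyperbolicity of the ambient piece.

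Your approach instead establishes connectedness of $\Lambda_q$ via a template computation (periodic orbits generate $H_1(B;\ZZ)\cong H_1(N_0;\ZZ)\cong\ZZ$, hence their winding numbers generate $\ZZ/q\ZZ$), and then promotes connectedness to transitivity by shadowing. Two remarks. First, once you know $\Lambda_q$ is connected and consists of nonwandering points (periodic orbits of $\Lambda$ lift to periodic orbits and are dense), Smale's spectral decomposition already forces it to be a single basic set, so the shadowing step, while correct, is more than you need. Second, your connectedness step imports the Birman--Williams/Christy template machinery and the specific computation $H_1(N_0;\ZZ)\cong\ZZ$; this is valid for $N_0$ but is an additional ingredient not otherwise used in this section. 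The paper's argument trades that ingredient for the Brunella--Fenley transverse-torus result and the hyperbolicity of $N_q$, which fits more naturally with the surrounding arguments and generalizes directly to the situation of Section~\ref{s.final}.
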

\begin{proof}
Otherwise,  by spectral decomposition theorem and  the existence of Lyapunov function, there exists several transverse surfaces
$\Sigma_1, \dots, \Sigma_n$ ($n\geq2$) of $Y_t^q$ which cut $W_q$ to $m$ ($m\geq 3$) pieces $U_1, \dots, U_m$ so that
 the maximal invariant set of $Y_t^q$ on $U_i$ ($i=1,\dots, m$) is a hyperbolic basic set. We can further assume that $\partial U_m = \Sigma_n$
 and the maximal invariant set in $U_m$ is the lifted periodic orbit repeller $\beta_q$. Set $N_q$ is the path closure of $W_q -U_m$ which can  be
 thought as the corresponding lifting space of $N_0$ which supports $\Lambda_q$.
 Then $\Sigma_1 \subset N_q$ which is transverse to $Y_t^q$. By the first part of Proposition \ref{p.BrFe} and the fact that
 $N_q$  is a hyperbolic $3$-manifold, $\Sigma_1$ is a transverse torus which is isotopic to $\Sigma_n$. Then
 by the second part of  Proposition \ref{p.BrFe}, $\Sigma_1$ and $\Sigma_n$ bound a thickened torus without maximal invariant set.
 This conflicts to the fact that every $U_i$ contains a non-empty maximal invariant set.
 Therefore, $\Lambda_q$ is a transitive  expanding attractor.
\end{proof}

\begin{proposition} \label{l.openper}
Let $\gamma$ be a periodic orbit in the attractor $\Lambda$, the algebraic intersection
number of $\gamma$ and $\Sigma$ is nonzero. Here $\Sigma$ is a fiber torus of the torus fibration on
$W_0$. Equivalently, every lifting connected component $\Wi{\gamma}$  of $\gamma$ in $\Wi W_0$ is homeomorphic to $\RR$.
\end{proposition}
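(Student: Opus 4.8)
The plan is to argue by contradiction, using the taut foliation furnished by Theorem \ref{t.folext}. First a reformulation: the torus fibration $W_0\to\SS^1$ identifies $\ker(\pi_1(W_0)\to\ZZ)$ with the fiber subgroup $\pi_1(\Sigma)\cong\ZZ^2$, and $\langle\gamma,\Sigma\rangle$ is exactly the image of $[\gamma]$ in $\ZZ$ (indeed $H_1(W_0)=\ZZ$ here, so $\langle\gamma,\Sigma\rangle=0$ simply says $\gamma$ is null-homologous). In $\Wi W_0\cong\Sigma\times\RR$ a lift $\Wi\gamma$ of the closed orbit $\gamma$ is a compact circle when $\langle\gamma,\Sigma\rangle=0$ and a properly embedded line otherwise, so the two assertions in the statement coincide and it suffices to derive a contradiction from $\langle\gamma,\Sigma\rangle=0$. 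So I assume this and fix a compact lift $\Wi\gamma\subset\Wi W_0$, a periodic orbit of $\Wi Y_t$.

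The first step is to identify the leaf through $\Wi\gamma$. Since $\Wi\gamma\subset\Wi\Lambda$ and the leaves of $\Wi\Lambda$ are the weak unstable manifolds, this leaf is $W^u(\Wi\gamma)$, which topologically is the total space of the unstable line bundle over the circle $\Wi\gamma$, i.e.\ an open annulus (or M\"obius band) with $\Wi\gamma$ as a non-contractible core; thus $\pi_1(W^u(\Wi\gamma))=\ZZ$ is generated by $[\Wi\gamma]$. As $\Wi\Lambda$ is a sub-foliation of $\Wi{\cF}$ (the lift of $\cF_q$ along $\Wi W_0\to W_q$), $W^u(\Wi\gamma)$ is a leaf of $\Wi{\cF}$. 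Now $\cF_q$ is taut on the closed manifold $W_q$, hence Reebless, hence has $\pi_1$-injective leaves by Novikov's theorem, and this property passes to the covering $\Wi W_0$; so $W^u(\Wi\gamma)\hookrightarrow\Wi W_0$ is $\pi_1$-injective, and therefore $[\Wi\gamma]$ is a non-zero element $v\in\pi_1(\Wi W_0)=\pi_1(\Sigma)=\ZZ^2$. Descending to $N_0$, and recalling that the once-punctured torus fibers of $N_0$ are precisely the slices $N_0\cap(\Sigma\times\{t\})$ (Proposition \ref{p.pYt}), this says: $\gamma$ is freely homotopic in $N_0$ into a fiber, representing a non-trivial element of the free group $\pi_1(N_0\cap\Sigma)$.

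The hard part — the real content of the proposition — is to rule this out, and this is where Lemma \ref{l.Sgood} and the transitivity of $\Lambda_q$ (Lemma \ref{l.ktransitive}) come in. The plan is to use Lemma \ref{l.Sgood} to isotope $\Sigma$ into good position relative to $\Lambda$ so that, as in the proof of Proposition \ref{p.noodd}, $\Lambda$ is transverse to $\Sigma$ away from the controlled prong singularity coming from the filling solid torus and $\Lambda\cap\Sigma$ is a one-dimensional singular lamination on the torus. By transitivity the leaf $W^u(\Wi\gamma)$ is dense in the component of $\Wi\Lambda$ containing it; one then tracks how this annulus — whose core $\Wi\gamma$ stays in a bounded slab $\Sigma\times[a,b]$ while the leaf itself sweeps the whole $\RR$-direction — meets the product fibers $\Sigma\times\{t\}$, and extracts from this a configuration of the induced singular $1$-foliation on the torus $\Sigma$ that is forbidden by the Poincare-Hopf count of Proposition \ref{p.noodd} (equivalently, a periodic essential isotopy class for the associated surface dynamics, which cannot occur). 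An alternative, more structural route would be to lift everything to the universal cover and argue on the leaf space of $\cF_q$: if $\cF_q$ is $\RR$-covered — as one expects for a Reebless foliation without compact leaves on the Sol-manifold $W_q$ — then a leaf can be stabilised only by integer classes lying in an eigendirection of the monodromy $A^q$, and these are irrational, so no non-zero $v\in\pi_1(\Sigma)$ stabilises $W^u(\Wi\gamma)$, contradicting $v\neq0$. Either way, establishing this last reduction (good position of $\Sigma$, or $\RR$-coveredness of $\cF_q$ together with the eigendirection argument) is where essentially all of the work lies; everything preceding it is formal once Theorem \ref{t.folext} is available.
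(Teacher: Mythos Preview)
Your setup is sound and you correctly isolate the real content: after Theorem \ref{t.folext} and Novikov, one knows the leaf $W^u(\gamma_q)\subset\cF_q$ is $\pi_1$-injective, so a hypothetical periodic orbit $\gamma$ with $\langle\gamma,\Sigma\rangle=0$ represents a nonzero class $v\in\pi_1(\Sigma)\cong\ZZ^2$. But, as you yourself say, this alone is no contradiction, and your two proposed completions are not carried out. The first (tracking the trace of the dense annulus on the fibers and invoking a Poincar\'e--Hopf count) is too vague to constitute an argument; the second (showing $\cF_q$ is $\RR$-covered and then arguing no nonzero $v\in\ZZ^2$ stabilises a leaf) is a reasonable strategy in outline, but establishing $\RR$-coveredness for an arbitrary taut extension $\cF_q$ of $\Lambda_q$ on a Sol-manifold is a substantial claim that you neither prove nor cite.

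The paper takes a different and more self-contained route that you do not mention. Working in $W_q$ with the transverse fiber $\Sigma_q$ of Lemma \ref{l.Sgood} (and passing to a further finite cyclic cover so that $\gamma$ misses $\Sigma_q$ entirely), let $C_\gamma$ be the plaque of $\cF_q$ in $U=W_q\setminus\Sigma_q\cong\Sigma_q\times[0,1]$ containing $\gamma$. By transitivity of $\Lambda_q$ the full leaf $l_\gamma$ must meet $\Sigma_q$, so $C_\gamma$ has nonempty boundary; items (3) and (4) of Lemma \ref{l.Sgood} then rule out $C_\gamma$ being an annulus or a M\"obius band. Now \emph{double}: glue two copies of $(U,\cF_q|_U)$ by the identity along their boundaries to obtain a foliation $\cH$ on $T^3$. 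The double $l_\gamma^H$ of $C_\gamma$ is a leaf of $\cH$ containing the essential curve $\gamma$, and since $C_\gamma$ is neither a disk, annulus, nor M\"obius band, $\pi_1(l_\gamma^H)$ is non-abelian. Item (3) of Lemma \ref{l.Sgood} also guarantees there are no half-Reeb components in $\cF_q|_U$, so $\cH$ is Reebless; Novikov on $T^3$ then forces $\pi_1(l_\gamma^H)\hookrightarrow\ZZ^3$, which is absurd. The key idea you are missing is this doubling to $T^3$: Novikov applied on $W_q$ itself cannot give a contradiction because $\pi_1(W_q)$ happily contains infinite cyclic subgroups, but after doubling the ambient fundamental group becomes abelian while Lemma \ref{l.Sgood} forces the relevant leaf to be complicated enough that its $\pi_1$ is not.
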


To prove Proposition \ref{l.openper}, we first choose a fiber torus $\Sigma_q$ in $W_q$ in a good position in the following senses.

\begin{lemma}\label{l.Sgood}
There exists a fiber torus $\Sigma_q$ transverse to $\cF_q$ such that the induced $1$-foliation $F=\cF_q \cap \Sigma_q$ satisfies
the following conditions.
\begin{enumerate}
  \item If $c$ is a compact leaf of $F$, then $c$ is contained in $\Lambda_q$.
  \item There are only finitely many compact leaves in $F$. Moreover, the holonomy of each compact leaf
  is either contracting or repelling.
  \item Let $U$ be the path closure of $W_q-\Sigma_q\cong \Sigma_q \times [0,1]$. Then there does not exist an  annulus plaque
  in $\cF_q |_W$.
  \item There also does not exist a Mobius band plaque in $\cF_q |_U$.
\end{enumerate}
\end{lemma}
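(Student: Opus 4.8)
The plan is to obtain $\Sigma_q$ by taking an arbitrary fiber torus of the torus bundle $W_q$ and isotoping it, in several stages, into the asserted position, using only the tautness of $\cF_q$ (hence Novikov's theorem: leaves are $\pi_1$-injective and there is no null-homotopic transverse loop), the hyperbolicity of the expanding attractor $\Lambda_q$, the fact that $\Lambda_q$ is an essential lamination (see \cite{GO}), and the fact that the monodromy of the torus-bundle structure of $W_q$ is a hyperbolic automorphism $A^q$ of the fiber, which fixes no isotopy class of essential simple closed curve. \emph{Transversality.} A fiber torus $\Sigma$ is incompressible in $W_q$, and by Theorem \ref{t.folext} together with the construction in Proposition \ref{p.extfol} the foliation $\cF_q$ is taut and has no compact leaf. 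By the Roussarie--Thurston positioning theorem (see \cite{Cal}), after an isotopy $\Sigma$ is either a leaf of $\cF_q$---impossible, since $\cF_q$ has no compact leaf---or transverse to $\cF_q$ outside finitely many tangencies, which may be taken to be saddle tangencies only; the induced singular $1$-foliation on $\Sigma$ would then have only index $-1$ singularities, so $\chi(\Sigma)=0$ and Poincar\'e--Hopf force the number of tangencies to be zero. Thus $\Sigma$ is isotopic to a fiber torus $\Sigma_q$ transverse to $\cF_q$; write $F=\cF_q\cap\Sigma_q$, a nonsingular $1$-foliation of the torus $\Sigma_q$, and let $U\cong\Sigma_q\times[0,1]$ be the path closure of $W_q\setminus\Sigma_q$, so that $\cF_q|_U$ is a foliation of $U$ transverse to its two boundary tori (the symbol $\cF_q|_W$ in item $3$ should read $\cF_q|_U$).

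\emph{Items $3$ and $4$.} First, every compact leaf of $F$ is essential in $\Sigma_q$: one bounding a disk $D\subset\Sigma_q$ would make $F|_D$ a nonsingular $1$-foliation of $D$, contradicting $\chi(D)=1$. Since two non-isotopic essential simple closed curves on a torus must intersect while distinct leaves of $F$ are disjoint, all compact leaves of $F$ are mutually isotopic in $\Sigma_q$. Now suppose $\cF_q|_U$ has an annulus or M\"obius plaque $P$, lying in a leaf $L$ of $\cF_q$. Following $L$ across $\Sigma_q$ when $U$ is reglued into $W_q$, one sees that $L\cap\Sigma_q$ contains a boundary curve of $P$ together with its image under the gluing monodromy $A^q$; both are compact leaves of $F$, hence isotopic, so $A^q$ would preserve the isotopy class of an essential simple closed curve on $\Sigma_q$, which is absurd---unless the two curves coincide, in which case $L$ is obtained from $P$ by gluing up boundary circles and is a closed leaf, again contradicting Proposition \ref{p.extfol}. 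The remaining configurations, with both boundary circles of $P$ on one copy of $\Sigma_q$, are either ruled out by the efficient position chosen in the next step, or produce a region of $W_q$ bounded by $P$ and a subsurface of $\Sigma_q$ that forces a Reeb component into $\cF_q$, contradicting tautness. Hence $\cF_q|_U$ has no annulus or M\"obius plaque.

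\emph{Items $1$ and $2$.} Since $\Sigma_q$ is $\pi_1$-injective in $W_q$ while $W_q\setminus\Lambda_q$ has cyclic fundamental group (it deformation retracts, via the flow, onto the repelling orbit $\beta_q$), $\Sigma_q$ must meet $\Lambda_q$; because $\Lambda_q$ is an essential lamination we may arrange, already in the transversality step, that this intersection is efficient, with no complementary bigon, so that $\Sigma_q\cap\Lambda_q$ is a non-empty union of leaves of $F$ and every leaf of $F$ is either contained in $\Lambda_q$ or disjoint from it. If $c$ is a compact leaf of $F$ disjoint from $\Lambda_q$, then by the previous paragraph it is isotopic to the compact leaves of $F$ inside $\Lambda_q$, so it lies in an annular complementary region of $\Sigma_q\cap\Lambda_q$ over which $\cF_q$ is modelled on the flow-saturated gap leaves of $\cF_q|_{N_q\setminus\Lambda_q}$ together with the filled monkey-saddle leaves in $V^q$; a small isotopy of $\Sigma_q$ supported in that region, sliding $\Sigma_q$ along $\cF_q$, removes $c$ as a compact leaf without creating new ones and without affecting items $3$ and $4$, keeping $\Sigma_q$ a transverse fiber torus. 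After finitely many such moves, item $1$ holds. Finally, a compact leaf $c\subset\Lambda_q$ lies in a leaf $\ell$ of the stable lamination of $\Lambda_q$; being essential, $c$ is not null-homotopic in $\ell$, so $\ell$ is a cylinder and $c$ is homotopic in $\ell$ to the core periodic orbit $\gamma$ of $\Lambda_q$. Hyperbolicity of $\gamma$, together with the boundary-periodic-orbit structure of Theorem \ref{t.boundpeorb}, makes the $\cF_q$-holonomy along $c$ contracting or repelling on each side, so $c$ is an isolated compact leaf of $F$; since $\Sigma_q$ is compact, there are finitely many such $c$, which is item $2$.

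\emph{Main obstacle.} The one genuinely delicate point is item $1$: $\Sigma_q$ has been made transverse to $\cF_q$, not to the flow $Y_t^q$, so the elimination of a compact leaf of $F$ lying outside $\Lambda_q$ must be carried out inside the merely $C^0$ foliation $\cF_q$, using the precise structure of its gap leaves (flow-saturated in $N_q\setminus\Lambda_q$, monkey-saddle in $V^q$) and controlling the $C^0$ holonomy so that the simplifying isotopy is well defined and strictly decreases the number of compact leaves of $F$. The transversality step and the plaque analysis are otherwise standard foliation theory.
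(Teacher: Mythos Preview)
Your transversality step and the monodromy argument for the ``across'' case of item~3 are essentially the paper's. The genuine gap is in item~1, and it propagates into your ordering of the remaining items.

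You treat item~1 as an isotopy problem---you try to slide $\Sigma_q$ along $\cF_q$ to remove compact leaves of $F$ lying outside $\Lambda_q$---and you yourself flag this as the main obstacle. In fact no isotopy is needed: such leaves do not exist. If $c$ is a compact leaf of $F$ with $c\not\subset\Lambda_q$, then since $\Lambda_q$ is a sublamination of $\cF_q$ the entire $\cF_q$-leaf $l_c$ through $c$ lies in $W_q\setminus\Lambda_q$. You already observed that $W_q\setminus\Lambda_q$ is an open solid torus whose core meets $\Sigma_q$ algebraically once; hence every nontrivial loop in $W_q\setminus\Lambda_q$ has nonzero algebraic intersection with $\Sigma_q$. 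But $c\subset\Sigma_q$ has zero algebraic intersection with $\Sigma_q$, while $c$ is essential in $\Sigma_q$ (hence in $W_q$), contradiction. This is the paper's argument, and it gives item~1 immediately for \emph{any} transverse fiber torus.

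Because you postpone item~1 and try to prove items~3--4 first, you are forced to handle the same-side annulus plaque case without the tool the paper uses. Your sentence about this case (``ruled out by the efficient position\ldots or forces a Reeb component'') is not a proof: a solid torus bounded by $P\cup A_1$ with $\cF_q$ tangent to $P$ and transverse to $A_1$ need not contain a Reeb component. The paper's route is to first establish items~1--2, then pass to a transverse fiber torus with the \emph{minimal} number of compact leaves of $F$ (this makes sense by item~2), and only then argue item~3: a same-side annulus plaque allows an explicit isotopy of $\Sigma_q$ across the solid torus it cobounds, strictly decreasing the number of compact leaves and contradicting minimality. For item~4 the paper gives a separate one-line $H_1$ argument: a M\"obius plaque in $U\cong\Sigma_q\times[0,1]$ would make an essential curve on $\Sigma_q$ twice a class in $H_1(U)\cong\ZZ^2$, which is impossible. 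You should adopt this ordering and these two arguments.
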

\begin{proof}
By   the main theorem of \cite{BR} by Brittenham and Roberts.
 we can pick a fibered torus $\Sigma_q$ in $W_q$ so that it transversely intersects with $\cF_q$, and further assume that the intersection $1$-foliation
is $F$.
First we claim that:  $W_q-\Lambda_q$ is homeomorphic to an open solid torus with one core
 transversely intersecting to $\Sigma_q$ once.
We give a short proof of the claim here.
 Since $N_q - \Lambda_q =\bigcup_{t\geq0} Y_t^q (\partial N_q)$,
 then $N_q - \Lambda_q$ is homeomorphic to $T^2 \times [0, 1)$.
 Further notice that  $(W_q-\Lambda_q) - (W_q -N_q)=N_q - \Lambda_q$,
 therefore  $(W_q-\Lambda_q) - (W_q -N_q)$ also is homeomorphic to $T^2 \times [0, 1)$.
 By our construction, it is easy to see that
 $W_q -N_q$ is homeomorphic to an open solid torus with one core
 transversely intersecting to $\Sigma_q$ once.
 Obviously, the two points above ensure the claim.
 This claim implies the following fact: two homotopy non-vanishing loops in $W_q-\Sigma_q$ and $W_q-\Lambda_q$ never can be freely homotopic.

 Now we show item $1$. Otherwise, $l_c \subset W_q -\Lambda_q$ where $l_c$ is the leaf of $\cF_q$ containing $c$. Note that $c$ is homotopy non-vanishing
  since it is leaf of $F$. Further notice that $c$ is homotopic to a circle in
 $W_q-\Sigma_q$. Then one can quickly find a contradiction by the fact in the first paragraph. The conclusion of item $1$ is obtained.

 By item $1$, $c\subset l_c$ which is in the interior of $\Lambda_q$, the holonomy of $l_c$ (in the foliation $\cF_q$) along
 $c$ is contracting (or expanding). Therefore, the holonomy of $c$ in $F$ is contracting (or expanding). This means that
 every compact leaf of $F$ is isolated. Moreover, the set of compact leaves in $F$ is a compact subset of $\Sigma_q$ since the holonomy
 of every non-compact leaf is trivial. By these two facts, we immediately get that there are only finitely many compact leaves in $F$. Item $2$ is proved.

 By item $2$, we can  assume that $\Sigma_q$ is in the position which transversely intersects with $\cF$ with minimal number of
 compact leaves.
 Suppose there exists an annulus plaque $A_0$ in $U$.  Let $A_0\subset l_{A_0}$ where $l_{A_0}$ is a leaf of $\cF_q$.
 Then naturally there are two cases: $\partial A_0=c_1 \sqcup c_2$ in the same connected component of $\partial U$ or in the two connected components
 of $\partial U$.

 In the first case, without loss of generality, we assume that  $A_1$ bounded by $c_1$ and $c_2$ in $\Sigma_q =\Sigma_q \times \{0\}$ so that the union of
 $A_0$ and $A_1$ bounds a solid torus $V_0$ in $U$.
  By item $2$, both of $c_1$ and $c_2$ are isolated compact leaves in $F$ with attracting or repelling holonomy.
 Let $A_1'$ be a small tubular neighborhood of $A_1$ in $\Sigma_q$ so that $F$ is transverse to $\partial A_1'$.
 Then we can isotopically push $A_1'$ to $A_0'$ relative to $\partial A_1'$ so that,
 \begin{enumerate}
   \item $A_0' \subset U-V_0$ which is close to $A_0$ and transverse to $\cF_q$;
   \item there does not exist any compact leaf in $\cF_q \cap A_0'$.
 \end{enumerate}
 Then the new torus $\Sigma'=( \Sigma_q - A_1')\cup A_0'$ automatically satisfies the following conditions:
 \begin{enumerate}
   \item $\Sigma'$ is isotopic to $\Sigma_q$ in $W_0$;
   \item $\Sigma'$ is transverse to $\cF_q$;
   \item the number of compact leaves in $\cF_q \cap \Sigma'$ is less than the number of compact leaves in
   $F=\cF_q \cap \Sigma_q$.
 \end{enumerate}
 This conflicts to the assumption about the minimality of the number of the compact leaves in $F$.

  On the other hand, we suppose that there is an annulus $A$ in $V$ so that one boundary connected component is in $T^2\times
\{0\}$ and the other is in $T^2 \times \{1\}$ since $A\subset l_{\gamma}$, a leaf of $\cF_q$ and the monodromy map (from $V$ to $W_0$) is isotopic to a Thom-Anosov automorphism. Therefore, $C_{\gamma}$ is not homeomorphic to an annulus.

   Then item $3$ is obtained.

 Now we prove item $4$. Otherwise, there  is a Mobius band $B$ in $U$ so that $c=\partial B$ is an essential simple closed
 curve in $\Sigma_q$. $c$ is essential since it is a compact leaf of $F$ on $\Sigma_q$. In $H_1(U,\ZZ)$, $[c]= 2[c_B]$ where $c_B$ is a core circle of $B$.
 But notice that $H_1(U, \ZZ) \cong \ZZ \oplus \ZZ$ with $[c]$ as a generator, $[c]$ can not be equivalent to $2[c_B]$. Therefore we get a contradiction.
 Item $4$ is proved.
\end{proof}

\begin{proof}[Proof of Proposition \ref{l.openper}]
First we choose $\Sigma_q$ constructed in Lemma \ref{l.Sgood}.
Assume by contradiction that there exists a periodic orbit $\gamma\subset \Lambda$, such that the algebraic intersection
number of $\gamma$ and $\Sigma$ is $0$. Then there exists a periodic orbit by lifting, still called by $\gamma$,
which satisfies that the algebraic intersection number of $\gamma$ and $\Sigma_q$ is $0$.
Therefore, up to cyclically finite cover, we can further assume that $\gamma$ and $\Sigma_q$ are disjoint, i.e. $\gamma \subset U$.

Let $l_{\gamma}$ be the leaf in $\cF_q$ containing $\gamma$ and $C_{\gamma}$ be the connected component of $l_{\gamma}\cap U$ containing $\gamma$.
 By the fact in the first paragraph in the proof of Lemma \ref{l.Sgood}, one can easily get that $\Lambda_q \cap \Sigma_q \neq \emptyset$.
 Moreover, since $\Lambda_q$ is a transitive expanding attractor (Lemma \ref{l.ktransitive}), every separatrice of
 $l_{\gamma}^+$ is dense in $\Lambda$ where  $l_{\gamma}^+$ is a connected component of $l_{\gamma}-\gamma$.
 Therefore, $l_{\gamma}^+ \cap \Sigma\neq \emptyset$.

Now we choose two copies of $(U, \cF_q|_U)$ and glue them together along their boundaries by identity map
to get a foliation $\cH$ on $T^3$.\footnote{This technique also has been used in \cite{HP}.} Set $l_{\gamma}^H$ is the leaf in $\cH$ which contains $\gamma$.
Then $l_{\gamma}^H$ is the surface by gluing two copies of $C_{\gamma}$ by identity map on $C_{\gamma}\cap \Sigma_q$.
Note that $C_{\gamma}$ contains an essential simple closed curve $\gamma$.
Then by some elementary arguments by surface classification theorem, one can get that
$\pi(l_{\gamma}^H)$ is non-abelian except for the case that $C_{\gamma}$ is homeomorphic to either an annulus or a Mobuis band.
But item $3$ and item $4$ in Lemma \ref{l.Sgood} say that both of these two cases do not appear.
Therefore, $\pi(l_{\gamma}^H)$ is non-abelian.

Item $3$ in Lemma \ref{l.Sgood}  also implies that there is no half-Reeb component in $\cF_q|_U$, and further notice that
$\cF_q|_U$ doesn't contain any Reeb component, then $\cH$ is a non-Reeb foliation on $T^3$.
By Novikov Theorem, the non-abelian group $\pi_1(l_{\gamma}^H)$ is a sub-group of $\pi_1 (T^3)$ which is abelian. We get an contradiction.
Therefore, there does not exist a periodic orbit $\gamma$ such that the algebraic intersection
number of $\gamma$ and $\Sigma$ is $0$.
\end{proof}

\subsubsection{Long time behavior of every flowline}
  In this subsection, we will prove that every flowline of the Axiom flow essentially intersects with every torus fiber in $W_0$ in some sense
       (Proposition \ref{p.liminf}). The proof depends on Proposition \ref{l.openper} and shadowing lemma.

 First we introduce Shadowing lemma for $Y_t$ in the attractor $\Lambda$  (see \cite{PW}).

 \begin{theorem}\label{t.shadow}
 For every  $\epsilon_0 >0$ and $\delta_0 >0$, there exists $\epsilon_1>0$ such that
 for any $x_1, \dots, x_q \in \Lambda$ and $t_1, \dots, t_q >1$ such that
 $d(Y_{t_i} (x_i), x_{i+1}) \leq \epsilon_1$ (we note that $x_{k+1} =x_1$),
 then there is a point $y$ with periodic $m$,
 $$(1-\epsilon_0)\sum_{i=1}^q t_i \leq m \leq (1+\epsilon_0) \sum_{i=1}^q t_i.$$
 Moreover, there is a family of increase continuous maps
 $\{\varphi_{x_i}: [0, t_i] \to \RR^+ \}$ such that,
 $\varphi_{x_1} (0)= 0, \varphi_{x_2} (0)= \varphi_{x_1} (t_1), \dots, \varphi_{x_q} (t_q)=m$
 for any $i\in \{1,\dots, k\}$ and $t \in [0, t_q]$, $$d(Y_t (x_i), Y_{\varphi_{x_i}(t)}  (y) )\leq \delta_0.$$
 \end{theorem}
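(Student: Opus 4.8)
The plan is to read Theorem~\ref{t.shadow} as the usual periodic (``closing'') shadowing lemma for the hyperbolic set $\Lambda$, adapted to the flow, and to prove it by the classical graph-transform fixed-point scheme; the quantitative estimates are exactly those of \cite{PW}, so I will only describe the structure of the argument.

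\emph{Set-up.} Put $S=\sum_{i=1}^q t_i$ and form the broken trajectory that follows each orbit arc $\{Y_t(x_i):0\le t\le t_i\}$ and then jumps from $Y_{t_i}(x_i)$ to $x_{i+1}$ (indices read cyclically), each jump of size $\le\epsilon_1$. Along $\Lambda$ we have a continuous hyperbolic splitting $T_xM=E^{ss}_x\oplus\langle\dot Y(x)\rangle\oplus E^{uu}_x$, and since every $t_i>1$ one can cover the broken trajectory by finitely many adapted flow-box charts of a uniform radius $\rho=\rho(\Lambda)>0$, inside which the flow agrees with its linearization up to a controlled error and in which cross-sections transverse to $\dot Y$ are available.

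\emph{Fixed-point argument.} I would look for the shadowing periodic orbit inside the $\rho$-tube around the broken trajectory, described in the adapted coordinates by two unknowns: a transverse ``section'' with values in $E^{ss}\oplus E^{uu}$, and a family of increasing time-change maps $\varphi_{x_i}:[0,t_i]\to\RR^+$ recording how fast the true orbit runs relative to the pseudo-orbit. Hyperbolicity makes the graph transform a contraction on the transverse section --- split, as usual, into a part contracted forwards along $E^{ss}$ and a part contracted backwards along $E^{uu}$ --- with contraction factor bounded away from $1$ as soon as $\epsilon_1$ is small compared with $\delta_0$ and the hyperbolicity constants; the neutral flow direction carries no contraction, and it is exactly this slack that the reparametrizations $\varphi_{x_i}$ take up. Imposing periodicity amounts to solving on a loop of length $\approx S$, the period $m$ being the total reparametrized time elapsed; keeping each chart's time distortion within a factor $1\pm\epsilon_0$ forces $(1-\epsilon_0)S\le m\le(1+\epsilon_0)S$. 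The contraction mapping theorem on the Banach space of such (periodic section, reparametrization) pairs then produces the point $y$, and by construction the $\varphi_{x_i}$ satisfy $\varphi_{x_1}(0)=0$, $\varphi_{x_{j+1}}(0)=\varphi_{x_j}(t_j)$, $\varphi_{x_q}(t_q)=m$ and $d(Y_t(x_i),Y_{\varphi_{x_i}(t)}(y))\le\delta_0$.

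\emph{Main difficulty.} The only thing that goes beyond a literal hyperbolic estimate is the neutral direction along $\dot Y$: since one cannot contract there, the period is not prescribed in advance, so the reparametrizations must be carried as genuine unknowns and solved simultaneously with the transverse coordinates, and one must check that the closing-up holonomy around the loop yields an honestly periodic orbit (not merely one asymptotic to a periodic one) while the accumulated time distortion stays inside the $\epsilon_0$-window. Once this bookkeeping is arranged the conclusion follows from the contraction estimate, for which I refer to \cite{PW}.
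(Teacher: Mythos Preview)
Your sketch is a reasonable outline of the classical closing/shadowing argument for hyperbolic flows, and this is exactly the result the paper is invoking. Note, however, that the paper does \emph{not} give its own proof of Theorem~\ref{t.shadow}: it simply states the result and cites \cite{PW} (Palis--de Melo) for it, treating it as a standard tool. So there is nothing to compare your approach against; the paper regards this as background, and your outline is in the spirit of the textbook proof it refers to.
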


\begin{lemma} \label{l.uniformbound}
Let $\Wi x$ be a point in $\Wi{\Lambda}$ and $U$ be a compact subset of $\Wi W_0$, then there exists $R>0$ so that $\Wi Y_t (\Wi x)$ ($|t|>R$) is disjoint with $U$.
\end{lemma}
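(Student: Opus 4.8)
The plan is to argue by contradiction, using the projection $P:\Wi W_0\to\RR$ (with $P\circ\tau=P+1$ for the generator $\tau$ of the deck group of $\pi$): since $\Sigma$ is compact, an orbit leaves every compact subset of $\Wi W_0\cong\Sigma\times\RR$ precisely when $P$ is unbounded along it in each time direction, so it suffices to rule out the existence of a sequence $t_n\to+\infty$ with $\Wi Y_{t_n}(\Wi x)\in U$ (the case $t_n\to-\infty$ being identical after reversing time). Suppose such a sequence exists. Since $U$ is compact I may pass to a subsequence along which $\Wi Y_{t_n}(\Wi x)$ converges; projecting by $\pi$ and writing $x=\pi(\Wi x)\in\Lambda$, the points $Y_{t_n}(x)$ converge inside the compact hyperbolic set $\Lambda$, hence form a Cauchy sequence, and $P(\Wi Y_{t_n}(\Wi x))$ is Cauchy in $\RR$. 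The idea is to feed this near‑recurrence of the $Y_t$–orbit of $x$ into the closing lemma to manufacture a periodic orbit $\gamma\subset\Lambda$ whose lift makes almost no net displacement of $P$, which will contradict Proposition \ref{l.openper}.

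Concretely, I would fix a small $\delta_0>0$, let $\epsilon_1$ be the constant provided by Theorem \ref{t.shadow} (say with $\epsilon_0=1$ and one pseudo‑orbit point), and choose indices $n<m$ large enough that $d(Y_{t_m}(x),Y_{t_n}(x))\le\epsilon_1$, that $t_m-t_n>1$, and that $|P(\Wi Y_{t_m}(\Wi x))-P(\Wi Y_{t_n}(\Wi x))|<\tfrac12$. Applying Theorem \ref{t.shadow} with $x_1=Y_{t_n}(x)$ and $t_1=t_m-t_n$ yields a periodic orbit $\gamma$ — which lies in $\Lambda$ once $\delta_0$ is small, because $\Lambda$ is an attractor — together with a reparametrization under which the orbit of $\gamma$, traversed exactly once, $\delta_0$‑shadows the segment $Y_{[t_n,t_m]}(x)$. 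Lifting: this once‑around shadowing path of $\gamma$ lifts to a path $\delta_0$‑close to $\Wi Y_{[t_n,t_m]}(\Wi x)$ (here $\delta_0$ is also taken below the evenly‑covered radius of $\pi$, so the compatible starting lift is the point $\Wi y_0$ of the lift of $\gamma$ lying $\delta_0$‑close to $\Wi Y_{t_n}(\Wi x)$); after one full period this lifted path ends at another lift of the same point of $\gamma$, hence at $\tau^{k}\Wi y_0$, where $k\in\ZZ$ is the algebraic intersection number of $\gamma$ with the fiber torus $\Sigma$. Using that $P$ is Lipschitz (it is the lift of a smooth fibration map on the compact manifold $W_0$), say with constant $C$, and that $P\circ\tau=P+1$, comparing $P$ at the two endpoints gives $|k|\le|P(\Wi Y_{t_m}(\Wi x))-P(\Wi Y_{t_n}(\Wi x))|+2C\delta_0<\tfrac12+2C\delta_0$, so choosing $\delta_0$ at the outset with $2C\delta_0<\tfrac12$ forces $|k|<1$, i.e.\ $k=0$ — contradicting Proposition \ref{l.openper}.

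Running the same contradiction for the time‑reversed flow $Y_{-t}$ (for which $\Lambda$ is still a locally maximal hyperbolic set to which Theorem \ref{t.shadow} applies, and for which Proposition \ref{l.openper} is unchanged, since it concerns $\gamma$ only as a subset) rules out a sequence $t_n\to-\infty$ as well; hence $\{t:\Wi Y_t(\Wi x)\in U\}$ is bounded, which is the assertion. The conceptual input is entirely Proposition \ref{l.openper} together with the closing lemma; I expect the only real care needed to be the bookkeeping with constants — choosing $\delta_0$ before $\epsilon_1$ and small relative both to the evenly‑covered radius of $\pi$ and to the isolating‑neighborhood scale of the attractor $\Lambda$ — and the routine covering‑space fact that $\delta_0$‑close paths downstairs lift to $\delta_0$‑close paths upstairs once a compatible starting lift is fixed.
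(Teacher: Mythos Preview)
Your proof is correct and follows essentially the same idea as the paper: obtain near-recurrence from compactness, close it up via the shadowing/closing lemma, and contradict Proposition~\ref{l.openper}. The only difference is that the paper applies the shadowing lemma directly to $\Wi Y_t$ in the cover $\Wi W_0$ (the hyperbolicity constants lift), so the resulting periodic orbit is periodic \emph{upstairs} and hence automatically has zero algebraic intersection with $\Sigma$ --- this sidesteps your Lipschitz bookkeeping with $P$ to force $k=0$, but the content is the same.
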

\begin{proof}
Otherwise, we can assume that $\Wi Y_t (\Wi x) \in U$ for any $t>0$ where $U$ is a compact subset of $\Wi W_0$.
By compactness of $U$, there exist $t_1, t_2 \in \RR$ such that $t_2 -t_1 >1$ and $d(\Wi Y_{t_1}(\Wi x), \Wi Y_{t_2}(\Wi x))< \epsilon_1$.
Note that the claim in Theorem \ref{t.shadow} also works for $\Wi Y_t$ and $\Wi{\Lambda}$.
Therefore, there exists a periodic orbit which is very close to $\{\Wi Y_t (\Wi x), t \in [t_1,t_2]\}$.
It conflicts to the main result in Proposition \ref{l.openper}.
\end{proof}

\begin{lemma} \label{l.cooriented}
Let $\gamma_1$, $\gamma_2$ be two periodic orbits in the attractor $\Lambda$, then the algebraic intersection
numbers  $Int(\gamma_1, \Sigma)$ and $Int(\gamma_2, \Sigma)$ satisfy that $Int(\gamma_1, \Sigma) \cdot Int(\gamma_2, \Sigma)> 0$.
\end{lemma}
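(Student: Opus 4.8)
The plan is to argue by contradiction and, from two periodic orbits in $\Lambda$ with opposite-sign intersection numbers, to manufacture a periodic orbit of $Y_t$ inside $\Lambda$ whose algebraic intersection number with $\Sigma$ is exactly zero — which is forbidden by Proposition \ref{l.openper}. The inputs are: transitivity of $\Lambda$ (so $\Lambda$ is connected and its periodic orbits are dense), Proposition \ref{l.openper}, and the shadowing lemma (Theorem \ref{t.shadow}).

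First I would set up the dichotomy. Let $P^+$ (resp.\ $P^-$) be the union of the periodic orbits $\gamma\subset\Lambda$ with $Int(\gamma,\Sigma)>0$ (resp.\ $<0$). By Proposition \ref{l.openper} no periodic orbit has zero intersection number, so $\overline{P^+}\cup\overline{P^-}=\Lambda$. If the conclusion failed, both $P^+$ and $P^-$ would be non-empty, and connectedness of $\Lambda$ would force $\overline{P^+}\cap\overline{P^-}\neq\emptyset$; fix a point $x$ in this intersection. Since all fibers $T^2\times\{t\}$ are homologous, $Int(\cdot,\Sigma)$ is independent of the chosen fiber, so I may take $\Sigma=T^2\times\{t_0\}$ with $t_0$ chosen so that $\Sigma$ is disjoint from a small ball $B$ centered at $x$.

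Then comes the construction. Choose $\epsilon_0,\delta_0$ small — in particular smaller than the injectivity radius of $W_0$ and than the radius of $B$ — and let $\epsilon_1$ be as in Theorem \ref{t.shadow} (shrunk further if needed so that $\epsilon_1$ is much smaller than the radius of $B$). Since $x\in\overline{P^+}\cap\overline{P^-}$, there are periodic orbits $\gamma^+,\gamma^-\subset\Lambda$ with $p:=Int(\gamma^+,\Sigma)>0$ and $-q:=Int(\gamma^-,\Sigma)<0$, each meeting $B$, say through points $w^+\in\gamma^+$, $w^-\in\gamma^-$ with $d(w^\pm,x)<\epsilon_1$. Put $g=\gcd(p,q)$ and consider the periodic pseudo-orbit that runs $N\tfrac{q}{g}$ times around $\gamma^+$ starting and ending near $w^+$, jumps (a jump of length $<2\epsilon_1$, lying inside $B$) to $w^-$, runs $N\tfrac{p}{g}$ times around $\gamma^-$, and jumps back to $w^+$; here $N$ is a large integer so that each stretch can be subdivided into pieces of flow-time in $(1,2)$, as required by Theorem \ref{t.shadow}. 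The theorem then yields a genuine periodic orbit $\gamma$ of $Y_t$ inside $\Lambda$ that $\delta_0$-shadows this pseudo-orbit. Because $\delta_0$ is below the injectivity radius, $\gamma$ is freely homotopic in $W_0$ to the closed-up pseudo-orbit, whose homology class is $N\tfrac{q}{g}[\gamma^+]+N\tfrac{p}{g}[\gamma^-]+[\mu]$, where $\mu$ is the loop made of the two jump arcs; since $\mu\subset B$ and $B\cap\Sigma=\emptyset$, the class $[\mu]$ pairs trivially with $[\Sigma]$, so
\[
Int(\gamma,\Sigma)=N\tfrac{q}{g}\,Int(\gamma^+,\Sigma)+N\tfrac{p}{g}\,Int(\gamma^-,\Sigma)=N\tfrac{q}{g}\,p-N\tfrac{p}{g}\,q=0 .
\]
This contradicts Proposition \ref{l.openper}, since $\gamma$ is a periodic orbit in $\Lambda$. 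Hence $Int(\gamma_1,\Sigma)$ and $Int(\gamma_2,\Sigma)$ cannot have opposite signs, and, being non-zero by Proposition \ref{l.openper}, they satisfy $Int(\gamma_1,\Sigma)\cdot Int(\gamma_2,\Sigma)>0$.

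The one point that requires care — and the reason for the geometric bookkeeping above — is making the computation of $Int(\gamma,\Sigma)$ exact rather than approximate: a naive attempt to join $\gamma^+$ and $\gamma^-$ through long connecting orbit segments (using $W^u(\gamma^+)\cap W^s(\gamma^-)$) leaves uncontrolled "connector" contributions to the intersection number. Routing both jumps through a small ball $B$ around the common accumulation point $x$, chosen disjoint from the fiber $\Sigma$, is precisely what forces all the error terms in the homology class of the shadowing orbit to be supported away from $\Sigma$ and hence to drop out of the intersection pairing.
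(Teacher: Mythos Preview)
Your proof is correct and reaches the same contradiction as the paper, but by a somewhat different route. The paper works in the infinite cyclic cover $\Wi W_0$: starting from two nearby periodic orbits with opposite-sign intersection numbers, it uses the local product structure to produce heteroclinic connecting orbits in the cover, adjusts by deck transformations, and shadows the resulting pseudo-orbit to obtain a \emph{closed} orbit of $\Wi Y_t$ in $\Wi W_0$ --- which is exactly the ``lifted'' form of the statement $Int(\gamma,\Sigma)=0$ forbidden by Proposition~\ref{l.openper}. It then passes from the local (nearby) case to arbitrary pairs by density and compactness.

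Your argument stays entirely in $W_0$ and replaces the covering/heteroclinic machinery by a combinatorial balancing: you pick a common accumulation point $x\in\overline{P^+}\cap\overline{P^-}$, route both jump arcs through a small ball $B$ disjoint from the chosen fiber, and take $N\tfrac{q}{g}$ and $N\tfrac{p}{g}$ laps so that the intersection contributions cancel exactly. This is a genuine simplification --- no lift, no heteroclinic orbits, no deck-transformation bookkeeping --- and the device of isolating all connector arcs inside $B\cap\Sigma=\emptyset$ is exactly what makes the homological count clean. One small cosmetic point: your displayed homology decomposition with $[\mu]$ is a slight abuse (the jump arcs are not themselves a cycle until concatenated), but the intersection count you actually perform --- summing contributions segment by segment, with the jump arcs contributing zero --- is correct and is all that is needed. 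The paper's approach, by contrast, avoids having to balance multiplicities at all, since any periodic orbit in the cover automatically projects to one with zero intersection number; that viewpoint meshes with the rest of Section~\ref{ss.struflow}, which is already organized around the cover.
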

\begin{proof}
First of all, notice that $\Lambda$ is an expanding attractor, then there exists some $\eta>0$ so that if the distance of two points is less than $\eta$, the two points are in a small product flow box.
We first prove the lemma in a special case that two periodic orbits $\gamma_1$ and
$\gamma_2$ satisfy that, there are two points
in $\gamma_1$ and $\gamma_2$ respectively whose distance is less than $\frac{\eta}{2}$.
We assume by contradiction that $\gamma_1$, $\gamma_2$ are two periodic orbits in the attractor $\Lambda$ so that $Int(\gamma_1, \Sigma) \cdot Int(\gamma_2, \Sigma)< 0$
in this case.
The local product property implies that there are two lifting orbits $\Wi\gamma_1$ and $\Wi\gamma_2$ of $\gamma_1$ and $\gamma_2$ respectively so that they are homolinic related. This fact induces that there are two orbits $\Wi{\alpha}$ and $\Wi{\beta}$ of $\Wi Y_t$ so that,
\begin{enumerate}
  \item $\Wi{\alpha}$ is positively asymptotic to $\Wi \gamma_1$ and negatively asymptotic to $\Wi \gamma_2$;
  \item $\Wi{\beta}$ is negatively asymptotic to $\Wi \gamma_1$ and positively asymptotic to $\Wi \gamma_2$.
\end{enumerate}
Then for $\epsilon_1 >0$, there exists a flow arc $c_{\alpha}$ in $\Wi{\alpha}$ and a flow arc $c_{\beta}$ in
$\Wi{\beta}$ so that,
\begin{enumerate}
  \item the starting point $s(c_{\alpha})$ and ending point $e(c_{\alpha})$ of $c_{\alpha}$  are in the $\epsilon_1$ neighborhoods of $\Wi \gamma_1$ and
  $\Wi\gamma_2$ respectively;
  \item $s(c_{\beta})$ and $e(c_{\beta})$ are in the $\epsilon_1$ neighborhoods of $\Wi\gamma_2$ and
  $\Wi\gamma_1$ respectively.
\end{enumerate}
Note that the Deck transformation associated to the covering map $\Wi W_0 \to W_0$ is a $\ZZ$-action generated by $\sigma$.
Since both of $\gamma_1$ and $\gamma_2$ are periodic orbits whose algebraic intersection number with $\Sigma$ are nonzero (Proposition \ref{l.openper}), there exists a nonzero integer $k$ so that both of $\Wi \gamma_1$ and
  $\Wi\gamma_2$ are invariant under the action $\sigma^k$. Therefore,  $\Wi\gamma_1$ and
  $\Wi\gamma_2$ are invariant under the action $\sigma^{sk}$ for every $s\in \ZZ$.
  Then we can choose $s\in \ZZ$ and define a new flow arc $c_{\beta}'= \sigma^{sk} c_{\beta}$ so that,
  \begin{enumerate}
    \item $s(c_{\beta}')$  and $e(c_{\beta}')$ are in the $\epsilon_1$ neighborhoods of $\Wi\gamma_2$ and
  $\Wi\gamma_1$ respectively;
    \item there exist flow arcs $c_1$ in $\Wi\gamma_1$  and $c_2$ in $\Wi\gamma_2$ so that
    $e(c_1)$, $e(c_{\alpha}')$, $e(c_2)$ and $e(c_{\beta}')$ are in the $\epsilon_1$ neighborhoods of
    $s(c_{\alpha}')$, $s(c_2)$, $s(c_{\beta}')$ and $s(c_1)$ respectively.
  \end{enumerate}
  Therefore, these four flow arcs form a $\epsilon_1$-pseudo periodic orbit.
  By Theorem \ref{t.shadow}, there exists a periodic orbit $\Wi{\gamma}$
  in $\Wi Y_t$. It conflicts to Proposition \ref{l.openper}.
Then $Int(\gamma_1, \Sigma) \cdot Int(\gamma_2, \Sigma)> 0$.

 Now by the density of the union of periodic orbits in $\Lambda$
  and the compactness of $\Lambda$, one can immediately get that  $Int(\gamma_1, \Sigma) \cdot Int(\gamma_2, \Sigma)> 0$ for every
   two periodic orbits $\gamma_1$ and $\gamma_2$ in $\Lambda$.
  \end{proof}

\begin{remark}\label{r.posint}
Without confusion, in the following of this paper, we can always assume that $Int(\gamma, \Sigma)>0$ for every periodic orbit $\gamma$ in $\Lambda$.
\end{remark}
By Lemma \ref{l.uniformbound} and Lemma \ref{l.cooriented}, we immediately  have the following consequence.

\begin{corollary}\label{c.pinfty}
Let $\Wi x \in \Wi{\Lambda}\subset \Wi W_0$, then  $\lim_{t\to \infty} P\circ\Wi Y_t (\Wi x)=\infty$.
Moreover, if $\pi(\Wi x)$ is in a periodic orbit of $\Lambda$, $\lim_{t\to -\infty} P\circ\Wi Y_t (\Wi x)=-\infty$ and $\lim_{t\to +\infty} P\circ\Wi Y_t (\Wi x)=+\infty$.
\end{corollary}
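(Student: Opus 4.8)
The plan is to deduce the corollary directly from the two lemmas just proved, namely Lemma \ref{l.uniformbound} (trajectories of points in $\Wi\Lambda$ leave any compact subset of $\Wi W_0$ in finite time) and Lemma \ref{l.cooriented} together with Proposition \ref{l.openper} (all periodic orbits of $\Lambda$ have positive algebraic intersection number with the fiber torus $\Sigma$, after fixing orientations as in Remark \ref{r.posint}). First I would fix $\Wi x\in\Wi\Lambda$ and recall that the slices $\Sigma\times\{t\}$ exhaust $\Wi W_0$, so that the function $P\circ\Wi Y_t(\Wi x)$ is a continuous real-valued function of $t$; the claim $\lim_{t\to\infty}|P\circ\Wi Y_t(\Wi x)|=\infty$ is exactly the statement that the forward orbit of $\Wi x$ is eventually disjoint from every preimage $P^{-1}([-a,a])$, which is compact. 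That is precisely what Lemma \ref{l.uniformbound} gives: for each $a>0$, taking $U=P^{-1}([-a,a])$, there is $R=R(a)>0$ with $\Wi Y_t(\Wi x)\notin U$ for $|t|>R$, hence $|P\circ\Wi Y_t(\Wi x)|>a$ for $|t|>R$. Since $a$ is arbitrary, $\lim_{t\to\pm\infty}|P\circ\Wi Y_t(\Wi x)|=\infty$.

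The remaining point is to upgrade this to a genuine limit (with sign) in the case where $\pi(\Wi x)$ lies on a periodic orbit $\gamma$ of $\Lambda$. Here I would argue that $P\circ\Wi Y_t(\Wi x)$ is \emph{monotone for large $|t|$}, or at least that its sign stabilizes, using the orientation normalization $\mathrm{Int}(\gamma,\Sigma)>0$ from Remark \ref{r.posint}. Concretely: since $\gamma$ is a periodic orbit with $\mathrm{Int}(\gamma,\Sigma)>0$, a lift $\Wi\gamma$ is a properly embedded copy of $\RR$ on which the deck generator $\sigma$ acts by translation by $|\mathrm{Int}(\gamma,\Sigma)|\geq 1$ in the positive direction; following $\Wi Y_t$ for one period of $\gamma$ advances $P$ by exactly $\mathrm{Int}(\gamma,\Sigma)>0$, hence $P\circ\Wi Y_{t+L}(\Wi x)=P\circ\Wi Y_t(\Wi x)+\mathrm{Int}(\gamma,\Sigma)$ where $L$ is the period. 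This periodicity-up-to-translation forces $P\circ\Wi Y_t(\Wi x)\to+\infty$ as $t\to+\infty$ and $\to-\infty$ as $t\to-\infty$, since the function increases by a fixed positive amount every period and is bounded on each period.

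For a general $\Wi x\in\Wi\Lambda$ (not on a periodic orbit) one cannot hope for a signed limit in both time directions — the backward orbit may oscillate — which is why the statement only claims $\lim_{t\to\infty}|P\circ\Wi Y_t(\Wi x)|=\infty$ in that case; this is handled entirely by the compactness argument of the first paragraph. The only mild subtlety, and the step I expect to need the most care, is the translation identity $P\circ\Wi Y_{t+L}(\Wi x)=P\circ\Wi Y_t(\Wi x)+\mathrm{Int}(\gamma,\Sigma)$: one must check that the integer by which $\sigma$-translates the lift $\Wi\gamma$ is exactly the algebraic intersection number of $\gamma$ with the fiber $\Sigma$ (with the sign conventions fixed in Remark \ref{r.posint}), which is a standard fact about infinite cyclic covers associated to a fibration but should be stated explicitly. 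Once that identity is in place, both halves of the corollary follow immediately.
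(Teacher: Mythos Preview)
Your proposal is correct and matches the paper's intended argument: the paper gives no explicit proof, simply stating that the corollary follows immediately from Lemma \ref{l.uniformbound} and Lemma \ref{l.cooriented}, and your plan is exactly the natural way to unpack that implication. The one small point you leave implicit is that, once $|P\circ\Wi Y_t(\Wi x)|\to\infty$ as $|t|\to\infty$, continuity of $t\mapsto P\circ\Wi Y_t(\Wi x)$ and the intermediate value theorem force each one-sided limit to exist as $+\infty$ or $-\infty$ (this is how the paper uses the corollary in the proof of Proposition \ref{p.liminf}); but this is a one-line observation and does not affect the correctness of your argument.
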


Indeed, the second consequence in Corollary \ref{c.pinfty} also is true for every point $\Wi x$ in $\Wi{\Lambda}$.

\begin{proposition}\label{p.liminf}
Let $\Wi x \in \Wi{\Lambda}\subset \Wi W_0$, then $\lim_{t\to -\infty} P\circ\Wi Y_t (\Wi x)=-\infty$ and $\lim_{t\to +\infty} P\circ\Wi Y_t (\Wi x)=+\infty$.
\end{proposition}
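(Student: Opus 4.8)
The plan is to bootstrap from Corollary \ref{c.pinfty}, which already gives the two-sided statement for points whose projection to $W_0$ lies on a periodic orbit, plus the one-sided statement $\lim_{t\to\infty} P\circ \Wi Y_t(\Wi x)=\infty$ for \emph{every} $\Wi x\in\Wi\Lambda$. What is missing is the control as $t\to-\infty$ for a general point of $\Wi\Lambda$: a priori the backward orbit could oscillate, or drift to $+\infty$, or stay bounded. The idea is to rule each of these out using the density of periodic orbits in $\Lambda$, Lemma \ref{l.uniformbound} (uniform escape from compact sets), Lemma \ref{l.cooriented} (all periodic orbits have the same sign of intersection with $\Sigma$, normalized to positive by Remark \ref{r.posint}), and the shadowing lemma (Theorem \ref{t.shadow}).

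First I would fix $\Wi x\in\Wi\Lambda$ and consider the backward orbit $\Wi Y_t(\Wi x)$ for $t\le 0$. By Lemma \ref{l.uniformbound} applied to any compact set $U$, for $|t|$ large the orbit leaves $U$; since $\Wi W_0\cong\Sigma\times\RR$ with $P$ the $\RR$-coordinate and $\Sigma$ compact, leaving every compact set forces $|P\circ\Wi Y_t(\Wi x)|\to\infty$ along subsequences — but it does not yet force a definite sign, nor does it preclude oscillation between $+\infty$ and $-\infty$. The first real step is to upgrade "escapes compact sets" to "$P\circ\Wi Y_t(\Wi x)$ is monotone-ish / has a well-defined limit at $-\infty$". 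I would argue by contradiction: if $\liminf_{t\to-\infty} P\circ\Wi Y_t(\Wi x) > -\infty$ while $\limsup = +\infty$ (or both are finite, contradicting Lemma \ref{l.uniformbound}), then the orbit repeatedly crosses some fixed fiber $\Sigma\times\{c\}$ in both directions. Combined with the forward statement from Corollary \ref{c.pinfty} ($P\to\infty$ as $t\to+\infty$), one extracts a long backward flow segment that starts and ends near the same point (after applying a deck translation $\sigma^k$ to realign, exactly as in the proof of Lemma \ref{l.cooriented}), hence an $\epsilon_1$-pseudo-periodic orbit; shadowing then produces a genuine periodic orbit of $\Wi Y_t$, contradicting Proposition \ref{l.openper}. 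This is the same trick used twice already in this section, so the mechanics should go through.

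Once oscillation is excluded, $L:=\lim_{t\to-\infty} P\circ\Wi Y_t(\Wi x)$ exists in $[-\infty,+\infty]$. If $L=+\infty$ I would reach a contradiction with the forward behavior: the orbit would then lie entirely on one side $P\ge M$ for $M$ large, in particular it would be disjoint from the compact piece $\pi^{-1}(\text{one fundamental domain})\cap\{P\le M\}$ — but by the sign normalization $Int(\gamma,\Sigma)>0$ for all periodic orbits, and by density of periodic orbits in $\Lambda$ together with a limiting argument (approximate $\Wi x$ by points on lifted periodic orbits and use that those satisfy the full two-sided conclusion by Corollary \ref{c.pinfty}), the flow direction through every fiber is "positive", which is incompatible with $P$ tending to $+\infty$ in both time directions. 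If $L$ is finite, the backward $\omega$-limit set (in $W_0$, i.e. the $\alpha$-limit set of $\pi(\Wi x)$) would be a nonempty compact invariant subset of $\Lambda$ sitting in a bounded range of $P$; picking a point $z$ in it and a periodic orbit $\gamma$ close to $z$, a shadowing argument again manufactures a pseudo-orbit forcing $P$ to change by a definite nonzero amount per period (since $Int(\gamma,\Sigma)>0$), contradicting boundedness — this is essentially Corollary \ref{c.pinfty}'s mechanism applied at the $\alpha$-limit. Hence $L=-\infty$, which is the claim.

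The main obstacle I expect is the very first step — promoting the non-quantitative "escapes compact sets" (Lemma \ref{l.uniformbound}) to genuine monotonicity of $P\circ\Wi Y_t(\Wi x)$ at $-\infty$ — because a single orbit of $\Wi Y_t$ in $\Wi\Lambda$ need not cross fibers transversally or monotonically; $\Sigma$ is only a topological transverse torus for the flow, not literally a cross-section. The clean way around this is to work with the foliation $\cF_q$ (equivalently its lift $\Wi\cF$) and the good fiber torus $\Sigma_q$ from Lemma \ref{l.Sgood}: intersections of flowlines with the lifted fibers $\Sigma\times\{n\}$ can be analyzed via the leaves of $\Wi\cF$ they sit in, and the absence of annulus/Möbius plaques (items 3 and 4 of Lemma \ref{l.Sgood}) plus $\Lambda_q\cap\Sigma_q\neq\emptyset$ should give that once a forward orbit passes $\Sigma\times\{n\}$ it never returns — a "no-return" property that is exactly monotonicity. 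I would phrase the argument so that all the oscillation-exclusion is packaged into this no-return lemma, and then the limit $L=\pm\infty$ dichotomy is settled by the shadowing-plus-deck-translation contradiction already rehearsed in Lemma \ref{l.cooriented} and Corollary \ref{c.pinfty}.
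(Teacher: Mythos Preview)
Your core strategy---close up a long flow segment into a pseudo-orbit and shadow it to produce a periodic orbit with the wrong algebraic intersection with $\Sigma$---is exactly the paper's. But you have misidentified where the difficulty lies, and as a result the plan is much more elaborate than necessary.

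Two corrections. First, the first clause of Corollary~\ref{c.pinfty} does \emph{not} hand you the signed forward limit $\lim_{t\to+\infty}P\circ\Wi Y_t(\Wi x)=+\infty$ for general $\Wi x$; it only says $|P\circ\Wi Y_t(\Wi x)|\to\infty$ as $|t|\to\infty$ (the signed version is the ``moreover'' for lifts of periodic orbits). So both time directions need the same work---this is a minor misreading. Second, and this is the substantive point, the oscillation you flag as the ``main obstacle'' is ruled out for free by Lemma~\ref{l.uniformbound} plus the intermediate value theorem: if $P\circ\Wi Y_t(\Wi x)$ took both arbitrarily large positive and negative values as $t\to+\infty$, continuity would force it to hit $0$ for arbitrarily large $t$, so the orbit would revisit the compact set $P^{-1}(0)$ infinitely often, contradicting Lemma~\ref{l.uniformbound}. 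Hence each of $\lim_{t\to\pm\infty}P\circ\Wi Y_t(\Wi x)$ already exists in $\{-\infty,+\infty\}$ with no further argument. Your proposed detours---a separate shadowing-plus-deck-translation step to kill oscillation, or invoking the foliation $\cF_q$ and Lemma~\ref{l.Sgood} to prove a ``no-return'' property---are unnecessary.

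With that in hand the paper's argument is short. If, say, $\lim_{t\to+\infty}P\circ\Wi Y_t(\Wi x)=-\infty$, pick an accumulation point $z\in\Lambda$ of $\{Y_n(x)\}_{n\in\NN}$ downstairs and choose $n_2\gg n_1$ with $Y_{n_1}(x),Y_{n_2}(x)$ both $\epsilon_1/2$-close to $z$; since $P$ decreases along the lift, the closed curve formed by the flow arc from $Y_{n_1}(x)$ to $Y_{n_2}(x)$ and a short closing arc has negative intersection with $\Sigma$, and shadowing (Theorem~\ref{t.shadow}) produces a genuine periodic orbit freely homotopic to it, contradicting Remark~\ref{r.posint}. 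The case $\lim_{t\to-\infty}P=+\infty$ is symmetric using $\{Y_{-n}(x)\}$. This is essentially what you were groping toward in the ``$L=+\infty$'' paragraph, but note that your phrase ``the flow direction through every fiber is positive'' is not a valid step: the fibers are not yet transverse to the flow---that is precisely what Proposition~\ref{p.sect} will establish \emph{using} the present proposition.
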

\begin{proof}
Assume that there exists a point $\Wi x \in \Wi{\Lambda}$ which doesn't satisfy the consequence in the proposition.
Then by Corollary \ref{c.pinfty}, $\Wi x$ should satisfy one of the following three conditions:
\begin{enumerate}
  \item $\lim_{t\to -\infty} P\circ\Wi Y_t (\Wi x)=\lim_{t\to +\infty} P\circ\Wi Y_t (\Wi x)=-\infty$;
  \item $\lim_{t\to -\infty} P\circ\Wi Y_t (\Wi x)=\lim_{t\to +\infty} P\circ\Wi Y_t (\Wi x)=+\infty$;
  \item $\lim_{t\to -\infty} P\circ\Wi Y_t (\Wi x)=+\infty$ and $\lim_{t\to +\infty} P\circ\Wi Y_t (\Wi x)=-\infty$.
\end{enumerate}
Set $x=\pi (\Wi x)$. In the first case and the third case, we suppose that $z$ is an accumulation point
of $\{Y_n (x)\}$ ($n\in \NN$) in $\Lambda$. Then
there exist $n_1, n_2 \in \NN$ so that,
\begin{enumerate}
  \item both of $Y_{n_1} (x)$ and $Y_{n_2}(x)$ are in $\frac{\epsilon_1}{2}$ neighborhood of $z$;
  \item $n_2 \gg n_1$ so that the oriented closed curve $c=a\cup b$ satisfies that $Int(c,\Sigma)<0$ where $a$ is the oriented flowline arc $Y_t (x)$ ($t\in [n_1, n_2]$)
  and $b$ is an oriented arc in $\frac{\epsilon_1}{2}$  neighborhood of $z$ starting at $Y_{n_2} (x)$ and ending in $Y_{n_1} (x)$.
\end{enumerate}
Here the fact that $\lim_{t\to +\infty} P\circ\Wi Y_t (\Wi x)=-\infty$ ensure that $Int(c,\Sigma)<0$ when $n_2 \gg n_1$.
By   Theorem \ref{t.shadow}, there is a periodic orbit $\alpha$ of $\Lambda$
which is close to $c$ in the sense that $d(Y_t (w), Y_{\varphi_{x} (t)} (y)) \leq \delta_0$ where $w=Y_{n_1}(x)$ (the starting point of $a$, $y\in \alpha$
and $\varphi_{x}: [0, t_0] \to [0,m]$ is an increase continuous map.
Here $t_0=n_2 -n_1 >0$ satisfies that $Y_{t_0} (w)=Y_{n_2}(x)$ which is the ending point of $a$.
 Therefore $\alpha$ is freely homotopic to $c$ and $Int (\alpha, \Sigma)<0$. This consequence conflicts to Remark \ref{r.posint}: $Int (\gamma, \Sigma)<0$ for every periodic orbit $\gamma$ of $\Lambda$.

Indeed, we can do a similar argument  to show that the second case also is impossible.
The only difference is that now we should choose a new point series $\{Y_{-n}(x)\}$ instead of $\{Y_n (x)\}$ ($n\in \NN$).
\end{proof}

\subsection{Global section}\label{ss.glsec}
Recall that $(W_0, Y_t)$ is an extension of $(N_0, Y_t)$ so that the maximal invariant set
of $Y_t|_V$ is an isolated periodic orbit $\beta$ which is isotopic to the core of $V$ and transverse
to the torus fibration structure $\Sigma \Wi{\times} \SS^1$.
In the remainder of this section, we assume that $Int(\beta, \Sigma)>0$. This can be done by
 choosing the orientation of $\beta$ suitably.

The purpose of this subsection is to find a global section for $(W_0,Y_t)$. The proof will heavily rely on a theorem of Fuller \cite{Fu}.
For the convenience of  readers, we introduce the theory and some related definitions below. All of them can be found in Section
$2$ of \cite{Fu}.

Let $W$ be a compact manifold and $T_t$ ($t\geq 0$) be  a nonsingular semi-flow on $W$.
We say that a continuous map $\theta: W\to \SS^1$ is an \emph{angular function} on $W$.
The net change of $\theta$ along $T_t$, $\Delta \theta (x,t): W\times [0,+\infty) \to \RR$
is the change of $\Wi \theta(x)$ to $\Wi \theta(T_t (x))$:
$\Wi \theta( T_t (x)) - \Wi \theta(x)$. Here $\Wi \theta$ is a lifting map of $\theta$
under a covering map $\rho: \RR \to \SS^1$.

 \begin{definition}\label{d.sufsec}
 An angular function $\theta$ on $W$ defines \emph{a surface of section}
 for $T_t$ if $\Delta \theta$ on each trajectory of $T_t$ is a strictly
 increasing function of $t$.
  The pre-image of $0$ under $\theta$ is called  a \emph{surface section}.
 \end{definition}

 \begin{theorem}\label{t.Fuller}\footnote{The proof of Theorem \ref{t.Fuller} is very elegant, we suggest that an interested reader
 read the proof of Theorem $1$ in [Fu]}
 If the angular function $\theta$ on $W$ has the property that
 $\Delta \theta$ is positive somewhere on each trajectory of $T_t$ ($t\geq 0$),
 i.e. for every point $x\in W$, there exists a $t_x >0$ so that $\Delta \theta (x,t_x)>0$,
 then there exists an angular function $\theta'$ in the homotopy class of $\theta$,
 which defines a surface of section for $T_t$.
 \end{theorem}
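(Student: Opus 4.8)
The plan is to produce $\theta'$ by adding to $\theta$ a suitable real-valued correction. Two maps $W\to\SS^1$ are homotopic exactly when their ratio lifts to $\RR$, so an angular function in the homotopy class of $\theta$ is precisely one of the form $\theta'=\theta+f$ with $f\colon W\to\RR$ continuous; for such $\theta'$ one has $\Delta\theta'(x,t)=\Delta\theta(x,t)+f(T_tx)-f(x)$. Throughout I will use the cocycle identity $\Delta\theta(x,a+b)=\Delta\theta(x,a)+\Delta\theta(T_ax,b)$ (valid for any semi-flow) together with joint continuity of $(x,t)\mapsto\Delta\theta(x,t)$. The goal is to choose $f$ so that $\Delta\theta'(x,\cdot)$ is strictly increasing on each trajectory, which is the defining property of a surface of section.

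\emph{Step 1 (uniform eventual positivity).} I would first show there are $S>0$ and $\delta>0$ with $\Delta\theta(x,S)\ge\delta$ for \emph{every} $x\in W$. By hypothesis each $x$ admits $t_x>0$ with $\Delta\theta(x,t_x)>0$; by continuity this persists on a neighborhood $U_x$, and compactness of $W$ gives a finite subcover $U_{x_1},\dots,U_{x_n}$. Writing $\tau_j:=t_{x_j}$, $S_0:=\max_j\tau_j$, $\tau_{\min}:=\min_j\tau_j>0$, the continuous function $g(y):=\max_j\Delta\theta(y,\tau_j)$ is positive on all of $W$, hence $\ge\epsilon_0>0$ by compactness; so every $y$ admits $t'(y)\in[\tau_{\min},S_0]$ with $\Delta\theta(y,t'(y))\ge\epsilon_0$. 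Now chain: from $x$ set $v_0:=0$ and $v_{k+1}:=v_k+t'(T_{v_k}x)$, so $v_k\in[k\tau_{\min},kS_0]$ and, by the cocycle identity, $\Delta\theta(x,v_k)\ge k\epsilon_0$. Given $T\ge0$, comparing with the largest $v_k\le T$ and bounding the residual segment by $M:=\sup_{W\times[0,S_0]}|\Delta\theta|<\infty$ yields the uniform linear lower bound $\Delta\theta(x,T)\ge(T/S_0-1)\epsilon_0-M$ for all $x\in W$, $T\ge0$. Fix $S$ large enough that $\delta:=(S/S_0-1)\epsilon_0-M>0$.

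\emph{Step 2 (averaged correction).} With this $S$, set $f(x):=\frac1S\int_0^S\Delta\theta(x,u)\,du$; joint continuity of $\Delta\theta$ makes $f$ continuous, and $\theta':=\theta+f$ is homotopic to $\theta$. Substituting $f$ into $\Delta\theta'(x,t)=\Delta\theta(x,t)+f(T_tx)-f(x)$ and applying the cocycle identity, the term $\Delta\theta(x,t)$ cancels and one obtains the closed form $\Delta\theta'(x,t)=\frac1S\int_t^{t+S}\Delta\theta(x,w)\,dw-\frac1S\int_0^S\Delta\theta(x,w)\,dw$. Differentiating in $t$ (legitimate since $w\mapsto\Delta\theta(x,w)$ is continuous) and using the cocycle identity once more gives $\partial_t\Delta\theta'(x,t)=\frac1S\big(\Delta\theta(x,t+S)-\Delta\theta(x,t)\big)=\frac1S\Delta\theta(T_tx,S)\ge\frac\delta S>0$ by Step 1; equivalently $\Delta\theta'(x,t)=\frac1S\int_0^t\Delta\theta(T_vx,S)\,dv$. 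Hence $\Delta\theta'$ is strictly increasing along every trajectory of $T_t$, so $\theta'$ defines a surface of section, completing the proof.

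\emph{Main obstacle.} The manipulation in Step 2 is short once the correct $f$ is written down; the point that requires an idea is exactly that choice — averaging $\Delta\theta$ over a window of fixed length $S$ is what makes the cocycle identity telescope cleanly, producing $\partial_t\Delta\theta'=\frac1S\Delta\theta(T_tx,S)$. The substantive input is Step 1: upgrading the pointwise, non-uniform hypothesis ``$\Delta\theta$ is positive somewhere on each trajectory'' to the uniform statement ``$\Delta\theta(\cdot,S)\ge\delta>0$ on all of $W$ for one fixed $S$'', which is where compactness of $W$ and the chaining argument enter.
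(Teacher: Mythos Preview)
The paper does not prove this theorem; it is quoted from Fuller \cite{Fu} with only a footnote recommending the original proof, so there is no in-paper argument to compare against. Your proof is correct and is in fact essentially Fuller's own argument: the averaging $f(x)=\frac1S\int_0^S\Delta\theta(x,u)\,du$ is exactly the construction in \cite{Fu}, and your Step~1 compactness-and-chaining upgrade of the pointwise hypothesis to a uniform one is the standard preliminary that makes the averaging work.
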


\begin{proposition}\label{p.sect}
Up to isotopy, $\Sigma$ is a global section of $Y_t$ on $W_0$, i.e. $\Sigma$ is transverse to $Y_t$ and
every point $x\in \Sigma$ will return back to $\Sigma$ in finite time along flowlines of $Y_t$.
\end{proposition}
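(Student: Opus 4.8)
The strategy is to produce a cross-section by applying Fuller's Theorem~\ref{t.Fuller} to the natural angular function $\theta=P_1\colon W_0\to\SS^1$, $(x,t)\mapsto t$, whose lift to the infinite cyclic cover $\Wi W_0$ is precisely the projection $P$; thus, for any lift $\Wi x$ of $x$, $\Delta\theta(x,t)=P(\Wi Y_t(\Wi x))-P(\Wi x)$ (independent of the chosen lift). The flow $Y_t$ is nonsingular, since its non-wandering set $\Lambda\sqcup\beta$ contains no fixed point. Hence it suffices to verify the hypothesis of Theorem~\ref{t.Fuller}: for every $x\in W_0$ there is $t_x>0$ with $\Delta\theta(x,t_x)>0$. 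Fuller's theorem then yields a homotopic angular function $\theta'$ whose zero set is a global cross-section, and we are left to recognize that section as $\Sigma$ up to isotopy.

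For $x$ on the repelling periodic orbit $\beta$, the standing assumption $Int(\beta,\Sigma)>0$ gives that over one period the net change of $\theta$ along $\beta$ equals $Int(\beta,\Sigma)>0$, so $t_x$ may be taken to be the period of $\beta$. For $x\in W_0\setminus\beta$, recall that, the non-wandering set being $\Lambda\sqcup\beta$ with $\Lambda$ an attractor and $\beta$ an isolated periodic repeller, $W_0\setminus\beta$ is exactly the basin of $\Lambda$; hence $x$ lies on the strong stable manifold $W^{ss}(p)$ of some $p\in\Lambda$, so $d\big(Y_t(x),Y_t(p)\big)\to 0$ as $t\to+\infty$. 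Choosing a lift $\Wi x$ of $x$ close to a lift $\Wi p\in\Wi\Lambda$ of $p$, the forward orbits $\Wi Y_t(\Wi x)$ and $\Wi Y_t(\Wi p)$ remain within bounded distance in $\Wi W_0$, so the quantities $P(\Wi Y_t(\Wi x))$ and $P(\Wi Y_t(\Wi p))$ differ by a bounded amount. By Proposition~\ref{p.liminf}, $P(\Wi Y_t(\Wi p))\to+\infty$; therefore $P(\Wi Y_t(\Wi x))\to+\infty$ and $\Delta\theta(x,t)\to+\infty$, which certainly supplies the desired $t_x$. Thus the hypothesis of Theorem~\ref{t.Fuller} holds at every point of $W_0$, and we obtain an angular function $\theta'$ homotopic to $\theta$ whose preimage $S=\theta'^{-1}(0)$ is a global cross-section of $Y_t$: $S$ is transverse to $Y_t$ and every orbit returns to it in finite time.

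It remains to identify $S$ with $\Sigma$ up to isotopy. Since $\theta'\simeq\theta=P_1$ and $P_1$ realizes the torus fibration, $\theta'_*\colon\pi_1(W_0)\to\ZZ$ is onto, so $S$ is connected, and $S$ is $\pi_1$-injective (being a fiber of a fibration over $\SS^1$); as $W_0$ is a sol-manifold, $\pi_1(W_0)$ is solvable and contains no closed orientable surface subgroup other than $\ZZ^2$, so $S$ is a torus. Moreover $[S]=[\Sigma]$ generates $H_2(W_0;\ZZ)\cong\ZZ$ with compatible orientation, and a sol-manifold fibers over $\SS^1$ uniquely up to isotopy, so $S$ is isotopic to the fiber torus $\Sigma$. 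Carrying the transversality and finite-return properties of $S$ back along this isotopy yields the proposition.

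The step I expect to be the main obstacle is the verification of Fuller's hypothesis at the points of $W_0\setminus\beta$: this relies on the structural description of a hyperbolic attractor — that its basin is the union of the strong stable manifolds of its points — together with a careful choice of lifts so that the asymptotics of Proposition~\ref{p.liminf} at $\Wi p$ transfer to $\Wi x$ \emph{uniformly in $t$} (a bare subsequence argument via an accumulation point of $\{Y_n(x)\}$ is not enough, because the corresponding lifts could escape to $-\infty$). The final identification of the cross-section with $\Sigma$ up to isotopy is more routine but uses the special topology of the sol-manifold $W_0$.
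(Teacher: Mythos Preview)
Your proof is correct and follows essentially the same approach as the paper: apply Fuller's theorem to the angular function given by the bundle projection, verify its hypothesis by combining Proposition~\ref{p.liminf} on $\Lambda$, the transversality of $\beta$, and the fact that every wandering orbit is forward-asymptotic to an orbit in $\Lambda$, and then invoke the uniqueness of the torus fibration on the sol-manifold $W_0$. Your treatment of the wandering points via the strong stable lamination and coherent lifts is a more explicit version of the paper's one-line ``every wandering orbit of $Y_t$ is positively asymptotic to an orbit in $\Lambda$'', and your identification of the resulting section with $\Sigma$ spells out what the paper packages as ``surface fibration structures are unique on $W_0$''.
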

\begin{proof}
 Define an angular function $\theta: W_0\to \SS^1$ by the projection map on a smooth torus fibration structure $\Sigma \Wi{\times} \SS^1$ on $W_0$.
Based on the following three facts:
\begin{enumerate}
  \item Proposition \ref{p.liminf};
  \item  the periodic orbit repeller $\beta$ is positively transverse to the torus fibration everywhere;
  \item  every wandering orbit of $Y_t$ is positively asymptotic to an orbit in $\Lambda$,
\end{enumerate}
one can immediately check that for  every point $x\in W$, there exists a $t_x >0$ so that $\Delta \theta (x,t_x)>0$. Then by Theorem \ref{t.Fuller}, there exists a new angular function $\theta': M\to \SS^1$  in the homotopy class of $\theta$,
 which defines a surface of section for $Y_t$.
 Moreover, this surface of section
naturally endows $W_0$ a new surface fibration structure $F'$  by pushing $\theta'^{-1} (0)$ along the trajectories of $Y_t$ so that $Y_t$ is transverse to
the fibers of $F'$ everywhere. Note that up to isotopy, surface fibration structures are unique on the sol-manifold $W_0$ (see, for instance \cite{Thu} or \cite{Fri}).  Therefore, the consequence of the proposition is followed.
\end{proof}

\subsection{Axiom A diffeomorphisms on torus} \label{ss.claAdiff}
Recall that $\Phi_A$ is an Axiom A diffeomorphism which is induced by DA-surgery on the Thom-Anosov automorphism $A$ along
the origin $O$, and the non-wandering set of $\Phi_A$ is the union of a
transitive expanding attractor $\Lambda_a^0$
and an isolated fixed point repeller $O$. The purpose of this section is to prove:

\begin{proposition}\label{p.DAdiff}
Let $\Phi:T^2 \to T^2$ be an Axiom A diffeomorphism which satisfies the following conditions.
\begin{enumerate}
  \item The non-wandering set of $\Phi$ is the union of a transitive expanding attractor $\Lambda_a$ and an isolated fixed point repeller $R$.
  \item $\Phi$ is isotopic to the Thom-Anosov automorphism $A$.
\end{enumerate}
Then $\Phi$ and $\Phi_A$ are topologically conjugate.
\end{proposition}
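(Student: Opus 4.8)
The plan is to pass to the pseudo-Anosov quotients furnished by Theorem~\ref{t.sufconj}, recognize both quotients as the linear Anosov map $A$, and then ``un-collapse'' a conjugacy between the quotients into a conjugacy between $\Phi$ and $\Phi_A$. First I would remove from $T^2$ a small open disk around the repeller $R$ (resp.\ around $O$), bounded by a circle transverse to the dynamics, obtaining a filtrating neighborhood $\Sigma_0$ of $\Lambda_a$ (resp.\ $\Sigma_0^A$ of $\Lambda_a^0$); with a suitable choice of that circle, $\Phi$ restricted to $\Sigma_0$ is an embedding of $\Sigma_0$ into its own interior, so the hypotheses of Theorem~\ref{t.sufconj} are met. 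Since an orientation-preserving diffeomorphism of a $2$-disk with a single repelling fixed point is topologically conjugate to the linear model, the filling Axiom A diffeomorphism of $\Phi|_{\Sigma_0}$ (Definition~\ref{d.filA}) is topologically conjugate to $\Phi$, and likewise for $\Phi_A$. Applying Theorem~\ref{t.sufconj} to $\Phi|_{\Sigma_0}$ and to $\Phi_A|_{\Sigma_0^A}$ produces pseudo-Anosov homeomorphisms $\psi,\psi_A$ of $T^2$, semi-conjugacies $\pi\colon\Sigma_0\to T^2$ and $\pi_A\colon\Sigma_0^A\to T^2$ whose non-degenerate fibres are precisely the $s$-arches and the free stable separatrices of the (by Theorem~\ref{t.boundsuf}, unique) chain-adjacent class, and isotopies $\psi\simeq\Phi\simeq A$ and $\psi_A\simeq A$.

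Next I would identify the quotients. Since $A$ is already a linear Anosov automorphism, any pseudo-Anosov homeomorphism of $T^2$ isotopic to $A$ is topologically conjugate to $A$; hence $\psi$ and $\psi_A$ are conjugate, and I fix a conjugacy $g$ with $g\psi g^{-1}=\psi_A$. The collapsed loci are canonical: each semi-conjugacy sends the chain-adjacent class onto a $\psi$-fixed point (the class being invariant), necessarily the unique fixed point $O$ of $A$, and sends the union of the $s$-arches onto the $\psi$-invariant set $W^s_A(O)\smallsetminus\{O\}$ of stable separatrices of $O$; being invariantly defined, these are carried to one another by $g$. Moreover, by Theorem~\ref{t.sufconj} and Remark~\ref{r.sufconj} the combinatorial type of the chain-adjacent class --- the number of boundary periodic orbits, their cyclic order along $\partial\Sigma_0$, and the adjacency and side data of their free separatrices --- is read off from the isotopy class together with the quotient, hence is the same for $\Phi$ and $\Phi_A$.

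Then I would un-collapse: set $H=\pi_A^{-1}\circ g\circ\pi$ on the open dense set where $\pi$ and $\pi_A$ are injective, and on each $s$-arch of $\Phi$ (resp.\ each free separatrix of its chain-adjacent class) extend $H$ by a homeomorphism onto the matching $s$-arch (resp.\ separatrix) of $\Phi_A$, agreeing with the already-defined endpoint values, which lie in the attractor. A standard uniform-continuity argument --- of the kind used to build conjugacies between derived-from-Anosov maps, exploiting that $\psi$ and $\psi_A$ are conjugate to the same expanding map $A$ and that the arch and separatrix correspondences of the previous step intertwine the dynamics --- then shows that $H$ is a well-defined homeomorphism with $H\circ\Phi=\Phi_A\circ H$ on $\Sigma_0$. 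Finally, coning the boundary homeomorphism across the removed disks, which conjugates the two ``single repelling fixed point in a disk'' pieces, yields the desired topological conjugacy $T^2\to T^2$ between $\Phi$ and $\Phi_A$.

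The hard part will be this last step, together with the claim that $\Phi$ and $\Phi_A$ carry the same collapsed combinatorial data: one must show that the semi-conjugacies $\pi$ and $\pi_A$ are ``the same up to $g$'' --- essentially that a derived-from-Anosov surface diffeomorphism is determined up to topological conjugacy by its Anosov quotient together with the surgery data --- and that the piecewise definition of $H$ glues to a globally continuous homeomorphism commuting with the dynamics. This rests on the explicit local model of the DA surgery near the repeller, the uniqueness clause of Definition~\ref{d.filA}, and Theorem~\ref{t.sufconj}; the remaining ingredients (rigidity of disk dynamics with one repelling fixed point, and uniqueness of the Anosov representative on $T^2$) are routine.
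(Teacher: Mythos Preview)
Your overall strategy --- semi-conjugate both $\Phi$ and $\Phi_A$ to $A$ via $s$-arch collapse, match the quotients, then lift the identification back up --- is exactly the paper's. Your first two paragraphs are the content of Lemma~\ref{l.Asecj}, which also pins down that there are exactly two boundary fixed points and arranges that both semi-conjugacies send the chain-adjacent class to the \emph{same} fixed point $O$ of $A$.

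The gap is in your ``un-collapse''. You propose to set $H=\pi_A^{-1}\circ g\circ\pi$ on the injectivity locus, extend by \emph{some} homeomorphism on each $s$-arch, and then invoke ``a standard uniform-continuity argument''. But this step is the entire content of the proof, and no such standard argument exists. Choosing an arch-to-arch homeomorphism on each $s$-arch so that the result is simultaneously (i) $\Phi$-equivariant and (ii) continuous as arches accumulate on one another and on $\Lambda_a$ is not automatic: equivariance forces you to choose once per $\Phi$-orbit of arches and propagate, and continuity of the propagated family is precisely what must be proved. The paper handles this with a concrete construction. The wandering set $W=T^2\setminus(\Lambda_a\cup\{R\})$ is an open annulus whose quotient $\cO=W/\Phi$ is a torus carrying the induced foliation $h^s$, which is two Reeb annuli with coherent asymptotics; likewise for $\Phi_A$. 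One then produces a \emph{nice circle} $c\subset W$ (Definition~\ref{d.ncircle}, Lemma~\ref{l.ncircle}): a circle tangent to $f^s$ at exactly two points, mapping under $P$ onto a fixed arc $I\subset W^u_A(O)$, and bounding a fundamental domain for $\Phi|_W$. Matching $c$ pointwise (via $P,P_a$) to the analogous $c_a$ for $\Phi_A$ gives $\tau\colon c\to c_a$; this descends to a leaf-preserving homeomorphism $\cO\to\cO_a$ and lifts uniquely to an equivariant $G\colon W\to W_a$. The fact that $G$ carries $f^s$-leaves to $f_a^s$-leaves is what guarantees that $G$ and the attractor conjugacy $\chi$ glue continuously across $\partial W\cap\Lambda_a$. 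None of this machinery appears in your proposal. Your final ``coning across the removed disks'' is a red herring: once the map is continuous on $T^2\setminus\{R\}$, sending $R\mapsto O$ is trivial; the delicate extension is at $\Lambda_a$, not at $R$.
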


We first observe that there exists a semi-conjugacy map $P$ between $\Phi$ and the Thom-Anosov map $A$ by the corresponding stable-arches shrinking so that $P (R)=O$. More precisely, we have:

 \begin{lemma}\label{l.Asecj}
There exists a continuous surjective map $P: T^2 -\{R\}\to T^2$ by the corresponding
s-arches shrinking so that:
\begin{enumerate}
  \item $\Lambda_a$ has two boundary fixed points $O_1$  and $O_2$ whose free stable separatrices are
  mapped to the original fixed point $O\in T^2$ of $A$;
  \item $P\circ \Phi = A \circ P$.
\end{enumerate}
 \end{lemma}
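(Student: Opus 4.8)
The plan is to invoke the surface-level theory recalled in Section \ref{sss.epadif}, applied to the filtrating neighborhood obtained by removing a small disk around $R$. First I would let $\Sigma_0 = T^2 - \mathrm{int}(D)$, where $D$ is a small closed disk centered at $R$ with $\Phi(D) \subset \mathrm{int}(D)$ (such $D$ exists since $R$ is an isolated fixed point repeller). Then $\Phi_0 = \Phi^{-1}|_{\Sigma_0}$ is an embedding satisfying the three hypotheses of Section \ref{sss.epadif}: $\Phi_0(\Sigma_0) \subset \mathrm{int}(\Sigma_0)$, the maximal invariant set is the expanding attractor $\Lambda_a$, and every component of a stable manifold minus $\Lambda_a$ has an end in $\Lambda_a$ (the latter from the attractor structure). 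By Definition \ref{d.filA}, the filling Axiom A diffeomorphism $\overline{\Phi}_0$ on the closed surface $\Sigma$ obtained by capping off $\partial \Sigma_0 = \partial D$ with a disk carrying a single repelling fixed point is, up to conjugacy, exactly $\Phi^{-1}$ itself (so $\Sigma = T^2$ and $\overline{\Phi}_0$ is conjugate to $\Phi^{-1}$). Now apply Theorem \ref{t.sufconj}: there is a pseudo-Anosov homeomorphism $\Phi_{pA}$ on $T^2$ and a continuous surjection $\pi: \Sigma_0 \to T^2$ with $\pi \circ \Phi_0 = \Phi_{pA} \circ \pi$, whose point-preimages are $s$-arches or bundles of free stable separatrices of chain-adjacent classes, and with $\Phi_{pA}$ isotopic to $\overline{\Phi}_0 \simeq \Phi^{-1} \simeq A^{-1}$.

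Next I would identify $\Phi_{pA}$ with $A^{-1}$. A pseudo-Anosov homeomorphism of the torus isotopic to the linear automorphism $A^{-1} \in SL(2,\ZZ)$ must in fact be conjugate to $A^{-1}$: on $T^2$ there are no pseudo-Anosov maps with genuine singularities of negative index, so the invariant foliations are nonsingular linear foliations and the map is the linear model (this is the classical statement that $A^{-1}$ is the unique pseudo-Anosov representative of its mapping class on the torus, with no singular points). Hence after a conjugacy we may take $\Phi_{pA} = A^{-1}$ and $\pi \circ \Phi_0 = A^{-1} \circ \pi$. Since $\Phi_0 = \Phi^{-1}|_{\Sigma_0}$, taking inverses gives $\pi \circ \Phi = A \circ \pi$ on the appropriate domain. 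Setting $P = \pi$ and noting $\Sigma_0 = T^2 - \mathrm{int}(D) \supset T^2 - \{R\}$ up to the standard collapse of the capping disk (equivalently, extend $\pi$ over $T^2 - \{R\}$ by using that the disk $D - \{R\}$ is foliated by the forward orbit of $\partial D$ and maps into a neighborhood that collapses consistently), we obtain the continuous surjection $P: T^2 - \{R\} \to T^2$ with $P \circ \Phi = A \circ P$, which is item $2$.

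For item $1$, I would use part (3) of Theorem \ref{t.boundsuf} together with Remark \ref{r.sufconj}(2): the boundary component $\partial D$ of $\Sigma_0$ corresponds to a single chain-adjacent class of boundary periodic orbits, and the preimage under $\pi$ of the singular (branch) point of $A^{-1}$ on $T^2$ lying "inside" this filled disk is precisely the union of free stable separatrices of that chain-adjacent class. Unwinding: there are (generically two, as in the model $\Phi_A$) boundary fixed points $O_1, O_2$ of $\Lambda_a$, and their free stable separatrices are exactly $\pi^{-1}$ of the point $O \in T^2$ which is the image of the capping disk — i.e. the origin, the DA-surgery point for $A$. So $P$ maps those free separatrices to $O$ and $P(R) = O$ follows by continuity (the point $R$ is the limit of the free separatrices under the collapse). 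I expect the main obstacle to be the bookkeeping in the last paragraph: verifying that the chain-adjacent class associated to the single boundary torus really consists of exactly two boundary fixed points matching the model $\Phi_A$ (rather than some other even number), which will require comparing the induced $1$-foliation $F^s$ on $\partial D$ with the known coherent two-Reeb-annulus structure of the DA model recorded in Proposition \ref{p.pYt}(4); and carefully extending $\pi$ across the removed point $R$ so that the equivariance $P \circ \Phi = A \circ P$ holds on all of $T^2 - \{R\}$, not merely on $\Sigma_0$.
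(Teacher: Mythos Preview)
Your overall strategy matches the paper's: apply Theorem \ref{t.sufconj} to the filtrating neighborhood, identify the resulting pseudo-Anosov on $T^2$ with the linear map via Nielsen--Thurston, and read off item~1 from the structure of the collapse. Two points need correcting.

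First, the direction is wrong in your setup. If $R$ is a repeller for $\Phi$, then small disks $D$ around $R$ satisfy $\Phi^{-1}(D)\subset\mathrm{int}(D)$, not $\Phi(D)\subset\mathrm{int}(D)$. Consequently $\Phi(\Sigma_0)\subset\mathrm{int}(\Sigma_0)$ and you should take $\Phi_0=\Phi|_{\Sigma_0}$ directly, not $\Phi^{-1}$. With your choice $\Phi_0=\Phi^{-1}$, the set $\Lambda_a$ is a repeller for $\Phi_0$, so the hypotheses of Section~\ref{sss.epadif} fail. Fixing this removes the unnecessary inversion: the semiconjugating pseudo-Anosov is isotopic to $A$, not $A^{-1}$, and you get $P\circ\Phi=A\circ P$ immediately.

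Second, and more substantively, your argument for item~1 does not establish that there are \emph{exactly two} boundary fixed points; saying ``generically two, as in the model $\Phi_A$'' is not a proof, and appealing to Proposition~\ref{p.pYt}(4) is circular since that proposition describes the specific DA model you are ultimately trying to show $\Phi$ is conjugate to. The paper's argument is this: the unique chain-adjacent class (unique because $T^2\setminus\Lambda_a$ is a single open disk) collapses under $P$ to a single point, which must be a fixed point of $A$ since $R$ is $\Phi$-fixed and $P$ is a semiconjugacy. Now $A$ is a genuine Anosov automorphism of $T^2$, hence has \emph{no} singular points; every point is an ordinary $2$-prong. By Remark~\ref{r.sufconj}(2) the number of free stable separatrices in the chain-adjacent class equals the number of prongs at the image point, so there are exactly two free separatrices and hence two boundary fixed points $O_1,O_2$. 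Finally, any two fixed points of $A$ are exchanged by an automorphism of $T^2$ commuting with $A$, so up to a further conjugacy one may assume the image point is the origin $O$.
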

\begin{proof}
First notice that $\Phi$ can be thought as the filling Axiom A diffeomorphism (Definition \ref{d.filA}) of $\Phi
|_{T_0}$ where $T_0$ is a filtrating neighborhood of $\Lambda_a$.
By Theorem \ref{t.sufconj}, there exists  a pseudo-Anosov homeomorphism $\Phi'$ on $T^2$ which is isotopic
to $\Phi$
and a continuous surjective map $P: T^2 - \{R\} \to T^2$ by the corresponding stable-arches shrinking
so that $P \circ \Phi = \Phi' \circ P$. Recall the assumption that $\Phi$ is isotopic to the Thom-Anosov automorphism $A$. Then $\Phi'$ and $A$ are two isotopic pseudo-Anosov homeomorphisms on $T^2$.
By Nielsen-Thurston Theorem (see for instance, \cite{FLP}), up to isotopically topological
conjugacy, we can think $\Phi'=A$. Therefore, we can think that $P\circ \Phi = A \circ P$.
Item $2$ is proved.

 Since the complement space of $\Lambda_a$ is an open disk in $T^2$, there is a unique chain-adjacent class
 of $\Phi$ on $\Lambda_a$.
 Note that every two fixed points dynamically are the same.
 Namely, let $O_1$ and $O_2$ be two fixed points of $A$, there exists
a homeomorphism $\eta:T^2\to T^2$ so that $\eta(O_1)=O_2$ and $\eta\circ A= A\circ \eta$. Then up to topological
 conjugacy, we can further assume that $P$ maps the union of the free stable separatrices to the original fixed point $O$ of $A$. As a direct consequence, there are exactly two free stable separatrices corresponding to the chain-adjacent class. Item $1$ is proved.
\end{proof}

Before the proof of this proposition, we would like to discuss a type of circles in the wandering set of
$(T^2,\Phi)$.

Consider the wandering  space $W=T^2 -\Lambda_a \cup\{R\}$ of $\Phi$, which is an open strip minus a point and homeomorphic to $S^1\times \RR$. The union of all $s$-arches of $\Lambda_a$ and the free stable separatrices of the two boundary periodic orbits
foliate $W$. We call this $1$-foliation $f^s$.
Then  the wandering orbit space of $\Phi$, $\cO$ is a torus endowed with a
$C^{1+}$ $1$-foliation $h^s$ induced by $f^s$.
Let $\pi: (W, f^s)\to (\cO, h^s)$ be the associated cyclic covering map.
By the construction of $\Phi$, we can observe that:
\begin{enumerate}
  \item there are only two compact leaves in
$h^s$ which are the image of the two free stable separatrices of the two boundary periodic orbits under $\pi$;
  \item the holonomy of every compact leaf is either contracting or repelling;
  \item the attracting directions of the two compact leaves are coherent:
  as the two boundary components of an annulus embbeded into $\cO$, they
  give the same orientation of a homotopy contracting circle of the annulus.
\end{enumerate}
In summery,
 $h^s$ induced by $f^s$ is a $C^{1+}$ $1$-foliation composed of $2$-Reeb annuli with the same asymptotic directions on the torus $\cO$. One can see  Figure \ref{f.O} as an illustration.

  \begin{figure}[htp]
\begin{center}
  \includegraphics[totalheight=7cm]{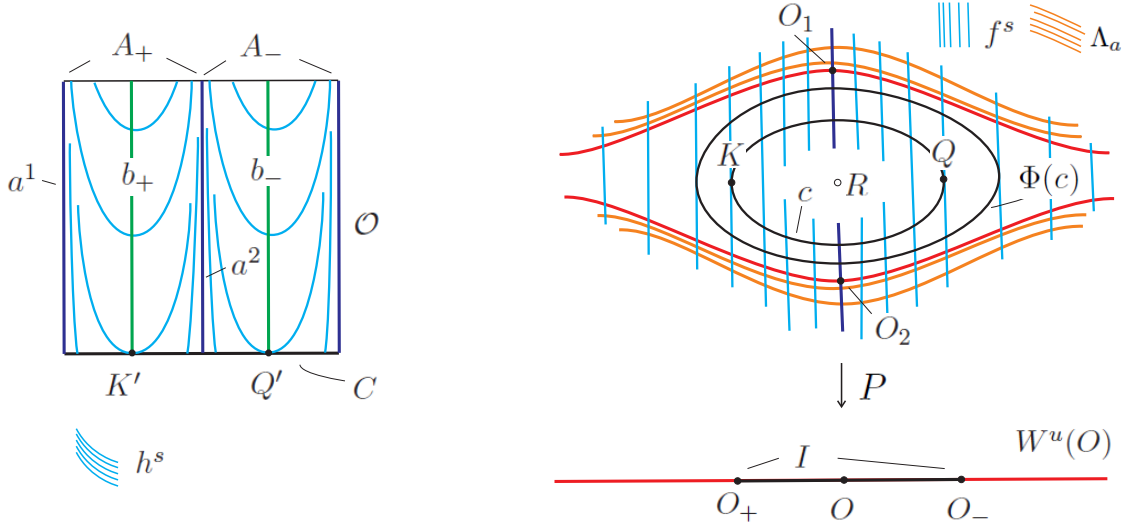}\\
  \caption{the wandering orbit space $\cO$ and a nice circle $c$}\label{f.O}
\end{center}
\end{figure}

Up to topological leaf-conjugacy, we can parameterize $h^s$ on $\cO$ as follows.
\begin{enumerate}
  \item $\cO$ is coordinated by $[-1,3]\times \RR^1$ by the action of  $\sigma$
  where $\sigma (x,y)=(x,y+1)$. Let $A_+$ and $A_-$ be the annuli parameterized by $[-1,1]\times \RR^1$
  and $[1,3]\times \RR^1$ under the $\ZZ$-action.
  \item  $h^s |_{A_+}$ and $h^s |_{A_-}$ can be parameterized by the orbits of the vector fields
  $Z_+ = \cos (\frac{1}{2}\pi x) \frac{\partial}{\partial x}+\sin (\frac{1}{2}\pi x)\frac{\partial}{\partial y}$ and $Z_- = \cos (\frac{1}{2}\pi (x-2)) \frac{\partial}{\partial x}+\sin (\frac{1}{2}\pi (x-2))\frac{\partial}{\partial y}$ under the $\ZZ$-action.
  \item Set $a^1$ and $a^2$ the compact leaves of $h^s$ parameterized by $\{-1\} \times \RR^1$ and
  $\{1\} \times \RR^1$  under the $\ZZ$-action.
  Let $b_+$ and $b_-$ be the circles parameterized by $\{0\}\times \RR^1$ and $\{2\}\times \RR^1$
  which are in the middle of $A_+$ and $A_-$ respectively.
\end{enumerate}
Let $C'$ be the circle in $\cO$ parameterized by $\{(x,0)| x\in [-1,3]\}$, then
we can further assume that $P^{-1} (C')$ is the union of infinitely many pairwise disjoint
circles in $W$. Let us give some comments about this assumption. In the case that $C'$ does not
satisfy the assumption, we can do some Dehn twist along $a^1$ or $a^2$ on $C'$ to get a new circle $C''$ in $\cO$
so that  $P^{-1} (C'')$ is the union of infinitely many pairwise disjoint
circles in $W$. If we do this Dehn twist more carefully, that is to twist in a small annulus in $A_+$
close to $a^1$ or $a^2$,\footnote{One can find more details about this careful twist in the proof of Lemma \ref{l.ncircle}.} then $C''$ also can be tangent to $h^s$ at two points $(0,0)$ and $(2,0)$.  This means that we can ensure the assumption after  some re-parameterization if necessary.

Pick two points $O_+$ and $O_-$ in the two separatrices of the unstable manifold $W^u (O)$ of $O$ for
$A$ respectively
so that $\pi\circ P^{-1}(O_+)\subset A_+$ and $\pi\circ P^{-1}(O_-)\subset A_-$. Let $I$ be the arc between $O_+$ and $O_-$ in $W^u (O)$.

\begin{definition}\label{d.ncircle}
If a circle $c\subset W$ satisfies:
\begin{enumerate}
  \item $c$ is transverse to $f^s$ except for two points $K$ and $Q$
  so that $P(K)=O_+$, $P(Q)=O_-$ and $P (c)=I$;
  \item $c$ and $\Phi(c)$ are disjoint and bound a fundamental region of $\pi$,
\end{enumerate}
then we say that $c$ is a \emph{nice circle}.
\end{definition}

Figure \ref{f.O} illustrates some notations above. The point is that $c$  bounds a filtrating neighborhood of $\Lambda_a$ so that $c$ contains $K$ and $Q$ as two tangent points.

\begin{lemma}\label{l.ncircle}
There exists a nice circle $c$ in $W$ for $\Phi$.
\end{lemma}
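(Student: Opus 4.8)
The plan is to construct the circle first in the wandering orbit space $\cO$, where $h^s$ is presented in the explicit two--Reeb--annulus model above, and then to pull it back to $W$ through the cyclic covering $\pi$; the delicate point will be to line the two tangencies up with the prescribed leaves $P^{-1}(O_+)$ and $P^{-1}(O_-)$ while keeping everything compatible with $\pi$ and $P$ at once.

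First I would examine the circle $C'\subset\cO$ cut out by $\{(x,0):x\in[-1,3]\}$. Reading off the model vector fields $Z_+$ and $Z_-$ shows immediately that $C'$ meets each compact leaf $a^1$, $a^2$ transversally in a single point, is transverse to $h^s$ everywhere else, and is tangent to $h^s$ at exactly the two points $C'\cap b_+$ and $C'\cap b_-$ (one in $A_+$, one in $A_-$), where it touches the non--compact leaf through that point. Thus $C'$ is already a ``nice circle downstairs''. Invoking the discussion preceding Definition \ref{d.ncircle}, I would then replace $C'$ by a circle obtained from it by a Dehn twist supported in a thin annulus of $A_+$ (resp.\ $A_-$) close to $a^1$ (resp.\ $a^2$), small enough to preserve the two--tangency property and the transversality to $a^1,a^2$; this can be arranged so that $C'$ lifts to a disjoint union of circles in $W$, and, after sliding the tangencies and reparameterizing $h^s$, so that the two tangency leaves of $C'$ are precisely $\pi(P^{-1}(O_+))\subset A_+$ and $\pi(P^{-1}(O_-))\subset A_-$.

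Next I would lift. The loop $a^1$ corresponds under $\pi$ to the deck transformation $\Phi$ (up to replacing $\Phi$ by $\Phi^{-1}$), and $C'$ meets $a^1$ once; hence $[C']$ generates the infinite--cyclic subgroup of $\pi_1(\cO)$ classifying $\pi$, so $\pi^{-1}(C')$ is a disjoint family of essential circles in the open annulus $W$, forming a single orbit $\{\Phi^n(c)\}_{n\in\ZZ}$ of the deck group $\langle\Phi\rangle$. I would fix the lift $c$ whose two tangency points $K$ (over the $A_+$ tangency) and $Q$ (over the $A_-$ tangency) sit on $P^{-1}(O_+)$ and $P^{-1}(O_-)$ respectively. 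Then $c$ is transverse to $f^s$ except at $K$ and $Q$; since $\cO$ cut along $C'$ is an annulus, $\Phi(c)$ is the lift adjacent to $c$ and the region between $c$ and $\Phi(c)$ is a fundamental domain of $\pi$, which is condition (2) of Definition \ref{d.ncircle}. By the choice of $c$ we get $P(K)=O_+$ and $P(Q)=O_-$. Finally, running once around $c$: it crosses the two free stable separatrices of $O_1$ and $O_2$, both of which $P$ collapses to $O$ by Lemma \ref{l.Asecj}, and between these crossings it meets only $s$-arches, transversally; so $P\circ c$ traces $W^u(O)$ (necessarily monotonically, by the structure of the DA) from $O$ out to the fold value $O_+$ and back, then out to $O_-$ and back. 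Hence $P(c)$ is exactly the arc $I$, condition (1) holds, and $c$ is a nice circle (see Figure \ref{f.O}).

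The hard part will be the coordination in the second step: one must place the two tangencies of the downstairs circle so that \emph{one and the same} lift $c$ has $K$ on $P^{-1}(O_+)$ and $Q$ on $P^{-1}(O_-)$, while simultaneously keeping $C'$ in the homology class that lifts to disjoint circles and keeping $c$ essential and ``tight enough'' that $P\circ c$ never overshoots $O_+$ or $O_-$ along $W^u(O)$ (so that $P(c)$ is exactly $I$ and not a longer arc). Balancing these competing constraints is precisely what the careful choice of the twisting annulus and of the reparameterization is for, and that is where I expect the argument to need the most care.
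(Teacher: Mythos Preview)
Your outline matches the paper's strategy---work in $\cO$ with the explicit two--Reeb model, then lift---and you correctly flag the synchronization of the two tangencies as the crux. The gap is that you never actually close it. After the Dehn twist you describe (which, following the remarks before Definition~\ref{d.ncircle}, the paper has already absorbed into the standing parameterization, so that $\pi^{-1}(C')$ is a union of circles and the tangency leaves downstairs are $l_\pm=\pi(P^{-1}(O_\pm))$), you simply ``fix the lift $c$ whose two tangency points sit on $P^{-1}(O_+)$ and $P^{-1}(O_-)$.'' But no such lift need exist: once the $A_-$-tangency of a lift $c'$ is normalized to lie on $P^{-1}(O_-)$, its $A_+$-tangency lands on $\Phi^m(P^{-1}(O_+))$ for some integer $m$, and nothing you have done forces $m=0$. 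Choosing a different lift shifts both tangencies by the same power of $\Phi$, so it cannot repair the defect.

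The paper supplies precisely the missing move. Having recorded the integer $m$, it replaces $C'$ by a curve $C$ obtained via a \emph{second}, explicit Dehn twist supported in $\epsilon$-collars of the compact leaves $a^1,a^2$: one grafts in arcs $\alpha_1,\alpha_2$ governed by a smooth $g:[0,\epsilon]\to\RR$ with $g(0)=0$, $g(\epsilon)=-m$, and $|g'|$ small relative to $\tan\tfrac12\pi(1-\epsilon)$ so that transversality to $h^s$ is preserved. In $\cO$ the new $C$ is isotopic to $C'$ and keeps the same tangency points $K',Q'$; upstairs, however, the twist shifts the $A_+$-tangency of the lift through $Q$ by exactly $m$ deck translations, landing it on $K$. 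Your last paragraph gestures at a ``careful choice of the twisting annulus,'' but the actual content of the proof is this quantitative step: twist by the measured defect $m$, in collars of $a^1$ and $a^2$, with slope controlled by the Reeb model.
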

\begin{proof}
Recall that $P^{-1} (O_+)$ and $P^{-1} (O_-)$ are two s-arches of $\Phi$.
$l_+ = \pi(P^{-1}(O_+))$ and $l_- = \pi(P^{-1}(O_-))$ are two non-compact leaves of $h^s$
in $A_+$ and $A_-$.

By the parameterization of $h^s$ on $A_+$ and $A_-$, there is a unique point $K'$ in
$l_+ \cap b_+$ and a unique point $Q'$ in $l_- \cap b_-$.
After a re-parameterization  on $\cO$ if necessary, we can assume that
$K'=(0,0)$ and $Q'=(0,2)$. Recall that $C'$ is the circle in $\cO$ parameterized by $\{(x,0)| x\in [-1,3]\}$,
then $C'$ is transverse to $h^s$ except for the two tangent points $K'$ and $Q'$.
Since $\pi(K)=K'$ and $\pi(Q)=Q'$, there exists a circle $c'$ in $\pi^{-1}(C')$ so that
$c'$ is transverse to $f^s$ except for two points $K_m$ and $Q$
where $K_m =\Phi^m (K)$ for some integer $m$, and $K$ and $Q$ are two points in $W$ so that $P(K)=O_+$ and $P(Q)=O_-$.

Now we introduce a technique to find $c$ based on $c'$.
By the paramerization of $h^s$, one can easily check that there exist a small $\epsilon >0$
and a smooth  function $g:[0, \epsilon] \to [0,m]$  so that:
\begin{enumerate}
  \item $g(0)=0$, $g(\epsilon)=-m$ and $g'(0)=g'(\epsilon)=0$;
  \item $0<|g'(t)| \ll tan \frac{1}{2} \pi (1-\epsilon)$.
\end{enumerate}

Let $\alpha_1$ and $\alpha_2$ be the  two  arcs in $T^2$ which are parameterized by
$\{(x, g(-1-x+\epsilon))| x\in [-1,\epsilon-1]\}$  and $\{(x, g(x-1+\epsilon))| x\in [1-\epsilon, 1]\}$  under the $\ZZ$-action.
Set $C_0' = C' - \{(x, 0)| x\in [-1,\epsilon-1] \cup [1-\epsilon, 1]\}$
and $C= C_0' \cup \alpha_1 \cup \alpha_2$. See Figure \ref{f.C} as an illustration.

By the construction, $C$ and $C'$ are isotopic in $\cO$.
By the assumption of the slope about the function $g$, $C$ also is transverse to $h^s$ except for $K'$ and $Q'$.
Since $g(\epsilon)=-m$,  there exists a circle $c$ in $\pi^{-1} (C)$  so that
 $c$ is transverse to $h^s$ except for $K$ and $Q$. Therefore,
 $P(K)=O_+$, $P(Q)=O_-$ and $P (c)=I$. Moreover
since $C$ is transverse to $h^s$ except for $K'$ and $Q'$.
The facts that $C$ is a circle in $\cO$ and $\pi(c)=C$ ensure that
$c$ and $\Phi(c)$ are disjoint and bound a fundamental region of $\pi:W \to \cO$.
The properties above about $c$ ensure that $c$ is a nice circle in $W$ for $\Phi$.
\end{proof}

\begin{figure}[htp]
\begin{center}
  \includegraphics[totalheight=7cm]{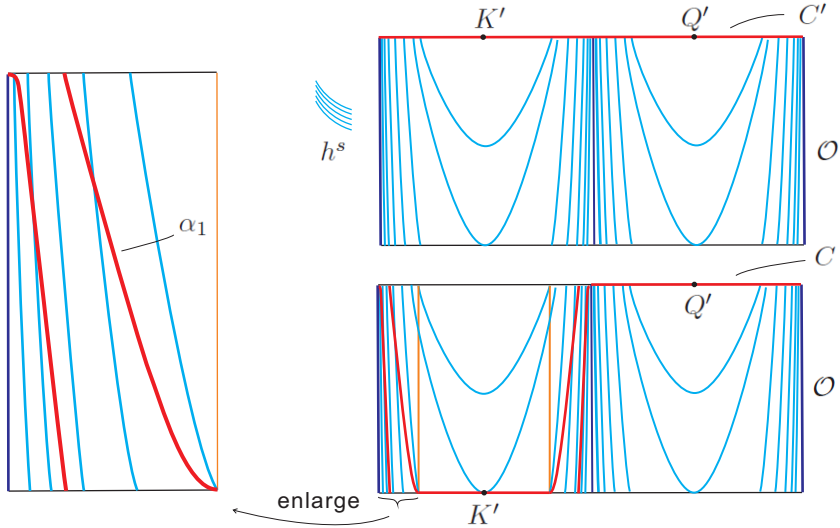}\\
  \caption{$C'$ and $C$ in the case $m=-2$}\label{f.C}
\end{center}
\end{figure}

\begin{proof}[Proof of Proposition \ref{p.DAdiff}]
On one hand, by Lemma \ref{l.Asecj}, there exists a semi-conjugacy map $P$ between $\Phi$ and the Thom-Anosov map $A$ through the corresponding stable-arches shrinking so that $P (R)=O$.
On the other hand,
automatically by the construction of DA surgery, there also exists a semi-conjugacy map $P_a$ between $\Phi_A$ and the Thom-Anosov map $A$ by the corresponding stable-arches shrinking so that $P_a (O)=O$.


Then we can automatically define a conjugacy map between $\Phi$ on $\Lambda_a$ and $\Phi_A$ on $\Lambda_a^0$ by using $P$ and $P_a$. Let us be more precise.
Let $x$ be a point in $W^u (O)- \{O\}$ where $W^u (O)$ is the unstable manifold of the origin $O$ of $A$, the set $P^{-1}(x)$ is an $s$-arch for $\Phi$ and the set $P_a^{-1}(x)$ is an $s$-arch for $\Phi_A$. Then, we can naturally choose a homeomorphism $\chi$ between $\Lambda_a$ and $\Lambda_a^0$ so that
 $\chi$ sends the two ends of $P^{-1}(x)$ to the two ends of $P_a^{-1}(x)$. One can further easily check that for every point $y\in \Lambda_a$,
$z=\chi(y) \in \Lambda_a^0$ satisfies that $P(y)=P_a (z)$ and $\chi$ is a conjugacy map between $\Phi \mid_{\Lambda_a}$ and
$\Phi_A \mid_{\Lambda_a^0}$.
Now we are left to extend $\chi$ to a conjugacy map    $h$ between $\Phi$ and $\Phi_A$.

 By Lemma \ref{l.ncircle},
there exists a nice circle $c$ in $W$ for $\Phi$ which is tangent to $f^s$ at $K$ and $Q$
so that $P(K) =O_+$, $P(Q) =O_-$  and $P(c)=I$.
We can similarly define $W_a$, $\cO_a$, $\pi_a: W_a \to \cO_a$, $f_a^s$ and $h_a^s$ for the DA Axiom A diffeomorphism $\Phi_A$. Note that Lemma \ref{l.ncircle} also works for $\Phi_A$.
Therefore, there exists a nice circle $c_a$ in $W_a$ for $\Phi_A$ which is tangent to $f_a^s$ at $K_a$ and $Q_a$
so that  $P_a(K_a) =O_+$, $P_a(Q_a) =O_-$ and $P_a(c_a)=I$.

  By the properties which $c$ and $c_a$ satisfy, we can easily build a homeomorphism $\tau: c\to c_a$ so that for every point $y\in c$, $P(y)= P_a \circ \tau(y)$,
  and in particular, $\tau(K)= K_a$ and $\tau(Q) =Q_a$. $\tau$ routinely induces a homeomorphism $t: C\to C_a$ so that
  for every point $y\in c$, $\pi_a \circ \tau (y)= t \circ \pi (y)$.
 The two commutative equalities  $P= P_a \circ \tau$ and $\pi_a \circ \tau = t \circ \pi$ induce that $P \circ \pi = P_a \circ \pi_a \circ t$. This commutative equality ensures that
  $t:C\to C_a$ can be extended to a homeomorphism
  $g:\cO \to \cO_a$ so that $g$ maps every leaf of $h^s$ to a leaf of $h_a^s$.
  By the cyclic covering maps $\pi$ and $\pi_a$, $g$ can be uniquely lifted to a homeomorphism $G:W\to W_a$
  so that $\pi_a \circ G=g\circ \pi$ and $G\mid_c= \tau$ which maps $c$ to $c_a$.

  We claim that $G$ satisfies the following conditions.
  \begin{enumerate}
    \item $G$ maps every leaf of $f^s$ to a leaf of $f_a^s$. In particular, $G$ maps the  two free stable separatrices of the two boundary periodic orbits of $\Lambda_a$ to the two free stable separatrices of the two boundary periodic orbits of $\Lambda_a^0$.
    \item $\pi_a \circ G=g\circ \pi$.
    \item $G\circ \Phi= \Phi_A \circ G$.
    \item Set $c$ and $c_a$ bound disks $D$ and $D_a$ in $T^2$ respectively, then $\Phi^{-1} (D)$ and $\Phi_A^{-1} (D_a)$ are in the interior of $D$ and $D_a$ respectively, and moreover
       $\bigcap_{k=0}^{+\infty} \Phi^{-k} (D) =R$ and  $\bigcap_{k=0}^{+\infty} \Phi_A^{-k} (D_a) =O$.
    \end{enumerate}

    The proof of the claim  are followed by some routine checks. We just list the main ideas here.
    Item $1$ is followed by the facts that $g$ maps every leaf of $h^s$ to a leaf of $h_a^s$, and in particular, $g$ maps the two compact leaves of $h^s$ to the two compact leaves of $h_a^s$.
    Item $2$ is one condition which $G$ already satisfies.
    Item $3$ is followed by item $2$ and the fact that  $G$ is the lifting map of $g$
    so that $G\mid_{c}= \tau$ which maps $c$ to $c_a$.
    To show item $4$,  we only need to show that $\Phi^{-1} (D)$ is in the interior of $D$.
    This can be followed by the fact that $D- \Phi^{-1} (D)$ is the fundamental region of the cyclic covering map $\pi:W\to \cO$  so that
    $\pi(\partial D) = \pi(\partial (\Phi^{-1} (D)))= C$ and therefore $D- \Phi^{-1} (D)$ is homeomorphic to
    $\SS^1 \times [0,1)$.

    By item $1$, item $2$ and item $4$ above, $G$ can be uniquely extended to a homeomorphism $h: T^2 \to T^2$ so that
    $h\mid_W =G$, $h\mid_{\Lambda_a}=\chi$ and $h(R)=O$.
    Furthermore, by item $3$ above and the two facts that $h(R)=O$ and $h\mid_{\Lambda_a}= \chi$,
    one can immediately get that $h$ is a conjugacy map
    between $\Phi$ and $\Phi_A$, namely, $h\circ \Phi= \Phi_A \circ h$.
\end{proof}

\subsection{End of the proof of Theorem \ref{t.claexp}} \label{ss.fclaexp}

\begin{proof}[Proof of Theorem \ref{t.claexp}]
By Proposition \ref{p.sect}, there exists a global torus section $\Sigma$ of $(W_0, Y_t)$. Let $\Phi$
be  the self-homeomorphism on $\Sigma$ induced by the first return map  of $Y_t$. Then $W_0$ is homeomorphic to
 the mapping torus of $\Phi$. Note that as the sol-manifold induced by the Anosov automorphism $A$, $W_0$ has a unique torus fibration structure (see, for instance \cite{Thu} or \cite{Fri}).
 Therefore, $\Phi$ is isotopic
to either $A$ or $A^{-1}$.

When $A=\left(\begin{array}{cc}
                                         2 & 1 \\
                                         1 & 1 \\
                                       \end{array}
                                     \right)$,
                                     $A$ and $A^{-1}=\left(\begin{array}{cc}
                                         1 & -1 \\
                                         -1 & 2 \\
                                       \end{array}
                                     \right)$ are conjugate by
                                     $B=\left(
                                          \begin{array}{cc}
                                            0 & 1 \\
                                            -1 & 0 \\
                                          \end{array}
                                        \right)$.
    This fact promises us to only  focus on the case that $\Phi$ is isotopic to $A$.
    By Proposition \ref{p.DAdiff}, $\Phi$ and $\Phi_A$ are topologically conjugate.
                                        As a direct consequence, $(W_0, Y_t)$ and $(W_0, Y_t^0)$ are topologically equivalent. Here $Y_t^0$ is the suspension Axiom $A$ flow induced by $\Phi_A$.
                                      Also recall the fact that for $3$-dimensional flows case, up to topological equivalence, there is a unique filtrating neighborhood for a given expanding attractor.
                                      Then, we can conclude that  $(N_0, Y_t)$ and $(N_0, Y_t^0)$ are topologically equivalent.
\end{proof}

\section{non-transitive Anosov flows}\label{s.nonAno}

Let $(N_1, Y_t^1, \Lambda_1, T_1)$ and $(N_2, Y_t^2, \Lambda_2, T_2)$
be two copies of $(N_0, Y_t^0, \Lambda_0, \partial N_0)$.
$Y_{-t}^2$ is a nonsingular flow on $N_2$  so that
the maximal invariant set of $Y_{-t}^2$ is an expanding repeller $\Lambda_2$
supported by $N_2$.
We still denote by $F^s$ the intersectional $1$-foliation of $T_1$ and the stable foliation $\cF^s$ of $\Lambda_1$.
In the system $(N_2, Y_{-t}^2)$, we denote by $F^u$ the intersectional $1$-foliation of $T_2$ and the unstable foliation $\cF^u$ of $\Lambda_2$. Here $(F^u, \cF^u)$ exactly corresponds to $(F^s, \cF^s)$ of $(N_2, Y_t^2)$ when we coordinate $(N_2, Y_t^2)$ by $(N_0, Y_t^0)$.
Franks and Williams constructed their  non-transitive Anosov flow $Z_t^0$  on $M_0$ which is topologically
equivalent to $(M_1, Z_t^1)$ constructed as follows.
Choose  a suitable gluing homeomorphism
$\Psi_1: T_2 \to  T_1$ so that $\Psi_1$ is isotopic to the identity map \footnote{This identity map can be understood as the identity map on $\partial N$, which coordinates both of $T_1$ and $T_2$. In the remainder of this section, we will always define an identity map in the same manner as above.} and $\Psi_1 (F^u)$ and $F^s$ are transverse everywhere. The glued flow, the glued manifold and the glued torus are named by $Z_t^1$, $M_1$ and $S_1$
respectively.

The purpose of this section is to prove Theorem \ref{t.claAno}, i.e. up to topological equivalence, every non-transitive Anosov flow $X_t$ on $M_0$ is topologically equivalent to $Z_t^0$.
Now we only need  to prove that $X_t$ on $M_0$ is topologically equivalent to $Z_t^1$ on $M_1$.
To prove the theorem, the first  step is to show that every non-transitive Anosov flow $X_t$ on $M_0$
admits a  similar decomposition structure with  $Z_t^1$:

\begin{lemma}\label{l.decomp}
Let $X_t$ be  a non-transitive Anosov flow on $M_0$. Then $X_t$ is topologically equivalent to the glued flow $Z_t^2$ on the glued manifold $M_2$ obtained by gluing $(N_1, Y_t^1)$ and $(N_2, Y_{-t}^2)$ together through a homeomorphism
 $\Psi_2: T_2 \to T_1$ so that $\Psi_2(F^u)$ is isotopic to the identity map and  transverse to $F^s$ everywhere. We denote by $S_2$ the glued torus.
\end{lemma}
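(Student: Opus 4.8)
The plan is to decompose $(M_0,X_t)$ along a single transverse torus $S$ into an expanding attractor half $N^+$ and an expanding repeller half $N^-$, to identify each half with a copy of the DA model by Theorem \ref{t.claexp}, and then to read off the gluing map $\Psi_2$ together with its transversality and isotopy properties. The decomposition step runs exactly as in the proof of Lemma \ref{l.ktransitive}. Since $X_t$ is Axiom A, Smale's spectral decomposition (plus a Lyapunov function) writes $\Omega(X_t)=\Lambda_1\sqcup\cdots\sqcup\Lambda_k$ with a partial order in which non-transitivity forces a maximal element $\Lambda^+$ that is a proper attractor and a minimal element $\Lambda^-$ that is a proper repeller; as the unstable dimension is $2$ and both are proper, $\Lambda^+$ (resp.\ $\Lambda^-$) is a $2$-dimensional expanding attractor (resp.\ repeller). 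A Lyapunov filtration adapted to the order has boundary transverse to $X_t$, hence a union of tori by Proposition \ref{p.BrFe}(1), each incompressible in $M_0$ (a standard property of transverse tori to Anosov flows, see \cite{Br},\cite{Fen2}), hence isotopic to the unique essential torus of $M_0$; Proposition \ref{p.BrFe}(2) makes any two transverse tori flow-isotopic, and the region swept between two such tori is a product with no non-wandering point, so $k=2$ and the filtration boundary is a single transverse torus $S$. Cutting $M_0$ along $S$ realizes its JSJ decomposition, so $N^+$ (the $\Lambda^+$ side) and $N^-$ (the $\Lambda^-$ side) are each homeomorphic to $N_0$, with $X_t$ entering $N^+$ and leaving $N^-$ along $S$.

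Theorem \ref{t.claexp} applied to $(N^+,X_t)$ gives a topological equivalence $\phi^+\colon(N^+,X_t)\to(N_1,Y_t^1)$, and applied to the time-reversed flow on $N^-$ — for which $\Lambda^-$ is an expanding attractor supported by $N^-\cong N_0$ — it gives $\phi^-\colon(N^-,X_t)\to(N_2,Y_{-t}^2)$; both respect the flow direction, since the flow meets $S$ in a prescribed sense on each side. Put $\Psi_2:=(\phi^+|_S)\circ(\phi^-|_S)^{-1}\colon T_2\to T_1$; then $\Psi_2\circ(\phi^-|_S)=\phi^+|_S$, so $\phi^+$ and $\phi^-$ glue to a topological equivalence $(M_0,X_t)\to(M_2,Z_t^2)$ with $M_2=N_1\cup_{\Psi_2}N_2$ and $Z_t^2$ the glued flow; in particular $M_2\cong M_0$. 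For the transversality of $F^s$ and $\Psi_2(F^u)$ the hypothesis that $X_t$ is \emph{Anosov}, not merely Axiom A, is essential: on $S$ the traces $\cW^s\cap S$ and $\cW^u\cap S$ of the weak stable and weak unstable foliations of $X_t$ are $1$-foliations that are transverse everywhere, because their tangent lines can meet only in the flow direction, which is transverse to $S$. Now $\cW^s$ restricts on $N^+$ to the weak stable foliation $\cF^s$ of $\Lambda^+$, so $\cW^s\cap S=F^s$, and dually $\cW^u\cap S$ is the copy of $F^u$ carried by $\Lambda^-$. Since an equivalence sends weak stable (resp.\ weak unstable) leaves to weak stable (resp.\ weak unstable) leaves, $\phi^+|_S$ carries the transverse pair $\cW^s\cap S,\ \cW^u\cap S$ onto $F^s$ and $\Psi_2(F^u)$ on $T_1$, which are therefore transverse everywhere on $S_2=T_1$.

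It remains to arrange that $\Psi_2$ is isotopic to the identity. As $M_2\cong M_0$ and both are built by gluing two copies of $N_0$, JSJ rigidity (uniqueness up to isotopy of the essential torus of $M_0$) forces $[\Psi_2]$ into the image $\Gamma$ of $\mathrm{MCG}(N_0)$ in $\mathrm{MCG}(T^2)$; on the other hand $\phi^+$ and $\phi^-$ are unique only up to self-equivalences of the DA model $(N_0,Y_t^0)$, whose boundary actions form a subgroup $\Gamma_0\subseteq\Gamma$, and modifying $\phi^\pm$ by these changes $\Psi_2$ within the double coset $\Gamma_0\,\Psi_2\,\Gamma_0$. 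The statement thus reduces to the equality $\Gamma_0=\Gamma$, i.e.\ to the fact that every self-homeomorphism of the figure eight knot complement is, after composition with a flow equivalence furnished by Theorem \ref{t.claexp}, realized by a self-equivalence of the DA flow; granting this one adjusts the choices so that $\Psi_2=\mathrm{id}$. Identifying this boundary symmetry group — balancing the mapping class group of the figure eight knot complement against the rigidity of the DA model given by Theorem \ref{t.claexp} — is the main obstacle; the decomposition and transversality steps are a routine assembly of the spectral decomposition, the Brunella-Fenley properties of transverse tori, the JSJ decomposition of $M_0$, and Theorem \ref{t.claexp}.
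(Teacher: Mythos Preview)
Your decomposition and transversality arguments are essentially the paper's own: Lyapunov function, Brunella--Fenley torus, JSJ uniqueness forcing each piece to be $N_0$, Theorem~\ref{t.claexp} on each side, and transversality of $F^s$ and $\Psi_2(F^u)$ read off from the Anosov splitting on the glued torus. On all of this you and the paper agree step for step.

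You are right to isolate ``$\Psi_2$ isotopic to the identity'' as the delicate point, and you should know that the paper does not argue it either---it simply asserts it. Your reduction to $\Gamma_0=\Gamma$ is however stronger than what is needed, and the gap closes more cheaply once you combine two constraints you already have. First, transversality of $\Psi_2(F^u)$ with $F^s$ forces $\Psi_2(c_v)$ to be isotopic to $\pm c_v$ (a curve transverse to a Reeb foliation must be isotopic to its core), so $[\Psi_2]$ lies among the matrices $\left(\begin{smallmatrix}\pm1&0\\k&\pm1\end{smallmatrix}\right)$; this is exactly the obstruction used later in the proof of Theorem~\ref{t.gFW1}. Second, $M_{\Psi_2}\cong M_0$ together with Mostow rigidity (the interior of $N_0$ is hyperbolic, so $\mathrm{MCG}(N_0)$ and hence your $\Gamma$ is finite) forces $[\Psi_2]\in\Gamma$ to have finite order; among the matrices above only $\pm I$ do. Finally $-I$ is realized by an actual self-equivalence of the DA model (the map $H_E$ of Remark~\ref{r.samecmpt}), so after adjusting one of $\phi^\pm$ by $H_E$ you may take $\Psi_2$ isotopic to the identity. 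Thus you do not need all of $\Gamma$ to come from flow self-equivalences---only $-I$, and that is explicitly available. Alternatively, as the paper itself observes in the proof of Theorem~\ref{t.gFW1}, the machinery of Section~\ref{s.nonAno} never actually uses that $\Psi_2$ is isotopic to the identity, so for the purpose of Theorem~\ref{t.claAno} the point is moot.
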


\begin{proof}
Since $X_t$ is a non-transitive Anosov flow, there are at least two basic sets in the nonwandering set of $X_t$.
Let $g:M_0\to \mathbb{R}$ be a Lyapunov function of $X_t$. Then $g$ contains at least one maximum value and one minimum
value. Let $y$ be a regular value and $T$ be a connected component of $g^{-1}(y)$. Then $T$ is a
transverse surface of $X_t$. By the first conclusion of Proposition \ref{p.BrFe} due to Brunella \cite{Br} and Fenley \cite{Fen2}, $T$ is a transverse torus of $X_t$.  Since the interior of $N_0$ is homeomorphic to figure eight knot complement space, which is a hyperbolic $3$-manifold. Then, up to isotopy, $T$ is the unique incompressible torus in $M_0$ which cuts $M_0$ to two parts so that the path closure of each of them is homeomorphic to $N_0$.
Moreover,
by the second conclusion of Proposition \ref{p.BrFe}, we immediately have:
\begin{enumerate}
  \item up to isotopy along flowlines, $T$ is the unique regular level set of $g$;
  \item the nonwandering set of $X_t$ exactly is the union of an expanding attractor and an expanding repeller with $N_1'$ and $N_2'$ as their filtrating neighborhoods respectively, where the union of $N_1'$  and $N_2'$ is the path closure of $M-T$  and each of $N_1'$ and $N_2'$ is homeomorphic to $N_0$.
\end{enumerate}

 By Theorem \ref{t.claexp}, either $(N_1', X_t)$ or $(N_2', X_{-t})$ is  topologically equivalent to $(N_0,Y_t^0)$. For simplicity, up to topological equivalence, we can assume that $(N_1', X_t) =(N_1,Y_t^1)$  and $(N_2',X_t)= (N_2,Y_{-t}^2)$. Notice the facts that  both of the stable foliation and the unstable foliation of
 $X_t$ are transverse to $T$ and the intersectional $1$-foliation of these two foliations exactly is the $1$-foliation induced by the flowlines of $X_t$. Then we can assume that
 $X_t$ is topologically equivalent to the flow obtained by gluing $(N_1, Y_t)$ and $(N_2, Y_{-t})$ through a homeomorphism
 $\Psi_2: T_2 \to T_1$  so that $\Psi_2(F^u)$ is isotopic to the identity map and transverse to $F^s$ everywhere. Lemma \ref{l.decomp} is proved.
 \end{proof}

By Lemma \ref{l.decomp},  to prove Theorem \ref{t.claAno}, we only need to show the following claim.
\begin{claim}\label{cl.final}
$Z_t^1$ on $M_1$ and $Z_t^2$ on $M_2$ are
topologically equivalent.
\end{claim}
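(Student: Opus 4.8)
The starting point is the decomposition supplied by Lemma \ref{l.decomp}: both $M_1=N_1\cup_{\Psi_1}N_2$ and $M_2=N_1\cup_{\Psi_2}N_2$ are built from the \emph{same} pieces $(N_1,Y_t^1)$ (attractor side) and $(N_2,Y_{-t}^2)$ (repeller side), so an orbit equivalence $h\colon M_1\to M_2$ will be sought in the form $h=h_1\cup h_2$, where $h_j$ is a self--orbit--equivalence of the corresponding piece. Identifying $T_1$ with $T_2$ once and for all, such an $h$ descends to an orbit equivalence of the glued flows exactly when the boundary traces $\phi_1=h_1|_T$ and $\phi_2=h_2|_T$ satisfy $\phi_1\circ\Psi_1=\Psi_2\circ\phi_2$. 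Since the stable (resp.\ unstable) foliation is intrinsic to the flow, $\phi_1$ must preserve $F^s$ and $\phi_2$ must preserve $F^u$; conversely, using the explicit structure of the DA model established in Section \ref{s.claexp} (the flow collar $N_1-\Lambda_1\cong T^2\times[0,1)$ together with the transversality of $\cF^s$ to $\partial N_1$), every homeomorphism of $T$ that preserves $F^s$ and is isotopic to the identity is realised as the boundary trace of a self--orbit--equivalence of $(N_1,Y_t^1)$, obtained by pushing it inward along the flowlines and the stable leaves; symmetrically for $F^u$ and $(N_2,Y_{-t}^2)$. Hence the whole claim reduces to producing a single homeomorphism $\phi=\phi_1$ of $T$, isotopic to the identity, with $\phi(F^s)=F^s$ and $\phi\bigl(\Psi_1(F^u)\bigr)=\Psi_2(F^u)$; equivalently, that the two transverse pairs of $1$--foliations $(F^s,\Psi_1(F^u))$ and $(F^s,\Psi_2(F^u))$ on $T$ are equivalent by an isotopically trivial homeomorphism fixing the first foliation.

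To produce $\phi$ I would pin down the combinatorial type of a transverse pair $(F^s,\mathcal G)$ with $\mathcal G=\Psi_i(F^u)$. Both $F^s$ and $\mathcal G$ are unions of two Reeb annuli meeting along their compact core leaves, and because $\Psi_1$ and $\Psi_2$ are both isotopic to the identity, the compact leaves of $\Psi_1(F^u)$ and of $\Psi_2(F^u)$ carry the same homology class, which is transverse to that of the compact leaves of $F^s$. The delicate point is the global way the non--compact leaves of $F^s$ and $\mathcal G$ interlock. Here I would lift everything to the infinite cyclic cover of $M_i$ coming from the once--punctured torus fibration of the pieces $N_1,N_2$ -- the technique imported from Barbot \cite{Bar} and Beguin--Yu \cite{BY1}. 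Over the gluing torus this cover unrolls the Reeb components, and the key rigidity statement is that a leaf of the lifted foliation $\widetilde{F^s}$ meets a leaf of the lifted $\widetilde{\mathcal G}$ in at most one point (an innermost--bigon argument, using transversality together with the fact that the holonomy of a Reeb component does not reverse). This uniqueness rigidifies the picture: the finitely many compact leaves, together with the cyclic order in which the leaves of one foliation cross a fixed leaf of the other, determine $(F^s,\mathcal G)$ up to homeomorphism, and the homological constraints above force this data to coincide for $i=1$ and $i=2$. One then builds $\phi\colon T\to T$ leaf by leaf -- first on the compact leaves, then across the Reeb strips, anchored by the unique intersection points -- checks it is deck--equivariant so that it descends to $T$, and corrects it by an isotopy to make it isotopic to the identity, using its action on $H_1(T)$.

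With $\phi$ in hand, set $\phi_1=\phi$ and $\phi_2=\Psi_2^{-1}\circ\phi_1\circ\Psi_1$; then $\phi_2$ preserves $F^u$ precisely because $\phi_1(\Psi_1(F^u))=\Psi_2(F^u)$, and $\phi_1\circ\Psi_1=\Psi_2\circ\phi_2$ holds by construction. Extending $\phi_1$ to an orbit equivalence $h_1$ of $(N_1,Y_t^1)$ and $\phi_2$ to an orbit equivalence $h_2$ of $(N_2,Y_{-t}^2)$ as in the first paragraph, the map $h=h_1\cup h_2$ is a well--defined homeomorphism $M_1\to M_2$ carrying $Z_t^1$--orbits to $Z_t^2$--orbits, which proves Claim \ref{cl.final} and hence, via Lemma \ref{l.decomp}, Theorem \ref{t.claAno}.

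I expect the heart of the proof to be the rigidity step: establishing the at--most--one--intersection property in the cyclic cover and genuinely deducing that the interlocking combinatorics of $(F^s,\Psi_i(F^u))$ is the \emph{unique} type compatible with transversality and with $\Psi_i$ being isotopic to the identity, so that $\phi$ exists. The extension/gluing step is comparatively routine, but still requires care: the flowline perturbations producing $h_1$ and $h_2$ must agree along the gluing torus and must not destroy the orbit--equivalence property near $\Lambda_1$ and $\Lambda_2$.
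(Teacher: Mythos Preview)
Your outline is essentially the paper's proof: lift to the infinite cyclic cover, use the unique-intersection property there to build the boundary homeomorphism $\Wi h_1$ leafwise (Lemma~\ref{l.ch1}), extend it over $N_1$ by an explicit flow-time reparametrization controlled by a height function (Section~\ref{ss.g1H1}), descend by deck-equivariance, and glue via $h_2=\Psi_2^{-1}\circ h_1\circ\Psi_1$. One point you pass over too quickly: the assertion that ``the homological constraints force this data to coincide for $i=1,2$'' is not quite right---a noncompact leaf $l^u$ of $F^u$ may have $\Psi_1(l^u)$ and $\Psi_2(l^u)$ avoiding \emph{different} compact leaves of $F^s$, and the paper handles this (Remark~\ref{r.samecmpt}) by precomposing with the self-equivalence $H_E$ of $(N_0,Y_t^0)$ that swaps the two boundary periodic orbits.
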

From now on, we focus on proving this claim. The proof will be divided into the following three steps which are corresponding to Section \ref{ss.lbh}, Section \ref{ss.g1H1} and Section \ref{ss.ep1} respectively.
\begin{enumerate}
  \item [Step 1.] First we lift every object in question to a infinitely many cyclic cover of $ N_i$ ($i\in \{1,2\}$), say $\Wi N_i$.
  Then we build a self-homeomorphism $\Wi h_1$ on  $\Wi T_1$ which  preserves the lift of $F^s$
  and  preserves the lifts of $\Psi_2(F^u)$ and $\Psi_1(F^u)$ with some  commutative conditions (Lemma \ref{l.ch1}). Here $\Wi T_1$  is the lift of $T_1$.
  \item [Step 2.] We construct a height function $\Wi g_1: \Wi T_1 \to \RR$, and then with the help of $\Wi g_1$,
  we can extend $\Wi h_1 :\Wi T_1 \to \Wi T_1$ to an orbit-preserving homeomorphism map $\Wi H_1$
  on the lifting wandering set  so that, when close to the lifting attractor, $\Wi H_1$ totally preserving strong
  s-arches. Then we can show that $\Wi H_1$ can be extended to an orbit-preserving homeomorphism on $\Wi N_1$.
  We still call by $\Wi H_1$ the extended homeomorphism.
  \item [Step 3.] $\Wi H_1$  induces an orbit-preserving homeomorphism $H_1$ on $(N_1, Y_t^1)$ which is
  an extension of the homeomorphism $h_1: T_1 \to T_1$ induced by $\Wi h_1$.
  Similarly, we can build an orbit-preserving homeomorphism $H_2$ on $(N_2, Y_t^2)$ which is
  an extension of a homeomorphism $h_2: T_2 \to T_2$ which is close related to $h_1$ (Corollary \ref{c.ch2} and Lemma \ref{l.H2}).
  The relationships between $h_1$ and $h_2$ can ensure us to pinch $H_1$ and $H_2$ to an orbit-preserving homeomorphism $H: (M_2, Z_t^2) \to (M_1, Z_t^1)$.
\end{enumerate}

\subsection{Lifting and boundary homeomorphism $\Wi h_1$}\label{ss.lbh}
Recall that $(M_i, Z_t^i)$
 ($i=1,2$)
 is topologically equivalent to the flow obtained by gluing $(N_1, Y_t)$ and $(N_2, Y_{-t})$ through a homeomorphism
 $\Psi_i: T_2 \to T_1$ so that $\Psi_i$ is isotopic to the identity map and $\Psi_i(F^u)$ is transverse to $F^s$ everywhere.

We will work on a type of cyclic covering spaces of $N_1$ and $N_2$, so we have to introduce
more information  about the covering spaces.
We can use some notations introduced to $(N_0,Y_t^0)$ in Section \ref{sss.epaflow} for both of $(N_1, Y_t^1)$  and $(N_2, Y_{-t}^2)$.
Let $\Wi N_i$ ($i\in\{1,2\}$) be the $\ZZ$-cyclic cover of $N_i$ associated to $\Sigma_0$ and $\pi_i: \Wi N_i \to N_i$ be  the corresponding covering map.
Let $P: \RR \to \SS^1$ be a covering map. Then one can easily use the standard  lifting criterion to lift
$P_i: N_i \to \SS^1$ to $\Wi P_i: \Wi N_i \to \RR$ so that $P\circ \Wi P_i = P_i \circ \pi_i$.
Naturally, $\{ \Wi{\Sigma}_t = \Wi P_i^{-1} (t) | t \in \RR\} $ induces
 a product fibration structure on $\Wi {N_i}$ so that $\pi_i (\Wi{\Sigma}_t ) = \Sigma_t$.
Further recall that $P_i\circ Y_t (x)= t+P_i (x) \in \SS^1$ for every $x\in T_i$,
it is easy to check that, $\Wi P_i (\Wi Y_t (\Wi x))= \Wi P_i (\Wi x) +t$.
The corresponding deck-transformation of the covering map $\pi_i:\Wi {N_i} \to N_i$ is a $\ZZ$-action which is generated by $\sigma_i$ where $\sigma_i$ satisfies that for every $\Wi x \in \Wi \Sigma_t$, $\sigma_i (\Wi x)\in \Wi \Sigma_{t+1}$.
Certainly we have $\pi_i \circ \sigma_i = \pi_i$ and $\Wi P_i \circ \sigma_i = \Wi P_i +1$.

Let $\Wi T_i, \Wi Y_t^i$ and $\Wi \Lambda_i $ be the corresponding lifts of
$T_i, Y_t^i$ and $\Lambda_i$. Here, the existence of $\Wi Y_t^i$ is followed by
the standard homotopy lifting property. Automatically, $\Wi Y_t^i$ is commutative to the corresponding deck transformation, i.e. $\Wi Y_t^i \circ \sigma_i =\sigma_i \circ \Wi Y_t^i$.
Naturally, $\pi_i: \Wi T_i \to T_i$ also is a cyclic covering map.
We further assume that the foliations
$\Wi{\cF}^s$, $\Wi{\cF}^u$, $\Wi F^s$ and $\Wi F^u$ on $\Wi N_1$, $\Wi N_2$, $\Wi T_1$ and $\Wi T_2$
are the lifts of $\cF^s$, $\cF^u$, $F^s$ and $F^u$ on $N_1$, $N_2$, $T_1$ and $T_2$ respectively.

 By covering lift Theorem once more,  there exists a lifting homeomorphism $\Wi\Psi_i: \Wi T_2 \to \Wi T_1$ ($i\in\{1,2\}$) of $\Psi_i: T_2 \to T_1$ commutative to the corresponding deck transformations:
 $\Wi \Psi_i \circ \sigma_2 =\sigma_1 \circ \Wi\Psi_i$.

We collect all of the commutative conditions above in the following lemma which will be very useful in the following of this section.

\begin{lemma}\label{l.comm}
$\pi_i$, $\sigma_i$, $P$, $\Wi P_i$, $P_i$ and $\Phi_i$ share the following commutative equalities:
\begin{enumerate}
\item $\pi_i \circ \sigma_i= \pi_i$;
\item $P\circ \Wi P_i = P_i \circ \pi_i$;
  \item $\Wi Y_t^i \circ \sigma_i =\sigma_i \circ \Wi Y_t^i$;
    \item $\Wi P_i \circ \Wi Y_t^i (\Wi x)= \Wi P_i (\Wi x) +t$ and $\Wi P_i \circ \sigma_i = \Wi P_i +1$;
    \item $\pi_1 \circ \Wi \Psi_i = \Psi_i \circ \pi_2$ and $\sigma_1 \circ \Wi{\Psi_i}=\Wi{\Psi_i} \circ \sigma_2$.
\end{enumerate}
\end{lemma}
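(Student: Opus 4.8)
The plan is to verify the five relations one by one, directly from the constructions that produced the covering maps $\pi_i$, the deck generators $\sigma_i$, the lifted projections $\Wi P_i$, the lifted flows $\Wi Y_t^i$ and the lifted gluing maps $\Wi\Psi_i$. No tool beyond the covering-space lifting criterion and the uniqueness of lifts is needed; I would treat the items in the order $(1),(2),(4),(3),(5)$.

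Item $(1)$ is simply the definition of a deck transformation of the covering $\pi_i\colon \Wi N_i\to N_i$. Item $(2)$ is the property imposed when $\Wi P_i$ was constructed: applying the lifting criterion to the map $P_i\circ\pi_i\colon \Wi N_i\to \SS^1$ against the covering $P\colon \RR\to\SS^1$ --- legitimate because $\Wi N_i$ carries the product fibration over $\RR$, so $(P_i\circ\pi_i)_*$ kills $\pi_1$ --- yields $\Wi P_i$ precisely so that $P\circ\Wi P_i=P_i\circ\pi_i$. For item $(4)$, the identity $P_i\circ Y_t(x)=t+P_i(x)$ holds by the normalization of the $\SS^1$-fibration $P_i$ (it was recorded just before the lemma); lifting through $P$ and using that $t\mapsto\Wi P_i\circ\Wi Y_t^i(\Wi x)$ is continuous and equals $\Wi P_i(\Wi x)$ at $t=0$ fixes the additive constant, so $\Wi P_i\circ\Wi Y_t^i(\Wi x)=\Wi P_i(\Wi x)+t$; the second half $\Wi P_i\circ\sigma_i=\Wi P_i+1$ just restates that $\sigma_i$ is the generator of the (infinite cyclic) deck group normalized so that $\sigma_i(\Wi\Sigma_t)\subset\Wi\Sigma_{t+1}$.

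Items $(3)$ and $(5)$ both reduce to uniqueness of lifts. In $(3)$, since $\pi_i\circ\sigma_i=\pi_i$, both $\Wi Y_t^i\circ\sigma_i$ and $\sigma_i\circ\Wi Y_t^i$ are lifts of the flow $Y_t^i\circ\pi_i$ that agree at $t=0$, hence coincide for all $t$ by the homotopy lifting property. In $(5)$, the equality $\pi_1\circ\Wi\Psi_i=\Psi_i\circ\pi_2$ is the defining property of $\Wi\Psi_i$ as a lift of $\Psi_i\circ\pi_2$ through $\pi_1$, which exists because $\Psi_i$ is isotopic to the identity and hence $(\Psi_i)_*$ fixes the subgroup of $\pi_1(T_1)$ corresponding to $\pi_1\colon\Wi T_1\to T_1$. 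For $\sigma_1\circ\Wi\Psi_i=\Wi\Psi_i\circ\sigma_2$: conjugation by $\Wi\Psi_i$ carries the deck group of $\pi_2$ isomorphically onto that of $\pi_1$, so $\Wi\Psi_i\circ\sigma_2\circ\Wi\Psi_i^{-1}$ is a generator of the latter; comparing its effect on the $\RR$-coordinate --- which increases by $1$, since $\Psi_i\simeq\mathrm{id}$ respects the fibration in the normalization fixed above --- identifies it with $\sigma_1$ (not $\sigma_1^{-1}$), after replacing $\Wi\Psi_i$ by the appropriate lift if needed.

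The one point that deserves genuine care, rather than being routine bookkeeping, is this orientation check in $(5)$: one must be sure the chosen lift $\Wi\Psi_i$ induces $\sigma_2\mapsto\sigma_1$ and not $\sigma_2\mapsto\sigma_1^{-1}$ on deck groups. This is where the hypothesis that $\Psi_i$ is isotopic to the identity --- so that it preserves the co-orientation of the once-punctured-torus fibration --- is used, together with the normalization $\Wi P_i\circ\sigma_i=\Wi P_i+1$ from item $(4)$. Everything else follows immediately from the definitions, so the lemma is really a consolidation of facts already established in the preceding paragraphs.
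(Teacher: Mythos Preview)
Your proposal is correct and matches the paper's approach exactly: the paper does not give a separate proof of this lemma but simply states that it ``collect[s] all of the commutative conditions above,'' each of which was recorded when the objects $\pi_i,\sigma_i,\Wi P_i,\Wi Y_t^i,\Wi\Psi_i$ were introduced in the preceding paragraphs. Your write-up supplies the routine covering-space justifications (uniqueness of lifts, homotopy lifting) that the paper leaves implicit, and your attention to the orientation check in item~(5) is exactly the one non-automatic point.
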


Now we start to understand the two foliations $F^s$ and $\Psi_i (F^u)$ on $S_i$ which are transverse everywhere.
Recall that $F^s$ and $F^u$ are two $1$-foliations on $T_1$ and $T_2$ respectively so that everyone is the union of two Reeb annuli which are asymptotic to the same direction along every compact leaf.
So we would like to abstractly understand two foliations $F^1$ and $F^2$ on a torus $S$ so that
$F^i$ ($i\in\{1,2\}$) is the union of such type of two Reeb annuli and they are transverse everywhere.

We can build two transverse $1$-foliations $F_0^1$ and $F_0^2$ on a torus $T^2$  which satisfy the conditions above as follows (Figure \ref{f.2twoReeb}).
First we build two $1$-foliations $\Wi F_0^1$ and $\Wi F_0^2$ on $\RR^2$
induced by the orbits of the vector fields   $Z^1 = \cos \pi x \frac{\partial}{\partial x}+\sin \pi x\frac{\partial}{\partial y}$ and $Z^2 = \sin \pi x\frac{\partial}{\partial x}-\cos \pi x\frac{\partial}{\partial y}$ respectively.
Note that $\Wi F_0^1$ and $\Wi F_0^2$ are transverse everywhere and they are invariant under the
$\ZZ \oplus \ZZ$ action on $\RR^2$ by $(m,n)\circ (x,y)=(m+x, n+y)$ for every $(m,n)\in \ZZ\oplus \ZZ$. Then $(\Wi F_0^1,\Wi F_0^2)$ induces a $1$-foliation pair $(F_0^1, F_0^2)$ on $T^2$ so that
they are transverse everywhere. Moreover, one can automatically check that each of  $F_0^1$ and  $F_0^2$ is the union of the desired two Reeb annuli.

\begin{figure}[htp]
\begin{center}
  \includegraphics[totalheight=5.5cm]{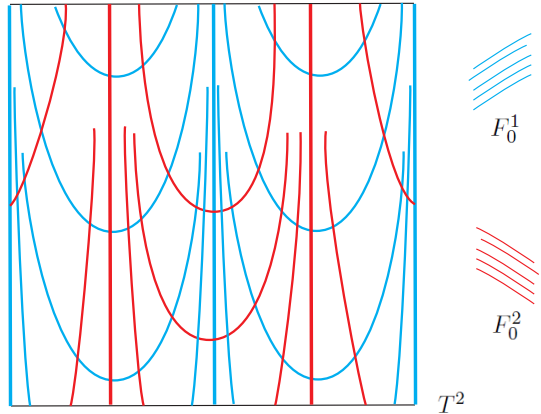}\\
  \caption{$F_0^1$ and $F_0^2$ on $T^2$}\label{f.2twoReeb}
\end{center}
\end{figure}

By some standard and elementary technique (for instance, the similar techniques can be found in Section $7$ of \cite{BBY}), one can easily show the following proposition:

\begin{proposition}\label{p.stbifol}
Let $F^1$ and $F^2$ be two foliations on a torus $S$ so that
each of $F^1$ and $F^2$ is the union of two Reeb annuli which are asymptotic to the same direction along every compact leaf and they are transverse everywhere.
Then there exists a homeomorphism $\varphi: S\to T^2$ so that $\varphi$ preserves leaves between $F^i$
and $F_0^i$.
\end{proposition}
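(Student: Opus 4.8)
The plan is to reduce the problem to the classical leaf-conjugacy classification of Reeb-type foliations of the torus, and then to a soft piece-by-piece normalization on the annuli cut out by the compact leaves.

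First I would treat $F^1$ by itself. A $1$-foliation of $T^2$ with a non-empty compact-leaf set is determined, up to leaf-conjugacy, by the cyclic arrangement of its compact leaves together with the rotational direction in which its noncompact leaves spiral onto each of them; the hypotheses on $F^1$ (a union of Reeb annuli whose leaves are asymptotic to the same direction along every compact leaf) fix this combinatorial datum, so $F^1$ is leaf-conjugate to $F_0^1$. Fix $h_1\colon S\to T^2$ with $h_1(F^1)=F_0^1$ and put $\bar F^2:=h_1(F^2)$. It then suffices to construct a self-homeomorphism $g$ of $T^2$ carrying each leaf of $F_0^1$ to a leaf of $F_0^1$ and each leaf of $\bar F^2$ to a leaf of $F_0^2$, since then $\varphi:=g\circ h_1$ does the job.

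The main step is to bring the compact leaves of $\bar F^2$ into model position relative to $F_0^1$. Each such compact leaf is a closed transversal of $F_0^1$, and a closed transversal of a Reeb-type foliation of $T^2$ is isotopic through transversals into a foliated product chart; an $F_0^1$-leaf-preserving ambient isotopy therefore makes the compact leaves of $\bar F^2$ disjoint from and parallel to those of $F_0^1$. One must then verify that global transversality of $F_0^1$ and $\bar F^2$, together with the ``same asymptotic direction'' hypothesis, forces the number of compact leaves of $\bar F^2$ inside each complementary Reeb annulus of $F_0^1$, and the way the $\bar F^2$-spiralling meshes with the $F_0^1$-spiralling near each compact leaf, to be exactly as for $F_0^2$ relative to $F_0^1$; this is the one place where transversality is used in an essential, global way, and it requires a careful analysis of the leaves of $\bar F^2$ crossing a complementary annulus of $F_0^1$, in the spirit of the arguments in Section~7 of \cite{BBY}. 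After this normalization the compact leaves of $F_0^1$ and $\bar F^2$ coincide, as a family of curves, with those of $F_0^1$ and $F_0^2$, and they cut $T^2$ into finitely many ``corner'' annuli $R_1,\dots,R_m$; on each $R_j$ the foliation $F_0^1$ has one boundary circle as a spiralled-onto leaf and is transverse to the other, while $\bar F^2$ does the same with the two boundary circles exchanged.

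The last step is local. On each corner annulus $R_j$ the pair $\bigl(F_0^1|_{R_j},\,\bar F^2|_{R_j}\bigr)$ is a pair of transverse foliations of the standard corner type, and one builds a leaf-conjugacy $g_j$ onto the corresponding corner piece of $(T^2,F_0^1,F_0^2)$ by using the two transverse foliations as coordinates (on the universal cover of $R_j$ a point is pinned down by the $F_0^1$-leaf and the $\bar F^2$-leaf through it). Since each $g_j$ may be prescribed arbitrarily on the two boundary circles among homeomorphisms respecting the induced $1$-foliations there, the $g_j$ can be chosen to agree along common boundary circles and then glued to the desired $g$. I expect essentially all the genuine work to lie in the second step --- the rigidity, forced by global transversality, of the relative position of the two compact-leaf families --- whereas the first and third steps are standard surface-foliation bookkeeping.
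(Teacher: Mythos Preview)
The paper does not actually give a proof of this proposition: it simply states that it follows from ``some standard and elementary technique (for instance, the similar techniques can be found in Section~7 of \cite{BBY})'' and moves on. Your proposal is a reasonable and essentially correct way to flesh out such an argument, and your own pointer to Section~7 of \cite{BBY} for the delicate step matches the paper's reference exactly.

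A few remarks on the content of your sketch. Your identification of the main difficulty is right: once $F^1$ is normalized to $F_0^1$, the nontrivial point is to show that the two compact leaves of $\bar F^2$ must sit one in each Reeb annulus of $F_0^1$, with the spiralling directions forced; everything else is indeed soft bookkeeping. For this step you should make the following observation explicit: a closed transversal to the two-Reeb foliation $F_0^1$ is necessarily isotopic (through transversals, hence by an $F_0^1$-leaf-preserving isotopy) to a vertical circle, since a simple closed curve in any other homotopy class would have to cross a compact leaf of $F_0^1$ and would then be forced, by the coherent spiralling on both sides, to acquire a tangency with a nearby noncompact leaf. After that, if both compact leaves of $\bar F^2$ lay in the same Reeb annulus $A$ of $F_0^1$, the other annulus $A'$ would be entirely contained in a single Reeb annulus of $\bar F^2$, and one checks directly that a half-Reeb foliation of an annulus cannot be everywhere transverse to a full Reeb foliation of that same annulus (look at a leaf of $\bar F^2$ near the core of $A'$). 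This is the concrete content of your ``transversality used in an essential, global way''. Your last step (product coordinates on each corner annulus and gluing along boundaries) is fine; just note that on each corner piece the product structure is naturally defined on the open annulus and one extends to the two boundary circles by continuity, using that each boundary circle is a compact leaf of one foliation and transverse to the other.
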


\begin{remark}\label{r.samecmpt}
Note that if we pick a noncompact leaf $l^2$ in $F_0^2$, $l^2$ intersects with one of the two compact leaves of $F_0^1$ once and is disjoint to the other.
By the comments above and Proposition \ref{p.stbifol}, it maybe appear the following phenomena:
there is a noncompact leaf $l^u$ in $F^u$ so that $\Psi_1 (l^u)$ is disjoint to one
of the two compact leaves in $F^s$ and $\Psi_2 (l^u)$ is disjoint to the other.
This phenomena will provide some troubles in the following of the paper. We would like to give some comments to overcome it at the moment. Note that there is self-conjugacy map  $E=\left(
                                          \begin{array}{cc}
                                            -1 & 0 \\
                                            0 & -1 \\
                                          \end{array}
                                        \right)$
for the Anosov automorphism $A$ so that $E$ exchanges the two stable sepratrices of $O$.
The existence of $E$ implies that there is a self orbit-preserving map $H_E$ of $(N_0, Y_t^0)$
satisfying that $H_E$ exchanges the two boundary periodic orbits of $Y_t^0$.
This fact permits us to assume that  $\Psi_1 (l^u)$ and  $\Psi_2 (l^u)$
are disjoint to the same compact leaf of $F^s$ by a further self orbit-preserving map $H_E$
if necessary. We keep this assumption from now on.
\end{remark}




\begin{lemma}\label{l.ch1}
There exists a unique homeomorphism $\Wi h_1: \Wi T_1  \to \Wi T_1$ so that:
\begin{enumerate}
  \item $\Wi h_1  (\Wi l^s) = \Wi l^s $ for every leaf $\Wi l^s$ of $\Wi F^s$;
  \item $\Wi h_1  \circ \Wi \Psi_1 (\Wi l^u) = \Wi \Psi_2  (\Wi l^u)$ for every leaf $\Wi l^u$ of $\Wi F^u$;
  \item $\Wi h_1 \circ \sigma_1 = \sigma_1 \circ \Wi{h_1}$.
\end{enumerate}
\end{lemma}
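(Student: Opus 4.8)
The plan is to work entirely on the cover $\Wi T_1$, which is an infinite cyclic cover of the torus $T_1$, hence homeomorphic to $\RR\times\SS^1$, with $\Wi P_1:\Wi T_1\to\RR$ the projection and $\sigma_1$ translation by the deck generator. On $\Wi T_1$ we have three $1$-foliations: $\Wi F^s$, and $\cG_i:=\Wi\Psi_i(\Wi F^u)$ for $i=1,2$, with $\Wi F^s$ transverse to each $\cG_i$ (this lifts the transversality of $F^s$ and $\Psi_i(F^u)$ on $S_i$). The homeomorphism $\Wi h_1$ will be \emph{defined by sliding along the leaves of $\Wi F^s$}: given $\Wi x\in\Wi T_1$, let $\Wi l^s$ be the $\Wi F^s$-leaf through $\Wi x$ and let $\Wi l^u$ be the unique $\Wi F^u$-leaf with $\Wi x\in\Wi\Psi_1(\Wi l^u)$; then set $\Wi h_1(\Wi x)$ to be the point where $\Wi l^s$ meets $\Wi\Psi_2(\Wi l^u)$. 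For this to make sense one needs the following key property.

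\medskip\noindent\emph{Property $(\mathrm P)$: in $\Wi T_1$, every leaf of $\Wi F^s$ meets every leaf of $\cG_i$ in exactly one point, $i=1,2$.} I would first record that, after the normalization of Remark \ref{r.samecmpt}, all leaves of $\Wi F^s$, $\cG_1$, $\cG_2$ are properly embedded lines, each surjecting properly onto the $\RR$-factor under $\Wi P_1$: by Proposition \ref{p.pYt} every leaf of $F^s$ is either one of the two compact leaves $c_1,c_2$ (whose homotopy class pairs nontrivially with the fibre $\partial\Sigma_t$, so it lifts to a line spiralling around the $\SS^1$-factor) or a non-compact leaf inside one of the two Reeb annuli, which spirals onto $c_1$ or $c_2$ at both ends and therefore lifts to such a line; the same holds for $\cG_i$ since $\Psi_i$ is isotopic to the identity and $F^u$ is of the same type. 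For the ``at most one'' half of $(\mathrm P)$ I would lift once more to the universal cover $\RR^2$: there $\Wi F^s$ and $\cG_i$ pull back to transverse foliations by properly embedded lines (closed leaves are impossible, as they would bound disks and force a tangency), and a leaf of one meeting a leaf of the other twice would produce a bigon with one side a leaf-arc and the other side a transverse arc, which is excluded by the usual Poincar\'e--Hopf index obstruction; pushing down yields ``at most one'' in $\Wi T_1$. For the ``at least one'' half I would use essentiality together with the fact (Remark \ref{r.samecmpt}) that $\Psi_1(l^u)$ and $\Psi_2(l^u)$ avoid the same compact leaf of $F^s$: a leaf of $\cG_i$ and a leaf of $\Wi F^s$ run off to $\pm\infty$ in the $\Wi P_1$-direction on the same side of the lifts of $c_1$ and $c_2$, so an elementary separation argument in $\RR\times\SS^1$ forces them to cross. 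I expect $(\mathrm P)$ to be the main obstacle: it is exactly the ``convenient property about uniqueness of intersectional points'' that motivates passing to the cyclic cover, since on $S_i$ itself two transverse Reeb foliations may have leaves meeting several times.

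\medskip Granting $(\mathrm P)$, the map $\Wi h_1$ is well defined, and it is continuous because the (unique, transverse) intersection point depends continuously on the two leaves, which in turn depend continuously on $\Wi x$. It is a bijection, with inverse obtained by exchanging the roles of $\Wi\Psi_1$ and $\Wi\Psi_2$ (here one uses that $\Wi\Psi_1,\Wi\Psi_2$ are homeomorphisms), hence a homeomorphism. Property $(1)$ holds by construction, with surjectivity of $\Wi h_1$ on each $\Wi F^s$-leaf again provided by $(\mathrm P)$; property $(2)$ is immediate, since $\Wi x\in\Wi\Psi_1(\Wi l^u)$ gives $\Wi h_1(\Wi x)\in\Wi\Psi_2(\Wi l^u)$, and surjectivity onto $\Wi\Psi_2(\Wi l^u)$ is the symmetric statement. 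Property $(3)$ follows from Lemma \ref{l.comm}: $\sigma_1$ permutes the leaves of $\Wi F^s$, $\sigma_2$ permutes those of $\Wi F^u$, and $\sigma_1\circ\Wi\Psi_i=\Wi\Psi_i\circ\sigma_2$, so $\sigma_1$ carries the defining data of $\Wi h_1$ at $\Wi x$ to that at $\sigma_1\Wi x$, whence $\Wi h_1\circ\sigma_1=\sigma_1\circ\Wi h_1$.

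\medskip Finally, for uniqueness, suppose $\Wi h_1'$ also satisfies $(1)$ and $(2)$. For every $\Wi x$, property $(1)$ forces $\Wi h_1'(\Wi x)$ to lie on $\Wi l^s(\Wi x)$, and property $(2)$ forces $\Wi h_1'(\Wi x)\in\Wi\Psi_2(\Wi l^u(\Wi x))$ because $\Wi x\in\Wi\Psi_1(\Wi l^u(\Wi x))$; by the uniqueness clause of $(\mathrm P)$ this point must equal $\Wi h_1(\Wi x)$, so $\Wi h_1'=\Wi h_1$. (Condition $(3)$ is not needed for uniqueness.)
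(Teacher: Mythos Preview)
Your proposal is correct and follows essentially the same route as the paper: define $\Wi h_1(\Wi x)$ as the unique point of $\Wi l^s(\Wi x)\cap\Wi\Psi_2(\Wi l^u(\Wi x))$, obtain the inverse by interchanging $\Wi\Psi_1$ and $\Wi\Psi_2$, get continuity from the local product structure, and verify item~(3) via the deck--equivariance $\sigma_1\circ\Wi\Psi_i=\Wi\Psi_i\circ\sigma_2$ of Lemma~\ref{l.comm}. The only difference is in justifying your Property~$(\mathrm P)$: the paper invokes the explicit model $(F_0^1,F_0^2)$ of Proposition~\ref{p.stbifol} together with Remark~\ref{r.samecmpt} and declares the check ``routine'', whereas you sketch a direct topological argument (bigon/Poincar\'e--Hopf for ``at most one'', separation for ``at least one''); note that your ``pushing down'' step from $\RR^2$ to $\Wi T_1$ would need a word on why distinct deck--translates of a lifted $\cG_i$-leaf do not create extra intersections, which is exactly what the explicit model makes transparent.
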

\begin{proof}
First we remind the reader that  we will always use the model $(F_0^1, F_0^2)$ on $T^2$ to understand
$(F^s, \Psi_i (F^u))$ on $S_i$ by Proposition \ref{p.stbifol} and
we will omit the routine checks about some intuitive properties implied by the model.
Pick a point $\Wi x \in \Wi T_1$, by Proposition \ref{p.stbifol}, there exists a unique leaf
$\Wi l^s$ of $\Wi F^s$ and  a  unique leaf $\Wi l^u$ of $\Wi F^u$ so that
$\{ \Wi x\} = \Wi l^s \cap \Wi \Psi_1 (\Wi l^u)$.
By the assumption in Remark \ref{r.samecmpt} and Proposition \ref{p.stbifol} once more,
one can find a unique point in $\Wi l^s \cap \Wi \Psi_2 (\Wi l^u)$, defined by $\Wi h_1  (\Wi x)$.

One can build $\Wi \tau_1: \Wi T_1 \to \Wi T_1$ in a similar way so that
$\Wi h_1\circ \Wi \tau_1= \Wi \tau_1\circ \Wi h_1 =Id$, which is the identity map on $\Wi T_1$.
Therefore, $\Wi h_1$ is a bijection map. By the construction of $\Wi h_1$,
$\Wi h_1$ preserves the local product structures induced by $(\Wi F^s, \Psi_1 (\Wi F^u))$
and $(\Wi F^s, \Psi_2 (\Wi F^u))$. Therefore, both of $\Wi h_1$ and $\Wi \tau_1$ are continuous.
Hence, $\Wi h_1$ is a homeomorphism. Item $1$ and item $2$ of the lemma  and the uniqueness of
$\Wi h_1$
can be directly followed by the construction of $\Wi h_1$. We are left to prove item $3$.

Pick a point $\Wi x \in \Wi T_1$ so that $\{ \Wi x\} = \Wi l^s \cap \Wi \Psi_1 (\Wi l^u)$.
We have£¬
\begin{eqnarray*}
   \sigma_1 \circ \Wi{h_1}(\Wi x) &=& \sigma_1 ( \Wi l^s \cap \Wi \Psi_2 (\Wi l^u))\nonumber\\
                           &=& \sigma_1 ( \Wi l^s) \cap (\sigma_1 \circ \Wi \Psi_2 (\Wi l^u))\nonumber\\
                           &=&\sigma_1 ( \Wi l^s) \cap  (\Wi \Psi_2 \circ  \sigma_2 (\Wi l^u)),
\end{eqnarray*} and,
\begin{eqnarray*}
   \sigma_1 (\Wi x) &=& \sigma_1 (\Wi l^s \cap \Wi \Psi_1 (\Wi l^u))\nonumber\\
                           &=& \sigma_1 (\Wi l^s) \cap (\sigma_1 \circ \Wi \Psi_1 (\Wi l^u))\nonumber\\
                           &=&\sigma_1 (\Wi l^s) \cap (\Wi \Psi_1 \circ \sigma_2 (\Wi l^u)).
\end{eqnarray*}
The third equality in each recursive equation above is followed by item $5$ of Lemma \ref{l.comm}.
Therefore, by the construction of $\Wi h_1$, $\Wi h_1 \circ \sigma_1 (\Wi x) =
\sigma_1 (\Wi l^s) \cap (\Wi \Psi_2 \circ \sigma_2 (\Wi l^u))$.
In summary, $\Wi h_1 \circ \sigma_1 (\Wi x) = \sigma_1 \circ \Wi{h_1} (\Wi x)$.
\end{proof}

The following corollary is a direct consequence of Lemma \ref{l.ch1} and Lemma \ref{l.comm}.
\begin{corollary}\label{c.ch2}
$\Wi h_1$ induces a homeomorphism $\Wi h_2= \Wi \Psi_2^{-1}\circ \Wi h_1\circ \Wi \Psi_1: \Wi T_2 \to \Wi T_2$ so that:
\begin{enumerate}
  \item $\Wi h_2 \circ \Wi \Psi_1^{-1}(\Wi l^s) = \Wi \Psi_2^{-1}(\Wi l^s)$ for every leaf $\Wi l^s$ in $\Wi F^s$;
 \item $\Wi h_2 (\Wi l^u) = \Wi l^u$ for every leaf $\Wi l^u$ in $\Wi F^u$;
  \item $\Wi h_2 \circ \sigma_2 = \sigma_2 \circ \Wi h_2$.
\end{enumerate}
\end{corollary}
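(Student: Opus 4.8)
The plan is to verify the three asserted properties by direct substitution of the definition $\Wi h_2 = \Wi \Psi_2^{-1}\circ \Wi h_1\circ \Wi \Psi_1$, invoking only the three corresponding items of Lemma \ref{l.ch1} together with the commutation relation between $\Wi\Psi_i$ and the deck transformations recorded in item $5$ of Lemma \ref{l.comm}. Along the way one also records that $\Wi h_2$ is a homeomorphism, being a composite of the homeomorphisms $\Wi\Psi_2^{-1}$, $\Wi h_1$, $\Wi\Psi_1$.

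For the first item I would compute $\Wi h_2\circ \Wi\Psi_1^{-1}(\Wi l^s) = \Wi\Psi_2^{-1}\circ \Wi h_1\circ \Wi\Psi_1\circ \Wi\Psi_1^{-1}(\Wi l^s) = \Wi\Psi_2^{-1}\circ \Wi h_1(\Wi l^s)$, and then use item $1$ of Lemma \ref{l.ch1}, which says that $\Wi h_1$ fixes every leaf $\Wi l^s$ of $\Wi F^s$; hence the expression equals $\Wi\Psi_2^{-1}(\Wi l^s)$. For the second item I would apply $\Wi h_2$ to a leaf $\Wi l^u$ of $\Wi F^u$, obtaining $\Wi\Psi_2^{-1}\circ(\Wi h_1\circ \Wi\Psi_1)(\Wi l^u)$; by item $2$ of Lemma \ref{l.ch1} the inner composite sends $\Wi l^u$ to $\Wi\Psi_2(\Wi l^u)$, so the whole expression collapses to $\Wi l^u$.

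For the equivariance (item $3$) I would chain the commutations: starting from $\Wi h_2\circ \sigma_2 = \Wi\Psi_2^{-1}\circ \Wi h_1\circ \Wi\Psi_1\circ \sigma_2$, push $\sigma_2$ through $\Wi\Psi_1$ using $\Wi\Psi_1\circ \sigma_2 = \sigma_1\circ \Wi\Psi_1$ (item $5$ of Lemma \ref{l.comm} with $i=1$), then through $\Wi h_1$ using item $3$ of Lemma \ref{l.ch1} ($\Wi h_1\circ \sigma_1 = \sigma_1\circ \Wi h_1$), and finally through $\Wi\Psi_2^{-1}$ using $\sigma_1\circ \Wi\Psi_2 = \Wi\Psi_2\circ \sigma_2$, equivalently $\Wi\Psi_2^{-1}\circ \sigma_1 = \sigma_2\circ \Wi\Psi_2^{-1}$ (item $5$ of Lemma \ref{l.comm} with $i=2$). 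This lands exactly on $\sigma_2\circ \Wi h_2$.

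Since every step is a one-line algebraic identity, there is essentially no obstacle; the only point requiring a little care is to apply item $5$ of Lemma \ref{l.comm} for \emph{both} indices $i=1$ and $i=2$ — once to move $\sigma_2$ past $\Wi\Psi_1$ and once to move $\sigma_1$ past $\Wi\Psi_2^{-1}$ — rather than for a single index.
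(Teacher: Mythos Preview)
Your proposal is correct and is exactly what the paper intends: it states only that Corollary \ref{c.ch2} is a direct consequence of Lemma \ref{l.ch1} and Lemma \ref{l.comm}, and your three substitutions (using items $1$, $2$, $3$ of Lemma \ref{l.ch1} and item $5$ of Lemma \ref{l.comm} for both $i=1,2$) spell out precisely that routine verification.
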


\subsection{Hight function $\Wi g_1$ and topologically equivalent map $\Wi H_1$}\label{ss.g1H1}

Define a continuous function $\Wi g_1: \Wi T_1 \to \RR$ by $\Wi g_1 (\Wi x)= \Wi P_1 (\Wi x)-\Wi P_1 \circ \Wi h_1 (\Wi x)$ for every point $\Wi x \in \Wi T_1$.

\begin{proposition}\label{p.cg1}
$\Wi g_1$ satisfies the following properties:
\begin{enumerate}
  \item $\Wi g_1\circ \sigma_1 = \Wi g_1$;
  \item there exists a big integer $K> 0$ so that $|\Wi g_1 (\Wi x)|< K$ for every point $\Wi x \in \Wi T_1$;
  \item $\Wi P_1 \circ \Wi Y_K^1(\Wi x)= \Wi P_1 \circ \Wi Y_{K+\Wi g_1 (\Wi x)}^1\circ\Wi h_1(\Wi x)$.
\end{enumerate}
\end{proposition}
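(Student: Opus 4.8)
The plan is to derive all three items directly from the definition $\Wi g_1 (\Wi x)= \Wi P_1 (\Wi x)-\Wi P_1 \circ \Wi h_1 (\Wi x)$ together with the commutation relations collected in Lemma \ref{l.comm} --- chiefly item $4$, i.e. $\Wi P_1 \circ \sigma_1 = \Wi P_1 +1$ and $\Wi P_1 \circ \Wi Y_t^1 (\Wi x)= \Wi P_1 (\Wi x)+t$ --- and the $\sigma_1$-equivariance of $\Wi h_1$ furnished by item $3$ of Lemma \ref{l.ch1}. No new geometric input is needed: the proposition is pure bookkeeping on the covering space $\Wi N_1$, and the only point requiring a moment's care is that the flow time appearing in item $3$ be positive, since $Y_t^1$ is only defined in forward time on $N_1$.

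For item $1$, I would substitute $\sigma_1 \Wi x$ into the definition of $\Wi g_1$. Applying $\Wi P_1 \circ \sigma_1 = \Wi P_1 +1$ to the first term, and for the second term first $\Wi h_1 \circ \sigma_1 = \sigma_1 \circ \Wi h_1$ and then again $\Wi P_1 \circ \sigma_1 = \Wi P_1 +1$, one obtains $\Wi g_1 (\sigma_1 \Wi x) = (\Wi P_1 (\Wi x)+1) - (\Wi P_1 \circ \Wi h_1 (\Wi x)+1) = \Wi g_1 (\Wi x)$, so the two $+1$'s cancel.

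For item $2$, item $1$ says that $\Wi g_1$ is invariant under the deck group, hence descends to a continuous function on the quotient $\Wi T_1/\langle \sigma_1 \rangle = T_1$, which is compact; therefore $\Wi g_1$ is bounded, and one takes $K$ to be any integer strictly larger than $\sup_{\Wi T_1}|\Wi g_1|$. In particular $K + \Wi g_1 (\Wi x) > 0$ for every $\Wi x \in \Wi T_1$, which legitimizes the expression $\Wi Y_{K+\Wi g_1 (\Wi x)}^1 \circ \Wi h_1 (\Wi x)$ in item $3$: both $K$ and $K+\Wi g_1 (\Wi x)$ are positive, and $\Wi h_1 (\Wi x)$ again lies on $\Wi T_1$, so the forward flow is defined there.

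Finally, item $3$ is an immediate computation with $\Wi P_1 \circ \Wi Y_t^1 = \Wi P_1 +t$: the left-hand side equals $\Wi P_1 (\Wi x)+K$, while the right-hand side equals $\Wi P_1 \circ \Wi h_1 (\Wi x) + \bigl(K + \Wi g_1 (\Wi x)\bigr)$, and this coincides with $\Wi P_1 (\Wi x)+K$ exactly because $\Wi g_1 (\Wi x)= \Wi P_1 (\Wi x) - \Wi P_1 \circ \Wi h_1 (\Wi x)$. Thus the only genuine obstacle is the purely formal one of keeping the covering-space bookkeeping straight; there is no analytic or topological difficulty beyond invoking the compactness of $T_1$ in item $2$.
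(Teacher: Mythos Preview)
Your proposal is correct and follows essentially the same route as the paper: item $1$ via the commutation $\Wi h_1 \circ \sigma_1 = \sigma_1 \circ \Wi h_1$ together with $\Wi P_1 \circ \sigma_1 = \Wi P_1 + 1$, item $2$ by descent to the compact torus $T_1$, and item $3$ by a direct application of $\Wi P_1 \circ \Wi Y_t^1 = \Wi P_1 + t$ plus the definition of $\Wi g_1$. Your additional remark that $K + \Wi g_1(\Wi x) > 0$ guarantees the forward flow in item $3$ is well-defined is a nice clarification that the paper leaves implicit.
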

\begin{proof}
Let $\Wi x$ be a point in $\Wi T_1$. Then we have,
\begin{eqnarray*}
   \Wi g_1\circ \sigma_1  (\Wi x) &=& \Wi P_1 \circ \Wi h_1 \circ \sigma_1 (\Wi x) - \Wi P_1 \circ \sigma_1 (\Wi x)\nonumber\\
                           &=& \Wi P_1 \circ \sigma_1\circ \Wi h_1 (\Wi x)-  \Wi P_1 \circ \sigma_1 (\Wi x)\nonumber\\
                           &=&(\Wi P_1 \circ \Wi h_1 (\Wi x)+1) -(\Wi P_1 (\Wi x) +1)\nonumber\\
                           &=& \Wi P_1 \circ \Wi h_1 (\Wi x) -\Wi P_1 (\Wi x) \nonumber\\
                           &=& \Wi g_1 (x).
\end{eqnarray*}
Here the second equality is because of item $3$ of Lemma
\ref{l.ch1} and the third equality is because of item $4$ of Lemma \ref{l.comm}.  Then item $1$ of the proposition is proved.

Item $2$  is a direct consequence of item $1$ and the fact that $\Wi g_1$ is continuous.
Let us explain more. Item $1$ ensure that the continuous map $\Wi g_1$ can naturally induce a continuous map $g_1: T_1 \to \RR$ so
that $g_1 \circ \pi_1 = \Wi g_1$.  Since $g_1$ is a continuous function on a compact set $T_1$, the image of $g_1$ is uniformly bounded. Moreover, it is obvious that the images of $g_1$ and $\Wi g_1$ are the same. Therefore, the image of $\Wi g_1$ is uniformly  bounded.

Now we check item $3$. By item $4$ of Lemma \ref{l.comm},  $\Wi P_1 \circ \Wi Y_{K+\Wi g_1 (\Wi x)}^1\circ\Wi h_1(\Wi x)=
 \Wi P_1 \circ \Wi h_1(\Wi x) +K+g_1 (\Wi x)$ and $\Wi P_1 \circ \Wi Y_K^1(\Wi x)=
 \Wi P_1 (\Wi x)+K$. Moreover, recall that $\Wi g_1 (\Wi x)= \Wi P_1 (\Wi x)-\Wi P_1 \circ \Wi h_1 (\Wi x)$, one can immediately get that $\Wi P_1 \circ \Wi Y_{K+\Wi g_1 (\Wi x)}^1\circ\Wi h_1(\Wi x)=
 \Wi P_1 \circ \Wi Y_K^1(\Wi x)$.
\end{proof}

Now we define a map $\Wi H_1 : \Wi N_1 \to \Wi N_1$ in  different ways depending on whether
the pre-image belongs to $\Wi \Lambda_1$ or not.
If $\Wi x \in \Wi \Lambda_1$, $\Wi H_1 (\Wi x)$ is defined to be $\Wi x$. In another word,
 $\Wi H_1 |_{\Wi \Lambda_1}$ is  the identity map on $\Wi \Lambda_1$.
If $\Wi x \in \Wi N_1 - \Wi \Lambda_1$, there exists a unique point $\Wi y \in \Wi T_1$ and a unique $t\in \RR^+$ so that
$\Wi x = \Wi Y_t^1 (\Wi y)$.
In this case, $\Wi H_1 (\Wi x)$ is defined by:
\begin{enumerate}
  \item $\Wi Y_{\frac{K+ \Wi g_1 (\Wi y)}{K} t}^1 \circ \Wi h_1 (\Wi y)$, when $t\in [0,K]$;
  \item $\Wi Y_{t+ \Wi g_1 (\Wi y)}^1 \circ \Wi h_1 (\Wi y)$, when $t\in [K, +\infty)$.
\end{enumerate}
$\Wi H_1$ satisfies the following  commutative properties:

\begin{lemma}\label{l.cH1}
\begin{enumerate}
  \item $\Wi H_1$ is commutative to $\sigma_1$: $\Wi H_1 \circ \sigma_1 (\Wi x)= \sigma_1 \circ \Wi H_1 (\Wi x)$ for every point $\Wi x \in \Wi N_1$.
  \item  If $\Wi x \in \Wi Y_K^1 (\Wi N_1)- \Wi \Lambda_1$, then $\Wi P_1 \circ \Wi H_1 (\Wi x)= \Wi P_1 (\Wi x)$.
\end{enumerate}

\end{lemma}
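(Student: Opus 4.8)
The plan is to verify both items by unwinding the definition of $\Wi H_1$ directly, splitting according to whether the argument lies in $\Wi\Lambda_1$ or in the wandering part $\Wi N_1-\Wi\Lambda_1$ (where every point has a unique representation $\Wi Y_t^1(\Wi y)$ with $\Wi y\in\Wi T_1$, $t\ge 0$, because $\Wi Y_t^1$ is transverse inwards along $\Wi T_1$), and in the wandering case according to the two regimes $t\in[0,K]$ and $t\in[K,+\infty)$ appearing in the definition. The only ingredients are the commutation relations of Lemma \ref{l.comm}, item $3$ of Lemma \ref{l.ch1} (that $\Wi h_1\circ\sigma_1=\sigma_1\circ\Wi h_1$), and Proposition \ref{p.cg1} (that $\Wi g_1\circ\sigma_1=\Wi g_1$, which rests on item $1$ of that proposition, and the defining formula $\Wi g_1(\Wi y)=\Wi P_1(\Wi y)-\Wi P_1\circ\Wi h_1(\Wi y)$).

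For item $1$, if $\Wi x\in\Wi\Lambda_1$ then $\sigma_1(\Wi x)\in\Wi\Lambda_1$ since $\Wi\Lambda_1$ is invariant under the deck group, so both $\Wi H_1\circ\sigma_1(\Wi x)$ and $\sigma_1\circ\Wi H_1(\Wi x)$ equal $\sigma_1(\Wi x)$. If $\Wi x=\Wi Y_t^1(\Wi y)\in\Wi N_1-\Wi\Lambda_1$ with $\Wi y\in\Wi T_1$, then by item $3$ of Lemma \ref{l.comm} we have $\sigma_1(\Wi x)=\Wi Y_t^1(\sigma_1(\Wi y))$, so $\sigma_1(\Wi x)$ has flow-coordinates $(\sigma_1(\Wi y),t)$ with the \emph{same} $t$. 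Feeding this into the definition of $\Wi H_1$ and using $\Wi g_1\circ\sigma_1=\Wi g_1$ (which keeps the scaling factor $\frac{K+\Wi g_1(\Wi y)}{K}$ unchanged), $\Wi h_1\circ\sigma_1=\sigma_1\circ\Wi h_1$, and once more item $3$ of Lemma \ref{l.comm} to pull $\sigma_1$ through the flow, one obtains $\Wi H_1\circ\sigma_1(\Wi x)=\sigma_1\circ\Wi H_1(\Wi x)$ in each regime; the two formulas agree at $t=K$, so there is no ambiguity on the overlap.

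For item $2$, a point $\Wi x\in\Wi Y_K^1(\Wi N_1)-\Wi\Lambda_1$ has the form $\Wi x=\Wi Y_K^1(\Wi z)$ with $\Wi z\notin\Wi\Lambda_1$ (otherwise flow-invariance of $\Wi\Lambda_1$ would put $\Wi x$ in $\Wi\Lambda_1$), hence $\Wi z=\Wi Y_s^1(\Wi y)$ for some $\Wi y\in\Wi T_1$, $s\ge 0$, and therefore $\Wi x=\Wi Y_t^1(\Wi y)$ with $t=s+K\ge K$. Thus $\Wi H_1$ is given by the second branch, $\Wi H_1(\Wi x)=\Wi Y_{t+\Wi g_1(\Wi y)}^1\circ\Wi h_1(\Wi y)$. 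Applying $\Wi P_1$, using item $4$ of Lemma \ref{l.comm} twice and the definition of $\Wi g_1$,
\[
\Wi P_1\circ\Wi H_1(\Wi x)=\Wi P_1\circ\Wi h_1(\Wi y)+t+\Wi g_1(\Wi y)=\Wi P_1(\Wi y)+t=\Wi P_1(\Wi x),
\]
which is the claim.

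The whole argument is routine bookkeeping with the commutation relations; the only two points that deserve a moment's care are the $\sigma_1$-invariance of $\Wi g_1$ ensuring the reparametrized regime $t\in[0,K]$ transforms correctly under the deck group, and the observation that a point of $\Wi Y_K^1(\Wi N_1)$ outside $\Wi\Lambda_1$ genuinely has flow-parameter $\ge K$, so that item $2$ only ever calls on the simpler second branch of the definition. I do not anticipate any real obstacle here.
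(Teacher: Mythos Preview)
Your proposal is correct and follows essentially the same approach as the paper: split into the cases $\Wi x\in\Wi\Lambda_1$ and $\Wi x\notin\Wi\Lambda_1$, in the latter case use the flow-coordinates $(\Wi y,t)$ and the two regimes $t\le K$, $t\ge K$, and then chase the commutation relations $\Wi g_1\circ\sigma_1=\Wi g_1$, $\Wi h_1\circ\sigma_1=\sigma_1\circ\Wi h_1$, $\Wi Y_t^1\circ\sigma_1=\sigma_1\circ\Wi Y_t^1$ for item~$1$, and the definition of $\Wi g_1$ together with $\Wi P_1\circ\Wi Y_t^1=\Wi P_1+t$ for item~$2$. Your justification that points of $\Wi Y_K^1(\Wi N_1)-\Wi\Lambda_1$ genuinely have flow-parameter $t\ge K$ is slightly more explicit than the paper's, which simply asserts this.
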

\begin{proof}
First we prove item $1$.
If $\Wi x \in \Wi \Lambda_1$, the commutative equality can be immediately
followed by the fact that $\Wi H_1 |_{\Wi \Lambda_1}=Id$.

If $\Wi x \in \Wi N_1 - \Wi \Lambda_1$,
$\Wi x = \Wi Y_t^1 (\Wi y) $ for some $\Wi y \in \Wi T_1$ and some $t\geq 0$.
Then we divide the proof in two subcases: $t \leq K$ and $t\geq K$.

When $t \leq K$,
\begin{eqnarray*}
   \Wi H_1 \circ \sigma_1 (\Wi x) &=& \Wi Y_{\frac{K+ \Wi g_1 \circ\sigma_1 (\Wi y)}{K} t}^1 \circ \Wi h_1 \circ \sigma_1 (\Wi y)\nonumber\\
                           &=& \Wi Y_{\frac{K+ \Wi g_1  (\Wi y)}{K} t}^1 \circ \sigma_1 \circ\Wi h_1 (\Wi y)\nonumber\\
                           &=&\sigma_1 \circ \Wi Y_{\frac{K+ \Wi g_1  (\Wi y)}{K} t}^1 \circ \Wi h_1 (\Wi y)\nonumber\\
                           &=& \sigma_1 \circ \Wi H_1 (\Wi x).
\end{eqnarray*}

When $t\geq K$,
\begin{eqnarray*}
   \Wi H_1 \circ \sigma_1 (\Wi x) &=& \Wi Y_{t+ \Wi g_1 \circ\sigma_1(\Wi y)}^1 \circ \Wi h_1 \circ \sigma_1(\Wi y)\nonumber\\
                           &=& \Wi Y_{t+ \Wi g_1 (\Wi y)}^1 \circ \sigma_1\circ\Wi h_1  (\Wi y)\nonumber\\
                           &=&\sigma_1\circ\Wi Y_{t+ \Wi g_1 (\Wi y)}^1 \circ \Wi h_1  (\Wi y)\nonumber\\
                           &=& \sigma_1 \circ \Wi H_1 (\Wi x).
\end{eqnarray*}

The second equality in each  recursive equation above  is followed by item $3$ of Lemma \ref{l.ch1}
and item $1$ of Proposition \ref{p.cg1}, and the third equality in each  recursive equation above is followed by  item $3$ of Lemma \ref{l.comm}.

Now we turn to prove item $2$.
$\Wi x \in \Wi Y_K^1 (\Wi N_1)- \Wi \Lambda_1$,
then there exists a unique point $\Wi y \in \Wi T_1$ so that $\Wi x= \Wi Y_t^1 (\Wi y)$ for some $t\geq K$.
Since $t\geq K$, $\Wi P_1 \circ \Wi H_1 (\Wi x)= \Wi P_1 \circ \Wi Y_{t+ \Wi g_1 (\Wi y)}^1 \circ \Wi h_1 (\Wi y)= \Wi P_1 \circ \Wi h_1 (\Wi y) + t+ \Wi g_1 (\Wi y)$.
 $\Wi P_1 \circ  (\Wi x)= \Wi P_1 \circ \Wi Y_t (\Wi y)= \Wi P_1 (\Wi y) +t$.
 Recall that  $\Wi g_1 (\Wi x)= \Wi P_1 (\Wi x)-\Wi P_1 \circ \Wi h_1 (\Wi x)$,
 then  $\Wi P_1 \circ \Wi H_1 (\Wi x)= \Wi P_1 (\Wi x)$. Item $2$ is proved.
\end{proof}

\begin{lemma}\label{l.samesach}
 If $\Wi x \in \Wi Y_K^1 (\Wi N_1)- \Wi \Lambda_1$, then $\Wi x$ and $\Wi H_1 (\Wi x)$ are in the same strong s-arch in $\Wi N_1 - \Wi \Lambda_1$.
\end{lemma}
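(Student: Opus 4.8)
The plan is to push both $\Wi x$ and $\Wi H_1(\Wi x)$ back to the boundary torus $\Wi T_1$ along $\Wi Y^1$, where Lemma \ref{l.ch1} controls $\Wi h_1$, and then to use Lemma \ref{l.cH1} to see that the two points land on one and the same $1$-dimensional piece. Write $\Wi x=\Wi Y^1_t(\Wi y)$ with $\Wi y\in\Wi T_1$ and $t\ge K$; then by the definition of $\Wi H_1$ on $\{t\ge K\}$ one has $\Wi H_1(\Wi x)=\Wi Y^1_{t'}(\Wi h_1(\Wi y))$ with $t'=t+\Wi g_1(\Wi y)>0$ (recall $|\Wi g_1|<K\le t$ from Proposition \ref{p.cg1}). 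First I would record three elementary facts. (i) By Lemma \ref{l.ch1}(1), $\Wi y$ and $\Wi h_1(\Wi y)$ lie on a common leaf $\Wi\ell$ of $\Wi F^s=\Wi{\cF}^s\cap\Wi T_1$; since $\Wi{\cF}^s$ is flow-invariant and contains $\Wi\ell$, both $\Wi x$ and $\Wi H_1(\Wi x)$ lie on the single weak stable leaf $\Wi L$ of $\Wi{\cF}^s$ through $\Wi\ell$. (ii) From the definition of $\Wi g_1$ together with Lemma \ref{l.comm}(4), $\Wi P_1(\Wi H_1(\Wi x))=\Wi P_1(\Wi h_1(\Wi y))+t'=\Wi P_1(\Wi y)+t=\Wi P_1(\Wi x)=:s$, so both points lie in the single fiber $\Wi\Sigma:=\Wi P_1^{-1}(s)$ (this is exactly Lemma \ref{l.cH1}(2)). (iii) The set $\Wi N_1-\Wi\Lambda_1$ is invariant under the forward flow — and hence so is $\Wi L$ and so is $\Wi\Lambda_1$ — because $\Wi\Lambda_1$ is flow-invariant and $\Wi N_1$ is a filtrating neighborhood. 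Facts (i) and (ii) already place $\Wi x$ and $\Wi H_1(\Wi x)$ on the connected $1$-manifold $\Wi L\cap\Wi\Sigma$, which is nothing but the stable leaf, through either point, of the surface attractor $\Wi\Lambda_1\cap\Wi\Sigma$ for the first-return map of $\Wi Y^1$ on $\Wi\Sigma$; thus the components of $(\Wi L\cap\Wi\Sigma)\setminus\Wi\Lambda_1$ are precisely the strong $s$-arches lying in $\Wi\Sigma$ along $\Wi L$, and it remains to see that $\Wi x$ and $\Wi H_1(\Wi x)$ lie in the \emph{same} such component.

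For this I would exhibit an explicit arc joining $\Wi x$ to $\Wi H_1(\Wi x)$ inside $(\Wi L\cap\Wi\Sigma)\setminus\Wi\Lambda_1$. Let $\Wi J\subset\Wi\ell$ be the compact sub-arc of the leaf $\Wi\ell$ with endpoints $\Wi y$ and $\Wi h_1(\Wi y)$, so that $\Wi P_1|_{\Wi J}$ is bounded. Since $\Wi H_1$ commutes with $\Wi Y^1_u$ on $\Wi Y^1_K(\Wi N_1)$ and $\Wi Y^1_u$ carries $\Wi\Sigma$, $\Wi L$, $\Wi\Lambda_1$ to $\Wi\Sigma_{s+u}$, $\Wi L$, $\Wi\Lambda_1$ (so that it induces a bijection between the strong $s$-arches in the two fibers), I may replace $\Wi x$ by $\Wi Y^1_u(\Wi x)$ for a large $u>0$ and thus assume $s>\max_{\Wi J}\Wi P_1$. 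Then for every $\Wi w\in\Wi J$ the forward $\Wi Y^1$-orbit of $\Wi w$ crosses $\Wi\Sigma$ in a unique point $\varphi(\Wi w):=\Wi Y^1_{\,s-\Wi P_1(\Wi w)}(\Wi w)$; the map $\varphi\colon\Wi J\to\Wi\Sigma$ is continuous, and $\varphi(\Wi J)\subset(\Wi L\cap\Wi\Sigma)\setminus\Wi\Lambda_1$ by (i) and (iii). Since $\varphi(\Wi y)=\Wi x$, $\varphi(\Wi h_1(\Wi y))=\Wi H_1(\Wi x)$ and $\varphi(\Wi J)$ is connected, $\Wi x$ and $\Wi H_1(\Wi x)$ lie in one and the same component of $(\Wi L\cap\Wi\Sigma)\setminus\Wi\Lambda_1$, i.e.\ in the same strong $s$-arch.

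The step I expect to be most delicate is the one just carried out: making $\varphi$ well defined on all of $\Wi J$ (not merely at its endpoints) forces the flowing-forward reduction, which in turn needs the forward-invariance of $\Wi N_1-\Wi\Lambda_1$ and the fact that $\Wi\ell$, being a compact sub-arc of a single leaf of $\Wi F^s$, has bounded $\Wi P_1$-variation. A secondary point — used tacitly when passing to "the stable leaf of the surface attractor'' and when saying the components of $(\Wi L\cap\Wi\Sigma)\setminus\Wi\Lambda_1$ are strong $s$-arches — is the structure of the filtrating neighborhood near $\Wi Y^1_K(\Wi N_1)$ from Section \ref{sss.epaflow} (Theorem \ref{t.boundpeorb}) together with the Reeb-annulus description of $F^s$ in Proposition \ref{p.pYt}; here the normalisation of Remark \ref{r.samecmpt} and Lemma \ref{l.ch1}(1) guarantee that moving within a single $\Wi F^s$-leaf and flowing back to a fixed $\Wi P_1$-level never makes a point cross one of the finitely many free separatrices of the boundary periodic orbits, so that no component of $\Wi\Lambda_1$ is ever interposed between $\Wi x$ and $\Wi H_1(\Wi x)$.
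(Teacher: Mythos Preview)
Your argument is correct and takes a genuinely different route from the paper's. Both proofs begin identically: using Lemma~\ref{l.ch1}(1) to put $\Wi y$ and $\Wi h_1(\Wi y)$ on a common leaf $\Wi\ell\subset\Wi L$ of $\Wi F^s$, and Lemma~\ref{l.cH1}(2) to place $\Wi x$ and $\Wi H_1(\Wi x)$ on the same fiber $\Wi\Sigma_s$. From that point on the paper analyzes the plaque $D_{\Wi\ell}\subset\Wi L\setminus\Wi\Lambda_1$ case by case; in the delicate sub-case where $D_{\Wi\ell}\cap\Wi\Sigma_s$ splits into two strong $s$-arches $a_b^1,a_b^2$, it runs a global open--closed argument on an auxiliary region $E\subset\Wi\Sigma_s$ to rule out that $\Wi x$ and $\Wi H_1(\Wi x)$ land in different pieces. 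You bypass this entirely: after observing that $\Wi H_1$ commutes with $\Wi Y_u^1$ on $\Wi Y_K^1(\Wi N_1)$ and that $\Wi Y_u^1$ sends strong $s$-arches in $\Wi\Sigma_s$ injectively to strong $s$-arches in $\Wi\Sigma_{s+u}$ (so the statement for $\Wi Y_u^1(\Wi x)$ implies it for $\Wi x$), you push $s$ past $\max_{\Wi J}\Wi P_1$ and then flow the compact sub-arc $\Wi J\subset\Wi\ell$ forward to level $s$. The resulting connected set $\varphi(\Wi J)\subset(\Wi L\cap\Wi\Sigma_s)\setminus\Wi\Lambda_1$ joins $\Wi x$ to $\Wi H_1(\Wi x)$ directly. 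This is more elementary and avoids the case analysis; the paper's approach, on the other hand, gives a finer structural picture of how $D_{\Wi\ell}$ meets the fibers, which is of independent descriptive interest even if not logically needed for the lemma. Your closing remarks about Remark~\ref{r.samecmpt} and free separatrices are unnecessary for your own argument: forward-invariance of $\Wi N_1\setminus\Wi\Lambda_1$ already guarantees $\varphi(\Wi J)$ avoids $\Wi\Lambda_1$, with no appeal to which separatrix is which.
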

\begin{proof}
$\Wi x \in \Wi Y_K^1 (\Wi N_1)- \Wi \Lambda_1$,
then there exists a unique point $\Wi y \in \Wi T_1$ so that $\Wi x= \Wi Y_t^1 (\Wi y)$ for some $t\geq K$.
In this case, $\Wi H_1 (\Wi x)=\Wi Y_{t+ \Wi g_1 (\Wi y)}^1 \circ \Wi h_1 (\Wi y)$.
  Recall  that $\Wi h_1 (\Wi y)$  and $\Wi y$ are in the same leaf $\Wi l^s$ of $\Wi F^s$,
 and
 $\Wi H_1 (\Wi x)$ and $\Wi x$ are in the orbits of $\Wi Y_t$ starting at $\Wi h_1 (\Wi y)$  and $\Wi y$ respectively.
 Assume that $\Wi L^s$ is the  leaf in $\Wi {\cF}^s$ which contains $\Wi l^s$.
 Since $\Wi l^s \subset \Wi L^s - \Wi \Lambda_1$ and $\Wi l^s$ is connected,
then $\Wi l^s$  belongs to a connected plague $ D_{\Wi l^s}$ of $\Wi L^s - \Wi \Lambda_1$.
By item $2$ of Lemma \ref{l.cH1}, both of $\Wi x$ and $\Wi H_1 (\Wi x)$ are in the same section $\Wi \Sigma_{\Wi P_1 (\Wi x)}$.
Therefore, $\Wi x$ and $\Wi H_1 (\Wi x)$ are in $D_{\Wi l^s} \cap\Wi \Sigma_{\Wi P_1 (\Wi x)}$.
So, we need to understand $D_{\Wi l^s} \cap\Wi \Sigma_{\Wi P_1 (\Wi x)}$ more clear.

There are two possibilities    for the intersection of $\Wi \Lambda_1$ and the closure of $D_{\Wi l^s}$:
\begin{enumerate}
  \item one orbit $\Wi \gamma$  which is the lift of a
boundary periodic orbit of $\Lambda_1$;
  \item the union of two orbits $\Wi \gamma_1$ and $\Wi \gamma_2$ in the two adjacent boundary leaves of $\Wi \Lambda_1$.
\end{enumerate}

In the first case, $D_{\Wi l^s} \cap\Wi \Sigma_{\Wi P_1 (\Wi x)}$ is a strong s-arch $a_p$ with two ends in the lift of a compact leaf of $F^s$ and $\Wi \gamma$ respectively. Then the conclusion of the lemma is followed in this case.

There are two sub-cases for $D_{\Wi l^s} \cap\Wi \Sigma_{\Wi P_1 (\Wi x)}$ in the second case.
The first sub-case is that $D_{\Wi l^s} \cap\Wi \Sigma_{\Wi P_1 (\Wi x)}$ is a  strong s-arch $a_b$ with two ends in $\Wi \gamma_1$ and $\Wi \gamma_2$ respectively. Then the conclusion of the lemma  is followed in this sub-case.
The second sub-case is that $D_{\Wi l^s} \cap\Wi \Sigma_{\Wi P_1 (\Wi x)}$ is
the union of two strong s-arches $a_b^1$  and $a_b^2$ so that:
 \begin{enumerate}
   \item  one end of $a_b^i$ ($i\in \{1,2\}$) is in $\Wi \gamma_i$ and the other end  is in $\Wi l^s$;
   \item $\Wi Y_t^1 (a_b^1 \sqcup a_b^2)$ is in a strong s-arch  of $\Wi N_1 - \Wi \Lambda_1$ when $t$ is big enough.
 \end{enumerate}
 See Figure \ref{f.sarch} as the illustrations of some notations above.
 In the second case, it is possible that $\Wi x$ and $\Wi H_1 (\Wi x)$ are not in the same strong s-arch, i.e.
 $\Wi x \in a_b^1 $ and $\Wi H_1 (\Wi x) \in a_b^2$, or $\Wi x \in a_b^2 $ and $\Wi H_1 (\Wi x) \in a_b^1$. In the following, we will finish the proof of the lemma by showing that this  never happens.

 We define an open connected  subset $E$ (see Figure \ref{f.sarch}) of $\Sigma_{\Wi P_1 (\Wi x)}$ by the union  of the strong stable-arches so that one end of each arch is in $W^u(\Wi \gamma)$ and the other end is in
 $\Wi T^1$.  We define by $E_0$ the intersection of $\Wi Y_K^1 (\Wi N_1)$ and $E$. One can routinely
 check that $E_0$ also is connected.
 We define by $E_1$ the union of the point $\Wi x$ in $E_0$ which satisfies that  $\Wi H_1 (\Wi x) $
 is not in $E$. Note that  when $\Wi x \in E_1$,  $\Wi H_1 (\Wi x) $ is in the interior of  a strong
 s-arch which is disjoint to $E$. This fact and the continuity of  $\Wi H_1$ ensure that $E_1$ is an open subset of
 $E$. Further notice that every point $\Wi x \in E_0-E_1$ satisfies that $\Wi H_1 (\Wi x)$ is in the open set $E$.
 Therefore, $E_0-E_1$ also is an open subset of $E_0$.
 Since $E_0$ is connected, then either $E_1$ or $E_0-E_1$ is empty.
 But obviously $a_p \cap E_1-E_0$ is non-empty, hence $E_1$ has to be  empty.
 This means that the phenomena in the end of the last paragraph never happens.
 The lemma is proved.
\end{proof}

\begin{figure}[htp]
\begin{center}
  \includegraphics[totalheight=7cm]{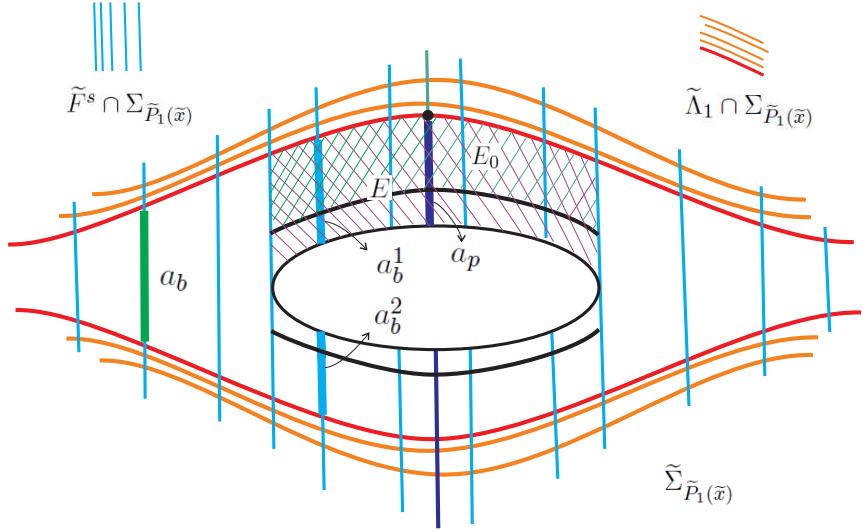}\\
  \caption{some notations on $\Sigma_{\Wi P_1 (\Wi x)}$}\label{f.sarch}
\end{center}
\end{figure}

The following proposition about $\Wi H_1$ is important to show the continuity of $\Wi H_1$ on $\Wi N_1$.

\begin{proposition}\label{p.cH1}
If $\lim_{n\to \infty} \Wi x_n =\Wi x \in \Wi \Lambda_1$,  then $\lim_{n\to \infty} d(\Wi x_n, \Wi H_1 (\Wi x_n))=0$ where the function $d(\Wi x, \Wi y)$ means the distance of $\Wi x$ and $\Wi y$ under some fixing Riemannian metric on $\Wi N_1$.
\end{proposition}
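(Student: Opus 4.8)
The plan is the following. First I would dispose of the trivial part: if $\Wi x_n\in\Wi\La_1$ for infinitely many $n$ then $\Wi H_1(\Wi x_n)=\Wi x_n$ for those $n$, so, passing to a subsequence, I may assume $\Wi x_n\notin\Wi\La_1$ for all $n$. Since $\Wi N_1-\Wi\La_1=\bigcup_{t\ge 0}\Wi Y_t^1(\Wi T_1)$ and every orbit in it meets $\Wi T_1$ exactly once, write $\Wi x_n=\Wi Y_{t_n}^1(\Wi y_n)$ with $\Wi y_n\in\Wi T_1$, $t_n>0$. I claim $t_n\to+\infty$: for every $T>0$ the set $\bigcup_{0\le s\le T}\Wi Y_s^1(\Wi T_1)$ is closed and disjoint from $\Wi\La_1$, hence some fixed neighbourhood of $\Wi x\in\Wi\La_1$ is disjoint from it; as $\Wi x_n\to\Wi x$, eventually $\Wi x_n$ lies outside this set, forcing $t_n>T$. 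Thus for all large $n$ we have $t_n\ge K$, and then, by the very definition of $\Wi H_1$ on the wandering set,
\[
\Wi H_1(\Wi x_n)=\Wi Y_{t_n+\Wi g_1(\Wi y_n)}^1\big(\Wi h_1(\Wi y_n)\big)=\Wi Y_{t_n}^1(\Wi z_n'),\qquad \Wi z_n':=\Wi Y_{\Wi g_1(\Wi y_n)}^1\big(\Wi h_1(\Wi y_n)\big),
\]
with $|\Wi g_1(\Wi y_n)|<K$ by item $2$ of Proposition \ref{p.cg1} (replacing $\Wi y_n,\Wi z_n'$ by $\Wi Y_K^1\Wi y_n,\Wi Y_K^1\Wi z_n'$ and $t_n$ by $t_n-K$ if necessary, so that both points and their forward orbits lie in $\Wi N_1$). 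Hence it suffices to show $d\big(\Wi Y_{t_n}^1(\Wi y_n),\Wi Y_{t_n}^1(\Wi z_n')\big)\to 0$.

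Next I would collect three uniform facts about the pair $(\Wi y_n,\Wi z_n')$. (a) $d(\Wi y_n,\Wi z_n')\le D_1$ for a constant $D_1$ independent of $n$: $\Wi h_1$ commutes with $\sigma_1$ (item $3$ of Lemma \ref{l.ch1}), so it descends to a homeomorphism of the compact torus $T_1$ and $\Wi u\mapsto d(\Wi u,\Wi h_1(\Wi u))$ is a $\sigma_1$-periodic, hence bounded, function on $\Wi T_1$, while $\Wi z_n'$ is obtained from $\Wi h_1(\Wi y_n)$ by flowing for bounded time $|\Wi g_1(\Wi y_n)|<K$. (b) $\Wi y_n$ and $\Wi z_n'$ lie on a common leaf of the weak stable foliation $\Wi{\cF}^s$ of $\Wi\La_1$, since $\Wi h_1$ fixes every leaf of $\Wi F^s=\Wi{\cF}^s\cap\Wi T_1$ (item $1$ of Lemma \ref{l.ch1}) and the flow preserves $\Wi{\cF}^s$. (c) $\Wi P_1(\Wi y_n)=\Wi P_1(\Wi z_n')=:\tau_n$; this is exactly the computation carried out in item $2$ of Lemma \ref{l.cH1}, and it is precisely the reason the correction term $\Wi g_1$ was built into $\Wi H_1$. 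By (b) and (c) the two points lie in one fibre $\Wi\Si_{\tau_n}$ — a once-punctured torus carrying the surface attractor $\Wi\La_1\cap\Wi\Si_{\tau_n}$ — and on one and the same leaf of its stable foliation (this is also what Lemma \ref{l.samesach} records, after flowing forward). Finally, because $\Wi y_n$ lies on (a shallow neighbourhood of) $\partial\Wi\Si_{\tau_n}\subset\Wi T_1$, near which $\Wi F^s$ is a union of Reeb annuli and carries no recurrence, the distance from $\Wi y_n$ to $\Wi z_n'$ measured along their common stable leaf is likewise bounded by a constant $D_2$ independent of $n$.

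Then I would invoke uniform contraction. Since $\Wi P_1$ increases along $\Wi Y_t^1$ at unit speed (item $4$ of Lemma \ref{l.comm}), the flow is transverse to all fibres, $\Wi Y_t^1$ carries $\Wi\Si_\tau$ into $\Wi\Si_{\tau+t}$, and along the attractor these fibre-to-fibre maps realize the dynamics of the DA surface diffeomorphism (downstairs they are the powers of the first-return map $\Phi_A$ of $Y_t^1$ to $\Sigma_0$, whose non-wandering set is $\Lambda_a^0\cup\{O\}$). Hence the stable leaves of the surface attractors are contracted exponentially and \emph{uniformly}: there are $C\ge1$ and $0<\mu<1$, independent of $n$, such that $d\big(\Wi Y_t^1(p),\Wi Y_t^1(q)\big)\le C\mu^{\,t}\,d^s(p,q)$ whenever $p,q$ lie on a common stable leaf of some $\Wi\La_1\cap\Wi\Si_\tau$, where $d^s$ denotes stable-leaf distance and the forward orbits stay in $\Wi N_1$ (the uniformity follows, by $\sigma_1$-equivariance, from compactness of $N_0$; this is the usual strong-stable contraction of an expanding attractor, cf. \cite{PW}). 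Applying this with $p=\Wi y_n$, $q=\Wi z_n'$, $t=t_n$ and using $d^s(\Wi y_n,\Wi z_n')\le D_2$ together with $t_n\to+\infty$ gives
\[
d\big(\Wi x_n,\Wi H_1(\Wi x_n)\big)=d\big(\Wi Y_{t_n}^1(\Wi y_n),\Wi Y_{t_n}^1(\Wi z_n')\big)\le C\mu^{\,t_n}D_2\longrightarrow 0,
\]
which is the assertion of the proposition.

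The genuinely delicate point I expect is the \emph{uniformity} of all the constants ($D_1,D_2,C,\mu$) in the non-compact cover $\Wi N_1$: this is where the $\sigma_1$-equivariance of every object in sight, and the consequent reduction to the compact manifold $N_0$, does the real work. The other thing one must not skip is the use of item $2$ of Lemma \ref{l.cH1} (equivalently, the defining relation for $\Wi g_1$): without it the residual separation between $\Wi y_n$ and $\Wi z_n'$ could contain an uncancelled component in the flow direction, which the flow does \emph{not} contract — so this bookkeeping is exactly what makes the final estimate go through, and indeed what makes the statement true.
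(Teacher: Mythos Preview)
Your proof is correct and follows essentially the same route as the paper's: reduce to wandering points with $t_n\to\infty$, write $\Wi x_n$ and $\Wi H_1(\Wi x_n)$ as time-$t_n$ images of two points lying on a common strong stable arc at uniformly bounded stable distance (this is exactly where Lemma~\ref{l.samesach} enters, together with the uniform bound on the lengths of strong $s$-arches in the DA model and its cyclic cover), and conclude by uniform exponential contraction along strong stable leaves. Your emphasis on $\sigma_1$-equivariance as the mechanism producing all the uniform constants, and on item~2 of Lemma~\ref{l.cH1} as the reason no flow-direction drift survives, matches the paper's argument; the only cosmetic difference is that the paper introduces the shifted basepoints $\Wi z_n=\Wi Y_K^1(\Wi y_n)$ and $\Wi w_n=\Wi H_1(\Wi z_n)$ from the outset, whereas you do the shift parenthetically.
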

\begin{proof}
 Since $\Wi H_1 \mid_{\Wi \Lambda_1}=Id$ and $\lim_{n\to \infty} \Wi x_n =\Wi x \in \Wi \Lambda_1$
  we only need to focus on the case that $\Wi x_n \in \Wi Y_K^1 (\Wi N_1)- \Wi \Lambda_1$.
  Then $\Wi x_n= \Wi Y_{K+t_n}^1 (\Wi y_n)= \Wi Y_{t_n}^1\circ\Wi Y_K^1 (\Wi y_n)$ for  some $y_n \in \Wi T_1$ and some $t_n\geq 0$. Here $\{t_n\}$ satisfies that $\lim_{n\to \infty} t_n =+\infty$.
   By the definition of $\Wi H_1$, we have
  $\Wi H_1 (\Wi x_n)= \Wi Y_{t_n}^1\circ\Wi Y_{K+\Wi g_1 (\Wi y_n)}^1 \circ \Wi h_1(\Wi y_n)$.
  Note that $\Wi H_1 \circ \Wi Y_K^1 (\Wi y_n)= \Wi Y_{K+\Wi g_1 (\Wi y_n)}^1 \circ \Wi h_1(\Wi y_n)$.

  For simplicity, set $\Wi z_n = \Wi Y_K^1 (\Wi y_n)$ and $\Wi w_n =\Wi Y_{K+\Wi g_1 (\Wi y_n)}^1 \circ \Wi h_1(\Wi y_n)$. Then $\Wi x_n= \Wi Y_{t_n}^1(\Wi z_n)$, $\Wi H_1 (\Wi x_n)= \Wi Y_{t_n}^1(\Wi w_n)$ and
  $\Wi H_1 (\Wi z_n) = \Wi w_n$.
  If we endow $N_1$ a Riemannian metric, then it naturally induces a lifting metric on $\Wi N_1$.
  By the construction of  a standard DA attractor $(N_0, Y_t^0, \Lambda_0)$ introduced in Section \ref{sss.epaflow}, the lengths of the strong s-arches in $N_0 -\Lambda_0$  is uniformly bounded.
  Further notice the facts that  $(N_1, Y_t^1, \Lambda_1)$  is a copy of $(N_0, Y_t^0, \Lambda_0)$ and
  $(\Wi N_1, \Wi Y_t^1, \Wi \Lambda_1)$ ia a cyclic cover of $(N_1, Y_t^1, \Lambda_1)$.
  Therefore, the  lengths of the strong s-arches of $\Wi N_1 - \Wi \Lambda_1)$  also is uniformly bounded.
  By Lemma \ref{l.samesach}, $\Wi z_n$ and $\Wi w_n=\Wi H_1 (\Wi z_n)$ are in the same strong s-arch in $\Wi N_1 - \Wi \Lambda_1$.
Then  there exists $K_1 >0$ so that  $d_s(\Wi z_n, \Wi w_n) <K_1$ for every $n\in \NN$.
  Here $d_s$ is the distance along the corresponding strong s-arch.
  Since $\Wi Y_t^1$ uniformly exponentially contracts on every strong stable manifold and  $\lim_{n\to \infty} t_n =+\infty$,
  we have  the distance of $\Wi x_n= \Wi Y_{t_n}^1(\Wi z_n)$ and $\Wi H_1 (\Wi x_n)= \Wi Y_{t_n}^1(\Wi w_n)$ in $\Wi N_1$ is asymptotic to $0$ when $n\to \infty$.
 \end{proof}

The following lemma is  a key lemma to prove Theorem \ref{t.claAno}.

\begin{lemma}\label{l.key}
$\Wi H_1: \Wi N_1 \to \Wi N_1$  is a homeomorphism which preserves the orbits of $\Wi Y_t^1$.
\end{lemma}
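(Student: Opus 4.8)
The plan is to verify three things about $\Wi H_1$: that it is continuous, that it is a bijection, and that it maps $\Wi Y_t^1$-orbits to $\Wi Y_t^1$-orbits; since $\Wi N_1$ is a manifold and $\Wi H_1$ is a continuous bijection, a symmetric argument applied to its inverse will upgrade this to a homeomorphism. First I would check the orbit-preserving property: away from $\Wi\Lambda_1$ this is immediate from the very definition of $\Wi H_1$, which sends the flowline $\{\Wi Y_t^1(\Wi y)\}_{t\ge 0}$ through a point $\Wi y\in\Wi T_1$ to the flowline $\{\Wi Y_t^1(\Wi h_1(\Wi y))\}_{t\ge 0}$ by a time reparametrization $t\mapsto \tfrac{K+\Wi g_1(\Wi y)}{K}t$ on $[0,K]$ and $t\mapsto t+\Wi g_1(\Wi y)$ on $[K,\infty)$ (both strictly increasing in $t$, since $|\Wi g_1|<K$ by Proposition \ref{p.cg1}(2), so $\tfrac{K+\Wi g_1(\Wi y)}{K}>0$); on $\Wi\Lambda_1$ the map is the identity, which of course preserves orbits. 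The reparametrizations on the two pieces agree at $t=K$, so $\Wi H_1$ is well-defined and sends each flowline bijectively onto a flowline.

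Next I would establish injectivity and surjectivity. Define $\Wi\tau_1:\Wi N_1\to\Wi N_1$ by the identity on $\Wi\Lambda_1$ and, for $\Wi x=\Wi Y_t^1(\Wi y)$ with $\Wi y\in\Wi T_1$ using the inverse homeomorphism $\Wi\tau_1$ of $\Wi h_1$ on the boundary (which exists by the last paragraph of the proof of Lemma \ref{l.ch1}, where $\Wi\tau_1$ on $\Wi T_1$ is constructed with $\Wi h_1\circ\Wi\tau_1=\Wi\tau_1\circ\Wi h_1=\mathrm{Id}$) and the inverse reparametrization. Then $\Wi\tau_1\circ\Wi H_1=\Wi H_1\circ\Wi\tau_1=\mathrm{Id}$ on $\Wi N_1-\Wi\Lambda_1$ because the two time reparametrizations compose to the identity, and trivially on $\Wi\Lambda_1$; hence $\Wi H_1$ is a bijection. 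The only subtlety is that a point $\Wi x\in\Wi N_1-\Wi\Lambda_1$ has a unique representation $\Wi Y_t^1(\Wi y)$ with $\Wi y\in\Wi T_1$, $t\ge 0$: this is because $\Wi N_1-\Wi\Lambda_1=\bigcup_{t\ge 0}\Wi Y_t^1(\Wi T_1)$ with $\Wi Y_t^1(\Wi T_1)\cap\Wi T_1=\emptyset$ for $t>0$ (the flow is transverse inward along $\Wi T_1$ and no orbit of $\Wi\Lambda_1$'s complement returns to $\Wi T_1$), so representations are unique.

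The main obstacle is continuity, and this is where the earlier lemmas do the work. On the open set $\Wi N_1-\Wi\Lambda_1$, continuity of $\Wi H_1$ follows from continuity of $\Wi h_1$ (Lemma \ref{l.ch1}), of $\Wi g_1$ (definition plus Proposition \ref{p.cg1}), of the flow $\Wi Y_t^1$, and of the first-return-time function $\Wi x\mapsto(\Wi y,t)$, together with the fact that the two branches of the reparametrization agree on $\{t=K\}$. The genuinely delicate point is continuity at a point $\Wi x\in\Wi\Lambda_1$: take any sequence $\Wi x_n\to\Wi x$. Splitting off the subsequence lying in $\Wi\Lambda_1$ (where $\Wi H_1=\mathrm{Id}$, so $\Wi H_1(\Wi x_n)\to\Wi x$) and the subsequence in $\Wi Y_K^1(\Wi N_1)-\Wi\Lambda_1$, for the latter Proposition \ref{p.cH1} gives $d(\Wi x_n,\Wi H_1(\Wi x_n))\to 0$, hence $\Wi H_1(\Wi x_n)\to\Wi x=\Wi H_1(\Wi x)$. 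It remains to handle $\Wi x_n$ in the ``collar'' $(\Wi N_1-\Wi Y_K^1(\Wi N_1))-\Wi\Lambda_1$; but points there are of the form $\Wi Y_t^1(\Wi y)$ with $t\in[0,K]$ and, since $\Wi x_n\to\Wi x\in\Wi\Lambda_1$ forces the $\Wi T_1$-coordinate $\Wi y_n$ to escape to infinity along the flow from $\Wi T_1$ while $t$ stays bounded — which cannot happen, as $\Wi Y_{[0,K]}^1(\Wi T_1)$ is disjoint from a neighborhood of $\Wi\Lambda_1$ — only finitely many $\Wi x_n$ can lie in this collar, so this subsequence is irrelevant. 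Thus $\Wi H_1$ is continuous everywhere. Finally, exactly the same three arguments applied to $\Wi\tau_1$ (note $\Wi\tau_1$ is orbit-preserving and $\Wi\tau_1|_{\Wi\Lambda_1}=\mathrm{Id}$, and the analogue of Proposition \ref{p.cH1} holds by the same strong-stable-contraction estimate since $\Wi\tau_1$ also lands points in the same strong $s$-arch) show $\Wi\tau_1$ is continuous, so $\Wi H_1$ is a homeomorphism preserving the orbits of $\Wi Y_t^1$, as claimed.
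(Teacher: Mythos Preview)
Your argument is correct and follows the same skeleton as the paper's: bijectivity and orbit-preservation are immediate from the definition, continuity on $\Wi N_1-\Wi\Lambda_1$ is routine, and continuity at points of $\Wi\Lambda_1$ is exactly Proposition~\ref{p.cH1}. (Your collar discussion is a bit redundant, since the statement of Proposition~\ref{p.cH1} already covers arbitrary sequences $\Wi x_n\to\Wi x\in\Wi\Lambda_1$; its proof is where the reduction to $\Wi Y_K^1(\Wi N_1)$ happens.)

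The one place you diverge from the paper is in establishing continuity of the inverse. You propose rerunning the whole Proposition~\ref{p.cH1} argument for $\Wi\tau_1$; this works, though strictly speaking Lemma~\ref{l.samesach} only gives the same-$s$-arch conclusion for $\Wi H_1$ on $\Wi Y_K^1(\Wi N_1)$, so for $\Wi\tau_1$ you must shift to $\Wi Y_{2K}^1(\Wi N_1)$ (if $\Wi z\in\Wi Y_{2K}^1(\Wi N_1)$ then $\Wi\tau_1(\Wi z)\in\Wi Y_K^1(\Wi N_1)$, and then symmetry of ``same $s$-arch'' gives what you need). The paper avoids this duplication entirely: since $\Wi H_1\circ\sigma_1=\sigma_1\circ\Wi H_1$ (Lemma~\ref{l.cH1}), $\Wi H_1$ descends to a continuous bijection of the \emph{compact} manifold $N_1$, which is automatically a homeomorphism; lifting back gives continuity of $\Wi H_1^{-1}$ for free. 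Your route is more hands-on, theirs is slicker.
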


\begin{proof}
By the definition of $\Wi H_1$, it is easy to check that $\Wi H_1: \Wi N_1 - \Wi \Lambda_1 \to \Wi N_1 -\Wi \Lambda_1$ is a homeomorphism which preserves the orbits of $\Wi Y_t^1$.
Note that $\Wi H_1 |_{\Wi \Lambda_1}$ is the identity map on $\Wi \Lambda_1$.
Therefore, $\Wi H_1$ is a bijection map on $\Wi N_1$ which preserves the orbits of $\Wi Y_t^1$.
We are left to check that both of $\Wi H_1$ and $\Wi H_1^{-1}$ are continuous.
Since $\Wi H_1 \circ \sigma_1 = \sigma_1 \circ \Wi H_1 $ (Lemma \ref{l.cH1}),
$\Wi H_1$ can induces a bijection self map on the compact manifold $N_1$ (which we will talk more in the next corollary). This means that the continuity of $\Wi H_1$ can implies the continuity of
$\Wi H_1^{-1}$. Therefore, we only need to focus on showing that $\Wi H_1$ is continuous.

$\Wi H_1$ obviously is continuous on the open set $\Wi N_1 - \Wi \Lambda_1$,
thus we only need to check the continuity on $\Wi \Lambda_1$.
Set $\Wi x \in \Wi \Lambda_1$ and $\{\Wi x_n\} \subset \Wi N_1$ so that $\lim_{n\to \infty} \Wi x_n= \Wi x$. By item $2$ of Lemma \ref{p.cH1},  we have $\lim_{n\to \infty} d(\Wi x_n, \Wi H_1 (\Wi x_n))=0$.
Then $\lim_{n\to \infty} \Wi H_1 (\Wi x_n)=\Wi x$. This means that $\Wi H_1$ is continuous on any point $\Wi x \in \Wi \Lambda_1$.
\end{proof}

\subsection{End of the proof of Theorem \ref{t.claAno}}\label{ss.ep1}
\begin{corollary}\label{c.H1}
$\Wi h_1$ and $\Wi H_1$ can induce two homeomorphisms $h_1$ and $H_1$ on $T_1$ and $N_1$
respectively so that:
\begin{enumerate}
  \item $H_1 \circ \pi_1 = \pi_1 \circ \Wi H_1$ and $h_1 \circ \pi_1 = \pi_1 \circ \Wi h_1$;
  \item $H_1: N_1 \to N_1$ is an extension of $h_1: T_1 \to T_1$;
  \item $H_1 |_{\Lambda_1}$ is the identity map on $\Lambda_1$;
  \item $H_1: N_1 \to N_1$ preserves the orbits of $Y_t^1$.
\end{enumerate}
\end{corollary}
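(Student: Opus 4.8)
The plan is to obtain $h_1$ and $H_1$ simply by pushing $\Wi h_1$ and $\Wi H_1$ down through the covering maps $\pi_1$, exploiting the equivariance of everything in sight under the deck $\ZZ$-action generated by $\sigma_1$. Recall that $\pi_1\colon \Wi N_1\to N_1$ (and likewise $\pi_1\colon \Wi T_1\to T_1$) realizes $N_1$ as the quotient $\Wi N_1/\langle\sigma_1\rangle$, so to manufacture a map on $N_1$ it is enough to exhibit a $\sigma_1$-equivariant map upstairs.

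First I would descend the maps. Item $3$ of Lemma \ref{l.ch1} gives $\Wi h_1\circ\sigma_1=\sigma_1\circ\Wi h_1$, so $\Wi h_1$ sends $\langle\sigma_1\rangle$-orbits to $\langle\sigma_1\rangle$-orbits and therefore factors through a unique $h_1\colon T_1\to T_1$ with $h_1\circ\pi_1=\pi_1\circ\Wi h_1$; item $1$ of Lemma \ref{l.cH1} gives the analogous relation $\Wi H_1\circ\sigma_1=\sigma_1\circ\Wi H_1$ and hence a unique $H_1\colon N_1\to N_1$ with $H_1\circ\pi_1=\pi_1\circ\Wi H_1$. This is item $1$ of the corollary. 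To see that $h_1$ and $H_1$ are homeomorphisms, I would note that $\Wi H_1$ and $\Wi h_1$ are homeomorphisms (Lemma \ref{l.key} for the former, the construction in Lemma \ref{l.ch1} for the latter) whose inverses are again $\sigma_1$-equivariant; these inverses descend, and by the uniqueness of the descent they are two-sided inverses of $H_1$ and $h_1$. Continuity is a local matter and $\pi_1$ is a local homeomorphism, so the continuity of $\Wi H_1^{\pm1}$ and $\Wi h_1^{\pm1}$ transfers to the quotients.

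Then items $2$--$4$ should fall out by inspection. For item $2$, a point $\Wi x\in\Wi T_1$ enters the definition of $\Wi H_1$ through the case $t=0\in[0,K]$, where $\Wi H_1(\Wi x)=\Wi Y_0^1\circ\Wi h_1(\Wi x)=\Wi h_1(\Wi x)$; thus $\Wi H_1|_{\Wi T_1}=\Wi h_1$, and applying $\pi_1$ yields $H_1|_{T_1}=h_1$. For item $3$, $\Wi H_1|_{\Wi\Lambda_1}=\mathrm{id}$ and $\pi_1(\Wi\Lambda_1)=\Lambda_1$, so $H_1|_{\Lambda_1}=\mathrm{id}$. For item $4$, $\pi_1$ intertwines $\Wi Y_t^1$ with $Y_t^1$ and $\Wi H_1$ preserves $\Wi Y_t^1$-orbits (Lemma \ref{l.key}), so $H_1$ preserves $Y_t^1$-orbits. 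I do not expect a genuine obstacle here: all the substance is already packaged in the $\sigma_1$-equivariance relations and in the fact that $\Wi H_1$ is an orbit-preserving homeomorphism; the only point that warrants an explicit line is the identification $\Wi H_1|_{\Wi T_1}=\Wi h_1$ used for item $2$.
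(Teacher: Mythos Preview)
Your proposal is correct and follows essentially the same route as the paper: descend $\Wi h_1$ and $\Wi H_1$ via their $\sigma_1$-equivariance (Lemma~\ref{l.ch1} item~3 and Lemma~\ref{l.cH1} item~1), then read off items~2--4 from the corresponding properties upstairs. Your explicit verification that $\Wi H_1|_{\Wi T_1}=\Wi h_1$ by plugging $t=0$ into the defining formula is in fact a touch more careful than the paper, which simply asserts that $\Wi H_1$ extends $\Wi h_1$.
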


\begin{proof}
Since  $\Wi H_1$ is a homeomorphism on $\Wi N_1$ commutative to $\sigma_1$ (Lemma \ref{l.key} and Lemma \ref{l.cH1}),
 $\Wi H_1$ induces a homeomorphism $H_1$ on $N_1$ so that $H_1 \circ \pi_1 = \pi_1 \circ \Wi H_1$.
Recall that $\Wi H_1 |_{\Wi \Lambda_1}$ is the identity map on $\Wi \Lambda_1$,
therefore $H_1 |_{\Lambda_1}$ is the identity map on $\Lambda_1$.
  Note that $\Wi H_1$  preserves the orbits of $\Wi Y_t^1$  (Lemma \ref{l.key}), then $H_1$ preserves the orbits of $Y_t^1$.

  Since  $\Wi h_1$ is a homeomorphism  on $\Wi T_1$ commutative to $\sigma_1$ (Lemma \ref{l.ch1}),
  $\Wi h_1$ induces a homeomorphism $h_1$ on $N_1$ so that $h_1 \circ \pi_1 = \pi_1 \circ \Wi h_1$.
  The conclusion that $H_1$ is an extension of $h_1$ can be immediately
  followed by the fact that $\Wi H_1: \Wi N_1 \to \Wi N_1$ is an extension of $\Wi h_1: \Wi T_1 \to \Wi T_1$.
 \end{proof}

We can do a similar arguments for $(N_2, Y_{-t}^2)$ based on $\Wi h_2$ on $\Wi T_2$ (Corollary \ref{c.ch2}) to get two homeomorphisms $h_2$ on $T_2$
and $H_2$ on $N_2$ as the following lemma.

\begin{lemma}\label{l.H2}
There exist two homeomorphisms $h_2$ on $T_2$
and $H_2$ on $N_2$ which satisfy the following conditions:
\begin{enumerate}
  \item $h_2$ satisfies the commutative equality: $h_2 \circ \pi_2 = \pi_2 \circ \Wi h_2$;
  \item $H_2: N_2 \to N_2$ is an extension of $h_2: T_2 \to T_2$;
  \item $H_2 |_{\Lambda_2}$ is the identity map on $\Lambda_2$;
  \item $H_2: N_2 \to N_2$ preserves the orbits of $Y_{-t}^2$.
\end{enumerate}
\end{lemma}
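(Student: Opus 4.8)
The plan is to repeat the construction of Section~\ref{ss.g1H1} \emph{verbatim}, with the flow $\Wi Y_{-t}^2$ on $\Wi N_2$ — whose maximal invariant set $\Wi\Lambda_2$ is now an expanding repeller — playing the role of $\Wi Y_t^1$ on $\Wi N_1$, and with the homeomorphism $\Wi h_2:\Wi T_2\to\Wi T_2$ from Corollary~\ref{c.ch2} in the role of $\Wi h_1$. First I would set $\Wi g_2:\Wi T_2\to\RR$, $\Wi g_2(\Wi x)=\Wi P_2(\Wi x)-\Wi P_2\circ\Wi h_2(\Wi x)$, and check the analogue of Proposition~\ref{p.cg1}: the identity $\Wi g_2\circ\sigma_2=\Wi g_2$ follows from item~$3$ of Corollary~\ref{c.ch2} and item~$4$ of Lemma~\ref{l.comm} by the same computation as in Proposition~\ref{p.cg1}; boundedness $|\Wi g_2|<K$ for a suitably large $K$ holds because $\Wi g_2$ descends to a continuous function on the compact torus $T_2$; and the flow–compatibility identity is the same one-line check. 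Since $\Lambda_2$ is a repeller of $Y_{-t}^2$ and $Y_{-t}^2$ points outward along $T_2$, every wandering point of $\Wi N_2$ has a unique representation $\Wi Y_s^2(\Wi y)$ with $\Wi y\in\Wi T_2$, $s\ge 0$, and $\Wi Y_s^2(\Wi y)\to\Wi\Lambda_2$ as $s\to+\infty$ — the exact mirror of the situation used for $\Wi H_1$ — so the interpolation formula ($\Wi Y^2_{\frac{K+\Wi g_2(\Wi y)}{K}s}\circ\Wi h_2(\Wi y)$ for $s\in[0,K]$ and $\Wi Y^2_{s+\Wi g_2(\Wi y)}\circ\Wi h_2(\Wi y)$ for $s\ge K$) defines $\Wi H_2$ on $\Wi N_2-\Wi\Lambda_2$, with $\Wi H_2|_{\Wi\Lambda_2}=Id$; because $K+\Wi g_2>0$ this is a homeomorphism on each flow segment, and at $s=0$ it restricts to $\Wi h_2$ on $\Wi T_2$.

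Next I would establish, in order, the analogues of Lemma~\ref{l.cH1} ($\Wi H_2\circ\sigma_2=\sigma_2\circ\Wi H_2$, and $\Wi P_2\circ\Wi H_2=\Wi P_2$ on $\Wi Y^2_K(\Wi N_2)-\Wi\Lambda_2$), Lemma~\ref{l.samesach} (for $\Wi x$ near $\Wi\Lambda_2$, the points $\Wi x$ and $\Wi H_2(\Wi x)$ lie on the same strong s-arch of $(N_2,Y_t^2)$, i.e.\ of the attractor $\Lambda_2$ for the time-reversed flow), and Proposition~\ref{p.cH1} ($\Wi x_n\to\Wi x\in\Wi\Lambda_2$ implies $d(\Wi x_n,\Wi H_2(\Wi x_n))\to 0$). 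The point that makes the last step go through is that $\Wi h_2$ preserves each leaf of $\Wi F^u=\Wi{\cF}^u\cap\Wi T_2$, and $\cF^u$ is precisely the stable foliation of $\Lambda_2$ for $Y_t^2$; hence once the synchronization $\Wi P_2\circ\Wi H_2=\Wi P_2$ has forced $\Wi z_n:=\Wi Y^2_K(\Wi y_n)$ and $\Wi w_n:=\Wi H_2(\Wi z_n)$ into the same fibre $\Wi\Sigma_{\Wi P_2(\Wi z_n)}$ and onto a common strong s-arch of uniformly bounded length (here $(N_2,Y^2_t,\Lambda_2)\cong(N_0,Y^0_t,\Lambda_0)$, so strong s-arch lengths are uniformly bounded), the uniform exponential contraction of strong stable manifolds under $\Wi Y^2_t$ as $t\to+\infty$ gives $d(\Wi Y^2_{t_n}(\Wi z_n),\Wi Y^2_{t_n}(\Wi w_n))\to 0$ with $t_n\to+\infty$. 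With these in hand, the analogue of Lemma~\ref{l.key} shows $\Wi H_2$ is a homeomorphism of $\Wi N_2$ preserving the orbits of $\Wi Y^2_t$, equivalently of $\Wi Y^2_{-t}$, since these flows share the same orbit foliation.

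Finally, since $\Wi H_2$ and $\Wi h_2$ both commute with $\sigma_2$, the argument of Corollary~\ref{c.H1} descends them to homeomorphisms $H_2$ of $N_2$ and $h_2$ of $T_2$ with $H_2\circ\pi_2=\pi_2\circ\Wi H_2$ and $h_2\circ\pi_2=\pi_2\circ\Wi h_2$; that $H_2$ extends $h_2$ follows from $\Wi H_2|_{\Wi T_2}=\Wi h_2$, that $H_2|_{\Lambda_2}=Id$ follows from $\Wi H_2|_{\Wi\Lambda_2}=Id$, and $H_2$ preserves the orbits of $Y^2_{-t}$ because $\Wi H_2$ does. I expect the only genuinely delicate point to be the analogue of Lemma~\ref{l.samesach} in the sub-case of two adjacent boundary periodic orbits, where — exactly as in the proof of Lemma~\ref{l.samesach} — one must rule out $\Wi H_2$ swapping the two strong s-arches $a_b^1,a_b^2$ by a connectedness argument: the set of wandering points in the relevant connected region whose $\Wi H_2$-image leaves that region is open, its complement there is open, and a point of $a_p$ witnesses that the former is empty. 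Everything else is the same formal computation as in Section~\ref{ss.g1H1}, with the sign and time-direction conventions for $\Wi P_2$ and $Y^2_{-t}$ tracked carefully, so I would not reproduce it in detail.
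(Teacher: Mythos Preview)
Your proposal is correct and follows exactly the approach the paper intends: the paper does not give an independent proof of Lemma~\ref{l.H2} at all, but simply states that one can ``do a similar arguments for $(N_2, Y_{-t}^2)$ based on $\Wi h_2$ on $\Wi T_2$ (Corollary~\ref{c.ch2})'' to obtain $h_2$ and $H_2$, which is precisely what you have spelled out. Your identification of the key point --- that item~2 of Corollary~\ref{c.ch2} says $\Wi h_2$ preserves each leaf of $\Wi F^u$, and $\cF^u$ is the stable foliation of $\Lambda_2$ for the attracting flow $Y_t^2$, so the strong s-arch argument of Lemma~\ref{l.samesach} and the contraction estimate of Proposition~\ref{p.cH1} carry over verbatim with $\Wi Y_t^2$ in place of $\Wi Y_t^1$ --- is exactly right and is the only thing one needs to observe.
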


 Now we can finish the proof of Theorem \ref{t.claAno}

 \begin{proof}[Proof of Theorem \ref{t.claAno}]
 We will prove Theorem \ref{t.claAno} by proving Claim \ref{cl.final}.
 Pick a point $x\in T_2$ and let $\Wi x$ be a point in $\Wi T_2$ so that $\pi_2 (\Wi x)= x$.
By item $1$ of Corollary \ref{c.H1} and item $5$ of Lemma \ref{l.comm}, $\Psi_1 \circ \pi_2 = \pi_1 \circ \Wi \Psi_1$ and
 $h_1 \circ \pi_1 = \pi_1 \circ \Wi h_1$. Then one can easily get that $h_1 \circ \Psi_1 (x)= \pi_1 \circ \Wi h_1 \circ \Wi \Psi_1 (\Wi x)$ by tracking the two commutative equalities above.
 Similarly, one can check that $h_2 \circ \Psi_2 (x)= \pi_1  \circ \Wi \Psi_2 \circ \Wi h_2 (\Wi x)$.
By the definition of $\Wi h_2$ (Corollary \ref{c.ch2}),  $\Wi h_1 \circ \Wi \Psi_1 = \Wi \Psi_2 \circ \Wi h_2$. Therefore, $h_1 \circ \Psi_1 (x)= \Psi_2 \circ h_2 (x)$.

Recall that $M_i= N_1 \sqcup N_2 / x\sim \Psi_i (x), x\in T_2$ ($i\in \{1,2\}$), by the facts
$H_1 |_{T_1}=h_1$ (Corollary \ref{c.H1}), $H_2 |_{T_2}=h_2$ (Lemma \ref{l.H2}) and $h_1 \circ \Psi_1 = \Psi_2 \circ h_2 $,
then we can compose $H_1$ and $H_2$ to a homeomorphism $H: M_1 \to M_2$ so that $H|_{N_1}= H_1$ and
$H|_{N_2}= H_2$.
Moreover, since $H_1$ preserves the orbits of $Y_t^1$ and
$H_2$ preserves the orbits of $Y_{-t}^2$, $H$ maps each orbit of $Z_t^1$ on $M_1$ to a orbit of $Z_t^2$ on  $M_2$. Therefore, $(M_1, Z_t^1)$ and $(M_2, Z_t^2)$ are topologically equivalent.
Claim \ref{cl.final} and Theorem \ref{t.claAno} are proved.
 \end{proof}

\section{Generalizations}\label{s.final}

This section  will mainly  discuss the two generalization questions in Question \ref{q.gen}.
We  will also give some comments for classifying expanding attractors on fibered hyperbolic $3$-manifolds.

\subsection{A sketch of the proof of Theorem \ref{t.gFW1}}
Recall that $M_B$ is  obtained by gluing two copies of $N_0$, say $N_1$ and $N_2$,
through a gluing automorphism $B\in SL(2, \ZZ)$.
When  either
$B=\left(
                                    \begin{array}{cc}
                                      1 & 0 \\
                                      k & 1 \\
                                    \end{array}
                                  \right)
  $ or
  $B=\left(
                                    \begin{array}{cc}
                                      -1 & 0 \\
                                      k & -1 \\
                                    \end{array}
                                  \right)
  $ for some $k\in \ZZ$,
 there exists a non-transitive Anosov flow $Z_t^B$  on $M_B$ which is our model flow in Theorem \ref{t.gFW1}.
Now let us introduce the model flow $Z_t^B$ more precisely. Note that in the two cases above,
$B (F^u)$ can be transverse to $F^s$ everywhere in $\partial N_1$. Using the strategy in \cite{FW} or Theorem 1.5 of \cite{BBY},
  there exists a homeomorphism $\Psi_B: \partial N_2 \to \partial N_1$ which is isotopic to $B$
 so that the glued flow is a non-transitive Anosov flow on the glued manifold, which is homeomorphic
 to $M_B$. Therefore, up to topological equivalence, we can think of this flow as a non-transitive Anosov flow
 on the manifold $M_B$, which we denote by $Z_t^B$.
 We remark that since $\left(\begin{array}{cc}
                                                  2 & 1 \\
                                                  1 & 1
                                                \end{array}\right)$ and
                                                $\left(\begin{array}{cc}
                                                  2 & 1 \\
                                                  1 & 1
                                                \end{array}\right)^{-1}$
                                                are conjugate by
                                                $\left(\begin{array}{cc}
                                                  -1 & 0 \\
                                                  0 & -1
                                                \end{array}\right)$,
                                                 $Z_t^{B_1}$ and
                                                $Z_t^{B_2}$  are topologically equivalent.
                                                Here $B_1=\left(\begin{array}{cc}
                                                  1 & 0 \\
                                                  k & 1
                                                \end{array}\right)$ and
                                                $B_2=\left(\begin{array}{cc}
                                                  -1 & 0 \\
                                                  -k & -1
                                                \end{array}\right)$.

Now we start to prove Theorem \ref{t.gFW1}. But we only give a sketch of the proof since most parts will
be identical to the proof of Theorem \ref{t.claAno}.

\begin{proof}[A brief proof of Theorem \ref{t.gFW1}]
Let $X_t$ be a non-transitive Anosov flow on $M_B$.
By using the same proof to Lemma  \ref{l.decomp},
one  can get that $X_t$ shares a similar decomposition structure  to
nontransitive Anosov flow on  $M$:
$X_t$ is topologically equivalent to the flow obtained
by gluing  $(N_1, Y_t^1)$ and
$(N_2, Y_{-t}^2)$ together through a homeomorphism $\Psi: \partial N_2 \to \partial N_1$
so that:
\begin{enumerate}
  \item $\Psi$ is isotopic to $B$;
  \item $\Psi (F^u)$ and $F^s$ are transverse in the glued torus $T$.
\end{enumerate}
Note that every notation which is not defined here  is the same to the proof of Lemma  \ref{l.decomp}.

When $B$ is neither   $\left(
                                                                            \begin{array}{cc}
                                                                              1 & 0 \\
                                                                              k & 1 \\
                                                                            \end{array}
                                                                          \right)$
nor  $\left(
       \begin{array}{cc}
         -1 & 0 \\
         k & -1 \\
       \end{array}
     \right)$ for some $k\in \ZZ$, we
     pick a compact leaf $c^u$ in $F^u$,
     then $\Psi(c^u)$ is not isotopic to a compact leaf of $F^s$ on $\partial N_1$.
     Hence,
     $\Psi(c^u)$ has to transversely pass through
     the two Reeb annuli of $F^s$.  This certainly is impossible.
     Therefore, in this case, there does not exist any non-transitive Anosov flow on $M_B$ at all. Item $2$ of the theorem is proved.

  When either $B=\left(
                                                                            \begin{array}{cc}
                                                                              1 & 0 \\
                                                                              k & 1 \\
                                                                            \end{array}
                                                                          \right)$
or  $B=\left(
       \begin{array}{cc}
         -1 & 0 \\
         k & -1 \\
       \end{array}
     \right)$ for some $k\in \ZZ$, one can  check that every step in Section \ref{s.nonAno}
still works  and we can similarly use them to prove that $X_t$ is topologically equivalent to
$Z_t^B$. The only  difference is that now the gluing map $\Psi$ is not isotopic to $Id$. But this difference does not affect any step of the proof.  Item $1$ of the theorem is proved.
\end{proof}

\subsection{Expanding attractors on $N_A$ and non-transitive Anosov flows on $M_A^B$}
In this subsection,
we turn to item $2$ of Question \ref{q.gen} regarding the classifications of expanding attractors supported by $N_A$ and non-transitive Anosov flows on $N_A^B$, starting with the discussion of the first.

There are two DA expanding attractors
$\Lambda_A$ and $\Lambda_{A^{-1}}$ supported by $N_A$ under the two flows $Y_t^A$ and $Y_t^{A^{-1}}$.\footnote{The definitions of these notions can be found in Section \ref{ss.Ge}.}
Note that different from the case $A=\left(\begin{array}{cc}
                                                  2 & 1 \\
                                                  1 & 1
                                                \end{array}\right)$,
generally $Y_t^A$ and $Y_t^{A^{-1}}$  are not topologically equivalent since generally
$A$ and $A^{-1}$ are not conjugate in $GL({2,\ZZ})$.
One can  consider classifying expanding attractors on $N_A$
in a similar way to the proof of Theorem \ref{t.claexp}.
After running all the steps of the proof, it is easy to find that
the only gap is that we can not build Proposition \ref{p.no0} by using a parallel proof.

Let us explain in detail.
Let $Y_t$ be a smooth flow on $N_A$ so that the maximal invariant set of $Y_t$
is an expanding attractor $\Lambda$ supported by $N_A$.
Recall that $\cF^s$ is the stable foliation of the attractor  and $F^s =\cF^s \cap T$ ($T=\partial N_A$)
contains $p$ compact leaves $c_1,\dots, c_p$. Proposition \ref{p.no0}  tells us that
$c_i$ ($i\in \{1,\dots, p\}$) is not parallel to $c_h$ in $T$ in the case $A= \left(\begin{array}{cc}
                                                  2 & 1 \\
                                                  1 & 1
                                                \end{array}\right)$ and $N_A =N_0$.

Let $O_A$ be the manifold obtained by  filling a solid torus $V$ to $N_A$ so that a meridian of $V$ is glued to $c_v$.
If $p$ is even, similar to the constructions in the proof of Proposition \ref{p.no0}, one can build a taut foliation in  $O_A$.
If $p$ is odd, one can do a double cover of $N_A$ associated to the once-punctured torus fibration structure on $N_A$. In fact, the covering manifold  is homeomorphic to $N_{A^2}$. Then we can similarly build a taut foliation in  $O_{A^2}$ by extending the double cover of $\Lambda$.
In the case that $A=\left(\begin{array}{cc}
                                                  2 & 1 \\
                                                  1 & 1
                                                \end{array}\right)$,
$O_A$ is homeomorphic to $S^3$ and $O_{A^2}$ is homeomorphic to the lens space $L(5,2)$. Then we can get a contraction
by the fact that neither $O_A$ nor $O_{A^2}$  carry any taut foliation.

To fill the gap in general case, it is natural to ask the following topological question: does there exist any taut foliation on $O_A$? As far as we know, the best result on this topic can be obtained by  Theorem 4.1 of Baldwin \cite{Bal}:  $O_A$ does not carry any co-orientable taut foliation.\footnote{Theorem 4.1 of \cite{Bal}  tells us that $O_A$ is a $L$-space. It is well-known for low-dimensional topologists that an $L$-space never carries any co-orientable taut foliation.} We say that the attractor $\Lambda$ is \emph{co-orientable} if $\Lambda$ is
a co-orientable lamination in $N_A$.
By tracking our construction, one can find that
the constructed foliation is co-orientable if and only if $\Lambda$ is co-orientable.
Then, this means that $N_A$ does not support a co-orientable expanding attractor so that its boundary periodic orbits
are parallel to $c_h$.

We can summarize these discussions to the following theorem:

\begin{theorem}\label{t.claexpA}
Up to topological equivalence, every
flow  $Y_t$ with expanding attractor $\Lambda$ supported by
$N_A$ is topologically equivalent to either
$Y_t^A$ or $Y_t^{A^{-1}}$
except for the case that the boundary periodic orbits
are parallel to $c_h$ and $\Lambda$ is not co-orientable.
In particular,  every
flow  with co-orientable expanding attractor supported by
$N_A$ is topologically equivalent to either
$Y_t^A$ or $Y_t^{A^{-1}}$
\end{theorem}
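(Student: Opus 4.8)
The plan is to reduce Theorem \ref{t.claexpA} to the machinery already developed in the proof of Theorem \ref{t.claexp}, carefully isolating the single place where the argument for the general automorphism $A$ differs from the case $A=\left(\begin{smallmatrix} 2 & 1 \\ 1 & 1 \end{smallmatrix}\right)$. First I would run through Section \ref{s.filfol} verbatim: given a flow $Y_t$ on $N_A$ with expanding attractor $\Lambda$, the stable foliation $\cF^s$ yields the $1$-foliation $F^s$ on $T=\partial N_A$ with $p$ contracting compact leaves, Lemma \ref{l.tranfol} and Lemma \ref{l.uext} give the transverse foliation $\cH^u$ without compact leaves, and Proposition \ref{p.pbrfol} produces the special boundary branching foliation $\cH_b^u$ on $N_0\cong N_A$ with $p$ cusp circles $a_1,\dots,a_p$. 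Write $q$ for the algebraic intersection number of $a_1$ with $c_h$. Exactly as before, the case $pq$ even and nonzero is handled by Proposition \ref{p.extfol} after a $q$-cyclic cover (Theorem \ref{t.folext}), and the case $pq$ odd is excluded by the Poincar\'e--Hopf computation of Proposition \ref{p.noodd}, which does not use any special property of $A$. So the entire content is the status of the case $pq=0$, i.e. $q=0$, i.e. $a_1\parallel c_h$.

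In that remaining case I would argue as in Proposition \ref{p.no0}, but now record precisely what can be concluded. Fill a solid torus $V$ to $N_A$ along a meridian glued to $c_v$, obtaining $O_A$. If $p$ is even, Proposition \ref{p.extfol} builds a taut foliation $\cF'$ on $O_A$ containing $\cL_{\cH_b^u}$ (hence $\Lambda$) as a sub-lamination; moreover, tracking the monkey-saddle construction, $\cF'$ is co-orientable precisely when $\Lambda$ is a co-orientable lamination in $N_A$. If $p$ is odd, pass to the double cover of $N_A$ associated to the once-punctured torus fibration, which is $N_{A^2}$, lift everything, note that the $2p$ cusp circles of the lifted branching foliation each bound disks in the lifted $V$, and build (again by Proposition \ref{p.extfol}) a taut foliation on $O_{A^2}$, co-orientable iff $\Lambda$ is. Now invoke Theorem 4.1 of Baldwin \cite{Bal}: $O_A$ (and likewise $O_{A^2}$, which is a further quotient/cover of the same type) is an $L$-space and hence carries no co-orientable taut foliation. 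This contradicts the existence of the co-orientable $\cF'$ whenever $\Lambda$ is co-orientable. Therefore the case $a_1\parallel c_h$ cannot occur once $\Lambda$ is assumed co-orientable, which is exactly the hypothesis needed to restore Theorem \ref{t.folext} in full generality.

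Once this dichotomy is in place, the rest follows the proof of Theorem \ref{t.claexp} with no change. Extend $Y_t$ to an Axiom A flow on $W_A$ with isolated periodic repeller $\beta$ transverse to the torus fibration; the lifted attractor $\Lambda_q$ on $W_q$ extends to a taut foliation $\cF_q$ (Theorem \ref{t.folext}, now available), so Lemma \ref{l.Sgood}, Proposition \ref{l.openper}, Proposition \ref{p.liminf}, and Proposition \ref{p.sect} all go through, producing a global torus section $\Sigma$ of $(W_A, Y_t)$. The first return map $\Phi$ on $\Sigma$ is an Axiom A diffeomorphism whose nonwandering set is a transitive expanding attractor together with an isolated fixed repeller, and $W_A$ has a unique fibration structure, so $\Phi$ is isotopic to $A$ or to $A^{-1}$; unlike the special case, these two possibilities are genuinely distinct because $A$ and $A^{-1}$ need not be conjugate in $GL(2,\ZZ)$. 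Proposition \ref{p.DAdiff} (which only used that the ambient automorphism is a hyperbolic toral automorphism, applicable verbatim with $A$ replaced by $A$ or $A^{-1}$) then forces $\Phi$ to be topologically conjugate to $\Phi_A$ or $\Phi_{A^{-1}}$, whence $(N_A, Y_t)$ is topologically equivalent to $Y_t^A$ or $Y_t^{A^{-1}}$. The ``except for'' clause of the statement absorbs precisely the configuration $a_1\parallel c_h$ with $\Lambda$ not co-orientable, which the argument above does not exclude.

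\textbf{Main obstacle.} The crux is the case $pq=0$. For $A=\left(\begin{smallmatrix} 2 & 1 \\ 1 & 1 \end{smallmatrix}\right)$ one had the lucky accidents that $O_A\cong S^3$ and $O_{A^2}\cong L(5,2)$, neither of which carries any taut foliation at all, so co-orientability was irrelevant. In general $O_A$ may well admit (non-co-orientable) taut foliations, and the sharpest available topological input is only that $O_A$ is an $L$-space and therefore admits no co-orientable taut foliation. This is exactly why the general theorem must carry the co-orientability hypothesis, and why the clean statement ``$Y_t$ is $Y_t^A$ or $Y_t^{A^{-1}}$'' can only be asserted unconditionally for co-orientable expanding attractors; deciding whether $O_A$ can ever carry a non-co-orientable taut foliation — equivalently, whether $N_A$ can support a non-co-orientable expanding attractor with boundary periodic orbits parallel to $c_h$ — is left open.
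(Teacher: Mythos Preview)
Your proposal is correct and follows essentially the same approach as the paper: the discussion preceding Theorem \ref{t.claexpA} in Section \ref{s.final} is precisely the argument you outline---run the proof of Theorem \ref{t.claexp} verbatim, observe that the only step that fails for general $A$ is Proposition \ref{p.no0}, and replace it by Baldwin's $L$-space result \cite{Bal} to rule out the case $a_1\parallel c_h$ under the co-orientability hypothesis (with the fibration double cover to $N_{A^2}$ handling odd $p$). One small slip: in the odd-$p$ step you write that the $2p$ lifted cusp circles ``each bound disks in the lifted $V$'', but what is actually needed (and true) for Proposition \ref{p.extfol} is that each cusp circle meets the filling meridian $c_v'$ once; also, $O_{A^2}$ is an $L$-space not because it is a cover of $O_A$ but because Baldwin's theorem applies directly to $A^2$ as well---neither point affects the argument.
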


The techniques in Section \ref{s.nonAno} still work for the manifold $M_B^A$. Therefore,  we can describe
most non-transitive Anosov flows on $M_B^A$ based on Theorem \ref{t.claexpA}. We would not like to state such a theorem
here since it is  incomplete and too subtle. Instead, we would like to ask the following topological question
whose negative answer will fill our gap above.

\begin{question}\label{q.final}
Does there exist any non-co-orientable taut foliation on $O_A$?
\end{question}

\subsection{Comments about expanding attractors on fibered hyperbolic $3$-manifolds}
It is interesting to classify expanding attractors on any given fibered hyperbolic
$3$-manifold  $N$ whose boundary is the union of finitely many tori.
The canonical example is DPA expanding attractor, which is derived from suspension pseudo-Anosov
flow. One naturally expects this class of expanding attractors will be the model attractors
in our question. We would like to provide two comments about it.
\begin{itemize}
  \item Let $\Lambda$  be  an expanding attractor supported by $N$.
Suppose that   every periodic orbit in $\Lambda$ algebraically intersects with a fiber surface of
some surface fibration structure on $N$.
Using the strategies developed in Section \ref{s.claexp}, one can show that $\Lambda$ is a DPA expanding attractor.
  \item
  Due to the first comment, it is natural to ask the following question.
Let $\Lambda$  be  an expanding attractor supported by  $N$.
Does there exist  a fibration structure $F$ on $N$ so that
every periodic orbit in $\Lambda$  algebraically intersects with a fiber surface of $F$?
In the preparing work \cite{BY2},
the authors will construct an example to negatively  answer the question above.
Therefore, it seems to be far from reached at present  for a complete classification of  expanding attractors on fibered hyperbolic $3$-manifolds.
\end{itemize}

\vskip 1cm
\noindent Jiagang Yang

\noindent {\small Departamento de Geometria, Instituto de Matem\'atica e Estatistica}

\noindent{\small Universidade Federal Fluminense, Niter\'oi, BRAZIL}

\noindent{\footnotesize{E-mail: yangjg@impa.br}}
\vskip 2mm

\noindent Bin Yu

\noindent {\small School of Mathematical Sciences}

\noindent{\small Tongji University, Shanghai 200092, CHINA}

\noindent{\footnotesize{E-mail: binyu1980@gmail.com }}


\begin{thebibliography}{MM}
\bibitem[An]{An}
Anosov, D. V.
\emph{Geodesic flows on closed Riemannian manifolds of negative curvature.}
Trudy Mat. Inst. Steklov.  90  (1967).
\bibitem[Bal]{Bal} Baldwin, John A. \emph{Heegaard Floer homology and genus one, one-boundary component open books.} J. Topol. 1 (2008), no. 4, 963-992.
\bibitem[Bar]{Bar}
Barbot, Thierry. \emph{Generalizations of the Bonatti-Langevin example of Anosov flow and their classification up to topological equivalence.} Comm. Anal. Geom.  6  (1998),  no. 4, 749-798.
\bibitem[Bart]{Bart} Thomas Barthelme. \emph{School on contemporary dynamical systems: Anosov
flows in dimension $3$.} 2017.
\bibitem[BB]{BB}  Beguin, Francois;  Bonatti, Christian. \emph{ Flots de Smale en dimension 3: pr\'esentations finies de voisinages invariants d¡¯ensembles selles.} Topology 41 (2002) 119-162
\bibitem[BBY]{BBY}  Beguin, Francois;  Bonatti, Christian and Yu, Bin.  \emph{Building Anosov flows on $3$-manifolds}  Geom. Topol.  21  (2017),  no. 3, 1837-1930.
 \bibitem[BF1]{BF1}   Barbot, Thierry; Fenley, S\'ergio \emph{Pseudo-Anosov flows in toroidal manifolds.} Geom. Topol. 17 (2013) 1877-1954
  \bibitem[BF2]{BF2} Barbot, Thierry; Fenley, S¨¦rgio R. \emph{Classification and rigidity of totally periodic pseudo-Anosov flows in graph manifolds.} Ergodic Theory Dynam. Systems 35 (2015), no. 6, 1681-1722.
\bibitem[BL]{BL}    Bonatti, C; Langevin,R \emph{Un exemple de flot d¡¯Anosov transitif transverse a un tore et
non conjugue a une suspension.} Ergodic Theory Dynam. Systems 14 (1994) 633-643
\bibitem[BLJ]{BLJ} Bonatti, C.; Langevin, R. \emph{Diff\'eomorphismes de Smale des surfaces. (French) [Smale diffeomorphisms of surfaces]} With the collaboration of E. Jeandenans. Ast\'erisque No. 250 (1998), viii+235 pp.
\bibitem[BM]{BM} Bowden, Jonathan  and  Mann, Kathryn. \emph{$C^0$ stability of boundary actions and inequivalent Anosov flows.}  arXiv:1909.02324, 2019
\bibitem[BR]{BR} Brittenham, Mark; Roberts, Rachel. \emph{When incompressible tori meet essential laminations.} Pacific J. Math. 190 (1999), no. 1, 21-40.
\bibitem[Br]{Br}  Brunella, Marco. \emph{Separating the basic sets of a nontransitive Anosov flow.} Bull. London Math. Soc. 25 (1993), no. 5, 487-490.
\bibitem[BW]{BW}    Birman, Joan S.; Williams, R. F. \emph{Knotted periodic orbits in dynamical system. II. Knot holders for fibered knots.} Low-dimensional topology (San Francisco, Calif., 1981), 1-60, Contemp. Math., 20, Amer. Math. Soc., Providence, RI, 1983.
\bibitem[BY1]{BY1}   Francois, Beguin; Yu, Bin.   \emph{A uniqueness theorem for transitive Anosov flows obtained by gluing hyperbolic plugs.} arXiv:1905.08989, 2019
\bibitem[BY2]{BY2}   Francois, Beguin; Yu, Bin.   \emph{Coding orbits of Anosov flows obtained by gluing hyperbolic
plugs: coding periodic orbits and its applications.} in preparation
 \bibitem[Cal]{Cal} Calegari, Danny. \emph{Foliations and the geometry of 3-manifolds.} Oxford Mathematical Monographs. Oxford University Press, Oxford, 2007. xiv+363 pp.
 \bibitem[Ch1]{Ch1}    Christy, Joe. \emph{Anosov flows on three-manifolds (Topology, Dynamics).}
 Thesis (Ph.D.)-University of California, Berkeley. 1984. 83 pp
 \bibitem[Ch2]{Ch2} Christy, Joe. \emph{Branched surfaces and attractors. I. Dynamic branched surfaces.} Trans. Amer. Math. Soc. 336 (1993), no. 2, 759-784.
 \bibitem[Fen1]{Fen1} Fenley, S\'ergio R. \emph{Anosov flows in 3-manifolds.} Ann. of Math. 139 (1994) 79-115
\bibitem[Fen2]{Fen2} Fenley, S\'ergio R. \emph{Quasigeodesic Anosov flows and homotopic properties of flow lines} J. Differential Geom.  41  (1995),  no. 2, 479-514.
\bibitem[FLP]{FLP}  Fathi, Albert; Laudenbach, Francois; Po\'enaru, Valentin \emph{Thurston's work on surfaces} Translated from the 1979 French original by Djun M. Kim and Dan Margalit. Mathematical Notes, 48. Princeton University Press, Princeton, NJ, 2012. xvi+254 pp.
 \bibitem[Fra]{Fra}    Franks, John \emph{Anosov diffeomorphisms.} 1970 Global Analysis (Proc. Sympos. Pure Math., Vol. XIV, Berkeley, Calif., 1968) pp. 61-93 Amer. Math. Soc., Providence, R.I.
\bibitem[Fri]{Fri} Fried, David. \emph{Fibration over $S^1$ with pseudo-Anosov monodromy} in [FLP], expose 14.
\bibitem[Fu]{Fu} Fuller, F. Brock. \emph{On the surface of section and periodic trajectories} Amer. J. Math.  87  (1965) 473-480.
\bibitem[FW]{FW} Franks, J;  Williams, B \emph{Anomalous Anosov flows.} from ¡°Global theory of dynamical
systems¡± (Z Nitecki, C Robinson, editors), Lecture Notes in Math. 819, Springer (1980)
158-174
\bibitem[Ga]{Ga} Gabai, David \emph{Taut foliations of $3$-manifolds and suspensions of $S^1$.} Ann. Inst. Fourier (Grenoble) 42 (1992), no. 1-2, 193-208.
\bibitem[GO]{GO} Gabai, David; Oertel \emph{Ulrich Essential laminations in 3-manifolds.} Ann. of Math. (2) 130 (1989), no. 1, 41-73.
\bibitem[Gh1]{Gh1} Ghys, E \emph{Flots d¡¯Anosov sur les 3-vari\'et\'es fibr\'ees en cercles.} Ergodic Theory Dynam.
Systems 4 (1984) 67-80
\bibitem[Gh2]{Gh2} Ghys, E \emph{Flots d¡¯Anosov dont les feuilletages stables sont diff\'erentiables.} Ann. Sci.
¨¦cole Norm. Sup. 20 (1987) 251-270
\bibitem[Ghr]{Ghr} Ghrist, Robert W \emph{Branched two-manifolds supporting all links.} Topology 36 (1997), no. 2, 423-448.
\bibitem[Go]{Go} Goodman, G \emph{Dehn surgery on Anosov flows.} from ¡°Geometric dynamics¡± (J Palis, Jr,
editor), Lecture Notes in Math. 1007, Springer (1983) 300-307
\bibitem[HP]{HP} Hammerlindl, A.; Potrie, R. \emph{Pointwise partial hyperbolicity in three-dimensional nilmanifolds.} J. Lond. Math. Soc. (2) 89 (2014), no. 3, 853-875.
\bibitem[HT]{HT} Handel, M;   Thurston, W, P. \emph{Anosov flows on new three manifolds.} Invent. Math. 59
(1980) 95-103
\bibitem[Pl]{Pl}   Plante, J. F. \emph{Anosov flows, transversely affine foliations, and a conjecture of Verjovsky.} J. London Math. Soc. (2) 23 (1981), no. 2, 359-362.
 \bibitem[Ply]{Ply} Plykin, R. V.  \emph{On the geometry of hyperbolic attractors of smooth cascades.} Usp.Math. Nauk 39, No 6, (1984) 75-113 [English Transl. Russ. Math. Survey 39, No 6, p. 85¨C131].
  \bibitem[PW]{PW}    Palis, Jacob, Jr.; de Melo, Welington \emph{Geometric theory of dynamical systems.} An introduction. Translated from the Portuguese by A. K. Manning. Springer-Verlag, New York-Berlin, 1982. xii+198 pp.
 \bibitem[Ro]{Ro}   Rolfsen, Dale. \emph{Knots and links.} Mathematics Lecture Series, No. 7. Publish or Perish, Inc., Berkeley, Calif., 1976.
 \bibitem[Rua]{Rua}    Ruas, G. \emph{Atratores hiperb\'olicos de codimensao um e classes de isotopia em superficies.} Informes de Matem\'atica, I.M.P.A. S\'erie F-01/82, (1982)
 \bibitem[Sm]{Sm}   S Smale, \emph{Differentiable dynamical systems.} Bull. Amer. Math. Soc. 73 (1967) 747-817
 \bibitem[So]{So}  Solodov, V. V. \emph{Components of topological foliations.} (Russian) Mat. Sb. (N.S.) 119(161) (1982), no. 3, 340-354, 447.
 \bibitem[Thu]{Thu}   Thurston, William P. A norm for the homology of $3$-manifolds. Mem. Amer. Math. Soc. 59 (1986), no. 339, i-vi and 99-130.
\end{thebibliography}
\end{document}